\documentclass{article} 
\usepackage{ssArxiv}

\usepackage{microtype}
\usepackage{graphicx}
\usepackage{subcaption}  %
\usepackage{booktabs}
\usepackage{color}
\usepackage{setspace} %
\usepackage{algorithm}
\usepackage{algpseudocode}
\usepackage{hyperref}
\usepackage{natbib}
\usepackage{wrapfig}
\usepackage{multirow}
\usepackage{siunitx}
\usepackage{multirow}
\usepackage{array}
\usepackage{amsmath,amsbsy,amsgen,amscd,amssymb,amsthm,amsfonts, mathptmx}
\usepackage{mathtools}
\usepackage{bm}
\usepackage{enumitem}
\usepackage[nameinlink,noabbrev]{cleveref}
\usepackage{url} 
\usepackage{tikz}
\usetikzlibrary{
    arrows.meta,
    shapes.geometric,
    positioning, calc
}
\usepackage{hyphenat}

\newtheorem{assumption}{Assumption}
\newtheorem{lemma}{Lemma}

\newtheorem{corollary}{Corollary}

\newtheorem{theorem}{Theorem}

\newtheorem{definition}{Definition}

\title{Inexact PDHG for Large-Scale Semidefinite Programming}

\title{Inexact PDHG for Large-Scale Semidefinite Programming}
 \author{\name{Karthik Prakhya} 
 \email{karthik.prakhya@umu.se} \\
 \addr{Ume\r{a} University, Sweden} \\[0.5em]
 \name{Zimeng Wang} 
 \email{zimeng.wang@umu.se} \\
 \addr{Ume\r{a} University, Sweden} \\[0.5em]
 }

\crefname{algorithm}{Algorithm}{Algorithms}
\Crefname{algorithm}{Algorithm}{Algorithms}

\usepackage[textsize=tiny]{todonotes}

\DeclareFontFamily{OT1}{pzc}{}
\DeclareFontShape{OT1}{pzc}{m}{it}{ <-> s*[1.2] pzcmi7t }{}
\DeclareMathAlphabet{\mathpzc}{OT1}{pzc}{m}{it}

\DeclareMathOperator*{\minimize}{\mathrm{minimize}}
\DeclareMathOperator*{\argmin}{\mathrm{arg\,min}}
\DeclareMathOperator*{\argmax}{\mathrm{arg\,max}}
\DeclareMathOperator{\subjto}{\mathrm{subject~to}}
\newcommand{\trans}{^{\mathstrut\scriptscriptstyle{\top}}}
\renewcommand{\vec}[1]{\mathbf{#1}}

\newcommand{\linop}[1]{\mathpzc{#1}}
\newcommand{\set}[1]{\mathpzc{#1}}
\newcommand{\reals}{\mathbb{R}}
\newcommand{\ip}[2]{\left\langle#1,#2\right\rangle}
\newcommand{\norm}[1]{\|#1\|}

\DeclareMathOperator{\prox}{\mathrm{prox}}
\DeclareMathOperator{\proj}{\mathrm{proj}}
\DeclareMathOperator{\grad}{\nabla}

\newcommand{\pdhg}{(\ref{alg:pdhg})}
\newcommand{\approxpdhg}{(\ref{alg:approx_pdhg})} %

\usepackage{amsmath,amsfonts,bm}

\def\1{\bm{1}}

\DeclareMathAlphabet{\mathsfit}{\encodingdefault}{\sfdefault}{m}{sl}
\SetMathAlphabet{\mathsfit}{bold}{\encodingdefault}{\sfdefault}{bx}{n}

\usepackage{hyperref}
\usepackage{url}

\begin{document}
\maketitle

\begin{abstract}
Motivated by their strong empirical performance on moderate-sized problems, first-order methods, such as the primal-dual hybrid gradient (PDHG) method, have recently been explored for solving large-scale optimization problems, including semidefinite programming problems. 
In these settings, exact evaluations of prox-operators or projections maybe computationally infeasible, so they are typically approximated. 
Recently, computationally cheap and projection-free conditional gradient methods \citep{jaggi2013revisiting} have become an attractive alternative for this reason.
However, the number of inner iterations for conditional gradient methods remains a computational bottleneck given their usual sublinear convergence rates.
In this work, we exploit advanced variants of conditional gradient method such as Spectrahedron Frank-Wolfe \citep{garber2026randomized} that achieve linear convergence in expectation for trace-constrained semidefinite programming once close enough to the optimal face for the projection subproblem onto the spectrahedron.
In this paper, we present various methods to make the inner projection subproblem onto the spectrahedron tractable including (1) a method of setting error tolerances for inexact projections to avoid unnecessary inner iterations that provably guarantees the convergence of PDHG  and (2) a heuristic spectral warm start method based on the Lanczos-algorithm to initialize Spectrahedron Frank-Wolfe near the optimal face to avoid the initially sublinear convergence of Spectrahedron Frank-Wolfe. 
We show that the overall PDHG algorithm converges in the presence of errors with an $\mathcal{O}(1/t)$ rate and estimate the total number of inner iterations required to converge the optimal solution.
We then do an empirical study that shows that our method of combining spectral warm starts with Spectrahedron Frank-Wolfe to do projection onto the spectrahedron outperforms a method that uses Lanczos to do inexact projection onto the spectrahedron. 
We perform this evaluation on several semidefinite programming (SDP) instances for the MaxCut problem of increasing size in terms of the number of edges and vertices.
\end{abstract} %
\section{Introduction}\label{sec:intro}
In this paper, we are interested in the following saddle-point problem:
\begin{equation}\label{prob:main-saddle-point}
    \tag{P-Saddle-Point}
    \min_{\vec{x} \in \set{X}} \max_{\vec{y} \in \set{Y}} ~\ip{\linop{A}\vec{x}}{\vec{y}} + f(\vec{x}) + g(\vec{x}) - h^\ast(\vec{y}).
\end{equation}
where $\set{X}$ and $\set{Y}$ are two finite-dimensional Hilbert spaces, with dual spaces $\set{X}^\ast$ and $\set{Y}^\ast$, $\linop{A}: \set{X} \to \set{Y}$ is a bounded linear operator with its adjoint denoted by $\linop{A}\trans: \set{Y} \to \set{X}$, $f : \set{X} \to \reals$ is a convex function with Lipschitz continuous gradient with some constant $L$ and both $g : \set{X} \to \reals \cup \{+\infty\}$ and $h :\set{Y} \to \reals \cup \{+\infty\}$ are closed, convex and proper (CCP) functions.

To ensure that strong duality holds, we assume that Slater's condition hold here and that there exists some $\vec{x}_0 \in \mathrm{ri}(\mathrm{dom}(f)) \cap \mathrm{ri}(\mathrm{dom}(g))$ such that $\linop{A}\vec{x}_0 \in \mathrm{ri}(\mathrm{dom}(h))$ (see Proposition 15.24 and Fact 15.25 in \citep{bauschke2020correction}).

Primal-dual first-order methods like Primal Dual Hybrid Gradient (PDHG) (also called Chambolle-Pock algorithm) \citep{chambolle2011first} can solve these types of problems with proximal minimizations to handle potentially non-smooth $g$ and $h$ terms.
We work with the following form of the PDHG algorithm:
\begin{equation}
    \tag{PDHG}
    \begin{aligned}
        \vec{x}_{t+1} & = \prox_{\tau g} \big(\vec{x}_t - \tau \grad f (\vec{x}_t) - \tau \linop{A}^\top \vec{y}_t \big) \\[0.5em]
        \vec{y}_{t+1} & = \prox_{\sigma h^\ast} \big(\vec{y}_t + \sigma \linop{A}(2\vec{x}_{t+1} - \vec{x}_t)\big ). \\[0.5em]
    \end{aligned}
    \label{alg:pdhg}
\end{equation}
However, the proximal minimizations in \eqref{alg:pdhg} sometimes lack closed-form solution and approximate solutions to these subproblems can be computationally intensive to obtain especially for large-scale problems, so we propose an efficient strategies for these proximal minimizations that minimize the total number of inner iterations.
Our results have easy extension to other similar and related primal-dual methods based on proximal minimizations such as Alternating Direction Method of Multipliers (ADMM) \citep{boyd2011distributed} and Three Operator Splitting (TOS) \citep{davis2017three}.

If we take the supremum of the saddle point problem with respect to $\vec{y} \in \set{Y}$, we arrive at the primal formulation, which is the following convex composite minimization problem:
\begin{equation}
\tag{P}
\label{prob:main-primal}
\minimize_{\vec{x} \in \set{X}} \quad f(\vec{x}) + g(\vec{x}) + h(\linop{A}\vec{x}).
\end{equation}
On the other hand, if we take the infimum of the objective of \eqref{prob:main-saddle-point} with respect to $\vec{x} \in \set{X}$, we obtain the dual formulation of \eqref{prob:main-primal}.

Convex composite optimization problems of the form \eqref{prob:main-primal} have many practical applications ranging from operations research \citep{goemans1995improved}, training of machine learning models \citep{peng2007approximating, jacob2009group, chambolle2016ergodic}, robustness characterization of neural networks and other machine learning models \citep{fazlyab2019efficient}, signal processing \citep{rudin1992nonlinear, candes2013phaselift, chambolle2011first, chambolle2016introduction, rasch2020inexact, barbero2018modular, kim2009ell_1, weiss2009efficient, zhang2011unified, zhu2008efficient}, numerical linear algebra \citep{boumal2020deterministic}, min-max games \citep{chambolle2016ergodic} and portfolio optimization \citep{ye2012robust, boyd2017multi}.

Training of machine learning models is enhanced by use of regularizers to promote some desired structure in the learned model such as reduction of overfitting ($\ell_2$ regularization) or sparsity in the learned model ($\ell_1$ regularization) in the learned model and these additional regularizers can be combined with original training objective to produce a convex composite optimization problem \citep{peng2007approximating, jacob2009group, chambolle2016ergodic}.
In addition, many signal and image processing problems such as image denoising, image deconvolution, image inpainting, motion estimation and multi-label
image segmentation lend themselves to the convex composite optimization template \citep{chambolle2011first, chambolle2016introduction, rasch2020inexact}.
As in machine learning optimization problems, additional function terms such as those based on total variation in the convex composite optimization problems in signal processing enforce desired structure on the recovered image or motion field such as reduction of certain types of noise such as salt-pepper noise, reduction of effects due to illumination and shadow or a desired level of smoothness.
For high-dimensional problems in machine learning and signal processing, proximal minimization subproblems can be computationally intensive and/or time-consuming.

We can obtain a special subset of convex minimization problems by setting $g$ and $h$ as indicator functions over the closed convex sets $\set{H} \subseteq \set{X}$ and $\set{K} \subseteq \set{Y}$ in \eqref{prob:main-primal}:
\begin{equation}
\tag{P'}
\label{prob:main-2}
\minimize_{\vec{x} \in \set{X}} \quad f(\vec{x}) \quad \subjto \quad \vec{x} \in \set{H} \quad \text{and} \quad \linop{A}\vec{x} \in \set{K}. 
\end{equation}
For this problem template, of particular interest to this paper are positive semidefinite programs (SDPs) with affine constraints and bounded trace constraints.
These can be represented using \eqref{prob:main-2} by assuming that $\set{H}$ is the intersection of the bounded trace constraint and positive semi-definite cone with the affine constraints encapsulated by the linear operator $\linop{A}$.
Then the proximal minimization corresponding to $g$ reduces to a projection onto the intersection of the bounded trace constraint and positive semi-definite cone, which can be done exactly using a combination of singular value decomposition and simplex projection or approximately using the projection-free conditional gradient method (CGM, a.k.a Frank-Wolfe algorithm) \citep{frank1956algorithm, jaggi2013revisiting}, which is one of the most scalable methods in literature for solving convex optimization problems over a convex and compact set \citep{hazan2012projection, yurtsever2017sketchy, yurtsever2018conditional, yurtsever2019conditional, yurtsever2021scalable, pham2023scalable}. 
The main computational efficiency of CGM comes from the fact that the exact projection is replaced with a computationally less expensive linear minimization oracle (LMO):
\begin{equation*}
    \mbox{lmo}_{\set{X}}(\vec{v}) = \arg\min_{\vec{x} \in \set{H}} \ip{\vec{v}}{\vec{x}}, \quad \vec{v} \in \set{X}
\end{equation*}
that computes the step direction by considering a linearization of the objective near the current iterate.
For bounded trace SDPs, the LMO amounts to solving a cheaper partial eigenvalue decomposition as opposed to a full eigenvalue decomposition.
For large SDPs, the number of LMO calls can still be a computational bottleneck, so there is a need for efficient termination criteria for inner projection subproblems.

Interior point methods (IPMs) \citep{nesterov1994interior, kojima1991primal, toh1999sdpt3, vandenberghe1996semidefinite} can be used to solve such bounded trace SDPs in polynomial time. 
Despite their effectiveness for solving moderate-sized instances, interior-point methods have become intractable to implement for large-scale problems. 
The primary computational bottleneck for IPMs is their reliance on Cholesky factorization to solve a large sparse linear system arising from applying Newton method on the Karush-Kuhn Tucker (KKT) conditions. 
The computational cost of Cholesky factorizations can grow superlinearly with problem size, which is measured by the number of decision variables and constraints. 
For this reason, there is an interest in applying first-order methods such as PDHG that avoid such large matrix factorizations \citep{lin2025pdcs} and can potentially exploit projection-free based methods like CGM to do projections onto the positive semidefinite cone and incorporates several enhancements along with GPU acceleration including adaptive reflected Halpern restarts, adaptive step-size selection, adaptive weight adjustment and diagonal rescaling to work with many different types of conic constraints.
However, \citep{lin2025pdcs} doesn't incorporate inexact projections onto the spectrahedron and instead uses exact projection onto the positive semi-definite cone for general semidefinite programming problems based on full eigendecomposition.

\textbf{Organization}: The rest of the paper is organized as follows. \Cref{sec:related-works} focuses on related works in the development of exact and inexact primal-dual methods with focus on PDHG.
\Cref{sec:preliminaries} goes over convex analysis preliminaries and the notion of inexact proximal point that is essential to defining our inexact PDHG method.
\Cref{sec:inexact-pdhg} defines our inexact PDHG method for the saddle point problem defined in \eqref{prob:main-saddle-point} and establishes convergence guarantees.
\Cref{sec:inexact-pdhg-application-p-prime} focuses on the application of our inexact PDHG method to a special case of \eqref{prob:main-saddle-point} defined by \eqref{prob:main-2} of which trace-bounded positive semidefinite programs are a special case, describes how the Specrahedron Frank-Wolfe algorithm can be used to solve the resulting projection subproblem and provides an upper bound on the total number of inner iterations required by the outer PDHG method for problem described in \eqref{prob:main-2}.
\Cref{sec:experiments} focus on experiments with trace-bounded positive semidefinite programs with Spectrahedron Frank-Wolfe and spectral warm starts for the projection subproblem.

\section{Related Works}\label{sec:related-works}
\subsection{Development of Exact PDHG Methods}
To solve problems of the form \eqref{prob:main-saddle-point}, primal-dual methods of the following form \citep{chambolle2011first, chambolle2016ergodic}:
\begin{equation}
    \tag{General-PDHG}
    \begin{aligned}
        \vec{x}_{t+1} & = \prox_{\tau g} \big(\vec{x}_t - \tau \grad f (\vec{x}_t) - \tau \linop{A}^\top \vec{y}_t \big) \\[0.5em]
        \widetilde{\vec{x}}_{t + 1} &= \vec{x}_{t+1} + \theta (\vec{x}_{t+1} - \vec{x}_t) \\[0.5em] 
        \vec{y}_{t+1} & = \prox_{\sigma h^\ast} \big(\vec{y}_t + \sigma \linop{A}\widetilde{\vec{x}}_{t + 1}  \big ), \\[0.5em]
    \end{aligned}
    \label{alg:gen-pdhg}
\end{equation}
where $\tau, \theta, \sigma>0$ are hyperparameters, 
have been utilized for a long time starting with the seminal work by \citep{arrow1958studies} that put forward the Arrow-Hurwicz method. 
The use of the PDHG method was made popular in the signal processing domain by the work of \citep{zhu2008efficient}, who used this method for TV denoising and deblurring models. 
\citep{chambolle2011first} extended this work to the general saddle point formulation with two function terms (same as \eqref{prob:main-saddle-point} with $f = 0$), which was subsequently extended to the problem template in \eqref{prob:main-saddle-point} in \citep{chambolle2016ergodic}.
One line of works investigate choices for the PDHG hyperparameters that enable convergence \citep{he2012convergence, he2014convergence, esser2009general}. 
\citep{goldstein2015adaptive} focus on adaptive choices for the PDHG hyperparameters.
Many operator-splitting and primal-dual methods including P3DO \citep{yan2018new}, Condat-V{\~u} algorithm \citep{condat2013primal, vu2013splitting} and PDDY \citep{salim2022dualize} can be viewed as variations of PDHG, we refer to this paper for a detailed discussion on these connections \citep{salim2022dualize}.
\citep{upadhyaya2025autolyap, upadhyaya2025automated} provide Lyapunov-type analysis of \eqref{alg:gen-pdhg} with the goal of extending the region of permissible parameter choices for $\tau$ and $\sigma$.
\citep{banert2026chambolle} show that the Chambolle-Pock method coverges under weaker condition of $\tau\sigma\|\linop{A}\|^2 < \frac{4}{1 + 2 \vartheta}$, where $\vartheta > \frac{1}{2}$.
For simplicity, we assume that $\theta = 1$ to get the following formulation of PDHG described in \eqref{alg:pdhg} that we use for our analysis and experimental evaluation.

\subsection{PDHG for Conic Programs}
One line of work uses PDHG for solving general conic programs. 
PDHG is already an algorithm of choice to solve linear programming problems (LPs) \citep{applegate2021practical, applegate2023faster} and more recently SDPs \citep{souto2022exploiting, lin2025pdcs}.
While \citep{lin2025pdcs} solves the projection onto positive-semidefinite cone exactly, \citep{souto2022exploiting} exploits the low-rank matrix approximation to do inexact projection onto the positive-semidefinite cone.
We also use inexact projection onto the same, but unlike \citep{souto2022exploiting}, we plan to use Frank-Wolfe algorithm to do the approximate projection and do not require low-rank assumptions.
PDHG has been used to directly solve the saddle-point formulation of LP \citep{applegate2023faster}. 
There are several efficient large-scale LP solvers developed around PDHG that utilize a variety of heuristics for efficiency such as PDLP for CPUs \citep{applegate2021practical}, cuPDLP \citep{lu2023cupdlp} for GPUs and a C-language implementation of cuPDLP \citep{lu2023cupdlp-c}. 
The two GPU-based implementations of PDLP \citep{lu2023cupdlp, lu2023cupdlp-c} outperform many classical LP solver on many problem instances. 
As a result, PDHG has been integrated into several open-source and commercial LP solvers including COPT \citep{lu2023cupdlp-c}, Xpress \citep{biele2024xpress}, Gurobi \citep{rothberg2024giantLPs}, Google OR tools \citep{applegate2021practical}, HiGHS \citep{lu2023cupdlp-c} and NVIDIA cuOpt \citep{fender2024optimization}.
\citep{jiang2026local} show that PDHG converges linearly for SDPs if strict complementarity and primal-dual nondegeneracy assumptions are satisfied.
\citep{wang2024tuning} present a tuning-free PDHG-based algorithm for large-scale semidefinite programming and they use inexact projection onto the semidefinite cone by taking $\log(n)$ eigenpairs estimated using an iterative solver.
In contrast to \citep{wang2024tuning}, our approach to projection onto the spectrahedron is rank-adaptive in the sense that number of eigenpairs is estimated as part of the spectral warm start procedure and is subsequently adjusted using Spectrahedron Frank-Wolfe, which results in the number of eigenpairs growing or shrinking as PDHG progresses as opposed to being fixed.

\subsection{Inexact Proximal and Primal-Dual Methods}
While there has been extensive earlier work on operator-splitting and primal-dual methods to solve \eqref{prob:main-saddle-point} with exact proximal minimizations, the literature on inexact primal-dual methods of the form \eqref{alg:gen-pdhg} is in its infancy.
The simplest type of error model for proximal minimizations is what \citep{rasch2020inexact} refers to as a type-0 error, which simply bounds the Euclidean distance between the exact proximal point and its approximate counterpart.
\citep{rasch2020inexact} also presents other notions of inexactness of proximal minimizations by assuming that the objective of the proximal operator is minimized up to within some small constant $\epsilon$. 
Given that the proximal operator sometimes lacks closed form solution or can only be approximated such as TV-regularized inverse problems in image processing, there has been a lot of previous work in inexact proximal methods. 
One line of work focuses on convergence of proximal point methods under error assumptions: \citep{cominetti1997coupling, guler1992new, he2012accelerated, lemaire1992convergence, salzo2012inexact} with \citep{lemaire1992convergence, salzo2012inexact} proving convergence rates. 
\citep{combettes2005signal} showed convergence of an inexact forward-backward splitting algorithm. 
For the accelerated versions of these methods including the FISTA method, the earlier works can be divided into ones focused on proofs of convergence \citep{friedlander2012hybrid, luo1993error, patriksson1998cost, zaslavski2010convergence, bello2020inexact}, convergence up to a certain level of accuracy \citep{d2008smooth, devolder2014first, nedic2001convergence} and error-dependent convergence rates \citep{aujol2015stability, schmidt2011convergence, villa2013accelerated}.
\citep{bonettini2016variable} extends the analysis of \citep{villa2013accelerated} to nonconvex problems by utilizing variable metric strategies and linesearch methods.
\citep{nemirovski2004prox} present an optimization algorithm for saddle point problems that converges at the same optimal rate as the exact case as soon as a particular inequality is satisfed for each iteration even with a controlled error term.
\citep{schmidt2011convergence} give convergence rates for inexact proximal gradient descent algorithm.

Previous approaches to inexact primal-dual methods have shown convergence or convergence rates on absolute summable criteria, meaning the error terms are absolutely summable or based on summable relative error criteria, which uses the distance between the iterates to obtain a looser criteria for convergence. 
\citep{condat2013primal} shows that the Condat-V{\~u} algorithm converges given type-0 error for proximal minimizations as well as an error for the gradient evaluation given absolute summable criteria but didn't show any convergence rates.
\citep{rasch2020inexact} study several types of error models mentioned above for proximal minimizations and show that an inexact primal-dual method of the form \eqref{alg:gen-pdhg} with inexact proximal minimizations can converge at the same rate as exact version of the same method.
\citep{tian2016linearized} deals with inexact case by approximating one of the proximal minimizations by a linearized objective function.
\citep{eckstein1992douglas, yuan2005improvement, xie2017inexact, eckstein2017approximate, eckstein2018relative} develop inexact ADMM methods and present convergence guarantees for the same under either absolutely summable error and/or relative error criterion.
\citep{jiang2021approximate, jiang2021first} also look at inexact primal-dual methods but they consider absolute summable error criteria or just solve one of the subproblems inexactly. 
\citep{marques2024general} develop a general framework for inexact operator splitting algorithms with relative error criterion and apply it to both the PDHG and TOS algorithms.
\citep{wu2022general} also explore relative summable error criterion to solve saddle point problems of the form \ref{prob:main-saddle-point} 
with $f = 0$ and using Type-3 error assumption for proximal mapping (see \citep{rasch2020inexact} for definition) with focus on applications to image restoration problems.
Our work also uses summable relative error criteria in the sense that error tolerances are set using distance between consecutive iterates but unlike previous work on inexact primal-dual methods, we focus on Type-2 error assumption for proximal evaluation (see \citep{rasch2020inexact} for definition) that is closely related to LMO error for conditional gradient methods and focus on use of summable relative error criteria to make it feasible to solve large-scale semidefinite programming problems using conditional gradient methods and inexact PDHG.

\section{Preliminaries}\label{sec:preliminaries}

\noindent\textbf{Notation and terminology:}\quad For notational simplicity, we work on the $\mathbb{R}^p / \mathbb{R}^n$ spaces with the Euclidean norms. 
Throughout the paper, $\|\cdot\|$ represents the Euclidean norm for vectors and the Frobenius norm for matrices. 
For a convex function $f$ and a point $\mathbf{x} \in \set{X}$, we use $\nabla f(\mathbf{x})$ for its gradient at point $x$ and $\mathbf{g} \in \partial f(\mathbf{x})$ for its subgradient, and $f^\ast(\vec{y})$ for its Fenchel conjugate. 
We denote the set of symmetric matrices of dimension $n$ by $\set{S}^n$, the positive semidefinite cone of dimension $n$ by $\set{S}_+^n$ and the set of binary vectors of dimension $n$ as $\mathbb{Z}_2^n$. 

\subsection{Convex Analysis Preliminaries}
In this section, we first discuss the preliminaries of convex analysis that are useful throughout the paper and then present the error model that we use for the PDHG algorithm, which corresponds to a Type-2 proximal error \citep{rasch2020inexact}. 
We start with the basic definitions of convexity, strong convexity, smoothness and Fenchel conjugate  here.
\begin{definition}[Convex Function, \citep{bauschke2020correction}] A function $f: \set{X} \rightarrow [-\infty , +\infty]$ is convex if and only if it satisfies the following inequality:
\begin{equation*}
    f(\alpha \vec{x} + (1 - \alpha)\vec{y}) \leq \alpha f(\vec{x}) + (1 - \alpha) f(\vec{y}),\quad \forall\vec{x}, \vec{y} \in \operatorname{dom}(f), \alpha \in [0, 1].
\end{equation*}
\end{definition}
\begin{definition}[Strong convexity] \label{def:strong-convexity}
A proper, closed, and convex function $\phi \colon \set{Y} \to \mathbb{R}$ is said to be $\beta$-strongly convex with respect to a norm $\|\cdot\|$ if, for all $\vec{x}, \vec{y} \in \set{Y}$ and any subgradient $\bm{g} \in \partial \phi(\vec{x})$,
\begin{equation*}
    \phi(\vec{y}) \geq \phi(\vec{x}) + \langle \bm{g}, \vec{y} - \vec{x} \rangle + \frac{\beta}{2}\|\vec{y} - \vec{x}\|^2.
\end{equation*}
\end{definition}

\begin{definition}[Smoothness] \label{def:smoothness}
A differentiable, proper, closed, and convex function $\psi: \set{Y} \to \mathbb{R}$ is said to be $\gamma$-smooth with respect to a norm $\|\cdot\|$ if, for all $\vec{x}, \vec{y} \in \set{Y}$,
\begin{equation*}
    \psi(\vec{y}) \leq \psi(\vec{x}) + \langle \nabla \psi(\vec{x}), \vec{y} - \vec{x} \rangle + \frac{\gamma}{2} \|\vec{y} - \vec{x}\|^2.
\end{equation*}
\end{definition}

\begin{definition}[Fenchel Conjugate \citep{bauschke2020correction}]
Let $\phi \colon \set{Y} \to [-\infty, +\infty]$ be a proper function. Then, its Fenchel conjugate $\phi^* \colon \set{Y}^\ast \to [-\infty, +\infty]$ is defined as
\begin{equation*}
    \phi^*(\vec{y}) := \sup_{\bm{x} \in \set{Y}} \left\{ \langle \vec{y}, \bm{x} \rangle - \phi(\bm{x}) \right\}.
\end{equation*}
\end{definition}
We now define the the concept of proximal mapping and then discuss how to relax it to incorporate an approximation error. 
\begin{definition}[Proximal Mapping] \label{def:prox-mapping}
For a proper, convex and lower semicontinuous function $g: \set{X} \rightarrow (-\infty, +\infty]$, $\tau > 0$ and a point $\vec{y} \in \set{X}$, the proximal map $\prox_{\tau g}$ is defined by 
\begin{equation*}
    \prox_{\tau g}(\vec{y}) := \smash{\arg\min}_{\vec{x} \in \set{X}} \left\{g(\vec{x}) + \frac{1}{2\tau}\norm{\vec{x} - \vec{y}}^2 \right \}
\end{equation*}
\end{definition}
\noindent Given the definition of a proximal mapping, we can use the corresponding first-order optimality condition to present an equivalent characterization of proximal mapping:
\begin{lemma}\label{lem:prox-lemma}
For any point $\vec{y}, \vec{z} \in \set{X}$, we have
\begin{equation*}
    \vec{z} = \prox_{\tau g}(\vec{y}) \iff \frac{\vec{y} - \vec{z}}{\tau} \in \partial g(\vec{z}).
\end{equation*}
\end{lemma}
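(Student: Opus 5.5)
The plan is to derive the equivalence directly from the first-order optimality conditions of the strongly convex problem in \Cref{def:prox-mapping}. Define $\Phi(\vec{x}) := g(\vec{x}) + \frac{1}{2\tau}\norm{\vec{x} - \vec{y}}^2$. Since $g$ is proper, convex and lower semicontinuous and the quadratic term is $\frac{1}{\tau}$-strongly convex (\Cref{def:strong-convexity}), $\Phi$ is proper, closed and strongly convex. It therefore has a unique minimizer, so $\prox_{\tau g}(\vec{y})$ is well defined and single-valued. This is the setting in which the ``$=$'' in the statement makes sense.

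The core step is Fermat's rule for convex functions: $\vec{z}$ minimizes $\Phi$ if and only if $\vec{0} \in \partial \Phi(\vec{z})$. This follows immediately from the subgradient inequality $\Phi(\vec{x}) \ge \Phi(\vec{z}) + \ip{\vec{0}}{\vec{x}-\vec{z}}$ in both directions. Next, we compute $\partial\Phi(\vec{z})$ by the subdifferential sum rule. Because the quadratic term is differentiable and finite on all of $\set{X}$, its domain's interior meets $\mathrm{dom}(g)$, so the sum rule holds with equality:
\begin{equation*}
\partial \Phi(\vec{z}) = \partial g(\vec{z}) + \tfrac{1}{\tau}(\vec{z} - \vec{y}).
\end{equation*}
Hence $\vec{0} \in \partial\Phi(\vec{z})$ is equivalent to $\frac{\vec{y}-\vec{z}}{\tau} \in \partial g(\vec{z})$.

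Combined with uniqueness of the minimizer, this gives both directions of the equivalence. Concretely, if $\vec{z} = \prox_{\tau g}(\vec{y})$, then $\vec{z}$ is the minimizer, so the inclusion holds. Conversely, if the inclusion holds, then $\vec{z}$ is a minimizer, hence the unique one, so $\vec{z} = \prox_{\tau g}(\vec{y})$.

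The only delicate point is justifying the sum rule, which could fail for two merely convex functions. If we prefer to avoid citing it, the ``$\Leftarrow$'' direction can be done by hand. Take $\vec{u} = \frac{\vec{y}-\vec{z}}{\tau} \in \partial g(\vec{z})$ and add the subgradient inequality $g(\vec{x}) \ge g(\vec{z}) + \ip{\vec{u}}{\vec{x}-\vec{z}}$ to the exact identity
\begin{equation*}
\tfrac{1}{2\tau}\norm{\vec{x}-\vec{y}}^2 = \tfrac{1}{2\tau}\norm{\vec{z}-\vec{y}}^2 + \tfrac{1}{\tau}\ip{\vec{z}-\vec{y}}{\vec{x}-\vec{z}} + \tfrac{1}{2\tau}\norm{\vec{x}-\vec{z}}^2.
\end{equation*}
The linear terms cancel, giving $\Phi(\vec{x}) \ge \Phi(\vec{z}) + \frac{1}{2\tau}\norm{\vec{x}-\vec{z}}^2$, so $\vec{z}$ is the minimizer.

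The ``$\Rightarrow$'' direction can also be done by hand. Compare $\Phi(\vec{z}) \le \Phi(\vec{z} + \alpha(\vec{x}-\vec{z}))$ for $\alpha \in (0,1]$, and use convexity of $g$ to bound $g(\vec{z} + \alpha(\vec{x}-\vec{z})) \le (1-\alpha) g(\vec{z}) + \alpha g(\vec{x})$. Divide by $\alpha$ and let $\alpha \downarrow 0$. This yields $g(\vec{x}) \ge g(\vec{z}) + \frac{1}{\tau}\ip{\vec{y}-\vec{z}}{\vec{x}-\vec{z}}$ for all $\vec{x}$, which is exactly the claimed inclusion.
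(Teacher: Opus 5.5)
Your proof is correct and follows the same route as the paper, which simply invokes the first-order optimality condition $\vec{0} \in \partial g(\vec{z}) + \frac{\vec{z}-\vec{y}}{\tau}$ for the minimization in \Cref{def:prox-mapping} and rearranges. You also make explicit two points the paper leaves implicit: why the sum rule holds with equality, and why uniqueness of the minimizer is needed for the converse direction. Your by-hand verifications of both directions are valid as well.
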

\begin{proof}
The proximal map defined by \Cref{def:prox-mapping} has to satisfy the following first-order optimality condition: $\bm{0} \in \partial g(\vec{z}) + \frac{\vec{z} - \vec{y}}{\tau}$, which can be rearranged to get the desired result.
\end{proof}
\noindent This characterization can then be relaxed by considering an $\epsilon$-subdifferential or inexact subdifferential of $g$ at $\vec{z} \in \set{X}$:
\begin{definition}[$\epsilon$-subdifferential \citep{rasch2020inexact}] \label{def:inexact-subdiff}
Suppose that $g: \set{X} \rightarrow [-\infty, +\infty]$ is a proper function.
The $\epsilon$-\textbf{subdifferential} $\partial_\epsilon g(\vec{z})$ at point $\vec{z} \in \mathrm{dom}(g)$ and for some $\epsilon > 0$ is given by 
\begin{equation*}
    \partial_\epsilon g(\vec{z}) = \{\vec{g} \in \set{X}^\star~:~ g(\vec{x}) \geq g(\vec{z}) + \ip{\vec{g}}{\vec{x} - \vec{ 
       z}} - \epsilon,~ \forall~\mathbf{x} \in \mathrm{dom}(g)\}.
\end{equation*}
\end{definition}
\noindent Based on this, it is possible to characterize $\epsilon$-optimality with respect to $g$ as follows:
\begin{equation*}
    \vec{0} \in \partial_\epsilon g(\vec{z}) \iff g(\vec{z}) \leq \inf g + \epsilon.
\end{equation*}
Given this, we can define the inexact proximal point as follows:
\begin{definition}[Inexact Proximal Point \citep{rasch2020inexact}] \label{def:inexact-proximal-point} Given some $\epsilon > 0$ and some proper, convex and lower semi-continuous function $g$, we say that $\vec{z} \in \set{X}$ is an \textbf{inexact proximal point} with precision $\epsilon$ denoted by $\vec{z} := \smash{\prox_{\tau g}^{(\epsilon)}}(\vec{y})$ if
\begin{equation*}
     \frac{\vec{y} - \vec{z}}{\tau} \in \partial_\epsilon g(\vec{z}).
\end{equation*}
\end{definition}
Finally, we require the following three-point descent lemma from convex analysis: 
\begin{lemma}[Three-points descent lemma]
\label{lem:three-points-descent}
Suppose $f:\set{X} \to \reals$ is $L$-smooth and convex. Then, for any $\vec{x}$, $\vec{y}$, $\vec{z}\in \set{X}$, the following bound holds:
\begin{equation*}
    f(\vec{x}) \leq f(\vec{y}) + \ip{\grad f(\vec{z})}{\vec{x} - \vec{y}} + \frac{L}{2}\norm{\vec{x}-\vec{z}}^2
\end{equation*}
\end{lemma}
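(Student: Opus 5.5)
The plan is to split $f(\vec{x}) - f(\vec{y})$ through the intermediate point $\vec{z}$, bounding one piece by convexity and the other by smoothness. Write
\begin{equation*}
    f(\vec{x}) - f(\vec{y}) = \big(f(\vec{x}) - f(\vec{z})\big) + \big(f(\vec{z}) - f(\vec{y})\big).
\end{equation*}
Each bracket will be handled separately, and the two linear terms will then be recombined.

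First, for $f(\vec{x}) - f(\vec{z})$ I apply the $L$-smoothness inequality of \Cref{def:smoothness} with base point $\vec{z}$ and evaluation point $\vec{x}$:
\begin{equation*}
    f(\vec{x}) \leq f(\vec{z}) + \ip{\grad f(\vec{z})}{\vec{x} - \vec{z}} + \frac{L}{2}\norm{\vec{x} - \vec{z}}^2 .
\end{equation*}
The hypothesis that $f$ is $L$-smooth is stated exactly as this quadratic upper bound, so nothing further is needed here.

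Second, for $f(\vec{z}) - f(\vec{y})$ I use the first-order characterization of convexity for a differentiable function, namely $f(\vec{y}) \geq f(\vec{z}) + \ip{\grad f(\vec{z})}{\vec{y} - \vec{z}}$. This is the case $\beta = 0$ of \Cref{def:strong-convexity}, with $\grad f(\vec{z}) \in \partial f(\vec{z})$. Rearranged, it reads $f(\vec{z}) \leq f(\vec{y}) + \ip{\grad f(\vec{z})}{\vec{z} - \vec{y}}$.

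Finally, I add the two inequalities. The linear terms combine by bilinearity:
\begin{equation*}
    \ip{\grad f(\vec{z})}{\vec{x} - \vec{z}} + \ip{\grad f(\vec{z})}{\vec{z} - \vec{y}} = \ip{\grad f(\vec{z})}{\vec{x} - \vec{y}}.
\end{equation*}
This leaves exactly the claimed bound $f(\vec{x}) \leq f(\vec{y}) + \ip{\grad f(\vec{z})}{\vec{x} - \vec{y}} + \frac{L}{2}\norm{\vec{x}-\vec{z}}^2$.

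There is no real obstacle in this argument. The only point to get right is the choice of base points: both inequalities must be linearized at $\vec{z}$, and the quadratic term must carry $\vec{x}-\vec{z}$, not $\vec{x}-\vec{y}$. If one wanted to justify smoothness from $L$-Lipschitz continuity of $\grad f$, as assumed in the introduction, the descent inequality would follow from the integral form $f(\vec{x}) - f(\vec{z}) = \int_0^1 \ip{\grad f(\vec{z} + s(\vec{x}-\vec{z}))}{\vec{x}-\vec{z}}\,ds$ together with Cauchy--Schwarz.
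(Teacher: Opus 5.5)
Your proof is correct and is the same as the paper's: both apply the $L$-smoothness upper bound linearized at $\vec{z}$ and evaluated at $\vec{x}$, then use the first-order convexity inequality at $\vec{z}$ evaluated at $\vec{y}$ to eliminate $f(\vec{z})$. Adding the two inequalities combines the linear terms into $\ip{\grad f(\vec{z})}{\vec{x}-\vec{y}}$.
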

\begin{proof}
By $L$-smoothness of $f$, for any $\vec{u} \in \set{X}$ we have
\begin{equation*}
    f(\vec{u}) \le f(\vec{z}) + \langle \nabla f(\vec{z}), \vec{u} - \vec{z} \rangle + \frac{L}{2}\|\vec{u} - \vec{z}\|^2 .
\end{equation*}
Applying this with $\vec{u} = \vec{x}$ yields
\begin{equation*}
    f(\vec{x}) \leq f(\vec{z}) + \langle \nabla f(\vec{z}), \vec{x} - \vec{z} \rangle + \frac{L}{2}\|\vec{x} - \vec{z}\|^2 .
\end{equation*}
On the other hand, convexity of $f$ implies that for any $\vec{u}, \vec{v}$, the following holds:
\begin{equation*}
    f(\vec{u}) \geq f(\vec{v}) + \langle \nabla f(\vec{v}), \vec{u} - \vec{v} \rangle .
\end{equation*}
Applying this with $\vec{u} = \vec{y}$ and $\vec{v} = \vec{z}$ gives
\begin{equation*}
    f(\vec{y}) \geq f(\vec{z}) + \langle \nabla f(\vec{z}), \vec{y} - \vec{z} \rangle
    \quad\Longrightarrow\quad
    f(\vec{z}) \le f(\vec{y}) - \langle \nabla f(\vec{z}), \vec{y} - \vec{z} \rangle .
\end{equation*}
Substituting this inequality for $f(\vec{z})$ into the bound on $f(\vec{x})$, we obtain
\begin{align*}
    f(\vec{x})
    &\leq f(\vec{y}) - \langle \nabla f(\vec{z}), \vec{y} - \vec{z} \rangle
        + \langle \nabla f(\vec{z}), \vec{x} - \vec{z} \rangle
        + \frac{L}{2}\|\vec{x} - \vec{z}\|^2 \\[0.5em]
    &= f(\vec{y}) + \langle \nabla f(\vec{z}), \vec{x} - \vec{y} \rangle
        + \frac{L}{2}\|\vec{x} - \vec{z}\|^2 .
\end{align*}
This completes the proof.
\end{proof}
\subsection{Convergence in Normed Spaces}
Here, we will review some results from functional analysis on the subject of convergence in normed spaces, which will prove useful in the convergence analysis of \pdhg and its inexact variant.
More specifically, we try to find an equivalent norm such that quantities of interest can be expressed more compactly and the convergence analysis can be done more easily as a result. 
So we start by making precise the notion of an equivalent norm.
This definition is based on the idea that equivalent norms on a vector space $\set{V}$ induce the same topology on $\set{V}$. 
Now we present an important result that states that all norms defined on finite dimensional real vector spaces are equivalent:
\begin{lemma}[Equivalent Norms, Theorem 2.4-5, \citep{kreyszig1978functional}] \label{lem:equiv-norms-finite-spaces} For any finite-dimensional real vector space $\set{V}$ and any two norms $\norm{\cdot}_a$ and $\norm{\cdot}_b$ defined on $\set{V}$, $\norm{\cdot}_a$ is equivalent to $\norm{\cdot}_b$, i.e., $c_0\norm{\vec{x}}_b \leq \norm{\vec{x}}_a  \leq c_1\norm{\vec{x}}_b$ for some $0<c_0\le c_1$. 
\end{lemma}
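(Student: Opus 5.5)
The plan is the standard compactness argument on the unit sphere. Since norm equivalence is an equivalence relation, it suffices to compare an arbitrary norm $\norm{\cdot}_a$ on $\set{V}$ with one fixed reference norm. Let $n=\dim \set{V}$ and fix a basis $\vec{e}_1,\dots,\vec{e}_n$. Writing $\vec{x}=\sum_i \alpha_i \vec{e}_i$, define the reference norm $\norm{\vec{x}}_2 := (\sum_i \alpha_i^2)^{1/2}$. Through the coordinate isomorphism $\vec{x}\mapsto(\alpha_1,\dots,\alpha_n)$, this is just the Euclidean norm on $\reals^n$. Once $c_0\norm{\vec{x}}_2\le\norm{\vec{x}}_a\le c_1\norm{\vec{x}}_2$ is shown for every norm $\norm{\cdot}_a$, the claim for two arbitrary norms $\norm{\cdot}_a,\norm{\cdot}_b$ follows by chaining the two comparisons. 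The resulting constants are $c_0^{a}/c_1^{b}$ and $c_1^{a}/c_0^{b}$.

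\textbf{Upper bound.} By the triangle inequality and Cauchy--Schwarz,
\begin{equation*}
\norm{\vec{x}}_a \le \sum_i |\alpha_i|\,\norm{\vec{e}_i}_a \le \Big(\sum_i \norm{\vec{e}_i}_a^2\Big)^{1/2}\norm{\vec{x}}_2 .
\end{equation*}
So we may take $c_1 := (\sum_i \norm{\vec{e}_i}_a^2)^{1/2}$, which is positive. This bound has a useful consequence. By the reverse triangle inequality, $|\norm{\vec{x}}_a-\norm{\vec{y}}_a|\le c_1\norm{\vec{x}-\vec{y}}_2$, so $\norm{\cdot}_a$ is Lipschitz, hence continuous, with respect to the Euclidean topology.

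\textbf{Lower bound.} Consider the unit sphere $S=\{\vec{x}:\norm{\vec{x}}_2=1\}$. Under the coordinate isomorphism, $S$ is closed and bounded in $\reals^n$, so it is compact by Heine--Borel. The continuous function $\norm{\cdot}_a$ therefore attains its minimum $c_0$ on $S$ at some point $\vec{x}^\ast$. Since $\vec{x}^\ast\neq\vec{0}$ and $\norm{\cdot}_a$ is a norm, $c_0=\norm{\vec{x}^\ast}_a>0$. For $\vec{x}\neq\vec{0}$, homogeneity applied to $\vec{x}/\norm{\vec{x}}_2\in S$ gives $\norm{\vec{x}}_a\ge c_0\norm{\vec{x}}_2$; the case $\vec{x}=\vec{0}$ is trivial. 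Finally, $c_0\le c_1$ follows by evaluating both bounds at any point of $S$.

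The main obstacle is this lower bound. It is the only place where finite-dimensionality is used in an essential way, namely through compactness of the unit sphere via Heine--Borel. Making the argument rigorous requires first establishing continuity of $\norm{\cdot}_a$ with respect to the Euclidean topology, which is exactly why the upper bound is proved first. Everything else is routine.
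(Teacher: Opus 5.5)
Your proof is correct, and it is the standard compactness argument. The paper gives no proof of its own and only cites Kreyszig's Theorem 2.4-5, which uses the same idea but compares each norm with the coordinate $\ell^1$ norm $\sum_i |\alpha_i|$ and gets the lower bound (Kreyszig's Lemma 2.4-1) by a Bolzano--Weierstrass contradiction argument, where you instead minimize $\norm{\cdot}_a$ over the Euclidean unit sphere; the trivial case $\set{V}=\{\vec{0}\}$, where your $c_1$ is $0$ and $S$ is empty, is the only one your write-up does not literally cover.
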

Now we present a useful result that basically states that equivalent norms have similar convergence properties:
\begin{lemma}\label{lem:convergence-equiv-norms-general} Let $\set{V}$ be a real vector space and suppose that $\norm{\cdot}_a$ and $\norm{\cdot}_b$ are defined on $\set{V}$ and that they are equivalent, then for any sequence $\{\vec{u}_t\}_{t \geq 0} \subset \set{V}$ and $\vec{u}^\star \in \set{V}$
\begin{equation*}
    \norm{\vec{u}_t - \vec{u}^\star}_a \rightarrow 0 \iff \norm{\vec{u}_t - \vec{u}^\star}_b \rightarrow 0.
\end{equation*}
\end{lemma}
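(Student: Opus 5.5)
The plan is to work directly from the definition of equivalence of norms. Since $\norm{\cdot}_a$ and $\norm{\cdot}_b$ are assumed equivalent, there are constants $0 < c_0 \le c_1$ with
\begin{equation*}
    c_0 \norm{\vec{v}}_b \le \norm{\vec{v}}_a \le c_1 \norm{\vec{v}}_b \quad \text{for all } \vec{v} \in \set{V},
\end{equation*}
which is the same two-sided bound appearing in \Cref{lem:equiv-norms-finite-spaces}. The only idea is to apply this inequality to the particular vector $\vec{v} = \vec{u}_t - \vec{u}^\star$ for each $t \ge 0$. This gives the sandwich
\begin{equation*}
    c_0 \norm{\vec{u}_t - \vec{u}^\star}_b \le \norm{\vec{u}_t - \vec{u}^\star}_a \le c_1 \norm{\vec{u}_t - \vec{u}^\star}_b .
\end{equation*}

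For the forward implication, assume $\norm{\vec{u}_t - \vec{u}^\star}_a \to 0$. The left inequality gives $0 \le \norm{\vec{u}_t - \vec{u}^\star}_b \le c_0^{-1}\norm{\vec{u}_t - \vec{u}^\star}_a$, and this step uses $c_0 > 0$. The squeeze theorem for real sequences then yields $\norm{\vec{u}_t - \vec{u}^\star}_b \to 0$. Concretely, given $\varepsilon > 0$, I would pick $T$ such that $\norm{\vec{u}_t - \vec{u}^\star}_a < c_0 \varepsilon$ for all $t \ge T$.

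For the reverse implication, assume $\norm{\vec{u}_t - \vec{u}^\star}_b \to 0$. The right inequality gives $0 \le \norm{\vec{u}_t - \vec{u}^\star}_a \le c_1 \norm{\vec{u}_t - \vec{u}^\star}_b \to 0$, and the same squeeze argument applies.

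There is no genuine obstacle; the proof is a direct squeeze argument. The one point to make explicit is that the strict positivity of the lower constant $c_0$ is exactly what allows the $a$-norm to control the $b$-norm. That positivity is part of the definition of equivalent norms, and in the finite-dimensional setting used later in the paper it is supplied by \Cref{lem:equiv-norms-finite-spaces}.
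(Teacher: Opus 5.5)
Your proof is correct and follows essentially the same route as the paper. Both apply the two-sided equivalence bound to $\vec{u}_t - \vec{u}^\star$, use $c_0 > 0$ (choosing the tail index so that $\norm{\vec{u}_t - \vec{u}^\star}_a \leq c_0 \varepsilon$) for the forward direction, and use $c_1$ for the reverse; the only difference is that the paper writes out the $\varepsilon$--$N$ argument where you invoke the squeeze theorem, which is cosmetic.
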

\begin{proof} Given that $\norm{\cdot}_a$ and $\norm{\cdot}_b$ are equivalent, they must satisfy the following relation:
\begin{equation*}
    c_0\norm{\vec{x}}_b \leq \norm{\vec{x}}_a  \leq c_1\norm{\vec{x}}_b, \quad \forall \vec{x} \in \set{V}.
\end{equation*}
Now suppose that $\norm{\vec{u}_t - \vec{u}^\star}_a \rightarrow 0$. Let $\epsilon > 0$. Then there exists an $N \in \set{N}$ such that for all $t \geq N$, we have that $\norm{\vec{u}_t - \vec{u}^\star}_a \leq c_0 \epsilon$, which means that
\begin{equation*}
    \norm{\vec{u}_t - \vec{u}^\star}_b \leq \frac{1}{c_0} \norm{\vec{u}_t - \vec{u}^\star}_a \leq \frac{1}{c_0} (c_0 \epsilon) = \epsilon
\end{equation*}
for all $t \geq N$.
Therefore, $\norm{\vec{u}_t - \vec{u}^\star}_b \rightarrow 0$.
Using the relation $\|\mathbf{u}_t - \mathbf{u}^\star\|_a \leq c_1\|\mathbf{u}_t - \mathbf{u}^\star\|_b$, we can similarly show that $\norm{\vec{u}_t - \vec{u}^\star}_b \rightarrow 0$ implies that $\norm{\vec{u}_t - \vec{u}^\star}_a \rightarrow 0$.
\end{proof}

\subsection{Saddle-Point Problem}\label{sec:saddle-point-prelim}
We can define the Lagrangian for \eqref{prob:main-saddle-point} as  $\mathcal{L}(\vec{x}, \vec{y}) := \ip{\linop{A}\vec{x}}{\vec{y}} + f(\vec{x}) + g(\vec{x}) - h^\ast(\vec{y})$.
For any saddle point $(\vec{x}^\star, \vec{y}^\star)\in \set{X} \times \set{Y}$, we must have that 
\begin{equation}\label{eqn:saddle-point-characterization}
    \mathcal{L}(\vec{x}^\star, \vec{y}) \leq \mathcal{L}(\vec{x}^\star, \vec{y}^\star) \leq \mathcal{L}(\vec{x}, \vec{y}^\star) 
\end{equation}
for any $(\vec{x}, \vec{y}) \in \set{X} \times \set{Y}$.
Therefore, $\sup_{\substack{\vec{x} \in \set{X} \\
\vec{y} \in \set{Y}}} \left \{\mathcal{L}(\vec{x}_t, \vec{y}) - \mathcal{L}(\vec{x}, \vec{y}_t) \right \} = 0$ if and only if $(\vec{x}_t, \vec{y}_t)$ is itself a saddle point.
Here, we assume that the supremum is well-defined and possibly taking the value $+\infty$.
Given a saddle point $(\mathbf{x}^\star, \mathbf{y}^\star)$, we can define the primal-dual gap of \pdhg at iteration $t$ as $\mathcal{G}_{x^\star, y^\star}(\mathbf{x}_t, \mathbf{y}_t) := \mathcal{L}(\vec{x}_t, \vec{y}^\star) - \mathcal{L}(\vec{x}^\star, \vec{y}_t)$.
Assume that maximum of $\mathcal{L}(\vec{x}_{t+1}, \vec{y})$ is attained at $\tilde{\vec{y}}_{t+1}$.
An upper bound for the primal objective residual can be derived as follows:
\begin{equation*}
\begin{aligned}
    &f(\vec{x}_{t+1}) + g(\vec{x}_{t+1}) + h(\linop{A}\vec{x}_{t+1}) = \max_{\vec{y} \in \set{Y}} \mathcal{L}(\vec{x}_{t+1}, \vec{y}) = \mathcal{L}(\vec{x}_{t+1}, \tilde{\vec{y}}_{t+1})  \mbox{ and }\\
    &f(\vec{x}^\star) + g(\vec{x}^\star) + h(\linop{A}\vec{x}^\star) = \max_{\vec{y} \in \set{Y}} \mathcal{L}(\vec{x}^\star, \vec{y}) \geq \mathcal{L}(\vec{x}^\star, \vec{y}_{t+1}),
\end{aligned}
\end{equation*}
which in turn implies that 
\begin{equation*}%
    f(\vec{x}_{t+1}) + g(\vec{x}_{t+1}) + h(\linop{A}\vec{x}_{t+1}) - [f(\vec{x}^\star) + g(\vec{x}^\star) + h(\linop{A}\vec{x}^\star)] \leq \mathcal{L}(\vec{x}_{t+1}, \tilde{\vec{y}}_{t+1}) - \mathcal{L}(\vec{x}^\star, \vec{y}_{t+1}).
\end{equation*}
Thus, it is sufficient to first bound $\mathcal{L}(\vec{x}_{t+1}, \vec{y}) - \mathcal{L}(\vec{x}, \vec{y}_{t+1})$.
Given this bound and following similar derivation as in \citep{chambolle2011first, chambolle2016ergodic} for the exact case and \citep{rasch2020inexact} for the inexact case, we can show ergodic convergence of the average primal-dual gap with respect to a saddle point $(\mathbf{x}^\star, \mathbf{y}^\star)$.
We will first present these results adapted to our inexact version of \pdhg. %
\section{Inexact PDHG} \label{sec:inexact-pdhg}
First, we define an inexact version of the \pdhg algorithm. 
The general iteration of the approximate PDHG for our model problem can be written as follows:
\begin{equation}
\tag{Inexact PDHG}
\begin{aligned}
    \vec{x}_{t+1}^\prime & = \smash{\prox^{(\epsilon_t)}_{\tau g}} \big(\vec{x}_t^\prime - \tau \grad f (\vec{x}_t^\prime) - \tau \linop{A}^\top \vec{y}_t^\prime \big) \\[0.5em]
    \vec{y}_{t+1}^\prime & = \smash{\prox^{(\delta_t)}_{\sigma h^\ast}} \big(\vec{y}_t^\prime + \sigma\linop{A} (2\vec{x}_{t+1}^\prime - \vec{x}_t^\prime) \big), \\[0.5em]
\end{aligned}
\label{alg:approx_pdhg}
\end{equation}
where we assume that the $\vec{x}'_{t+1}\in \set{X}$ is an inexact proximal point with precision of $\epsilon_t>0$ and $\vec{y}'_{t+1}\in\set{Y}$ is an inexact proximal point with precision of $\delta_t$. 
The error tolerance scheme is to discussed in this paper is to terminate with $\vec{x}_{t + 1}^\prime$ and $\vec{y}_{t + 1}^\prime$ when the resulting sum $\epsilon_t + \delta_t$ of the type-2 proximal errors for the first and second proximal subproblem satisfy
\begin{equation}\label{eqn:inexact-error-tolerance-schedule}
\bar{\epsilon}_t + \bar{\delta}_t := \frac{1}{t + 1}\sum_{i = 0}^t (\epsilon_i + \delta_i) = \frac{ \rho_1\kappa }{t + 1} \sum_{i = 0}^t \frac{\norm{\vec{x}_{i+1}^\prime - \vec{x}_i^\prime}^2}{2\tau} + \frac{\rho_2 \beta}{(\sigma\norm{\linop{A}} + \beta)(t + 1)} \sum_{i = 0}^t\frac{\norm{\vec{y}_{i+1}^\prime - \vec{y}_i^\prime}^2}{2\sigma}
\end{equation}
where $0 \leq \rho_1, \rho_2 < 1$, $\kappa := 1 - \tau L - \tau\beta \norm{\linop{A}} - \tau\sigma\norm{\linop{A}}^2$ for some $\beta > 0$ such that $\kappa > 0$.
Note that we set $0 \leq \rho_1, \rho_2 < 1$ so that it is easier to derive an upper bound for certain terms (see Corollary 1).
The motivation behind this choice of error tolerance scheme is to make the error tolerances $\epsilon_t$ and $\delta_t$ at iteration $t$ dependent on the fixed-point residual or the norm difference between consecutive iterates $(\vec{x}_t, \vec{y}_t)$ and $(\vec{x}_{t+1}, \vec{y}_{t+1})$. 
The first step is to show that the average primal-dual gap with respect to a saddle point $(\mathbf{x}^\star, \mathbf{y}^\star)$ for \approxpdhg~ converges at the rate of $\mathcal{O}(1/t)$ given the error tolerance schedule described in \eqref{eqn:inexact-error-tolerance-schedule}.
In the analysis that follows, unless stated otherwise, we make the following assumption. 
\begin{assumption}\label{ass:cvx}
    $f : \set{X} \to \reals$ is a convex, closed and proper  (CCP) function with $L$- Lipschitz continuous gradient and both $g : \set{X} \to \reals \cup \{+\infty\}$ and $h :\set{Y} \to \reals \cup \{+\infty\}$ are also CCP functions that are not necessarily differentiable.
\end{assumption}
\noindent We then use the above lemma to upper bound the quantity $\mathcal{L}(\vec{x}_{t + 1}, \vec{y}) - \mathcal{L}(\vec{x}, \vec{y}_{t + 1})$:
\begin{lemma}\label{lem:primal-dual-gap-bound-lemma-1} Let Assumption \ref{ass:cvx} hold.
Consider \approxpdhg~  applied to \eqref{prob:main-saddle-point}.
Then the following relation holds for any iteration $t$ of \approxpdhg:
\begin{multline*}
    \mathcal{L}(\vec{x}_{t+1}^\prime, \vec{y}) - \mathcal{L}(\vec{x}, \vec{y}_{t+1}^\prime)\leq \frac{1}{2\tau}\norm{\vec{x} - \vec{x}_t^\prime}^2 +  \frac{1}{2\sigma}\norm{\vec{y} -\vec{y}_t^\prime}^2 - \frac{1}{2\tau}\norm{\vec{x} - \vec{x}_{t + 1}^\prime}^2 - \frac{1 - \tau L}{2\tau}\norm{\vec{x}_t^\prime - \vec{x}_{t + 1}^\prime}^2 \\ - \frac{1}{2\sigma}\norm{\vec{y} - \vec{y}_{t + 1}^\prime}^2 - \frac{1}{2\sigma}\norm{\vec{y}_t^\prime - \vec{y}_{t + 1}^\prime}^2
    + \ip{\vec{y}_t^\prime - \vec{y}_{t+1}^\prime}{\linop{A}(\vec{x} - \vec{x}_{t + 1}^\prime)} + \ip{\linop{A}(\vec{x}_t^\prime - \vec{x}_{t+1}^\prime)}{ \vec{y} - \vec{y}_{t+1}^\prime} + \epsilon_t + \delta_t,~\forall \vec{x},~\vec{y} \in \set{X} \times \set{Y},
\end{multline*}
\end{lemma}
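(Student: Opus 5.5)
The plan is to write the two inexact proximal steps as $\epsilon$-subgradient inequalities for $g$ and $h^\ast$, combine them with the three-points descent lemma (\Cref{lem:three-points-descent}) for $f$, and reorganize inner products with the polarization identity $2\ip{\vec{a}-\vec{b}}{\vec{c}-\vec{a}} = \norm{\vec{c}-\vec{b}}^2 - \norm{\vec{c}-\vec{a}}^2 - \norm{\vec{a}-\vec{b}}^2$.

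\textbf{Primal step.} By \Cref{def:inexact-proximal-point}, $\frac{1}{\tau}(\vec{x}_t^\prime - \vec{x}_{t+1}^\prime) - \grad f(\vec{x}_t^\prime) - \linop{A}\trans\vec{y}_t^\prime \in \partial_{\epsilon_t} g(\vec{x}_{t+1}^\prime)$. \Cref{def:inexact-subdiff} then gives, for every $\vec{x}$,
\begin{equation*}
g(\vec{x}_{t+1}^\prime) \le g(\vec{x}) + \tfrac{1}{\tau}\ip{\vec{x}_t^\prime - \vec{x}_{t+1}^\prime}{\vec{x}_{t+1}^\prime - \vec{x}} - \ip{\grad f(\vec{x}_t^\prime) + \linop{A}\trans\vec{y}_t^\prime}{\vec{x}_{t+1}^\prime - \vec{x}} + \epsilon_t.
\end{equation*}
Apply \Cref{lem:three-points-descent} with $(\vec{x},\vec{y},\vec{z}) = (\vec{x}_{t+1}^\prime, \vec{x}, \vec{x}_t^\prime)$:
\begin{equation*}
f(\vec{x}_{t+1}^\prime) \le f(\vec{x}) + \ip{\grad f(\vec{x}_t^\prime)}{\vec{x}_{t+1}^\prime - \vec{x}} + \tfrac{L}{2}\norm{\vec{x}_{t+1}^\prime - \vec{x}_t^\prime}^2 .
\end{equation*}
Adding the two inequalities cancels the gradient term. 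Polarization turns the $\frac{1}{\tau}$ inner product into $\frac{1}{2\tau}\big(\norm{\vec{x}-\vec{x}_t^\prime}^2 - \norm{\vec{x}-\vec{x}_{t+1}^\prime}^2 - \norm{\vec{x}_t^\prime-\vec{x}_{t+1}^\prime}^2\big)$. The $L$ term merges with the last of these to give $-\frac{1-\tau L}{2\tau}\norm{\vec{x}_t^\prime-\vec{x}_{t+1}^\prime}^2$.

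\textbf{Dual step.} Similarly, $\frac{1}{\sigma}(\vec{y}_t^\prime - \vec{y}_{t+1}^\prime) + \linop{A}(2\vec{x}_{t+1}^\prime - \vec{x}_t^\prime) \in \partial_{\delta_t} h^\ast(\vec{y}_{t+1}^\prime)$. This gives
\begin{equation*}
h^\ast(\vec{y}_{t+1}^\prime) \le h^\ast(\vec{y}) + \tfrac{1}{\sigma}\ip{\vec{y}_t^\prime - \vec{y}_{t+1}^\prime}{\vec{y}_{t+1}^\prime - \vec{y}} + \ip{\linop{A}(2\vec{x}_{t+1}^\prime - \vec{x}_t^\prime)}{\vec{y}_{t+1}^\prime - \vec{y}} + \delta_t,
\end{equation*}
and polarization produces the three $\frac{1}{2\sigma}$ norm terms in the statement.

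\textbf{Assembling.} Expand
\begin{equation*}
\mathcal{L}(\vec{x}_{t+1}^\prime,\vec{y}) - \mathcal{L}(\vec{x},\vec{y}_{t+1}^\prime) = f(\vec{x}_{t+1}^\prime) + g(\vec{x}_{t+1}^\prime) - f(\vec{x}) - g(\vec{x}) + h^\ast(\vec{y}_{t+1}^\prime) - h^\ast(\vec{y}) + \ip{\linop{A}\vec{x}_{t+1}^\prime}{\vec{y}} - \ip{\linop{A}\vec{x}}{\vec{y}_{t+1}^\prime}.
\end{equation*}
Substituting the bounds above accounts for every term of the statement except the bilinear part. What remains is
\begin{equation*}
R := -\ip{\vec{y}_t^\prime}{\linop{A}(\vec{x}_{t+1}^\prime - \vec{x})} + \ip{\linop{A}(2\vec{x}_{t+1}^\prime - \vec{x}_t^\prime)}{\vec{y}_{t+1}^\prime - \vec{y}} + \ip{\linop{A}\vec{x}_{t+1}^\prime}{\vec{y}} - \ip{\linop{A}\vec{x}}{\vec{y}_{t+1}^\prime}.
\end{equation*}
The main bookkeeping step is to verify that $R$ equals
\begin{equation*}
\ip{\vec{y}_t^\prime - \vec{y}_{t+1}^\prime}{\linop{A}(\vec{x}-\vec{x}_{t+1}^\prime)} + \ip{\linop{A}(\vec{x}_t^\prime - \vec{x}_{t+1}^\prime)}{\vec{y}-\vec{y}_{t+1}^\prime}.
\end{equation*}
I would expand both sides in the six bilinear monomials $\ip{\linop{A}\vec{u}}{\vec{v}}$, with $\vec{u}\in\{\vec{x},\vec{x}_t^\prime,\vec{x}_{t+1}^\prime\}$ and $\vec{v}\in\{\vec{y},\vec{y}_t^\prime,\vec{y}_{t+1}^\prime\}$, and match coefficients. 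For example, the term $\ip{\linop{A}\vec{x}_{t+1}^\prime}{\vec{y}}$ appears with net coefficient $1-2=-1$ in $R$, and with $-1$ from the second inner product on the right.

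This sign check is the only real obstacle: sign slips in the extrapolation term $2\vec{x}_{t+1}^\prime - \vec{x}_t^\prime$ are the usual source of error. No bound on $\norm{\linop{A}}$ is needed here, since those terms are left intact in the statement and only controlled later.
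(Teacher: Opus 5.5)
Your proposal is correct and is essentially the paper's own proof: it uses the same $\epsilon$-subgradient inequalities for $g$ and $h^\ast$, the three-points lemma for $f$, and polarization of the $\frac{1}{\tau}$ and $\frac{1}{\sigma}$ inner products. The only difference is the final regrouping of the bilinear terms, which the paper does by adding and subtracting $\ip{\linop{A}\vec{x}}{\vec{y}_{t+1}^\prime}$, $\ip{\linop{A}\vec{x}_{t+1}^\prime}{\vec{y}_{t+1}^\prime}$ and $\ip{\linop{A}\vec{x}_{t+1}^\prime}{\vec{y}}$ instead of matching coefficients; your identity for $R$ does check out (seven of the nine possible monomials occur rather than six, with equal coefficients on both sides), so the sign bookkeeping you flagged causes no trouble.
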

\begin{proof}
The proof mimics \citep[Lemma 3]{rasch2020inexact}. First, recall the \approxpdhg~ updates:
\begin{align}
    \vec{x}_{t+1}^\prime & = \smash{\prox^{(\epsilon_t)}_{\tau g}} \big(\vec{x}_t^\prime - \tau \grad f (\vec{x}_t^\prime) - \tau \linop{A}^\top \vec{y}_t^\prime \big) \label{eqn:approx-pdhg-update-1}\\[0.5em]
    \vec{y}_{t+1}^\prime & = \smash{\prox^{(\delta_t)}_{\sigma h^\ast}} \big(\vec{y}_t^\prime + \sigma\linop{A} (2\vec{x}_{t+1}^\prime - \vec{x}_t^\prime) \big),  \label{eqn:approx-pdhg-update-2}
\end{align}
Using \Cref{def:inexact-proximal-point}, we then say that \eqref{eqn:approx-pdhg-update-2} is equivalent to the following: 
\begin{equation} \label{eqn:first-order-optimality-cond-pdhg-update-2}
    \frac{\vec{y}_t^\prime + \sigma \linop{A} (2\vec{x}_{t+1}^\prime - \vec{x}_t^\prime) - \vec{y}_{t+1}^\prime}{\sigma} \in \partial_{\delta_t} h^\ast(\vec{y}_{t+1}^\prime).
\end{equation}
Using this and \Cref{def:inexact-subdiff}, we can say that the following holds for any $\vec{y} \in \set{Y}$:
\begin{align}
    h^\ast(\vec{y}_{t+1}^\prime) &\leq h^\ast(\vec{y}) + \ip{ \frac{\vec{y}_t^\prime + \sigma\linop{A} (2\vec{x}_{t+1}^\prime - \vec{x}_t^\prime) - \vec{y}_{t+1}^\prime}{\sigma}}{\vec{y}_{t+1}^\prime - \vec{y}} + \delta_t \nonumber \\
    &= h^\ast(\vec{y}) + \frac{1}{\sigma}\ip{\vec{y}_t^\prime - \vec{y}_{t+1}^\prime}{\vec{y}_{t+1}^\prime - \vec{y}} + \ip{\linop{A} (2\vec{x}_{t+1}^\prime - \vec{x}_t^\prime)}{\vec{y}_{t+1}^\prime - \vec{y}} + \delta_t \nonumber \\
    &\leq h^\ast(\vec{y}) + \frac{1}{2\sigma}\norm{\vec{y} - \vec{y}_t^\prime}^2 - \frac{1}{2\sigma}\norm{\vec{y} - \vec{y}_{t+1}^\prime}^2 - \frac{1}{2\sigma}\norm{\vec{y}_t^\prime - \vec{y}_{t+1}^\prime}^2 + \ip{\linop{A} (2\vec{x}_{t+1}^\prime - \vec{x}_t^\prime)}{\vec{y}_{t+1}^\prime - \vec{y}} + \delta_t \label{eqn:primal-dual-gap-bound-proof-1}
\end{align}
Next, using \Cref{def:inexact-proximal-point}, we can say that \eqref{eqn:approx-pdhg-update-1} is equivalent to the 
\begin{equation}\label{eqn:first-order-optimality-cond-pdhg-update-1}
    \frac{\vec{x}_t^\prime - \tau \nabla f(\vec{x}_t^\prime) - \tau \linop{A}^\top \vec{y}_t^\prime - \vec{x}_{t+1}^\prime}{\tau} \in \partial_{\epsilon_t} g(\vec{x}_{t+1}^\prime).
\end{equation}
Using this and \Cref{def:inexact-subdiff}, we can say that the following holds for any $\vec{x} \in \set{X}$:
\begin{align}
    g(\vec{x}_{t+1}^\prime) &\leq g(\vec{x}) + \ip{ \frac{\vec{x}_t^\prime - \tau \nabla f(\vec{x}_t^\prime) - \tau \linop{A}^\top \vec{y}_t^\prime - \vec{x}_{t+1}^\prime}{\tau}}{\vec{x}_{t+1}^\prime - 
    \vec{x}} + \epsilon_t \notag\\
    &\leq g(\vec{x}) + \frac{1}{2\tau}\norm{\vec{x}_t^\prime - \vec{x}}^2 - \frac{1}{2\tau}\norm{\vec{x}_t^\prime - 
    \vec{x}_{t+1}^\prime}^2 - \frac{1}{2\tau}\norm{\vec{x}_{t+1}^\prime - \vec{x}}^2 - \ip{ \nabla f(\vec{x}_t^\prime) }{ \vec{x}_{t+1}^\prime - 
    \vec{x}} + \ip{ \vec{y}_t^\prime}{\linop{A}(\vec{x} - \vec{x}_{t+1}^\prime)} + \epsilon_t \label{eqn:primal-dual-gap-bound-proof-2}
\end{align}
Finally, using \Cref{lem:three-points-descent}, we can say that
\begin{equation} \label{eqn:primal-dual-gap-bound-proof-3}
    f(\vec{x}_{t+1}^\prime) \leq f(\vec{x}) + \ip{ \nabla f(\vec{x}_t^\prime)}{ \vec{x}_{t+1}^\prime - \vec{x}} + \frac{L}{2}\norm{\vec{x}_{t+1}^\prime - \vec{x}_t^\prime}^2
\end{equation}
Finally, we add \eqref{eqn:primal-dual-gap-bound-proof-1}, \eqref{eqn:primal-dual-gap-bound-proof-2} and \eqref{eqn:primal-dual-gap-bound-proof-3}.
We then proceed to add and subtract the following terms on the right-hand side: $\ip{\linop{A}\vec{x}} {\vec{y}_{t+1}^\prime}$, $\ip{\linop{A}\vec{x}_{t+1}^\prime} {\vec{y}_{t+1}^\prime}$ and $\ip{\linop{A}\vec{x}_{t+1}^\prime} {\vec{y}}$.
Rearranging the resulting inequality gives the desired result.
\end{proof}
\noindent Next, given the upper bound on $\mathcal{L}(\vec{x}_{t+1}^\prime, \vec{y}) - \mathcal{L}(\vec{x}, \vec{y}_{t+1}^\prime)$ provided by \Cref{lem:primal-dual-gap-bound-lemma-1}, we can proceed to bound the average of this quantity. 
This is equivalent to Eq.23 and Theorem 1 in \citep{rasch2020inexact} for \approxpdhg.

\begin{theorem}\label{thm:ergodic-primal-residual}
Let Assumption \ref{ass:cvx} hold and assume that $(\mathbf{x}^\star, \mathbf{y}^\star)$ be a saddle point.
Consider \approxpdhg~ applied to \eqref{prob:main-saddle-point}.
Then the following relation holds for any iteration $t$ of \approxpdhg:
\begin{multline}\label{eq: thm1}
    \sum_{i = 0}^t \mathcal{L}(\vec{x}_{i+1}^\prime, \vec{y}^\star) - \mathcal{L}(\vec{x}^\star, \vec{y}_{i+1}^\prime)
    \leq \frac{1}{2\tau}\norm{\vec{x}^\star - \vec{x}_0^\prime}^2 +  \frac{1}{2\sigma}\norm{\vec{y}^\star -\vec{y}_0^\prime}^2 + \ip{\linop{A}(\vec{x}_0^\prime - \vec{x}^\star)}{\vec{y}^\star - \vec{y}_0^\prime}
     \\- \frac{\beta}{\sigma\norm{\linop{A}}
     + \beta}\sum_{i = 0}^t\frac{\norm{\vec{y}_{i+1}^\prime - \vec{y}_i^\prime}^2}{2\sigma} - \kappa \sum_{i = 0}^t\frac{\norm{\vec{x}_{i+1}^\prime - \vec{x}_{i}^\prime}^2}{2\tau} + \sum_{i = 0}^t (\epsilon_i + \delta_i).
\end{multline}
where $\kappa = 1 - \tau L  - \tau \beta \norm{\linop{A}} - \tau\sigma \norm{\linop{A}}^2$ for some $\beta > 0$ such that $\kappa>0$.
Assuming that error tolerance schedule $\{\epsilon_t\}_{t \geq 0}$ and $\{\delta_t\}_{t \geq 0}$ in \eqref{eqn:inexact-error-tolerance-schedule} is used, it follows that
\begin{equation*}
    \frac{1}{t+1}\sum_{i = 0}^t \mathcal{G}_{\mathbf{x}^\star, \mathbf{y}^\star}(\mathbf{x}_{i + 1}^\prime, \mathbf{y}_{i + 1}^\prime) \leq \frac{\Phi_0}{t + 1},
\end{equation*}
where 
\begin{equation*}
    \Phi_0 := \frac{1}{2\tau}\norm{\vec{x}^\star - \vec{x}_0^\prime}^2 +  \frac{1}{2\sigma}\norm{\vec{y}^\star -\vec{y}_0^\prime}^2 +
    \ip{\linop{A}(\vec{x}_0^\prime - \vec{x}^\star)}{\vec{y}^\star - \vec{y}_0^\prime}.
\end{equation*}
\end{theorem}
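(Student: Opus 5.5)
Apply \Cref{lem:primal-dual-gap-bound-lemma-1} with $(\vec{x},\vec{y})=(\vec{x}^\star,\vec{y}^\star)$ at every iteration $i=0,\dots,t$ and sum. The squared-distance terms telescope, leaving $\frac{1}{2\tau}\norm{\vec{x}^\star-\vec{x}_0'}^2+\frac{1}{2\sigma}\norm{\vec{y}^\star-\vec{y}_0'}^2$ minus the nonnegative final terms at index $t+1$, which I keep for now. The cross terms also telescope. Set $a_i:=\ip{\linop{A}(\vec{x}_i'-\vec{x}^\star)}{\vec{y}^\star-\vec{y}_i'}$. Expanding
\[
a_i-a_{i+1}=\ip{\linop{A}(\vec{x}_i'-\vec{x}_{i+1}')}{\vec{y}^\star-\vec{y}_{i+1}'}+\ip{\linop{A}(\vec{x}_i'-\vec{x}^\star)}{\vec{y}_{i+1}'-\vec{y}_i'}
\]
and comparing with the two inner products in \Cref{lem:primal-dual-gap-bound-lemma-1} shows that their sum equals
\[
a_i-a_{i+1}+\ip{\linop{A}(\vec{x}_{i+1}'-\vec{x}_i')}{\vec{y}_{i+1}'-\vec{y}_i'}.
\]
Hence summing yields $a_0-a_{t+1}$ plus a sum of residual mixed terms $\ip{\linop{A}(\vec{x}_{i+1}'-\vec{x}_i')}{\vec{y}_{i+1}'-\vec{y}_i'}$. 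The quantity $a_0$ is exactly the inner-product term in $\Phi_0$.

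The main obstacle is to absorb the mixed terms and $-a_{t+1}$ while producing exactly the coefficients $\kappa$ and $\beta/(\sigma\norm{\linop{A}}+\beta)$. I would use Young's inequality with weight $\beta$ on each mixed term,
\[
\ip{\linop{A}\Delta x_i}{\Delta y_i}\le \tfrac{\beta\norm{\linop{A}}}{2}\norm{\Delta x_i}^2+\tfrac{\norm{\linop{A}}}{2\beta}\norm{\Delta y_i}^2 .
\]
This consumes the $\tau\beta\norm{\linop{A}}$ part of $\kappa$. For the boundary term $-a_{t+1}$, I would use Young's inequality with parameter $\sigma\norm{\linop{A}}^2$,
\[
|a_{t+1}|\le \tfrac{1}{2\sigma}\norm{\vec{y}^\star-\vec{y}_{t+1}'}^2+\tfrac{\sigma\norm{\linop{A}}^2}{2}\norm{\vec{x}^\star-\vec{x}_{t+1}'}^2 ,
\]
which is dominated by the leftover final-distance terms since $\tau\sigma\norm{\linop{A}}^2<1$ follows from $\kappa>0$.

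The $\tau\sigma\norm{\linop{A}}^2$ in $\kappa$ and the fraction $\beta/(\sigma\norm{\linop{A}}+\beta)$ suggest a different split, so I expect to adjust the weights. One option is a convex-combination split of the $\norm{\Delta y_i}^2/(2\sigma)$ budget, so that a fraction $\sigma\norm{\linop{A}}/(\sigma\norm{\linop{A}}+\beta)$ pays for the mixed term. The complementary fraction $\beta/(\sigma\norm{\linop{A}}+\beta)$ then survives, and an extra $\tau\sigma\norm{\linop{A}}^2$ is charged to the $x$-differences. With the $\tau L$ loss already present in \Cref{lem:primal-dual-gap-bound-lemma-1}, this gives the $x$-coefficient $\kappa/(2\tau)$. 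I would verify that these constants match exactly; the statement only requires the inequality, so any slack can be dropped. This establishes \eqref{eq: thm1}.

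For the second claim, note $\mathcal{G}_{\vec{x}^\star,\vec{y}^\star}(\vec{x}_{i+1}',\vec{y}_{i+1}')=\mathcal{L}(\vec{x}_{i+1}',\vec{y}^\star)-\mathcal{L}(\vec{x}^\star,\vec{y}_{i+1}')$. Substitute the schedule \eqref{eqn:inexact-error-tolerance-schedule}, multiplied by $t+1$:
\[
\sum_{i=0}^t(\epsilon_i+\delta_i)=\rho_1\kappa\sum_{i=0}^t\tfrac{\norm{\vec{x}_{i+1}'-\vec{x}_i'}^2}{2\tau}+\tfrac{\rho_2\beta}{\sigma\norm{\linop{A}}+\beta}\sum_{i=0}^t\tfrac{\norm{\vec{y}_{i+1}'-\vec{y}_i'}^2}{2\sigma}.
\]
The negative sums in \eqref{eq: thm1} then have net coefficients $-(1-\rho_1)\kappa$ and $-(1-\rho_2)\beta/(\sigma\norm{\linop{A}}+\beta)$. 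Both are nonpositive since $\rho_1,\rho_2<1$, so these terms can be dropped. This leaves $\sum_{i=0}^t\mathcal{G}\le\Phi_0$, and dividing by $t+1$ gives the claim.
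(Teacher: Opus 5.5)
Your proposal is correct and follows the paper's proof. The ingredients are the same: the telescoping identity for the cross terms, Young's inequality on the mixed terms $\ip{\linop{A}(\vec{x}_{i+1}'-\vec{x}_i')}{\vec{y}_{i+1}'-\vec{y}_i'}$ with weight $\sigma\norm{\linop{A}}+\beta$, and the substitution of the schedule \eqref{eqn:inexact-error-tolerance-schedule} using $\rho_1,\rho_2<1$. Your convex-combination split is exactly that weight and yields $\kappa$ and $\beta/(\sigma\norm{\linop{A}}+\beta)$ precisely; your first weight-$\beta$ attempt alone would not. The only deviation is cosmetic: you absorb the boundary term $a_{t+1}$ with Young weight $\sigma\norm{\linop{A}}$, which needs only $\tau\sigma\norm{\linop{A}}^2\le 1$, whereas the paper uses $(1-\tau L)/(\tau\norm{\linop{A}})$; both are valid under $\kappa>0$.
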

\begin{proof}
Given \Cref{lem:primal-dual-gap-bound-lemma-1}, the following holds for iteration $i$ of \approxpdhg:
\begin{multline}\label{eqn:primal-dual-gap-bound-2-proof-1}
    \mathcal{L}(\vec{x}_{i+1}^\prime, \vec{y}) - \mathcal{L}(\vec{x}, \vec{y}_{i+1}^\prime)\leq \frac{1}{2\tau}\norm{\vec{x} - \vec{x}_i^\prime}^2 +  \frac{1}{2\sigma}\norm{\vec{y} -\vec{y}_i^\prime}^2 - \frac{1}{2\tau}\norm{\vec{x} - \vec{x}_{i+1}^\prime}^2 - \frac{1 - \tau L}{2\tau}\norm{\vec{x}_i^\prime - \vec{x}_{i+1}^\prime}^2 \\ - \frac{1}{2\sigma}\norm{\vec{y} - \vec{y}_{i+1}^\prime}^2 - \frac{1}{2\sigma}\norm{\vec{y}_i^\prime - \vec{y}_{i+1}^\prime}^2
    + \ip{\vec{y}_i^\prime - \vec{y}_{i+1}^\prime}{\linop{A}(\vec{x} - \vec{x}_{i+1}^\prime)} + \ip{\linop{A}(\vec{x}_i^\prime - \vec{x}_{i+1}^\prime)}{ \vec{y} - \vec{y}_{i+1}^\prime} + \epsilon_i + \delta_i, \quad \vec{x},~\vec{y} \in \set{X} \times \set{Y},
\end{multline}
We now obtain a telescoping sum for the second inner product as follows by adding and subtracting the terms: $\ip{\linop{A}(\vec{x}_i^\prime - \vec{x})}{\vec{y} - \vec{y}_i^\prime}$ and $\ip{\linop{A}(\vec{x}_{i+1}^\prime - \vec{x})}{\vec{y} - \vec{y}_{i+1}^\prime}$ and then combining terms:
\begin{align*}
    &\langle \vec{y}_i^\prime - \vec{y}_{i+1}^\prime, \linop{A}(\vec{x} - \vec{x}_{i+1}^\prime) \rangle + \ip{\linop{A}(\vec{x}_i^\prime - \vec{x}_{i+1}^\prime)}{\vec{y} - 
    \vec{y}_{i+1}^\prime} \\
    &\quad = \ip{\linop{A}(\vec{x}_i^\prime - \vec{x})}{\vec{y} - \vec{y}_i^\prime} - \ip{\linop{A}(\vec{x}_{i+1}^\prime - \vec{x})
    }{\vec{y} - \vec{y}_{i+1}^\prime} + \ip{\linop{A}(\vec{x}_i^\prime - \vec{x}_{i+1}^\prime)}{\vec{y}_i^\prime - 
    \vec{y}_{i+1}^\prime} \\
    &\quad \leq \ip{\linop{A}(\vec{x}_i - \vec{x})}{\vec{y} - \vec{y}_i^\prime} - \ip{\linop{A}(\vec{x}_{i+1}^\prime - \vec{x})
    }{\vec{y} - \vec{y}_{i+1}^\prime} + (\tau\sigma \norm{\linop{A}}^2 + \tau \beta \norm{\linop{A}}) \frac{\norm{\vec{x}_i^\prime - \vec{x}_{i+1}^\prime}^2}{2\tau} + \frac{\sigma \norm{\linop{A}}}{\sigma \norm{\linop{A}} + \beta }\frac{\norm{\vec{y}_{i+1}^\prime - \vec{y}_i^\prime}^2}{2\sigma},
\end{align*}
where we used Cauchy-Schwartz inequality and Young's inequality in the last step to say that the following holds for any $\vec{x}, \vec{x}^\prime \in \set{X}$ and $\vec{y}, \vec{y}^\prime \in \set{Y}$:
\begin{equation}\label{eqn:fenchel-young-ineq-ip}
    \ip{\linop{A}(\vec{x} - \vec{x}^\prime)}{\vec{y} - \vec{y}^\prime} \leq \norm{\linop{A}}\norm{\vec{x} - \vec{x}^\prime}\norm{\vec{y} - \vec{y}^\prime} \leq \frac{\norm{\linop{A}}\gamma\tau}{2\tau}\norm{\vec{x} - \vec{x}^\prime}^2 + \frac{\norm{\linop{A}}\sigma}{2\gamma\sigma}\norm{\vec{y} - \vec{y}^\prime}^2
\end{equation}
where we chose $\gamma =  \sigma\norm{\linop{A}} + \beta$ for some $\beta > 0$.
Substituting this into \eqref{eqn:primal-dual-gap-bound-2-proof-1}, we get
\begin{multline} \label{eqn:primal-dual-gap-bound-2-proof-2}
    \mathcal{L}(\vec{x}_{i+1}^\prime, \vec{y}) - \mathcal{L}(\vec{x}, \vec{y}_{i+1}^\prime) \leq \frac{1}{2\tau}\norm{\vec{x} - \vec{x}_i^\prime}^2 +  \frac{1}{2\sigma}\norm{\vec{y} -\vec{y}_i^\prime}^2 - \frac{1}{2\tau}\norm{\vec{x} - \vec{x}_{i+1}^\prime}^2 - \frac{1 - \tau L}{2\tau}\norm{\vec{x}_i^\prime - \vec{x}_{i+1}^\prime}^2  \\ - \frac{1}{2\sigma}\norm{\vec{y} - \vec{y}_{i+1}^\prime}^2 +
    \ip{\linop{A}(\vec{x}_i^\prime - \vec{x})}{\vec{y} - \vec{y}_i^\prime} - \ip{\linop{A}(\vec{x}_{i+1}^\prime - \vec{x})
    }{\vec{y} - \vec{y}_{i+1}^\prime} 
    + (\tau\sigma \norm{\linop{A}}^2 + \tau \beta \norm{\linop{A}}) \frac{\norm{\vec{x}_i^\prime - \vec{x}_{i+1}^\prime}^2}{2\tau} \\
    - \frac{\beta}{\sigma\norm{\linop{A}} + \beta}\frac{\norm{\vec{y}_{i+1}^\prime - \vec{y}_i^\prime}^2}{2\sigma} + \epsilon_i + \delta_i,\quad \vec{x},~\vec{y} \in \set{X} \times \set{Y},
\end{multline}
Now we sum \eqref{eqn:primal-dual-gap-bound-2-proof-2} from $ i = 0, \ldots, t$ to get the following:
\begin{multline} \label{eqn:primal-dual-gap-bound-2-proof-3}
    \sum_{i = 0}^t \mathcal{L}(\vec{x}_{i+1}^\prime, \vec{y}) - \mathcal{L}(\vec{x}, \vec{y}_{i+1}^\prime) \leq \frac{1}{2\tau}\norm{\vec{x} - \vec{x}_0^\prime}^2 +  \frac{1}{2\sigma}\norm{\vec{y} -\vec{y}_0^\prime}^2 - \frac{1}{2\tau}\norm{\vec{x} - \vec{x}_{t + 1}^\prime}^2  - \frac{1}{2\sigma}\norm{\vec{y} - \vec{y}_{t + 1}^\prime}^2  \\   - \kappa \sum_{i = 0}^t\frac{\norm{\vec{x}_i^\prime - \vec{x}_{i+1}^\prime}^2}{2\tau} +
    \ip{\linop{A}(\vec{x}_0^\prime - \vec{x})}{\vec{y} - \vec{y}_0^\prime} - \ip{\linop{A}(\vec{x}_{t + 1}^\prime - \vec{x})
    }{\vec{y} - \vec{y}_{t + 1}^\prime} \\
    - \frac{\beta}{\sigma\norm{\linop{A}} + \beta}\sum_{i = 0}^t\frac{\norm{\vec{y}_{i+1}^\prime - \vec{y}_i^\prime}^2}{2\sigma} + \sum_{i = 0}^t (\epsilon_i + \delta_i),\quad \vec{x},~\vec{y} \in \set{X} \times \set{Y},
\end{multline}
where $\kappa = 1 - \tau L  - \tau \beta \norm{\linop{A}} - \tau\sigma \norm{\linop{A}}^2$.
Therefore, a condition for convergence is that $\kappa > 0$ or $\tau L  + \tau \beta \norm{\linop{A}} + \tau\sigma \norm{\linop{A}}^2 < 1$.
Assuming that $\linop{A} \neq 0$, we then proceed to apply Fenchel-Young inequality \eqref{eqn:fenchel-young-ineq-ip} again to the term $\ip{\linop{A}(\vec{x}_{t + 1}^\prime - \vec{x})}{\vec{y} - \vec{y}_{t + 1}^\prime}$ with $\gamma = \frac{1 - \tau L}{\tau \norm{\linop{A}}}$ as follows:
\begin{equation*}
    \ip{\linop{A}(\vec{x}_{t + 1}^\prime - \vec{x})}{\vec{y} - \vec{y}_{t + 1}^\prime} \leq \frac{1 - \tau L}{2\tau}\norm{\vec{x}_{t + 1}^\prime - \vec{x}}^2 + \frac{1}{2\sigma} \left[ \frac{\tau\sigma \norm{\linop{A}}^2}{1 - \tau L} \right ]\norm{\vec{y} - \vec{y}_{t+1}^\prime}^2
\end{equation*}
Note that $\frac{\tau\sigma \norm{\linop{A}}^2}{1 - \tau L} 
< 1$ due to the relationship that $\kappa > 0$. 
For $\linop{A} = 0$, the above inequality is vacuously true.
Substituting this and rearranging proves the following:
\begin{multline*}
    \sum_{i = 0}^t \Big( \mathcal{L}(\vec{x}_{i+1}^\prime, \vec{y}) - \mathcal{L}(\vec{x}, \vec{y}_{i+1}^\prime) \Big)  + \frac{L}{2}\norm{\vec{x}_{t + 1}^\prime - \vec{x}}^2 + \frac{1}{2\sigma} \left[ 1 - \frac{\tau\sigma \norm{\linop{A}}^2}{1 - \tau L} \right ]\norm{\vec{y} - \vec{y}_{t+1}^\prime}^2 \\
    \leq \frac{1}{2\tau}\norm{\vec{x} - \vec{x}_0^\prime}^2 +  \frac{1}{2\sigma}\norm{\vec{y} -\vec{y}_0^\prime}^2  - \kappa \sum_{i = 0}^t\frac{\norm{\vec{x}_i^\prime - \vec{x}_{i+1}^\prime}^2}{2\tau} +
    \ip{\linop{A}(\vec{x}_0^\prime - \vec{x})}{\vec{y} - \vec{y}_0^\prime}
     \\- \frac{\beta}{\sigma\norm{\linop{A}}
     + \beta}\sum_{i = 0}^t\frac{\norm{\vec{y}_{i+1}^\prime - \vec{y}_i^\prime}^2}{2\sigma} + \sum_{i = 0}^t (\epsilon_i + \delta_i),\quad \forall\vec{x},~\vec{y} \in \set{X} \times \set{Y},
\end{multline*}
The final result can be obtained by setting $(\vec{x}, \vec{y}) = (\vec{x}^\star, \vec{y}^\star)$ and dropping all the non-negative terms on left-hand side except for $\sum_{i = 0}^t \mathcal{L}(\vec{x}_{i+1}^\prime, \vec{y}^\star) - \mathcal{L}(\vec{x}^\star, \vec{y}_{i+1}^\prime)$.
\end{proof}
\noindent From \Cref{thm:ergodic-primal-residual}, it is clear that the average primal-dual gap converges to $0$ in an ergodic sense with rate $\mathcal{O}(1/t)$ if error tolerance schedule $\{\epsilon_t\}_{t \geq 0}$ and $\{\delta_t\}_{t \geq 0}$ in \eqref{eqn:inexact-error-tolerance-schedule} is used.
The following corollary of \Cref{thm:ergodic-primal-residual} helps to show that the average distance between the iterates of \approxpdhg~ goes to 0 given the choice of error tolerance schedule described in \eqref{eqn:inexact-error-tolerance-schedule}.
\begin{corollary}\label{corr:iterate-to-iterate-bounds} Given the choice of error tolerance schedule \eqref{eqn:inexact-error-tolerance-schedule}, the following holds for any iteration $t$ of \approxpdhg:
\begin{align*} 
    &\frac{1}{t + 1}\sum_{i = 0}^t\frac{\norm{\vec{x}_i^\prime - \vec{x}_{i+1}^\prime}^2}{2\tau} \leq \frac{\Phi_0}{(1 - \rho_1)\kappa(t + 1)}, \\
    &\frac{1}{t + 1}\sum_{i = 0}^t\frac{\norm{\vec{y}_i^\prime - \vec{y}_{i+1}^\prime}^2}{2\sigma} \leq \frac{(\sigma\norm{\linop{A}} + \beta)\Phi_0}{(1 - \rho_2)\beta (t + 1)}, \mbox{ and }\\
    &\bar{\epsilon}_t + \bar{\delta}_t \leq \frac{\rho_1\Phi_0}{(1 - \rho_1)(t + 1)} + \frac{\rho_2\Phi_0}{(1 - \rho_2)(t + 1)}.
\end{align*}
where $\kappa := 1 - \tau L - \tau\beta \norm{\linop{A}} - \tau\sigma\norm{\linop{A}}^2$ for some $\beta > 0$ such that $\kappa > 0$, $\rho_1$ and $\rho_2$ are the chosen scaling parameters for the error tolerance schedule in \eqref{eqn:inexact-error-tolerance-schedule} and 
\begin{equation*}
    \Phi_0 := \frac{1}{2\tau}\norm{\vec{x}^\star - \vec{x}_0^\prime}^2 +  \frac{1}{2\sigma}\norm{\vec{y}^\star -\vec{y}_0^\prime}^2 +
    \ip{\linop{A}(\vec{x}_0^\prime - \vec{x}^\star)}{\vec{y}^\star - \vec{y}_0^\prime}.
\end{equation*}
\end{corollary}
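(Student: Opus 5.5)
The plan is to start from inequality \eqref{eq: thm1} of \Cref{thm:ergodic-primal-residual}, evaluated at the saddle point $(\vec{x}^\star,\vec{y}^\star)$. The first step is to note that its left-hand side is nonnegative. By \eqref{eqn:saddle-point-characterization}, each term satisfies $\mathcal{G}_{\vec{x}^\star,\vec{y}^\star}(\vec{x}_{i+1}^\prime,\vec{y}_{i+1}^\prime)=\mathcal{L}(\vec{x}_{i+1}^\prime,\vec{y}^\star)-\mathcal{L}(\vec{x}^\star,\vec{y}^\star)+\mathcal{L}(\vec{x}^\star,\vec{y}^\star)-\mathcal{L}(\vec{x}^\star,\vec{y}_{i+1}^\prime)\ge 0$. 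Abbreviate
\begin{equation*}
S_x:=\sum_{i=0}^t\frac{\norm{\vec{x}_{i+1}^\prime-\vec{x}_i^\prime}^2}{2\tau},\qquad S_y:=\sum_{i=0}^t\frac{\norm{\vec{y}_{i+1}^\prime-\vec{y}_i^\prime}^2}{2\sigma},\qquad c:=\frac{\beta}{\sigma\norm{\linop{A}}+\beta}.
\end{equation*}
Then \eqref{eq: thm1} gives $0\le \Phi_0-\kappa S_x-cS_y+\sum_{i=0}^t(\epsilon_i+\delta_i)$.

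The second step substitutes the schedule \eqref{eqn:inexact-error-tolerance-schedule}, multiplied by $t+1$, which reads $\sum_{i=0}^t(\epsilon_i+\delta_i)=\rho_1\kappa S_x+\rho_2 cS_y$. This yields
\begin{equation*}
(1-\rho_1)\kappa S_x+(1-\rho_2)cS_y\le\Phi_0 .
\end{equation*}
Both terms on the left are nonnegative because $\rho_1,\rho_2<1$, $\kappa>0$, $c>0$, and $S_x,S_y\ge0$. So each one is individually bounded by $\Phi_0$. Dividing by $(1-\rho_1)\kappa(t+1)$, respectively $(1-\rho_2)c(t+1)$, gives the first two claimed bounds. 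The same inequality also shows $\Phi_0\ge0$ as a byproduct, so the bounds are meaningful.

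The third step bounds the averaged errors. Using the schedule again,
\begin{equation*}
\bar\epsilon_t+\bar\delta_t=\frac{\rho_1\kappa S_x+\rho_2cS_y}{t+1}.
\end{equation*}
Inserting $\kappa S_x\le \Phi_0/(1-\rho_1)$ and $cS_y\le\Phi_0/(1-\rho_2)$ from the second step gives the stated estimate.

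There is no real obstacle here. The only delicate point is justifying that the left-hand side of \eqref{eq: thm1} is nonnegative, which is needed before the gap terms can be dropped. This follows from the saddle-point inequalities as described in the first step. One should also be careful to bound each of $S_x$ and $S_y$ separately, using nonnegativity of the other term, rather than only bounding their weighted sum.
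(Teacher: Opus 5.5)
Your proposal is correct and follows the paper's own argument: apply the inequality of Theorem~\ref{thm:ergodic-primal-residual} at $(\vec{x}^\star,\vec{y}^\star)$, drop the nonnegative summed gap, and substitute the schedule \eqref{eqn:inexact-error-tolerance-schedule}. You additionally write out the step the paper leaves implicit, namely bounding $S_x$ and $S_y$ separately and then reinserting them into the schedule. That last insertion also uses $\rho_1,\rho_2\ge 0$, which the schedule assumes.
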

\begin{proof} This follows directly from \Cref{thm:ergodic-primal-residual}, the fact that $\sum_{i = 0}^t \mathcal{L}(\vec{x}_{i+1}^\prime, \vec{y}^\star) - \mathcal{L}(\vec{x}^\star, \vec{y}_{i+1}^\prime) \geq 0$ and \eqref{eqn:inexact-error-tolerance-schedule}. 
\end{proof}
Using \Cref{corr:iterate-to-iterate-bounds}, we can then make the following stronger statement about the convergence of the iterate-to-iterate distance:

\begin{corollary} \label{corr:iterate-to-iterate-bounds-inf-sum} Given the choice of error tolerance schedule \eqref{eqn:inexact-error-tolerance-schedule}, the following holds:
\begin{align*}
     &\sum_{t = 0}^\infty \frac{\norm{\vec{x}_t^\prime - \vec{x}_{t+1}^\prime}^2}{2\tau} \leq \frac{\Phi_0}{(1 - \rho_1)\kappa}, &\quad \lim_{t \rightarrow \infty} \norm{\vec{x}_t^\prime - \vec{x}_{t+1}^\prime}^2 = 0 \\
     &\sum_{t = 0}^\infty \frac{\norm{\vec{y}_t^\prime - \vec{y}_{t+1}^\prime}^2}{2\sigma} \leq \frac{(\sigma\norm{\linop{A}} + \beta)\Phi_0}{(1 - \rho_2)\beta}, &\quad \lim_{t \rightarrow \infty} \norm{\vec{y}_t^\prime - \vec{y}_{t+1}^\prime}^2 = 0\\
     &\sum_{t = 0}^\infty (\epsilon_t + \delta_t) \leq \frac{\rho_1\Phi_0}{1 - \rho_1} + \frac{\rho_2\Phi_0}{1 - \rho_2}, &\mbox{ and}\quad \lim_{t \rightarrow \infty} \epsilon_t + \delta_t = 0.
\end{align*}
\end{corollary}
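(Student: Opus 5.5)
The plan is to derive \Cref{corr:iterate-to-iterate-bounds-inf-sum} from \Cref{corr:iterate-to-iterate-bounds}. The bounds there hold for every finite $t$ after multiplying through by $t+1$. We then pass to the limit $t \to \infty$ using monotonicity of partial sums of nonnegative terms.

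First, multiply the first inequality of \Cref{corr:iterate-to-iterate-bounds} by $t+1$. This gives
\[
S^x_t := \sum_{i=0}^t \frac{\norm{\vec{x}_i^\prime - \vec{x}_{i+1}^\prime}^2}{2\tau} \leq \frac{\Phi_0}{(1-\rho_1)\kappa},
\]
and the right-hand side is independent of $t$. We should double-check that this bound is genuinely available. Combining \eqref{eq: thm1} with the nonnegativity of $\sum_i \mathcal{L}(\vec{x}_{i+1}^\prime,\vec{y}^\star) - \mathcal{L}(\vec{x}^\star,\vec{y}_{i+1}^\prime)$, which holds by the saddle-point characterization \eqref{eqn:saddle-point-characterization}, gives
\[
\kappa S^x_t + \tfrac{\beta}{\sigma\norm{\linop{A}}+\beta} S^y_t \le \Phi_0 + \sum_{i=0}^t(\epsilon_i+\delta_i).
\]
By \eqref{eqn:inexact-error-tolerance-schedule}, the error sum equals $\rho_1\kappa S^x_t + \rho_2\tfrac{\beta}{\sigma\norm{\linop{A}}+\beta}S^y_t$. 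Moving these terms to the left yields
\[
(1-\rho_1)\kappa S^x_t + (1-\rho_2)\tfrac{\beta}{\sigma\norm{\linop{A}}+\beta}S^y_t \le \Phi_0 .
\]
Since both summands on the left are nonnegative, each is separately bounded by $\Phi_0$. This is exactly where $\rho_1,\rho_2<1$ and $\kappa>0$ are used. The same computation gives the stated bound on $S^y_t$. Since $S^x_t$ and $S^y_t$ are nondecreasing in $t$ and bounded, they converge, and their limits (the infinite series) inherit the same bounds.

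For the error series, we again use \eqref{eqn:inexact-error-tolerance-schedule}. We have $\sum_{i=0}^t(\epsilon_i+\delta_i) = \rho_1\kappa S^x_t + \rho_2\frac{\beta}{\sigma\norm{\linop{A}}+\beta}S^y_t$. Inserting the two bounds gives at most $\frac{\rho_1\Phi_0}{1-\rho_1} + \frac{\rho_2\Phi_0}{1-\rho_2}$ uniformly in $t$, which matches the third line of \Cref{corr:iterate-to-iterate-bounds} multiplied by $t+1$. The partial sums are nondecreasing because $\epsilon_i,\delta_i>0$, so the series converges with the same bound.

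Finally, the limit statements follow from the elementary fact that the terms of a convergent series of nonnegative reals tend to zero. This gives $\norm{\vec{x}_t^\prime-\vec{x}_{t+1}^\prime}^2\to 0$, $\norm{\vec{y}_t^\prime-\vec{y}_{t+1}^\prime}^2\to 0$ and $\epsilon_t+\delta_t\to 0$.

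The only delicate point is that \Cref{corr:iterate-to-iterate-bounds} must hold for all $t$ with a $t$-independent constant $\Phi_0$, and that the schedule \eqref{eqn:inexact-error-tolerance-schedule} is an identity for the cumulative sums at every $t$. This is what allows the absorption argument above. One should also note that $\Phi_0 \ge 0$, which follows from the displayed inequality itself with $t=0$, so the bounds are meaningful.
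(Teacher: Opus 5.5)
Your proposal is correct and follows essentially the same route as the paper. \Cref{corr:iterate-to-iterate-bounds}, multiplied by $t+1$, gives $t$-independent bounds on nondecreasing partial sums; these series therefore converge with the same bounds by monotone convergence, and their terms tend to zero. Your explicit absorption step from \Cref{thm:ergodic-primal-residual} and the schedule, $(1-\rho_1)\kappa S^x_t + (1-\rho_2)\frac{\beta}{\sigma\norm{\linop{A}}+\beta} S^y_t \le \Phi_0$, just spells out what the paper leaves implicit in \Cref{corr:iterate-to-iterate-bounds}.
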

\begin{proof}
\Cref{corr:iterate-to-iterate-bounds} implies that $\left \{\sum_{i = 0}^t\frac{\norm{\vec{x}_i^\prime - \vec{x}_{i+1}^\prime}^2}{2\tau}\right \}_{t \geq 0}$, $\left \{\sum_{i = 0}^t\frac{\norm{\vec{y}_i^\prime - \vec{y}_{i+1}^\prime}^2}{2\sigma}\right \}_{t \geq 0}$ are bounded monotonically increasing sequences and the desired results follow from the monotone convergence theorem for series.
\end{proof}
For the remainder of the proofs, we introduce the following notation that allows us to concisely describe the convergence of the iterates of \pdhg and \approxpdhg. 
Define $\|\vec{z}\|_Z := \sqrt{\ip{\vec{z}}{\vec{z}}_Z}$ for any $\vec{z} = (\vec{x}, \vec{y}) \in \set{X} \times \set{Y}$, where $\ip{\tilde{\vec{z}}}{\hat{\vec{z}}}_Z = \frac{1}{\tau}\ip{\tilde{\vec{x}}}{\hat{\vec{x}}} + \frac{1}{\sigma}\ip{\tilde{\vec{y}}}{\hat{\vec{y}}}$ and $\hat{\vec{z}} + \tilde{\vec{z}} := (\hat{\vec{x}} + \tilde{\vec{x}}, \hat{\vec{y}} + \tilde{\vec{y}})$ for any $\hat{\vec{z}} = (\hat{\vec{x}}, \hat{\vec{y}})$ and $\tilde{\vec{z}} = (\tilde{\vec{x}}, \tilde{\vec{y}})$. 
Similar to Eq. 22 and 23 in \citep{condat2013primal}, we can rewrite the first-order optimality conditions for the iteration in \approxpdhg~ compactly as 
\begin{equation}\label{eqn:first-order-optimal-cond-pdhg-compact}
    -\begin{bmatrix}
        \nabla f(\mathbf{x}_t^\prime) \\
        \mathbf{0}
        \end{bmatrix} \in \begin{bmatrix}\partial_{\epsilon_t} g(\mathbf{x}_{t+1}^\prime) + \linop{A}^\top \mathbf{y}_{t+1}^\prime \\
        -\linop{A}\mathbf{x}_{t+1}^\prime +  \partial_{\delta_t} h^\ast(\mathbf{y}_{t+1}^\prime) \end{bmatrix} + \begin{bmatrix} \frac{1}{\tau} I & -\linop{A}^\top \\
            -\linop{A} & \frac{1}{\sigma} I
            \end{bmatrix}\begin{bmatrix} \mathbf{x}_{t+1}^\prime - \mathbf{x}_t^\prime \\
            \mathbf{y}_{t+1}^\prime - \mathbf{y}_t^\prime
            \end{bmatrix}
\end{equation}
Define the operators $\linop{L}$, $\linop{C}$ and $\linop{D}$ as follows:
\begin{equation*}
    \linop{L}: \begin{bmatrix}
    \vec{x} \\
    \vec{y}
    \end{bmatrix} \rightarrow \begin{bmatrix} \frac{1}{\tau} I & -\linop{A}^\top \\
    -\linop{A} & \frac{1}{\sigma} I
    \end{bmatrix}\begin{bmatrix}
    \vec{x} \\
    \vec{y}
    \end{bmatrix},\quad \linop{C}_{\epsilon_t, \delta_t}: \begin{bmatrix}
    \vec{x} \\
    \vec{y}
    \end{bmatrix} \rightarrow \begin{bmatrix} \partial_{\epsilon_t} g(\mathbf{x}) + \linop{A}^\top \mathbf{y} \\
    -\linop{A}\mathbf{x} + \partial_{\delta_t} h^\ast (\mathbf{y}) 
    \end{bmatrix} \quad \mbox{and} \quad \linop{D}: \begin{bmatrix}
    \vec{x} \\
    \vec{y}
    \end{bmatrix} \rightarrow \begin{bmatrix}
        \nabla f(\mathbf{x}) \\
        \mathbf{0}
        \end{bmatrix}
\end{equation*}
We analogously define $\mathcal{C}$ operator to be the error-free version of $\mathcal{C}_{\epsilon_t, \delta_t}$ (i.e. $\epsilon_t = 0$ and $\delta_t = 0$ for all $t$).
Then \eqref{eqn:first-order-optimal-cond-pdhg-compact} can be expressed more compactly as
\begin{equation*}
    -\linop{D}(\mathbf{z}_t^\prime) \in \linop{C}_{\epsilon_t, \delta_t}(\mathbf{z}_{t+1}^\prime) + \linop{L}(\mathbf{z}_{t+1}^\prime - \mathbf{z}_t^\prime).
\end{equation*}
Rearranging the above and solving for $\mathbf{z}_{t+1}^\prime$, we get the forward-backward splitting operator $\linop{T}_{\epsilon_t, \delta_t}$ for \approxpdhg~ as
\begin{equation*}
    \linop{T}_{\epsilon_t, \delta_t}(\mathbf{z}_{t + 1}) := (I + \linop{L}^{-1} \circ \linop{C}_{\epsilon_t, \delta_t})^{-1}(I - \linop{L}^{-1} \circ \linop{D})(\mathbf{z}_n).
\end{equation*}
We can analogously define the operator $\linop{T}$ for the exact version of \pdhg.
Given this notation, the \pdhg~  update and \approxpdhg~ update at iteration $t$ is given by $\vec{z}_{t + 1} = \linop{T}(\vec{z}_t)$ and $\vec{z}_{t+1}^\prime = \linop{T}_{\epsilon_t, \delta_t}(\vec{z}_t^\prime)$, respectively.

Next, we note that the following holds for any $\mathbf{z} = (\mathbf{x}, \mathbf{y})$
\begin{align*}
    \langle \mathbf{z}, \linop{L} \mathbf{z} \rangle_L &= \frac{1}{\tau}\|\mathbf{x}\|^2 + \frac{1}{\sigma}\|\mathbf{y}\|^2 -2 \langle \linop{A}\mathbf{x}, \mathbf{y} \rangle \\
    &\geq \frac{1}{\tau}\|\mathbf{x}\|^2 + \frac{1}{\sigma}\|\mathbf{y}\|^2 -2 \|\linop{A}\|\|\mathbf{x}\|\| \mathbf{y}\| \\
    &\geq c(\|\mathbf{x}\|^2 + \|\mathbf{y}\|^2)
\end{align*}
where $c = \frac{1}{2}\left(\frac{1}{\tau} + \frac{1}{\sigma} - \sqrt{\left (\frac{1}{\tau} - \frac{1}{\sigma}\right )^2 - 4 \|\linop{A}\|^2}\right)$. 
The condition $c > 0$ holds whenever $1 - \tau\sigma \|\linop{A}\|^2 > 0$, which in turn holds whenever $\kappa > 0$.
Therefore, $\linop{L}$ is positive definite and we can define the corresponding norm $\norm{\cdot}_L$ on $\set{X} \times \set{Y}$ with respect to the linear operator $\linop{L}$ by $\norm{z}_L := \sqrt{\ip{\vec{z}}{\linop{L}\vec{z}}}$ for any $\vec{z} \in \set{X} \times \set{Y}$.
We can show that $\norm{\cdot}_Z$ and $\norm{\cdot}_L$ are equivalent norms, so that convergence w.r.t. $\norm{\cdot}_L$ is equivalent to convergence w.r.t. $\norm{\cdot}_Z$.
\begin{lemma} \label{lem:z-norm-l-norm-rel} For any sequence $\{\vec{u}_t\}_{t \geq 0} \subset \set{X} \times \set{Y}$ and $\vec{u}^\star \in \set{X} \times \set{Y}$, the following holds:
\begin{equation*}
    \lim_{t \rightarrow \infty } \norm{\vec{u}_t - \vec{u}^\star}_Z = 0 \iff \lim_{t \rightarrow \infty } \norm{\vec{u}_t - \vec{u}^\star}_L = 0
\end{equation*}
\end{lemma}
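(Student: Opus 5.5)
The plan is to reduce \Cref{lem:z-norm-l-norm-rel} to \Cref{lem:convergence-equiv-norms-general}. For that, it suffices to show that $\norm{\cdot}_Z$ and $\norm{\cdot}_L$ are genuine norms on $\set{X} \times \set{Y}$ and that they are equivalent. Since $\set{X} \times \set{Y}$ is a finite-dimensional real vector space, \Cref{lem:equiv-norms-finite-spaces} already gives equivalence once both are known to be norms. So the main work is checking the norm axioms.

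\textbf{$\norm{\cdot}_Z$ is a norm.} The form $\ip{\cdot}{\cdot}_Z = \frac{1}{\tau}\ip{\cdot}{\cdot} + \frac{1}{\sigma}\ip{\cdot}{\cdot}$ is symmetric, bilinear and positive definite because $\tau, \sigma > 0$. Hence it is an inner product, and $\norm{\cdot}_Z$ is the induced norm.

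\textbf{$\norm{\cdot}_L$ is a norm.} The operator $\linop{L}$ is self-adjoint, since its off-diagonal blocks are $-\linop{A}^\top$ and $-\linop{A}$. The computation preceding the lemma shows $\ip{\vec{z}}{\linop{L}\vec{z}} \geq c(\norm{\vec{x}}^2 + \norm{\vec{y}}^2)$, with $c > 0$ whenever $\tau\sigma\norm{\linop{A}}^2 < 1$. That condition is implied by $\kappa > 0$. Therefore $(\vec{z}, \vec{w}) \mapsto \ip{\vec{z}}{\linop{L}\vec{w}}$ is an inner product, and $\norm{\cdot}_L$ is its induced norm; the triangle inequality follows from Cauchy--Schwarz as usual.

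With both norms in hand, \Cref{lem:equiv-norms-finite-spaces} gives constants $0 < c_0 \le c_1$ with $c_0 \norm{\vec{z}}_Z \le \norm{\vec{z}}_L \le c_1 \norm{\vec{z}}_Z$. Applying \Cref{lem:convergence-equiv-norms-general} to the sequence $\vec{u}_t - \vec{u}^\star$ then yields the claimed equivalence of the two limits.

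\textbf{Explicit constants (optional).} One can also derive the constants directly, avoiding the abstract theorem.

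For the upper bound, Young's inequality gives $2|\ip{\linop{A}\vec{x}}{\vec{y}}| \le \sqrt{\tau\sigma}\norm{\linop{A}}\left(\frac{1}{\tau}\norm{\vec{x}}^2 + \frac{1}{\sigma}\norm{\vec{y}}^2\right)$. Hence
\begin{equation*}
    \left(1 - \sqrt{\tau\sigma}\norm{\linop{A}}\right)\norm{\vec{z}}_Z^2 \le \norm{\vec{z}}_L^2 \le \left(1 + \sqrt{\tau\sigma}\norm{\linop{A}}\right)\norm{\vec{z}}_Z^2 .
\end{equation*}

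For the lower bound, $\tau\sigma\norm{\linop{A}}^2 < 1$ gives $\sqrt{\tau\sigma}\norm{\linop{A}} < 1$. So $1 - \sqrt{\tau\sigma}\norm{\linop{A}} > 0$, which is a cleaner route than the eigenvalue expression for $c$.

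\textbf{Main obstacle.} The only delicate point is justifying positive definiteness of $\linop{L}$ from the step-size condition. The quoted formula for $c$ should be checked carefully, or replaced by the weighted Young bound above, which makes the dependence on $\tau\sigma\norm{\linop{A}}^2 < 1$ transparent.
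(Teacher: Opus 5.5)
Your proposal is correct and follows the same route as the paper: the paper invokes \Cref{lem:equiv-norms-finite-spaces} and \Cref{lem:convergence-equiv-norms-general}, relying on the preceding paragraph's claim that $\linop{L}$ is positive definite whenever $\tau\sigma\norm{\linop{A}}^2<1$, a condition implied by $\kappa>0$. Your explicit norm-axiom check and your weighted-Young sandwich $(1-\sqrt{\tau\sigma}\norm{\linop{A}})\norm{\vec{z}}_Z^2 \le \norm{\vec{z}}_L^2 \le (1+\sqrt{\tau\sigma}\norm{\linop{A}})\norm{\vec{z}}_Z^2$ are sound, and your suspicion about the quoted constant $c$ is justified, since the term under its square root should be $+4\norm{\linop{A}}^2$ rather than $-4\norm{\linop{A}}^2$.
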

\begin{proof} This follows directly from \Cref{lem:convergence-equiv-norms-general} since $\norm{\cdot}_Z$ and $\norm{\cdot}_L$ are equivalent norms defined on the finite dimensional vector space $\set{X} \times \set{Y}$ as per \Cref{lem:equiv-norms-finite-spaces}. 
\end{proof}
Thus, to prove convergence w.r.t. $\norm{\cdot}_Z$ norm, it is sufficient to show convergence w.r.t. $\norm{\cdot}_L$ norm. 
Here, using the $\norm{\cdot}_L$ norm, we can write \Cref{lem:primal-dual-gap-bound-lemma-1} in a more compact form that is also more convenient for proving convergence of \approxpdhg~ to a saddle point w.r.t. $\norm{\cdot}_Z$ using an approach similar to \citep{wu2022general}:
\begin{theorem} \label{lem:iterate-convergence-decrease-cond}
Consider \approxpdhg~ applied to \eqref{prob:main-saddle-point} and assume that the error tolerance schedule in \eqref{eqn:inexact-error-tolerance-schedule} holds.
Assume that a saddle point exists for \eqref{prob:main-saddle-point}.
Let $\vec{z}_t^\prime := (\vec{x}_t^\prime, \vec{y}_t^\prime)$ represent the iterates at iteration $t$ of \approxpdhg.
Then there exists a saddle point $\vec{z}^\infty := (\mathbf{x}^\infty, \mathbf{y}^\infty)$ of \eqref{prob:main-saddle-point} such that $\lim_{t \rightarrow \infty} \norm{\vec{z}_t^\prime - \vec{z}^\infty}_Z = 0$ holds.
\end{theorem}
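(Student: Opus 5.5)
We follow the standard Opial-type argument, in the spirit of \citep{wu2022general}, working in the $\norm{\cdot}_L$ norm and transferring the conclusion to $\norm{\cdot}_Z$ at the end via \Cref{lem:z-norm-l-norm-rel}. The argument has three steps: boundedness of the iterates, identification of their cluster points as saddle points, and uniqueness of the limit.

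\textbf{Step 1: boundedness.} Fix an arbitrary saddle point $\vec{z}^\star$ and apply the last display in the proof of \Cref{thm:ergodic-primal-residual} with $(\vec{x},\vec{y})=(\vec{x}^\star,\vec{y}^\star)$.

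The left-hand sum is nonnegative by \eqref{eqn:saddle-point-characterization}. By \Cref{corr:iterate-to-iterate-bounds-inf-sum}, the right-hand side is bounded uniformly in $t$ by $\Phi_0$ plus a summable error term. Hence
\begin{equation*}
\frac{L}{2}\norm{\vec{x}_{t+1}^\prime-\vec{x}^\star}^2+\frac{1}{2\sigma}\Big[1-\frac{\tau\sigma\norm{\linop{A}}^2}{1-\tau L}\Big]\norm{\vec{y}_{t+1}^\prime-\vec{y}^\star}^2
\end{equation*}
is bounded.

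This bound does not control $\vec{x}$ when $L=0$. So instead I would rewrite the bound of \Cref{lem:primal-dual-gap-bound-lemma-1} compactly as a quasi-Fej\'er inequality
\begin{equation*}
\norm{\vec{z}_{t+1}^\prime-\vec{z}^\star}_L^2\le\norm{\vec{z}_t^\prime-\vec{z}^\star}_L^2-c_1\norm{\vec{z}_{t+1}^\prime-\vec{z}_t^\prime}_Z^2+2(\epsilon_t+\delta_t).
\end{equation*}
This uses the identity
\begin{equation*}
\tfrac1{2\tau}\norm{\vec{x}-\vec{x}_t^\prime}^2+\tfrac1{2\sigma}\norm{\vec{y}-\vec{y}_t^\prime}^2+\ip{\linop{A}(\vec{x}_t^\prime-\vec{x})}{\vec{y}-\vec{y}_t^\prime}=\tfrac12\norm{\vec{z}-\vec{z}_t^\prime}_L^2,
\end{equation*}
together with nonnegativity of the gap at $\vec{z}^\star$. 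Because $\sum_t(\epsilon_t+\delta_t)<\infty$ by \Cref{corr:iterate-to-iterate-bounds-inf-sum}, the standard quasi-Fej\'er lemma for nonnegative sequences gives two conclusions:
\begin{itemize}
\item $\lim_t\norm{\vec{z}_t^\prime-\vec{z}^\star}_L$ exists for every saddle point $\vec{z}^\star$;
\item consequently $\{\vec{z}_t^\prime\}$ is bounded.
\end{itemize}

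\textbf{Step 2: cluster points are saddle points.} Boundedness in finite dimensions gives a subsequence $\vec{z}_{t_k}^\prime\to\vec{z}^\infty$. By \Cref{corr:iterate-to-iterate-bounds-inf-sum}, $\vec{z}_{t_k+1}^\prime-\vec{z}_{t_k}^\prime\to0$, so $\vec{z}_{t_k+1}^\prime\to\vec{z}^\infty$ as well.

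The inclusion \eqref{eqn:first-order-optimal-cond-pdhg-compact} says $\vec{w}_k:=-\linop{D}(\vec{z}_{t_k}^\prime)-\linop{L}(\vec{z}_{t_k+1}^\prime-\vec{z}_{t_k}^\prime)$ lies in $\linop{C}_{\epsilon_{t_k},\delta_{t_k}}(\vec{z}_{t_k+1}^\prime)$. Continuity of $\nabla f$ and $\linop{L}$ gives $\vec{w}_k\to-\linop{D}(\vec{z}^\infty)$.

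The main obstacle is passing to the limit in the $\epsilon$-subdifferential inclusion. I would do this through the defining inequalities of \Cref{def:inexact-subdiff}: for all $\vec{x}$,
\begin{equation*}
g(\vec{x})\ge g(\vec{x}_{t_k+1}^\prime)+\ip{\vec{u}_k}{\vec{x}-\vec{x}_{t_k+1}^\prime}-\epsilon_{t_k},
\end{equation*}
and similarly for $h^\ast$. Taking $\liminf$, using lower semicontinuity of $g$ and $h^\ast$ and the fact that $\epsilon_t,\delta_t\to0$, yields $\vec{u}_\infty\in\partial g(\vec{x}^\infty)$ and the analogous inclusion for $h^\ast$. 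The $\liminf$ bound $g(\vec{x}^\infty)\le\liminf_k g(\vec{x}_{t_k+1}^\prime)$ is exactly what is needed here, since $g(\vec{x}_{t_k+1}^\prime)$ appears on the right. This gives $-\linop{D}(\vec{z}^\infty)\in\linop{C}(\vec{z}^\infty)$, which is the exact KKT system, so $\vec{z}^\infty$ is a saddle point.

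\textbf{Step 3: uniqueness of the limit.} Apply Step 1 with $\vec{z}^\star=\vec{z}^\infty$. The limit $\lim_t\norm{\vec{z}_t^\prime-\vec{z}^\infty}_L$ exists, and along the subsequence $t_k$ it equals $0$. Hence the whole sequence converges to $\vec{z}^\infty$ in $\norm{\cdot}_L$, and \Cref{lem:z-norm-l-norm-rel} transfers this to $\norm{\cdot}_Z$.

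Besides the limit passage in Step 2, the delicate point is deriving the quasi-Fej\'er inequality with a strictly positive $c_1$. I expect this to follow from the conditions $\kappa>0$ and $\tau\sigma\norm{\linop{A}}^2<1$, which make the cross terms of \Cref{lem:primal-dual-gap-bound-lemma-1} absorbable.
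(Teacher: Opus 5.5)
Your proposal is correct and follows the same architecture as the paper's proof: you derive an $\norm{\cdot}_L$ quasi-Fej\'er inequality from \Cref{lem:primal-dual-gap-bound-lemma-1} and nonnegativity of the gap, deduce boundedness, show a subsequential limit is a saddle point, and get full convergence by recentering the inequality at $\vec{z}^\infty$ before transferring to $\norm{\cdot}_Z$ via \Cref{lem:z-norm-l-norm-rel}. The differences are minor: you absorb the $L\norm{\vec{x}_t^\prime-\vec{x}_{t+1}^\prime}^2$ term into $\norm{\vec{z}_t^\prime-\vec{z}_{t+1}^\prime}_L^2$, which is valid because $\kappa>0$ gives $\tau L+\tau\sigma\norm{\linop{A}}^2<1$, whereas the paper keeps it as a summable perturbation via \Cref{corr:iterate-to-iterate-bounds-inf-sum}; and you identify the cluster point by closing the $\epsilon$-subdifferential inclusions to the exact fixed-point system, whereas the paper passes to the limit directly in the gap inequality using semicontinuity of $\mathcal{L}$.
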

\begin{proof}
Let $\vec{z}^\star := (\vec{x}^\star, \vec{y}^\star)$ be a saddle point.
Starting with \Cref{lem:primal-dual-gap-bound-lemma-1} with $\vec{x} = \vec{x}^\star$ and $\vec{y} = \vec{y}^\star$, we apply the Law of Cosines to get 
\begin{multline*}
    \mathcal{L}(\vec{x}_{t+1}^\prime, \vec{y}^\star) - \mathcal{L}(\vec{x}^\star, \vec{y}_{t+1}^\prime)\leq   \frac{1}{\tau}\ip{\vec{x}_t^\prime - \vec{x}_{t+1}^\prime}{\vec{x}_{t+1}^\prime - \vec{x}^\star} + \frac{1} {\sigma}\ip{\vec{y}_t^\prime - \vec{y}_{t+1}^\prime}{\vec{y}_{t+1}^\prime - \vec{y}^\star} + \frac{L}{2}\norm{\vec{x}_t^\prime - \vec{x}_{t + 1}^\prime}^2 \\ 
    + \ip{\vec{y}_t^\prime - \vec{y}_{t+1}^\prime}{\linop{A}(\vec{x}^\star - \vec{x}_{t + 1}^\prime)} + \ip{\linop{A}(\vec{x}_t^\prime - \vec{x}_{t+1}^\prime)}{ \vec{y}^\star - \vec{y}_{t+1}^\prime} + \epsilon_t + \delta_t.   
\end{multline*}
This can then be written more compactly in terms of the linear operator $\linop{L}$ as follows:
\begin{equation*}
    \mathcal{L}(\vec{x}_{t+1}^\prime, \vec{y}^\star) - \mathcal{L}(\vec{x}^\star, \vec{y}_{t+1}^\prime) \leq \ip{\vec{z}_t^\prime - \vec{z}_{t+1}^\prime}{\vec{z}_{t+1}^\prime - \vec{z}^\star}_L + \frac{L}{2}\norm{\vec{x}_t^\prime - \vec{x}_{t + 1}^\prime}^2 + \epsilon_t + \delta_t. 
\end{equation*}
We then apply Law of Cosines to the first term to get
\begin{equation}\label{eqn:convergence-to-limit-L-norm-rel}
    \mathcal{L}(\vec{x}_{t+1}^\prime, \vec{y}^\star) - \mathcal{L}(\vec{x}^\star, \vec{y}_{t+1}^\prime) \leq -\frac{1}{2}\norm{\vec{z}_t^\prime - \vec{z}_{t+1}^\prime}_L^2 - \frac{1}{2}\norm{\vec{z}_{t+1}^\prime - \vec{z}^\star}_L^2 + \frac{1}{2}\norm{\vec{z}_t^\prime - \vec{z}^\star}_L^2 + \frac{L}{2}\norm{\vec{x}_t^\prime - \vec{x}_{t + 1}^\prime}^2 + \epsilon_t + \delta_t. 
\end{equation}
Using the fact that $(\vec{x}^\star, \vec{y}^\star)$ is a saddle point implies that $\mathcal{L}(\vec{x}_{t+1}^\prime, \vec{y}^\star) - \mathcal{L}(\vec{x}^\star, \vec{y}_{t+1}^\prime) \geq 0$ and rearranging, we get the following:
\begin{equation*}
    \norm{\vec{z}_{t+1}^\prime - \vec{z}^\star}_L^2 \leq \norm{\vec{z}_t^\prime - \vec{z}^\star}_L^2 + 2\epsilon_t + 2\delta_t + L\norm{\vec{x}_t^\prime - \vec{x}_{t + 1}^\prime}^2 - \norm{\vec{z}_t^\prime -\vec{z}_{t+1}^\prime}_L^2.
\end{equation*}
For any $s \leq t$, we have 
\begin{equation}\label{eqn:convergence-to-limit-recursive-rel}
    \norm{\vec{z}_{t+1}^\prime - \vec{z}^\star}_L^2 \leq \norm{\vec{z}_s^\prime - \vec{z}^\star}_L^2 + 2\sum_{i = s}^t (\epsilon_i + \delta_i) + \sum_{i = s}^t L\norm{\vec{x}_i^\prime - \vec{x}_{i + 1}^\prime}^2,
\end{equation}
where we dropped the last negative term.
Expanding this recursive relationship fully, we get 
\begin{equation*}
    \norm{\vec{z}_{t+1}^\prime - \vec{z}^\star}_L^2 \leq \norm{\vec{z}_0^\prime - \vec{z}^\star}_L^2 + 2\sum_{i = 0}^t (\epsilon_i + \delta_i)  + L \sum_{i = 0}^t \norm{\vec{x}_i^\prime - \vec{x}_{i + 1}^\prime}^2.
\end{equation*}
Given the above and the boundedness of the last two terms above from \Cref{corr:iterate-to-iterate-bounds}, we can find some $C > 0$ such that $\norm{\vec{z}_{t+1}^\prime - \vec{z}^\star}_L \leq C$, which means the sequence $\{\vec{z}_t^\prime\}_{t \geq 0}$ is bounded. 
Given that $\{\vec{z}_t^\prime\}_{t \geq 0}$ is bounded and $\|\vec{z}_t^\prime - \vec{z}_{t+1}^\prime\|_Z \rightarrow 0$ given \Cref{corr:iterate-to-iterate-bounds-inf-sum}, the sequence $\{\vec{z}_t^\prime\}_{t \geq 0}$ has a subsequential limit $\vec{z}^\infty := (\vec{x}^\infty, \vec{y}^\infty) \in \set{X} \times \set{Y}$ and a subsequence $\{\vec{z}_{t_k}^\prime\}_{k \geq 0}$ such that $\vec{z}_{t_k}^\prime \rightarrow \vec{z}^\infty$.
Given that both $g$ and $h$ are proper, closed and convex functions (and hence lower semicontinuous) and $\ip{\linop{A}\vec{x}}{\vec{y}}$ is continuous function of $\vec{x}$ and $\vec{y}$, it is clear that $\mathcal{L}(\vec{x}, \vec{y})$ is lower semicontinuous w.r.t. to $\mathbf{x}$ and fixed $\mathbf{y}$ so that 
\begin{equation*}
    \mathcal{L}(\vec{x}^\infty, \vec{y}) \leq \liminf_{k \rightarrow \infty} \mathcal{L}(\mathbf{x}_{t_{k+1}}, \mathbf{y})
\end{equation*}
and upper semicontinuous with respect to $\mathbf{y}$ and fixed $\mathbf{x}$ so that 
\begin{equation*}
    -\mathcal{L}(\mathbf{x}, \mathbf{y}^\infty) \leq \liminf_{k \rightarrow \infty} -\mathcal{L}(\mathbf{x}, \mathbf{y}_{t_{k+1}}).
\end{equation*}
Putting it together, it means that
\begin{align*}
     \mathcal{L}(\vec{x}^\infty, \vec{y}) - \mathcal{L}(\vec{x}, \vec{y}^\infty) &\leq \liminf_{k \rightarrow \infty} \{\mathcal{L}(\vec{x}_{t_{k+1}}^\prime, \vec{y}) - \mathcal{L}(\vec{x}, \vec{y}_{t_{k+1}}^\prime) \} \\
     &\leq \liminf_{k \rightarrow \infty} \left \{ -\frac{1}{2}\norm{\vec{z}_{t_k}^\prime - \vec{z}_{t_k+1}^\prime}_L^2 - \frac{1}{2}\norm{\vec{z}_{t_k+1}^\prime - \vec{z}}_L^2 + \frac{1}{2}\norm{\vec{z}_{t_k}^\prime - \vec{z}}_L^2 + \frac{L}{2}\norm{\vec{x}_{t_k}^\prime - \vec{x}_{t_k + 1}^\prime}^2 + \epsilon_{t_k} + \delta_{t_k} \right \} \\
     &= 0,
\end{align*}
for any $\vec{z} := (\vec{x}, \vec{y}) \in \set{X} \times \set{Y}$, where we used \eqref{eqn:convergence-to-limit-L-norm-rel} with $\vec{z}^\star$ replaced with $\vec{z}$ and \Cref{corr:iterate-to-iterate-bounds-inf-sum} to say that $\epsilon_{t_k} + \delta_{t_k} \rightarrow 0$.
Taking the supremum on the left-hand side w.r.t. to $(\vec{x}, \vec{y}) \in \set{X} \times \set{Y}$, we get
\begin{equation*}
    \sup_{\substack{\vec{x} \in \set{X} \\
\vec{y} \in \set{Y}}} \mathcal{L}(\vec{x}^\infty, \vec{y}) - \mathcal{L}(\vec{x}, \vec{y}^\infty) = 0,
\end{equation*}
which means that $\vec{z}^\infty := (\vec{x}^\infty, \vec{y}^\infty)$ is a saddle point (see \Cref{sec:saddle-point-prelim}).
Now it remains to show that $\{\vec{z}_t^\prime\}_{t \geq 0}$ converges to $\vec{z}^\infty$.
Given any $\varepsilon > 0$ and \Cref{corr:iterate-to-iterate-bounds-inf-sum}, we can find some $k_0 > 0$ such that 
\begin{equation*}
    \norm{\vec{z}_{k_0}^\prime - \vec{z}^\infty}_L^2 \leq \frac{\varepsilon^2}{2} \mbox{ and } 2\sum_{i = k_0}^t (\epsilon_i + \delta_i) + \sum_{i = k_0}^t L\norm{\vec{x}_i^\prime - \vec{x}_{i + 1}^\prime}^2 \leq \frac{\varepsilon^2}{2}
\end{equation*}
holds for all $t \geq k_0$.
Given this and \eqref{eqn:convergence-to-limit-L-norm-rel} with $\vec{z}^\star$ replaced by $\vec{z}^\infty$, we have that 
\begin{equation*}
    \norm{\vec{z}_{t+1}^\prime - \vec{z}^\infty}_L^2 \leq \norm{\vec{z}_{k_0}^\prime - \vec{z}^\infty}_L^2 + 2\sum_{i = k_0}^t (\epsilon_i + \delta_i) + \sum_{i = k_0}^t L\norm{\vec{x}_i^\prime - \vec{x}_{i + 1}^\prime}^2 \leq \frac{\varepsilon^2}{2} + \frac{\varepsilon^2}{2} = \varepsilon^2.
\end{equation*}
for all $t \geq k_0$.
Thus, we have $\norm{\vec{z}_t^\prime - \vec{z}^\infty}_L \rightarrow 0$ and then $\norm{\vec{z}_t^\prime - \vec{z}^\infty}_Z \rightarrow 0$ by \Cref{lem:z-norm-l-norm-rel}.
This proves the desired result.
\end{proof}
We now turn our attention to the fixed-point characterization of \pdhg~ and \approxpdhg.
To do this, we first derive the following relationship between exact and inexact proximal points: 
\begin{lemma}\label{lem:inexact-exact-prox-rel-lem}
Suppose $g: \mathbb{R}^n \rightarrow \mathbb{R} \cup \{+\infty\}$ be a closed, convex and proper function.
If $\vec{u}^\prime = \prox_{\tau g}^{(\epsilon)}(\vec{x}^\prime)$ and $\vec{u} = \prox_{\tau g}(\vec{x})$, then 
\begin{equation*}
    \norm{\vec{u} - \vec{u}^\prime}^2 \leq \tau \epsilon + \langle \vec{x}^\prime - \vec{x}, \vec{u}^\prime - \vec{u} \rangle. 
\end{equation*}
\end{lemma}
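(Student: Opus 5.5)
The plan is to write down the two subgradient inequalities furnished by the exact and inexact prox characterizations, apply each at the other point, and add them.

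First, by \Cref{lem:prox-lemma}, $\vec{u} = \prox_{\tau g}(\vec{x})$ gives $\frac{\vec{x} - \vec{u}}{\tau} \in \partial g(\vec{u})$. Tested at the point $\vec{u}^\prime$, this yields
\begin{equation*}
g(\vec{u}^\prime) \geq g(\vec{u}) + \frac{1}{\tau}\ip{\vec{x} - \vec{u}}{\vec{u}^\prime - \vec{u}}.
\end{equation*}
Second, by \Cref{def:inexact-proximal-point}, $\vec{u}^\prime = \prox^{(\epsilon)}_{\tau g}(\vec{x}^\prime)$ means $\frac{\vec{x}^\prime - \vec{u}^\prime}{\tau} \in \partial_\epsilon g(\vec{u}^\prime)$. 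Using \Cref{def:inexact-subdiff} at the point $\vec{u}$ gives
\begin{equation*}
g(\vec{u}) \geq g(\vec{u}^\prime) + \frac{1}{\tau}\ip{\vec{x}^\prime - \vec{u}^\prime}{\vec{u} - \vec{u}^\prime} - \epsilon.
\end{equation*}
Both points lie in $\mathrm{dom}(g)$: $\vec{u}^\prime$ does by the definition of the $\epsilon$-subdifferential, and $\vec{u}$ does because it is an exact prox point. Hence both function values are finite and the inequalities can be added.

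Adding the two inequalities cancels $g(\vec{u})$ and $g(\vec{u}^\prime)$. After multiplying by $\tau$, this leaves
\begin{equation*}
0 \geq \ip{\vec{x} - \vec{u}}{\vec{u}^\prime - \vec{u}} - \ip{\vec{x}^\prime - \vec{u}^\prime}{\vec{u}^\prime - \vec{u}} - \tau\epsilon.
\end{equation*}
Now combine the two inner products, since they share the factor $\vec{u}^\prime - \vec{u}$. Their difference is $\ip{(\vec{x} - \vec{x}^\prime) + (\vec{u}^\prime - \vec{u})}{\vec{u}^\prime - \vec{u}}$, which equals $\norm{\vec{u}^\prime - \vec{u}}^2 - \ip{\vec{x}^\prime - \vec{x}}{\vec{u}^\prime - \vec{u}}$. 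Rearranging the resulting inequality gives exactly $\norm{\vec{u} - \vec{u}^\prime}^2 \leq \tau\epsilon + \ip{\vec{x}^\prime - \vec{x}}{\vec{u}^\prime - \vec{u}}$.

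There is no real obstacle. The only care needed is in the sign bookkeeping when merging the inner products, and in checking the domain membership noted above, which justifies adding the two inequalities.
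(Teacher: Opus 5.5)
Your proposal is correct and follows essentially the same route as the paper: both test the exact subgradient inequality at $\vec{u}^\prime$ and the $\epsilon$-subgradient inequality at $\vec{u}$, then add them so the $g$-values cancel, and finally regroup the inner products into $\norm{\vec{u}-\vec{u}^\prime}^2$ plus the cross term $\ip{\vec{x}^\prime-\vec{x}}{\vec{u}^\prime-\vec{u}}$. Your explicit check that $\vec{u},\vec{u}^\prime\in\mathrm{dom}(g)$, which ensures the function values are finite so the inequalities may be added, is a small point of rigor that the paper leaves implicit.
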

\begin{proof}
Given both \Cref{def:inexact-proximal-point}, we have that 
\begin{align*}
    &\frac{\vec{x} - \vec{u}}{\tau} \in \partial g(\vec{u}) \\
    &\frac{\vec{x}^\prime - \vec{u}^\prime}{\tau} \in \partial_\epsilon g(\vec{u}^\prime).
\end{align*}
Using \Cref{def:inexact-subdiff}, we have that
\begin{align*}
    &g(\vec{u}) \geq g(\vec{u}^\prime) + \ip{\frac{\vec{x}^\prime - \vec{u}^\prime}{\tau}}{\vec{u} - \vec{u}^\prime} - \epsilon \\
    &g(\vec{u}^\prime) \geq g(\vec{u}) + \ip{\frac{\vec{x} - \vec{u}}{\tau}}{\vec{u}^\prime - \vec{u}}.
\end{align*}
Adding these together, we get
\begin{equation*}
    0 \geq \frac{1}{\tau} \langle \vec{x}^\prime - \vec{u}^\prime + \vec{u} - \vec{x}, \vec{u} - \vec{u}^\prime \rangle - \epsilon = \frac{1}{\tau}\norm{\vec{u} - \vec{u}^\prime}^2 +  \frac{1}{\tau} \langle \vec{x}^\prime - \vec{x}, \vec{u} - \vec{u}^\prime \rangle - \epsilon
\end{equation*}
Multiplying both sides by $\tau$ and rearranging this proves the desired result.
\end{proof}
Applying \Cref{lem:inexact-exact-prox-rel-lem} to \pdhg and \approxpdhg, we obtain the following bound on the difference between their update operators.

\begin{corollary}\label{corr:inexact-pdhg-exact-pdhg-operator-rel}
For iteration $t$ of $\approxpdhg$, the following holds:
\begin{equation*}
    \norm{\linop{T}_{\epsilon_t,\delta_t}(\vec{z}_t)-\linop{T}(\vec{z}_t)}_Z^2 \leq \epsilon_t+
    \left(
    \sqrt{\delta_t}
    +2\sqrt{\tau\sigma}\norm{\linop{A}}
    \sqrt{\epsilon_t}
    \right)^2,
\end{equation*}
where $\vec{z}_t := (\vec{x}_t,\vec{y}_t)\in\set{X}\times\set{Y}$.
The same bound holds with \(\vec{z}_t\) replaced by \(\vec{z}_t^\prime\).
The above result holds for every selection of the operator $\linop{T}_{\epsilon_t,\delta_t}$.
\end{corollary}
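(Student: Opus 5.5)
We compare one exact step $\linop{T}(\vec{z}_t)$ with one inexact step $\linop{T}_{\epsilon_t,\delta_t}(\vec{z}_t)$ started from the same point $\vec{z}_t=(\vec{x}_t,\vec{y}_t)$, applying \Cref{lem:inexact-exact-prox-rel-lem} once to the primal block and once to the dual block. Write $(\hat{\vec{x}},\hat{\vec{y}}) := \linop{T}(\vec{z}_t)$ and $(\tilde{\vec{x}},\tilde{\vec{y}}) := \linop{T}_{\epsilon_t,\delta_t}(\vec{z}_t)$, where the latter is any selection, i.e.\ any pair satisfying the inexact inclusions.

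\textbf{Primal block.} Both prox arguments equal $\vec{x}_t-\tau\grad f(\vec{x}_t)-\tau\linop{A}^\top\vec{y}_t$, so the inner-product term in \Cref{lem:inexact-exact-prox-rel-lem} vanishes. This gives $\norm{\tilde{\vec{x}}-\hat{\vec{x}}}^2\le\tau\epsilon_t$, that is,
\begin{equation*}
\frac{1}{\tau}\norm{\tilde{\vec{x}}-\hat{\vec{x}}}^2\le\epsilon_t .
\end{equation*}
This accounts for the first summand $\epsilon_t$ of the bound.

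\textbf{Dual block.} The prox arguments are $\vec{y}_t+\sigma\linop{A}(2\tilde{\vec{x}}-\vec{x}_t)$ and $\vec{y}_t+\sigma\linop{A}(2\hat{\vec{x}}-\vec{x}_t)$, which differ by $2\sigma\linop{A}(\tilde{\vec{x}}-\hat{\vec{x}})$. Applying \Cref{lem:inexact-exact-prox-rel-lem} to $h^\ast$ with parameter $\sigma$, then Cauchy--Schwarz, yields
\begin{equation*}
\norm{\tilde{\vec{y}}-\hat{\vec{y}}}^2\le\sigma\delta_t+2\sigma\norm{\linop{A}}\norm{\tilde{\vec{x}}-\hat{\vec{x}}}\norm{\tilde{\vec{y}}-\hat{\vec{y}}}\le\sigma\delta_t+2\sigma\norm{\linop{A}}\sqrt{\tau\epsilon_t}\,\norm{\tilde{\vec{y}}-\hat{\vec{y}}} .
\end{equation*}
This is a quadratic inequality $r^2\le a+br$ in $r=\norm{\tilde{\vec{y}}-\hat{\vec{y}}}$, with $a=\sigma\delta_t$ and $b=2\sigma\norm{\linop{A}}\sqrt{\tau\epsilon_t}$. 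It implies $r\le\frac{b+\sqrt{b^2+4a}}{2}\le b+\sqrt{a}$, so $r^2\le(\sqrt{a}+b)^2$. Dividing by $\sigma$ gives
\begin{equation*}
\frac{1}{\sigma}\norm{\tilde{\vec{y}}-\hat{\vec{y}}}^2\le\left(\sqrt{\delta_t}+2\sqrt{\sigma}\norm{\linop{A}}\sqrt{\tau\epsilon_t}\right)^2=\left(\sqrt{\delta_t}+2\sqrt{\tau\sigma}\norm{\linop{A}}\sqrt{\epsilon_t}\right)^2,
\end{equation*}
which is the second summand. Solving this quadratic inequality is the only step needing care; using the looser root bound $b+\sqrt{a}$ rather than the exact root is what produces exactly the claimed constant.

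Summing the two block estimates gives the bound in the $\norm{\cdot}_Z$ norm. Nothing in the argument used a property of $\vec{z}_t$ beyond its being a common starting point, so the same bound holds with $\vec{z}_t^\prime$ in place of $\vec{z}_t$. Likewise, only the defining inclusions of the inexact proximal point were used, so the bound holds for every selection of $\linop{T}_{\epsilon_t,\delta_t}$.
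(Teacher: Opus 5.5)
Your proof is correct and gives exactly the stated constant, but your dual-block argument differs from the paper's. The primal blocks are identical.

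\textbf{How the paper handles the dual block.} It inserts the intermediate point $\prox_{\sigma h^\ast}(\vec{v}_t^\prime)$, the exact prox at the perturbed argument $\vec{v}_t^\prime=\vec{y}_t+\sigma\linop{A}(2\vec{x}_{t+1}^\prime-\vec{x}_t)$, and applies the triangle inequality. The inexactness part is bounded by $\sqrt{\sigma\delta_t}$ using \Cref{lem:inexact-exact-prox-rel-lem} with equal arguments. The propagated part is bounded by $\norm{\vec{v}_t^\prime-\vec{v}_t}\le 2\sigma\norm{\linop{A}}\sqrt{\tau\epsilon_t}$ using nonexpansiveness of the exact prox. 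This yields $\sqrt{a}+b$ directly, with no quadratic inequality to solve.

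\textbf{How your route differs.} You apply \Cref{lem:inexact-exact-prox-rel-lem} once with two different arguments. Its inner-product term, which for $\epsilon=0$ is just firm nonexpansiveness, then does the job of the separate nonexpansiveness step. The price is solving $r^2\le a+br$.

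\textbf{What each buys.} Your route uses fewer ingredients. It also gives the sharper intermediate bound $r\le\tfrac{1}{2}\bigl(b+\sqrt{b^2+4a}\bigr)$, which you only relax to $\sqrt{a}+b$ to match the stated constant. The paper's route is more modular: it separates the local prox error from the error propagated through $\linop{A}$ additively and linearly. That is convenient if one wants to chain such estimates across iterations.
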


\begin{proof}
Fix an iteration $t$. 
For the sake of brevity, define $\vec{u}_t := \vec{x}_t-\tau\nabla f(\vec{x}_t) -\tau\linop{A}^\top\vec{y}_t$.
Then $\vec{x}_{t+1} :=\prox_{\tau g}(\vec{u}_t)$ and $\vec{x}_{t+1}^\prime :=\prox_{\tau g}^{(\epsilon_t)}(\vec{u}_t)$.
By \Cref{lem:inexact-exact-prox-rel-lem}, we get 
\begin{equation*}
    \norm{\vec{x}_{t+1}^\prime-\vec{x}_{t+1}}^2
    \leq \tau\epsilon_t.
\end{equation*}
Next, define $\vec{v}_t :=\vec{y}_t+\sigma\linop{A}(2\vec{x}_{t+1}-\vec{x}_t)$ and $\vec{v}_t^\prime :=\vec{y}_t+\sigma\linop{A}(2\vec{x}_{t+1}^\prime-\vec{x}_t)$.
The dual updates are then given by $\vec{y}_{t+1} =\prox_{\sigma h^\ast}(\vec{v}_t)$ and $\vec{y}_{t+1}^\prime =\prox_{\sigma h^\ast}^{(\delta_t)}(\vec{v}_t^\prime)$.
By the triangle inequality, \Cref{lem:inexact-exact-prox-rel-lem}, and the nonexpansiveness of the exact proximal operator, we have that
\begin{align*}
    \norm{\vec{y}_{t+1}^\prime-\vec{y}_{t+1}} &\leq \norm{\prox_{\sigma h^\ast}^{(\delta_t)}(\vec{v}_t^\prime) -\prox_{\sigma h^\ast}(\vec{v}_t^\prime)
    } +
    \norm{\prox_{\sigma h^\ast}(\vec{v}_t^\prime) -\prox_{\sigma h^\ast}(\vec{v}_t)} \\
    &\leq
    \sqrt{\sigma\delta_t}
    +\norm{\vec{v}_t^\prime-\vec{v}_t}\\
    &\leq
    \sqrt{\sigma\delta_t}
    +2\sigma\norm{\linop{A}}
    \norm{\vec{x}_{t+1}^\prime-\vec{x}_{t+1}}\\
    &\leq
    \sqrt{\sigma\delta_t} +2\sigma\norm{\linop{A}}\sqrt{\tau\epsilon_t}.
\end{align*}
Consequently, using the definition of the $\|\cdot\|_Z$ norm,
\begin{align*}
    \norm{\linop{T}_{\epsilon_t,\delta_t}(\vec{z}_t)
    -\linop{T}(\vec{z}_t)}_Z^2 &=
    \frac{1}{\tau}
    \norm{\vec{x}_{t+1}^\prime-\vec{x}_{t+1}}^2
    +\frac{1}{\sigma}
    \norm{\vec{y}_{t+1}^\prime-\vec{y}_{t+1}}^2\\
    &\leq
    \epsilon_t+
    \left(
    \sqrt{\delta_t}
    +2\sqrt{\tau\sigma}\norm{\linop{A}}
    \sqrt{\epsilon_t}
    \right)^2,
\end{align*}
which proves the result. The same argument applies to
\(\vec{z}_t^\prime\).
\end{proof}
In order to provide fixed-point characterization of \approxpdhg, we provide a definition of a fixed point of \approxpdhg: 
\begin{definition}\label{def:fixed-point-approx-pdhg} A point $(\vec{x}^\star, \vec{y}^\star)$ is a \textbf{fixed point} of \approxpdhg~ if $\vec{z}_t^\prime \rightarrow \vec{z}^\star$ and
\begin{equation*}
    \lim_{t \rightarrow \infty} \norm{\vec{z}_t^\prime - \linop{T}_{\epsilon_t, \delta_t}(\vec{z}_t^\prime)}_Z = 0.
\end{equation*}
\end{definition}
We now can finally provide a fixed point characterization for \pdhg and \approxpdhg:
\begin{theorem}[Fixed Point Characterization] \label{thm:fixed-point-characterization}
Suppose the error tolerance schedule in \eqref{eqn:inexact-error-tolerance-schedule} is used with \approxpdhg.
Then (1), (2) and (3) are equivalent and (4) implies (3):
\begin{align*}
\textup{(1)}\quad & \vec{x}^\star \text{ is an optimal solution to } \textup{(\ref{prob:main-primal})}; \\
\textup{(2)}\quad & (\vec{x}^\star, \vec{y}^\star) \text{ is a saddle point of } \textup{\eqref{prob:main-saddle-point}}; \\
\textup{(3)}\quad & (\vec{x}^\star, \vec{y}^\star) \text{ is a fixed point of the \pdhg algorithm}; \\
\textup{(4)}\quad & (\vec{x}^\star, \vec{y}^\star) \text{ is a fixed point of the \approxpdhg algorithm}.
\end{align*}
\end{theorem}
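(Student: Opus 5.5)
We prove the chain (1)$\Leftrightarrow$(2)$\Leftrightarrow$(3), and then show (4)$\Rightarrow$(3). A fixed point of \pdhg\ is read in the usual sense, $\vec{z}^\star = \linop{T}(\vec{z}^\star)$.

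\textbf{(1)$\Leftrightarrow$(2).} This is standard convex duality under the Slater-type assumption of \Cref{sec:intro}. If $\vec{x}^\star$ solves \eqref{prob:main-primal}, strong duality gives a dual solution $\vec{y}^\star$ with zero duality gap. Writing $h(\linop{A}\vec{x}) = \sup_{\vec{y}}\{\ip{\linop{A}\vec{x}}{\vec{y}} - h^\ast(\vec{y})\}$ (Fenchel--Moreau, since $h$ is CCP), the pair satisfies \eqref{eqn:saddle-point-characterization}. Conversely, a saddle point yields
\[
\max_{\vec{y}} \mathcal{L}(\vec{x}^\star,\vec{y}) = \mathcal{L}(\vec{x}^\star,\vec{y}^\star) \le \inf_{\vec{x}} \max_{\vec{y}} \mathcal{L}(\vec{x},\vec{y}),
\]
so $\vec{x}^\star$ minimizes the primal objective.

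\textbf{(2)$\Leftrightarrow$(3).} Write the saddle-point optimality conditions as
\[
-\nabla f(\vec{x}^\star) - \linop{A}^\top\vec{y}^\star \in \partial g(\vec{x}^\star), \qquad \linop{A}\vec{x}^\star \in \partial h^\ast(\vec{y}^\star),
\]
that is, $-\linop{D}(\vec{z}^\star) \in \linop{C}(\vec{z}^\star)$. By \Cref{lem:prox-lemma}, the first inclusion is equivalent to
\[
\vec{x}^\star = \prox_{\tau g}\big(\vec{x}^\star - \tau\nabla f(\vec{x}^\star) - \tau\linop{A}^\top\vec{y}^\star\big).
\]
Given that, the second inclusion is equivalent to
\[
\vec{y}^\star = \prox_{\sigma h^\ast}\big(\vec{y}^\star + \sigma\linop{A}(2\vec{x}^\star - \vec{x}^\star)\big).
\]
These two equalities say exactly $\vec{z}^\star = \linop{T}(\vec{z}^\star)$. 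Equivalently, in the compact form \eqref{eqn:first-order-optimal-cond-pdhg-compact} with zero errors, $\vec{z}_{t+1} = \vec{z}_t = \vec{z}^\star$ makes the $\linop{L}$-term vanish. The sufficiency of the saddle-point inclusions for \eqref{eqn:saddle-point-characterization} follows from convexity of $\mathcal{L}(\cdot,\vec{y}^\star)$ and concavity of $\mathcal{L}(\vec{x}^\star,\cdot)$.

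\textbf{(4)$\Rightarrow$(3).} This is the main step. Assume $\vec{z}_t' \to \vec{z}^\star$ and $\norm{\vec{z}_t' - \linop{T}_{\epsilon_t,\delta_t}(\vec{z}_t')}_Z \to 0$. By the triangle inequality,
\[
\norm{\vec{z}^\star - \linop{T}(\vec{z}^\star)}_Z \le \norm{\vec{z}^\star - \vec{z}_t'}_Z + \norm{\vec{z}_t' - \linop{T}_{\epsilon_t,\delta_t}(\vec{z}_t')}_Z + \norm{\linop{T}_{\epsilon_t,\delta_t}(\vec{z}_t') - \linop{T}(\vec{z}_t')}_Z + \norm{\linop{T}(\vec{z}_t') - \linop{T}(\vec{z}^\star)}_Z .
\]
We bound the four terms in turn.
\begin{itemize}
\item The first term vanishes by assumption.
\item The second term vanishes by \Cref{def:fixed-point-approx-pdhg}.
\item The third term is bounded via \Cref{corr:inexact-pdhg-exact-pdhg-operator-rel} by $\sqrt{\epsilon_t + (\sqrt{\delta_t} + 2\sqrt{\tau\sigma}\norm{\linop{A}}\sqrt{\epsilon_t})^2}$. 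This tends to $0$ because $\epsilon_t + \delta_t \to 0$ by \Cref{corr:iterate-to-iterate-bounds-inf-sum}.
\item The fourth term requires continuity of $\linop{T}$. This holds because $\prox_{\tau g}$ and $\prox_{\sigma h^\ast}$ are nonexpansive and $\nabla f$ is $L$-Lipschitz, so $\linop{T}$ is Lipschitz, with a constant depending on $\tau, \sigma, L, \norm{\linop{A}}$, in the Euclidean norm. Hence it is Lipschitz in $\norm{\cdot}_Z$ by \Cref{lem:equiv-norms-finite-spaces}.
\end{itemize}
Letting $t \to \infty$ gives $\vec{z}^\star = \linop{T}(\vec{z}^\star)$, which is (3).

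The only delicate points are this continuity of $\linop{T}$ and making sure \Cref{corr:inexact-pdhg-exact-pdhg-operator-rel} applies along the iterate sequence $\vec{z}_t'$. The corollary explicitly covers $\vec{z}_t'$ and every selection of $\linop{T}_{\epsilon_t,\delta_t}$, so that step is handled.
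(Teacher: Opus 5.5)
Your proof is correct. For (4)$\Rightarrow$(3) it is the paper's argument, with the continuity of $\linop{T}$ made explicit; your Lipschitz justification is more complete than the paper's appeal to continuity of the proximal maps.

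Where you differ is in how you organize (1)--(3). The paper runs the cycle (3)$\Rightarrow$(2)$\Rightarrow$(1)$\Rightarrow$(3). For (1)$\Rightarrow$(3) it applies the subdifferential chain rule $0\in\nabla f(\vec{x}^\star)+\partial g(\vec{x}^\star)+\linop{A}^\top\partial h(\linop{A}\vec{x}^\star)$ under the relative-interior condition. It then picks a multiplier $\vec{y}^\star\in\partial h(\linop{A}\vec{x}^\star)$ from that decomposition and converts it via $\linop{A}\vec{x}^\star\in\partial h^\ast(\vec{y}^\star)$. You instead use the qualification condition once, on Fenchel--Rockafellar duality with dual attainment, to get (1)$\Rightarrow$(2). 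You then prove (2)$\Leftrightarrow$(3) directly with no qualification at all: $\vec{x}\mapsto f(\vec{x})+\ip{\linop{A}\vec{x}}{\vec{y}^\star}$ is finite and differentiable, so Fermat's rule needs only the elementary sum rule.

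The two tools are of the same strength, since each is routinely derived from the other. Your decomposition still buys something. It shows that a given pair is a saddle point if and only if that same pair is a fixed point of \pdhg. The paper's cycle sends (2)$\Rightarrow$(3) through (1), so it only recovers a fixed point $(\vec{x}^\star,\tilde{\vec{y}})$ whose multiplier is chosen by the chain rule, not necessarily the given $\vec{y}^\star$. In exchange, the paper's route builds the multiplier straight from the primal optimality condition without introducing the dual problem.

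Two small things. First, write out the necessity direction (saddle point $\Rightarrow$ inclusions) rather than just stating the conditions. Note there that finiteness of $\mathcal{L}(\vec{x}^\star,\vec{y}^\star)$ gives $g(\vec{x}^\star)<\infty$ and $h^\ast(\vec{y}^\star)<\infty$, so Fermat's rule applies. Second, \Cref{corr:iterate-to-iterate-bounds-inf-sum}, which both you and the paper cite, is derived relative to some existing saddle point. You can avoid this: \eqref{eqn:inexact-error-tolerance-schedule} is imposed with equality for every $t$, so differencing it writes $\epsilon_t+\delta_t$ as a fixed nonnegative combination of $\norm{\vec{x}_{t+1}'-\vec{x}_t'}^2$ and $\norm{\vec{y}_{t+1}'-\vec{y}_t'}^2$. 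Hence $\epsilon_t+\delta_t\to 0$ follows directly from $\vec{z}_t'\to\vec{z}^\star$.
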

\begin{proof}
\noindent (3) $\implies$ (2): Suppose $(\vec{x}^\star,\vec{y}^\star) \in \set{X} \times \set{Y}$ is a fixed point of \pdhg, i.e., 
\begin{align}
    \vec{x}^\star & = \smash{\prox_{\tau g}} \big(\vec{x}^\star - \tau \grad f (\vec{x}^\star) - \tau \linop{A}^\top \vec{y}^\star \big) \label{eqn:fixed-point-step-1} \\
    \vec{y}^\star & = \smash{\prox_{\sigma h^\ast}} \big(\vec{y}^\star + \sigma\linop{A} \vec{x}^\star \big) \label{eqn:fixed-point-step-2}
\end{align}
Then, using \Cref{lem:prox-lemma}, we can say that \eqref{eqn:fixed-point-step-1} is equivalent to the 
\begin{equation*}
    -\nabla f(\vec{x}^\star) - \linop{A}^\top \vec{y}^\star\in \partial g(\vec{x}^\star).
\end{equation*}
Using this, we can say that the following holds for any $\vec{x} \in \set{X}$:
\begin{align}
    g(\vec{x}) &\geq g(\vec{x}^\star) - \ip{ \nabla f(\vec{x}^\star) + \linop{A}^\top \vec{y}^\star}{\vec{x} - 
    \vec{x}^\star} \notag \\
    \iff & \quad \ip{ \grad f (\vec{x}^\star)}{\vec{x} - \vec{x}^\star} + \ip{\vec{y}^\star}{\linop{A}\vec{x} - \linop{A}\vec{x}^\star} \geq g(\vec{x}^\star) - g(\vec{x}), \qquad \forall \vec{x} \in \set{X} \label{eqn:fixed-point-prox-g}
\end{align}
Again, using \Cref{lem:prox-lemma}, we get $\linop{A}\vec{x}^\star \in \partial h^\ast(\vec{y}^\star)$.
Thus, we can say that the following holds for any $\vec{y} \in \set{Y}$:
\begin{align}
    h^\ast(\vec{y}) &\geq h^\ast(\vec{y}^\star) + \ip{ \linop{A}\vec{x}^\star}{\vec{y} - \vec{y}^\star}  \notag \\
    \iff & \quad \ip{ \linop{A}\vec{x}^\star}{\vec{y}^\star - \vec{y}} \geq h^\ast(\vec{y}^\star) - h^\ast(\vec{y}), \qquad \forall \vec{y} \in \set{Y} \label{eqn:fixed-point-prox-h}
\end{align}
Finally, using convexity of $f$, we can say that
\begin{equation} \label{eqn:fixed-point-f-descent}
     \ip{\nabla f(\vec{x}^\star)}{\vec{x}^\star - \vec{x}}  \geq f(\vec{x}^\star) - f(\vec{x}) 
\end{equation}
Adding \eqref{eqn:fixed-point-prox-g}, \eqref{eqn:fixed-point-prox-h} and \eqref{eqn:fixed-point-f-descent} together and then rearranging, we get
\begin{equation*}
       \ip{\linop{A}\vec{x}}{\vec{y}^\star} + f(\vec{x}) + g(\vec{x}) - h^\ast(\vec{y}^\star)  - [\ip{\linop{A}\vec{x}^\star}{\vec{y}} + f(\vec{x}^\star) + g(\vec{x}^\star) - h^\ast(\vec{y})] \geq 0,\quad \vec{x},~\vec{y} \in \set{X} \times \set{Y}.
\end{equation*}
Therefore, based on the saddle point characterization in \eqref{eqn:saddle-point-characterization}, $(\vec{x}^\star, \vec{y}^\star)$ is saddle point since $\mathcal{L}(\mathbf{x}, \mathbf{y}^\star) \geq \mathcal{L}(\mathbf{x}^\star, \mathbf{y})$.

\noindent (2) $\implies$ (1): Suppose that $(\vec{x}^\star, \vec{y}^\star)$ is saddle point. The fact that $\vec{x}^\star$ is a minimizer of \eqref{prob:main-primal} follows from taking the maximum over left-hand side with respect to $\vec{y} \in \set{Y}$ and minimum over the right-hand side with respect to $\vec{x} \in \set{X}$ of \eqref{eqn:saddle-point-characterization}:
\begin{equation*}
    (f + g + h \circ \linop{A})(\vec{x}^\star) = \max_{\vec{y} \in \set{Y}} \mathcal{L}(\vec{x}^\star, \vec{y}) \leq \mathcal{L}(\vec{x}^\star, \vec{y}^\star) \leq \min_{\vec{x} \in \set{X}} \mathcal{L}(\vec{x}, \vec{y}^\star) \leq \mathcal{L}(\vec{x}, \vec{y}^\star) \leq \max_{\vec{y} \in \set{Y}} \mathcal{L}(\vec{x}, \vec{y}) = (f + g + h \circ \linop{A})(\vec{x}).
\end{equation*}
\noindent (1) $\implies$ (3): Next, we will show that the optimal solutions to \eqref{prob:main-primal} are fixed points of \pdhg. 
Suppose that $\vec{x}^*$ is the optimal solution for \eqref{prob:main-primal}.  
Using Corollary 16.50(v) and Proposition 6.19(x) from \citep{bauschke2020correction} and the condition that there exists some $\vec{x}_0 \in \mathrm{ri}(\mathrm{dom}(f)) \cap \mathrm{ri}(\mathrm{dom}(g))$ such that $\linop{A}\vec{x}_0 \in \mathrm{ri}(\mathrm{dom}(h))$ (see \Cref{sec:intro}), we can write the first-order optimality condition for \eqref{prob:main-primal} at point $\vec{x}^*$ as 
\begin{equation}\label{eqn:first-order optimality condition-unconstrained}
    0 \in \nabla f (\vec{x}^*) + \partial g(\vec{x}^*) + \linop{A}^\top \partial h(\linop{A}\vec{x}^*).
\end{equation}
Then \eqref{eqn:first-order optimality condition-unconstrained} implies there is $\vec{u} + \vec{v} + \vec{w} = \vec{0}$ such that
\begin{equation*}
    \vec{u} = \nabla f( \vec{x}^*),\quad \vec{v} \in \partial g(\vec{x}^*), \quad \mbox{and} \quad \vec{w} \in \linop{A}^\top\partial h(\linop{A}\vec{x}^*). 
\end{equation*}
Choose some $\vec{y}^* \in \partial h(\linop{A}\vec{x}^*)$ such that $\vec{w} = \linop{A}^\top \vec{y}^\star$. 
Then, $\vec{v} = - \vec{u} - \vec{w} = - \nabla f(\vec{x}^*) - \linop{A}^\top \vec{y}^*$. 
From \Cref{lem:prox-lemma}, we have the following:
\begin{equation}
    \tau \vec{v} \in \partial (\tau g)(\vec{x}^*) \iff \vec{x}^* = \prox_{\tau g}(\tau \vec{v} + \vec{x}^*) = \prox_{\tau g}(\vec{x}^* - \tau \nabla f(\vec{x}^*) - \tau \linop{A}^\top \vec{y}^*)
\end{equation}
Lastly, we have that $\sigma \linop{A}\vec{x}^\star \in \partial (\sigma h^\ast)(\vec{y}^\star)$ implies that $\vec{y}^\star = \smash{\prox_{\sigma h^\ast}} \big(\vec{y}^\star + \sigma\linop{A} \vec{x}^\star \big)$.
This proves the other direction. \\

\noindent (4) $\implies$ (3): We now show that a fixed point of \approxpdhg~ is a fixed point of \pdhg.
Suppose $\mathbf{z}^\star := (\vec{x}^\star, \vec{y^\star})$ is a fixed point of \approxpdhg~ (see \Cref{def:fixed-point-approx-pdhg}), which means that $\vec{z}_t^\prime \rightarrow \vec{z}^\star$ and
\begin{equation*}
    \lim_{t \rightarrow \infty} [\norm{\vec{z}_t^\prime - \linop{T}_{\epsilon_t, \delta_t}(\vec{z}_t^\prime)}_Z = 0.
\end{equation*}
Given this, the following holds:
\begin{align*}
    \norm{\vec{z}^\star - \linop{T}(\vec{z}^\star)}_Z &= \lim_{t \rightarrow \infty} \norm{\vec{z}_t^\prime - \linop{T}(\vec{z}_t^\prime)}_Z \\
    &\leq \lim_{t \rightarrow \infty} [\norm{\vec{z}_t^\prime - \linop{T}_{\epsilon_t, \delta_t}(\vec{z}_t^\prime)}_Z + \norm{ \linop{T}_{\epsilon_t, \delta_t}(\vec{z}_t^\prime) - \linop{T}(\vec{z}_t^\prime)}_Z ] \\
    &\leq 0 + \lim_{t \rightarrow \infty} \sqrt{    \epsilon_t+
    \left(
    \sqrt{\delta_t}
    +2\sqrt{\tau\sigma}\norm{\linop{A}}
    \sqrt{\epsilon_t}
    \right)^2} = 0
\end{align*}
where we used continuity of $T$ (since both proximal operators are continuous operators) in the first step, again used \Cref{corr:inexact-pdhg-exact-pdhg-operator-rel} to bound the second norm in the second-to-last step and finally \Cref{corr:iterate-to-iterate-bounds-inf-sum} in the last step to say that $\sqrt{    \epsilon_t+\left(\sqrt{\delta_t} +2\sqrt{\tau\sigma}\norm{\linop{A}} \sqrt{\epsilon_t}\right)^2} \rightarrow 0$.
\end{proof}
\section{Application of Inexact PDHG to \eqref{prob:main-2}}\label{sec:inexact-pdhg-application-p-prime}

We now consider the problem template described in \eqref{prob:main-2} with $\set{H} \subset \set{X}$ and $\set{K} \subset \set{Y}$ being convex and compact sets and the dimensions $n$ and $d$ chosen appropriately as per the problem. 
The proximal subproblem in \eqref{eqn:approx-pdhg-update-1} then becomes an Euclidean projection onto the set $\set{H}$.
After applying the generalized Moreau decomposition, the proximal subproblem in \eqref{eqn:approx-pdhg-update-2} then becomes an Euclidean projection onto the set $\set{K}$.
For simplicity of presentation and implementation, we assume that the Euclidean projection onto $\set{K}$ has an exact closed form solution, while the closed-form projection onto $\set{H}$ is either computationally intensive or difficult to obtain and can therefore be done only approximately. 
We can then use the Frank-Wolfe algorithm to solve the inner projection onto $\set{H}$.

\subsection{Application of \eqref{prob:main-2} to Semidefinite Programming}
Despite only considering inexactness with respect to projection onto $\set{H}$, the problem template described in \eqref{prob:main-2} is widely useful and prevalent.
For instance, consider the following positive semidefinite program (SDP):
\begin{equation} \label{eqn:sample-sdp}
\begin{aligned}
& \underset{\mathbf{X} \in\mathbb{R}^{n \times n}}{\text{minimize}} & & f(\mathbf{X}) \\
& \text{subject to} & & \operatorname{Tr}(\mathbf{A}_i \mathbf{X}) = b_i,\quad i=1,\ldots,m,\\
& & & \operatorname{Tr}(\mathbf{X}) = R \\
& & & \mathbf{X} \in \set{S}^n_+.
\end{aligned}
\end{equation}
The most prominent example of an SDP that falls into this template is Goemans and Williamson's SDP relaxation for MaxCut \citep{goemans1995improved}.
The theta number of a graph is the optimal value of the SDP relaxation of the maximum clique problem, which also falls into the above template \citep{de2002approximation}.
This problem template also has a rich set of applications including statistics \citep{huang2014scalable}, microscopy \citep{horstmeyer2015solving}, robotics \citep{rosen2019se} as well as machine learning tasks such as assessing robustness of neural networks \citep{raghunathan2018semidefinite}.
This problem falls into the template of problem \eqref{prob:main-2} with $\set{H} = \set{S}_+^n \cap \{\mathbf{X}~:~\operatorname{Tr}(\mathbf{X}) = R \}$ (i.e. a spectrahedron) and $\set{K} = \{\vec{b}\}$ and $\linop{A}$ is constructed by combining all the affine constraints $\operatorname{Tr}(\mathbf{A}_i \mathbf{X}) = b_i$, $i = 1, \ldots, m$ into a single constraint of the form $\linop{A}\mathbf{X} = \vec{b}$ and the $i$th element of $\vec{b}$ is $b_i$.
Here, the closed-form projection of a general symmetric matrix onto the $\set{H}$, which is a spectrahedron, involves computing a full eigendecomposition of a matrix followed by a simplex projection, which can be difficult to compute for a large sparse matrix. 
For a large sparse matrix that may or may not fit into the memory, it is easier to do an approximate projection using the Frank-Wolfe algorithm onto the set $\set{H}$, which involves evaluating the LMO of the gradient of the projection objective at the current iterate and the main computational overhead is incurred by computation of the smallest eigenpair of this gradient.
On the other hand, the projection onto $\set{K}$ is simply $\vec{b}$ since $\set{K} = \{\vec{b}\}$.

\subsection{Inexact PDHG for \eqref{prob:main-2}}
We now discuss how to modify \approxpdhg~ to solve \eqref{prob:main-2}. 
First, we define the indicator function $i_{\set{H}}$ over the set $\set{H}$ as follows:
\begin{equation}\label{eqn:g-def-p-prime}
    g(\vec{x}) = i_{\set{H}}(\vec{x}) := \begin{cases}
        0 & \mbox{if }  \vec{x} \in \set{H} \\
        +\infty & \mbox{if } \vec{x} \not\in \set{H}.
    \end{cases}
\end{equation}
Given this definition of the function $g$, the proximal subproblem in \eqref{eqn:approx-pdhg-update-1} then becomes an Euclidean projection onto the set $\set{H}$, which we define as follows:
\begin{definition}[Exact Projection onto $\set{H}$]
Let $\set{H} \subset \set{X} \subseteq \mathbb{R}^d$ be a convex and closed set.
Let $g$ be defined as in \eqref{eqn:g-def-p-prime}.
For any $\vec{v} \in \mathbb{R}^d$, we can define the Euclidean projection $\mbox{proj}_{\set{H}}(\vec{v})$ of $\vec{v}$ onto the set $\set{H}$ as follows:
\begin{equation*}
    \vec{w} = \proj_{\set{H}}(\vec{v}) := \prox_{g}(\vec{v}) =  \arg\min_{\vec{x} \in \set{H}} \left \{ \frac{1}{2}\norm{\vec{x} - \vec{v}}^2 \right \},
\end{equation*}
where $\tau > 0$.
\end{definition}
We can apply Frank-Wolfe algorithm or any advanced variant of the same algorithm to obtain an approximate proximal point in the sense of \Cref{def:inexact-proximal-point} or approximate projection onto $\set{H}$  and the degree of inexactness is quantified by the Frank-Wolfe gap.
This relationship is formalized in the following lemma:
\begin{lemma} \label{lem:lmo-error-inexact-subdiff} Let $\set{H} \subset \set{X}$ be a convex and compact set.
Let $g$ be defined as in \eqref{eqn:g-def-p-prime}.
Now suppose that we are given an arbitrary point $\vec{v} \in \set{X}$ and consider the problem of computing $\proj_{\set{H}}(\vec{v})$.
We can define the Frank-Wolfe gap $g_{FW}$ at $\vec{u} \in \set{H}$ as follows:
\begin{equation*}
    g_{FW}(\vec{u}) := \max_{\vec{s} \in \set{H}} \ip{\vec{u} - \vec{v}}{\vec{u} - \vec{s}}.
\end{equation*}
Then $\vec{u}$ satisfies the \textbf{inexact proximal point condition}:
\begin{equation*}
    \vec{v} - \vec{u} \in \partial_\epsilon g(\vec{u}) \mbox{ with } \epsilon \geq g_{FW}(\vec{u}).
\end{equation*}
and we refer to $\vec{u}$ as an \textbf{inexact projection} of $\vec{v}$ onto the set $\set{H}$ with precision $\epsilon$ and denote it as  $\vec{u} := \smash{\proj^{(\epsilon)}_{\set{H}}}(\vec{v})$.
\end{lemma}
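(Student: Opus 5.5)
The plan is to unfold \Cref{def:inexact-subdiff} for the indicator function $g = i_{\set{H}}$ and check the defining inequality directly. Since $\vec{u} \in \set{H}$, we have $\vec{u} \in \mathrm{dom}(g)$ and $g(\vec{u}) = 0$. The condition $\vec{v} - \vec{u} \in \partial_\epsilon g(\vec{u})$ is then equivalent to
\begin{equation*}
    g(\vec{x}) \geq \ip{\vec{v} - \vec{u}}{\vec{x} - \vec{u}} - \epsilon, \quad \forall \vec{x} \in \mathrm{dom}(g) = \set{H}.
\end{equation*}
On $\set{H}$ the left-hand side vanishes, so the requirement reduces to $\ip{\vec{v} - \vec{u}}{\vec{x} - \vec{u}} \leq \epsilon$ for all $\vec{x} \in \set{H}$. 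Points outside $\set{H}$ impose no condition, because either they lie outside $\mathrm{dom}(g)$ or $g = +\infty$ there.

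The key observation is that $\ip{\vec{v} - \vec{u}}{\vec{x} - \vec{u}} = \ip{\vec{u} - \vec{v}}{\vec{u} - \vec{x}}$. The supremum of this quantity over $\vec{x} \in \set{H}$ is exactly $g_{FW}(\vec{u})$. That maximum is attained and finite because $\set{H}$ is compact and the objective is linear, hence continuous, in $\vec{s}$. It follows that for any $\epsilon \geq g_{FW}(\vec{u})$, every $\vec{x} \in \set{H}$ satisfies $\ip{\vec{v} - \vec{u}}{\vec{x} - \vec{u}} \leq g_{FW}(\vec{u}) \leq \epsilon$, which is the required inclusion.

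I would also record two remarks. First, $g_{FW}(\vec{u}) \geq 0$ (take $\vec{s} = \vec{u}$), so the precision is nonnegative, in line with \Cref{def:inexact-subdiff}. Second, in the paper's notation this is \Cref{def:inexact-proximal-point} with $\tau = 1$, i.e.\ $\vec{u} = \prox^{(\epsilon)}_{g}(\vec{v})$, which justifies writing it as $\proj^{(\epsilon)}_{\set{H}}(\vec{v})$. If the step size $\tau$ is kept, the precision simply scales: since $\tau g = g$ for an indicator function, the condition becomes $(\vec{v} - \vec{u})/\tau \in \partial_{\epsilon/\tau} g(\vec{u})$.

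There is no real obstacle here. The only point needing care is the sign bookkeeping in the inner product, together with noting that $\vec{u}$ must lie in $\set{H}$ for the $\epsilon$-subdifferential to be defined. This is automatic for Frank-Wolfe iterates, since they remain feasible.
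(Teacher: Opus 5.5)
Your proof is correct and follows essentially the same route as the paper: unfold the $\epsilon$-subdifferential of the indicator $g = i_{\set{H}}$ at $\vec{u}\in\set{H}$, discard the points outside $\set{H}$, and identify the supremum of $\ip{\vec{u}-\vec{v}}{\vec{u}-\vec{x}}$ over $\set{H}$ with $g_{FW}(\vec{u})$. Your side remark is also correct and clarifies something the paper leaves implicit: with a general step size $\tau$, the inclusion becomes $(\vec{v}-\vec{u})/\tau \in \partial_{\epsilon/\tau} g(\vec{u})$.
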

\begin{proof} 
We begin by recalling the definition of the $\epsilon$-subdifferential given in \Cref{def:inexact-subdiff} at point $\vec{u}$:
\begin{equation*}
    \vec{g} \in \partial_\epsilon g(\vec{u}) \iff g(\vec{x}) \geq g(\vec{u}) + \ip{\vec{g}}{\vec{x} - \vec{ 
       u}} - \epsilon,~ \forall~\vec{x} \in \set{X}.
\end{equation*}
Since $\vec{u} \in \set{H}$, we have that $g(\vec{u}) = 0$. Since $g(\vec{x}) = +\infty$ if $\vec{x} \not\in \set{H}$ which trivially satisfies the above inequality, we can consider only the case $\vec{x} \in \set{H}$, which means that we can assume that $g(\vec{x}) = 0$ without loss of generality.
Therefore, the condition in \Cref{def:inexact-proximal-point}, which says that
\begin{equation*}
    \vec{v} - \vec{u}\in \partial_\epsilon g(\vec{u}),
\end{equation*}
is equivalent to 
\begin{equation*}
    0 \geq \ip{\vec{v} - \vec{u}}{\vec{x} - \vec{u}} - \epsilon,~\forall~\vec{x} \in \set{H} \iff \epsilon \geq \ip{\vec{u} - \vec{v}}{\vec{u} - \vec{x}},~\forall~\vec{x} \in \set{H}. 
\end{equation*}
This inequality is only satisfied if $\epsilon \geq g_{FW}(\vec{u})$.
This completes the proof.
\end{proof}
Given the notion of an inexact projection onto $\set{H}$, we can proceed to derive the form of the \approxpdhg.
Using the generalized Moreau decomposition, %
\begin{equation*}
    \vec{y}_{t+1}^\prime = \vec{y}_t^\prime + \sigma\linop{A} (2\vec{x}_{t+1}^\prime - \vec{x}_t^\prime) - \sigma \smash{\prox_{\sigma^{-1} h}} \left(\frac{\vec{y}_t^\prime + \sigma\linop{A} (2\vec{x}_{t+1}^\prime - \vec{x}_t^\prime)}{\sigma} \right ).
\end{equation*}
By introducing the variable $\vec{w}_{t+1}^\prime := \vec{y}_t^\prime + \sigma\linop{A} (2\vec{x}_{t+1}^\prime - \vec{x}_t^\prime)$ %
\begin{equation}
\tag{Approximate PDHG - P'}
\begin{aligned}
    \vec{x}_{t+1}^\prime & = \smash{\proj^{(\epsilon_t)}_{\set{H}}} \big(\vec{x}_t^\prime - \tau \grad f (\vec{x}_t^\prime) - \tau \linop{A}^\top \vec{y}_t^\prime \big) \\[0.5em]
    \vec{w}_{t+1}^\prime & = \vec{y}_t^\prime + \sigma\linop{A} (2\vec{x}_{t+1}^\prime - \vec{x}_t^\prime) \big), \\[0.5em]
    \vec{y}_{t+1}^\prime &= \vec{w}_{t+1}^\prime - \sigma \smash{\proj_{\set{K}}} \left(\frac{\vec{w}_{t+1}^\prime}{\sigma} \right).
\end{aligned}
\label{alg:approx_pdhg-p-prime}
\end{equation}
Here $\smash{\proj^{(\epsilon_t)}_{\set{H}}}(\cdot)$ denotes an approximate Euclidean projection onto $\set{H}$ and  $\smash{\proj_{\set{K}}}(\cdot)$ is the exact projection onto $\set{K}$.
\subsection{Frank-Wolfe Algorithm for Projection Subproblem in \eqref{prob:main-2}}
We now discuss the basic Frank-Wolfe algorithm to approximately project a point in $\set{X}$ onto the convex and compact set $\set{H} \subset \set{X}$.
Given a $\mathbf{G} \in \set{X}$, the goal of the approximate projection subproblem onto $\set{H}$ is to find an $\mathbf{H} \in \set{H}$ such that the Frobenius norm of the difference between $\mathbf{G}$ and $\mathbf{H}$ is minimized.
\begin{equation}\label{eqn:proj-subproblem-H}
    \min_{\mathbf{H} \in \set{H}} \phi(\mathbf{H}) \mbox{ with } \phi(\mathbf{H}) :=  \frac{1}{2}\|\mathbf{H} - \mathbf{G}\|_F^2
\end{equation}
We then proceed to apply $n_t$ iteration of the Frank-Wolfe algorithm to this problem for the epoch $t$ of \approxpdhg. 
For inner iteration $k$, the step direction $\mathbf{S}_k$ is a solution of the following problem known as the Linear Minimization Oracle (LMO): 
\begin{equation}\label{eqn:lmo-step}
    \mathbf{S}_k = \arg\min_{\mathbf{U} \in \set{H}}\langle \nabla \phi(\mathbf{H}_k), \mathbf{U}\rangle, 
\end{equation}
The solution of \eqref{eqn:lmo-step} depends on the problem at hand. 
For trace-constrained SDP \eqref{eqn:sample-sdp}, the solution of the LMO is given by $\vec{S}_k = R \vec{v}_{min}\vec{v}_{min}^\top$, where $\vec{v}_{min}$ is the smallest eigenvector of $\mathbf{H}_k - \mathbf{G}$. 

Finally, the current estimate $\mathbf{H}_k$ can be updated to an approximate estimate $\mathbf{H}_{k + 1}$ by taking a step with size $\eta_k$ towards $\mathbf{S}_k$. 
\begin{equation*}
    \mathbf{H}_{k + 1} = (1 - \eta_k) \mathbf{H}_k + \eta_k \mathbf{S}_k
\end{equation*}
For added efficiency, assuming that the Frank-Wolfe algorithm is run for $N_t$ steps for iteation $t$ of \approxpdhg, the estimate of the projection $\mathbf{H}_{N_t}$ is saved and used as a starting estimate for the same projection onto the $\set{H}$ for the next iteration of \approxpdhg.

The step size $\eta_k$ can be found by solving the following optimization problem, which amounts to doing a line search:
\begin{equation}
    \eta_k = \arg\min_{\eta \in [0, 1]} \frac{1}{2} \|(1 - \eta) \mathbf{H}_k + \eta \mathbf{S}_k - \mathbf{G} \|_F^2
\end{equation}
This problem has the following closed form solution:
\begin{equation}
    \eta_k = \min \Bigg\{ \max \Big\{ \frac{\langle \mathbf{H}_k - \mathbf{S}_k  ,\mathbf{H}_k - \mathbf{G}\rangle}{\|\mathbf{H}_k - \mathbf{S}_k \|_F^2}, 0 \Big\},1 \Bigg\} .
\end{equation}

After $k$ iterations, the Frank-Wolfe gap $G(\mathbf{H}_k)$ for estimated projection $\mathbf{H}_k$ is given by

\begin{equation*}
    g_{FW}(\mathbf{H}_k) = \max_{\mathbf{U} \in \set{H}} \ip{\mathbf{H}_k - \mathbf{U}}{\mathbf{H}_k - \mathbf{G}}  = \ip{ \mathbf{H}_k - \mathbf{S}_k}{\mathbf{H}_k - \mathbf{G}} 
\end{equation*}
where $\mathbf{S}_k$ is the estimated step direction for iteration $k$. 
As per \Cref{lem:lmo-error-inexact-subdiff}, in order to obtain an inexact projection $\smash{\proj_{\set{H}}}^{(\epsilon)}(\mathbf{G})$ onto $\set{H}$ with precision of $\epsilon$, we can run Frank-Wolfe algorithm or any variant till 
\begin{equation*}
    g_{FW}(\mathbf{H}_k) =  \ip{\mathbf{H}_k - \mathbf{S}_k}{\mathbf{H}_k - \mathbf{G}} \leq \epsilon.
\end{equation*}
The pseudocode for approximate projection onto $\set{H}$ with the Frank-Wolfe algorithm is provided in \Cref{alg:approx_proj_fw}.
\begin{algorithm} 
\setstretch{1.25}
\caption{Approximate Projection onto $\set{H}$ with Frank-Wolfe: $\mathrm{FW}(\mathbf{G}, \mathbf{H}_0, N_t, \epsilon)$}
\label{alg:approx_proj_fw}
\begin{algorithmic}[1]
\State \textbf{Input: } $\mathbf{G} \in \set{X}$, $\mathbf{H}_0 \in \set{H}$, the maximum number of iterations $N_t$, tolerance $\epsilon$
\For{$k = 0, \ldots, N_t$}
\State Compute gradient: $\nabla \phi(\mathbf{H}_k) = \mathbf{H}_k - \mathbf{G}$
\State Compute LMO: $\mathbf{S}_k = \arg\min_{\mathbf{U} \in \set{H}} \langle \nabla \phi(\mathbf{H}_k), \mathbf{U} \rangle$
\State Frank-Wolfe direction: $\mathbf{D}_k^{FW} := \mathbf{S}_k - \mathbf{H}_k$
\State Compute FW gap: $g_k^{FW} := \langle -\nabla \phi(\mathbf{H}_k), \mathbf{D}_k^{FW} \rangle$
\If{$g_k^{FW} \leq \epsilon$}
    \State \Return $\mathbf{H}_k$
\EndIf
\State Line Search: $\eta_k = \min \Big\{ \max \big\{ \frac{\langle \mathbf{H}_k - \mathbf{S}_k  ,\mathbf{H}_k - \mathbf{G}\rangle}{\|\mathbf{H}_k - \mathbf{S}_k \|_F^2}, 0 \big\},1 \Big\}$
\State Update: $\mathbf{H}_{k + 1} = (1 - \eta_k) \mathbf{H}_k + \eta_k \mathbf{S}_k$
\EndFor
\State Return $\mathbf{H}_{N_t}$
\end{algorithmic}
\end{algorithm}
\subsection{Spectrahedron Frank-Wolfe for Projection Subproblem in \eqref{prob:main-2}}
The main problem with the vanilla Frank-Wolfe algorithm described in \Cref{alg:approx_proj_fw} is that the convergence to the optimal projection is sublinear and the large number of LMO calls significantly slows down the solution of projection subproblem, which in turn slows down the convergence of \approxpdhg~ due to the projection subproblem being solved for every iteration of \approxpdhg. 
The Frank-Wolfe algorithm also exhibits the ``zig-zagging'' phenomena where the current Frank-Wolfe iterate moves in a zig-zag manner between the vertices of the optimal face where the optimal projection $\mathbf{H}^\star$ lies. 
All advanced variants of Frank-Wolfe such as Away Steps (AFW) \citep{abadie1970integer, guelat1986some, lacoste2015global}, Pairwise (PFW) \citep{mitchell1974finding, lacoste2015global} and Fully Corrective Frank-Wolfe (FCW) \citep{holloway1974extension, lacoste2015global} attempt to address this shortcoming by either reducing the number of LMO calls or making use of information learned from previous LMO calls to learn some effective representation of $\set{H}$ in order to search $\set{H}$ more efficiently.
See \citep{braun2025conditional} for a more comprehensive list of more advanced Frank-Wolfe variants along with applications.
See \Cref{tab:fw_comparison_t} for a summary of different advanced variants of Frank-Wolfe in terms of computational complexity. 

One problem with advanced variants of Frank-Wolfe discussed above is that Away Steps, Pairwise and Fully Corrective Frank-Wolfe require the set $\set{H}$ to be a bounded convex polytope in order to achieve linear convergence, while the spectrahedron or the intersection of the positive semidefinite cone with the bounded trace is not a convex polytope and has "infinitely" many vertices and faces. 
To address this problem, \citep{garber2026randomized} showed it is possible to achieve linear convergence after the objective residual decreases below a certain threshold when optimizing a smooth and convex function over the spectrahedron given (1) a quadratic growth condition, which is satisfied since $\phi$ is strongly convex and (2) a strict complementarity condition, which amounts to a positive eigengap at $\nabla \phi(\mathbf{H}^\star)$ and can be stated precisely as follows: Suppose that $\set{H} \subset \mathbb{R}^{n \times n}$ and $\operatorname{rank}(\mathbf{H}^\star) = r^\star$ with $r^\star < n$, then
\begin{equation}\label{eqn:strict-complementarity-eigengap}
    \lambda_{n - r^\star}(\nabla \phi(\mathbf{H}^\star)) - \lambda_{n - r^\star + 1}(\nabla \phi(\mathbf{H}^\star)) = \delta
\end{equation}
for some $\delta > 0$, where $\lambda_i(\nabla \phi(\mathbf{H}^\star))$ denotes the $i$th largest eigenvalue of $\nabla \phi(\mathbf{H}^\star)$ and $\lambda_1(\nabla \phi(\mathbf{H}^\star)) \geq \ldots \geq \lambda_n(\nabla \phi(\mathbf{H}^\star))$.
The algorithm works by taking a regular LMO update, an away step and a randomized pairwise step on a single iteration and choosing the next FW iterate $\mathbf{H}_{k+1}$ based on which of the three steps lowers the projection objective $\phi$ the most. 
The problem of the spectrahedron having "infinite" number of faces is handled by taking a randomized pairwise step that replaces a random rank-1 component of the current iterate $\mathbf{H}_k$ with a new rank-1 component that reduces the objective residual by a constant fraction in expectation if the current iterate $\mathbf{H}_k$ is not aligned with the face defined by the $r^\star$-dimensional subspace defined by the current gradient direction $\nabla \phi (\mathbf{H}_k)$. 
If the iterate $\mathbf{H}_k$ is sufficiently aligned with the face that is defined by the currrent gradient direction $\nabla \phi (\mathbf{H}_k)$, then either an away step or a traditional FW update may lead to a greater decrease in the projection objective $\phi$ and also reduce the objective residual by a constant fraction, so that an overall linear convergence in objective can be achieved (see \Cref{alg:spectrahedron-fw}).
The strict complementarity condition in \eqref{eqn:strict-complementarity-eigengap}, while restrictive, is generally satisfied in practice for any choice of $\mathbf{H}^\star$ and $\mathbf{G}$ as the probability of an exactly zero eigengap is very small.
Furthermore, \citep{garber2026randomized} found that Spectrahedron Frank-Wolfe (SFW)  (see \Cref{alg:spectrahedron-fw}) converges linearly in practice even without strict complementarity condition and has recently proposed an alternative version of the algorithm that converges linearly without strict complementarity assumption \citep{garber2026linear} around the same time as the publication of this paper.
We use the version of the algorithm described in \citep{garber2026randomized}.
The implementation details for Spectrahedron Frank-Wolfe algorithm are in \Cref{app:sfw-impl-details}.
We used ARPACK's implementation of thick-restart Lanczos algorithm \citep{lehoucq1998arpack} as the backend eigensolver for Spectrahedron Frank-Wolfe with tolerance of $10^{-8}$.
Since \citep{garber2026randomized} only provides analysis with unit trace spectrahedron and assuming no eigensolver error, the convergence of Spectrahedron Frank-Wolfe algorithm for a non-unit trace and eigensolver error is provided in \Cref{app:approx-eigvec-comp}.
We find that Spectrahedron Frank-Wolfe often converges to machine precision in terms of LMO-error despite this eigensolver error and this is due to the errors terms decreasing at a geometric rate (see \Cref{thm:sfw-convergence-inexact-evec} and \Cref{thm:lanczos-iter-sfw}).
\begin{algorithm}
\caption{Spectrahedron Frank-Wolfe with Away and Pairwise Steps \citep{garber2026randomized}}
\label{alg:spectrahedron-fw}
\begin{algorithmic}[1]
\State \textbf{Input:} $\mathbf{G} \in \set{X}, \; \mathbf{H}_0 \in \set{H}$, maximum iterations $N_t$
\For{$k = 0,\dots,N_t$}
    \State $\vec{v}_{k,-} = \arg\max_{v \in \mathrm{Im}(\mathbf{H}_k): \|\vec{v}\|= \sqrt{R}} \vec{v}^\top \nabla \phi(\mathbf{H}_k) \vec{v}$
    \State $\lambda_k = (\vec{v}_{k,-}^\top \mathbf{H}_k^\dagger \vec{v}_{k,-})^{-1}$
    \If{$\lambda_k < 1$}
        \State $\mathbf{H}^{\text{drop}}_{k+1} = \frac{1}{1-\lambda_k}\big(\mathbf{H}_k - \lambda_k \vec{v}_{k,-} \vec{v}_{k,-}^\top\big)$
        \If{$\phi(\mathbf{H}^{\text{drop}}_{k+1}) \leq \phi(\mathbf{H}_k)$}
            \State $\mathbf{H}_{k+1} = \mathbf{H}^{\text{drop}}_{k+1}$ \Comment{take a drop step}
        \EndIf
    \EndIf
    \If{drop step was not taken}
        \State $\vec{v}_{k,+} \gets$ leading eigenvector of $-\nabla \phi(\mathbf{H}_k)$ scaled by $\sqrt{R}$
        \State $\eta_k^{\text{FW}} = \arg\min_{\eta \in [0,1]} \phi\big((1-\eta) \mathbf{H}_k + \eta \vec{v}_{k,+} \vec{v}_{k,+}^\top\big)$
        \State $\mathbf{H}^{\text{FW}}_{k+1} = (1-\eta_k^{\text{FW}}) \mathbf{H}_k + \eta_k^{\text{FW}} \vec{v}_{k,+} \vec{v}_{k,+}^\top$
        \State $\eta_k^{\text{AFW}} = \arg\min_{\eta \in [0,\lambda_k]} \phi\!\left(\frac{1}{1-\eta}(\mathbf{H}_k - \eta \vec{v}_{k,-} \vec{v}_{k,-}^\top)\right)$
        \State $\mathbf{H}^{\text{AFW}}_{k+1} = \tfrac{1}{1-\eta_k^{\text{AFW}}}(\mathbf{H}_k - \eta_k^{\text{AFW}} \vec{v}_{k,-} \vec{v}_{k,-}^\top)$
        \State $\vec{u}_{k,-} \gets$ random unit vector in $\mathrm{Im}(\mathbf{H}_k)$ scaled by $\sqrt{R}$
        \State $\gamma_k = (\vec{u}_{k,-}^\top \mathbf{H}_k^\dagger \vec{u}_{k,-})^{-1}$
        \State $\vec{u}_{k,+} \gets$ leading eigenvector of $\gamma_k \vec{u}_{k,-} \vec{u}_{k,-}^\top - \nabla \phi(\mathbf{H}_k)$ scaled by $\sqrt{R}$
        \State $\mathbf{H}^{\text{PFW}}_{k+1}  = \mathbf{H}_k + \gamma_k \big(\vec{u}_{k,+} \vec{u}_{k,+}^\top - \vec{u}_{k,-} \vec{u}_{k,-}^\top\big)$
        \State $\mathbf{H}_{k+1} = \arg\min_{\mathbf{H} \in \{\mathbf{H}^{\text{FW}}_{k+1}, \mathbf{H}^{\text{AFW}}_{k+1}, \mathbf{H}^{\text{PFW}}_{k+1}\}} \phi(\mathbf{H})$
    \EndIf
\EndFor
\State \Return $\mathbf{H}_{N_t}$
\end{algorithmic}
\end{algorithm}

\subsection{Theoretical Guarantees for \eqref{prob:main-2}}
Now we consider the convergence rate of \approxpdhg~ over \eqref{prob:main-2} assuming that some Frank-Wolfe variant is used to solve the projection subproblem in \eqref{eqn:proj-subproblem-H}.
The rate of convergence of each Frank-Wolfe variant is summarized in \Cref{tab:fw_comparison_t}.
In \Cref{sec:inexact-pdhg}, we showed that showed that the ergodic rate of \approxpdhg~ is $\mathcal{O}(1/t)$. 
In the following theorem, for simplicity, though the theory only provides an ergodic rate of $\mathcal{O}(1/t)$, we \textit{assume} that the iterate-to-iterate distance for \approxpdhg~ is given by $\norm{\vec{z}_t^\prime - \vec{z}_{t+1}^\prime}_Z^2 = \frac{C}{(t+1)^\alpha}$ for some $\alpha > 1$ and come up with a bound on the total number of inner iterations to achieve overall accuracy of $\varepsilon > 0$ with \approxpdhg~ over \eqref{prob:main-2}.
Note that this type of polynomial decay regime may not have been shown to hold theoretically in general, we observe this type of behavior from many applications including the ones that we present in the experimental section later.

\begin{theorem}\label{thm:AFW}
Consider \approxpdhg~ applied to \eqref{prob:main-2}.
Suppose that $\set{H}$ is a general convex and compact set.
Define $\vec{z}_t := (\vec{x}_t, \vec{y}_t)$
and $\vec{z}_t^\prime := (\vec{x}_t^\prime, \vec{y}_t^\prime)$. 
Assume that the error tolerance schedule $\{\epsilon_t\}$ is chosen as per \eqref{eqn:inexact-error-tolerance-schedule} by setting
\begin{equation}\label{eqn:inexact-error-tolerance-schedule-p-prime}
    \epsilon_t = \rho_1\kappa  \frac{\norm{\vec{x}_{t+1}^\prime - \vec{x}_t^\prime}^2}{2\tau}.
\end{equation}
Let the following conditions hold:
\begin{itemize}
    \item Assume that the  squared primal iterate-to-iterate distance for \approxpdhg~ is given by $\norm{\vec{x}_t^\prime - \vec{x}_{t+1}^\prime}^2 = \frac{C}{(t+1))^\alpha}$ for some $\alpha > 1$. %
    \item Apply AFW, PFW, FCFW or SFW (if $\set{H}$ is a spectrahedron) algorithm to solve the projection subproblem in \eqref{eqn:proj-subproblem-H}.
    We run AFW, PFW, FCFW or SFW algorithm until the Frank-Wolfe gap is under $\epsilon_t$ for the proximal subproblem in \eqref{eqn:proj-subproblem-H}.
    The FW variant either converges linearly with the type-2 error or LMO error after $k$ inner iterations being $\mathcal{O}(e^{-\rho k})$ for some $\rho > 0$ if $\set{H}$ is bounded convex polytope or if SFW is used under certain assumptions (see \Cref{thm:sfw-convergence-inexact-evec}) and otherwise sublinearly with the error after $k$ inner iterations being $\mathcal{O}(1/k)$.
    \item At outer iteration $t$, the subproblem is solved to accuracy $\epsilon_t$. %
\end{itemize}
Then, to obtain an overall accuracy of at most $\varepsilon > 0$ in terms of the squared primal iterate-to-iterate distance with \approxpdhg, the total number of inner iterations required is
\begin{equation*}
    \mathcal{O}\left( \varepsilon^{-1/\alpha} \log \left( \frac{1}{\varepsilon} \right) \right)
\end{equation*}
if $\set{H}$ is a bounded convex polytope or a spectrahedron (with SFW as chosen FW variant with conditions in \Cref{thm:sfw-convergence-inexact-evec} satisfied) and %
\begin{equation*}
    \mathcal{O}\left( \varepsilon^{-1 -\frac{1}{\alpha}} \right ).
\end{equation*}
if $\set{H}$ is a general convex and compact set without any other special structure.
\end{theorem}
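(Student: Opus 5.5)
The plan is a direct counting argument in three steps. First, fix how many outer iterations are needed. Second, bound the inner iterations spent at each outer iteration. Third, sum the two together.

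\textbf{Step 1: outer iterations.} Under the assumed decay $\norm{\vec{x}_t^\prime - \vec{x}_{t+1}^\prime}^2 = C/(t+1)^\alpha$, reaching a squared primal iterate-to-iterate distance of at most $\varepsilon$ requires $T$ with $C/(T+1)^\alpha \le \varepsilon$. It suffices to take
\begin{equation*}
T = \left\lceil (C/\varepsilon)^{1/\alpha} \right\rceil = \mathcal{O}(\varepsilon^{-1/\alpha}).
\end{equation*}
The requirement $\alpha>1$ is not needed for this count. It only guarantees summability, which makes the schedule consistent with \Cref{corr:iterate-to-iterate-bounds-inf-sum}.

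\textbf{Step 2: tolerances and inner iterations per outer step.} Substituting the assumed decay into the schedule \eqref{eqn:inexact-error-tolerance-schedule-p-prime} gives
\begin{equation*}
\epsilon_t = \frac{\rho_1\kappa C}{2\tau (t+1)^\alpha}.
\end{equation*}
So for every $t\le T$ we have $\epsilon_t \ge \epsilon_T = \Theta(\varepsilon)$. By \Cref{lem:lmo-error-inexact-subdiff}, stopping the FW variant once the FW gap drops below $\epsilon_t$ yields a valid inexact projection $\proj^{(\epsilon_t)}_{\set{H}}$, so the analysis of \Cref{sec:inexact-pdhg} applies. The number of inner iterations $k_t$ at outer step $t$ then depends on the regime.
\begin{itemize}
\item \emph{Linear regime} (polytope with AFW, PFW or FCFW, or spectrahedron with SFW under \Cref{thm:sfw-convergence-inexact-evec}). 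The gap after $k$ steps is at most $D e^{-\rho k}$, where $D$ is a uniform constant, e.g.\ the diameter of $\set{H}$ times a bound on $\norm{\mathbf{H}_0-\mathbf{G}}$. Hence $k_t \le \rho^{-1}\log(D/\epsilon_t) = \mathcal{O}(\log(1/\varepsilon))$, because $\log(1/\epsilon_t) \le \log(1/\epsilon_T) = \log(1/\varepsilon) + \mathcal{O}(1)$.
\item \emph{Sublinear regime} (general compact convex $\set{H}$). The gap after $k$ steps is at most $D'/k$, so $k_t \le D'/\epsilon_t = \mathcal{O}((t+1)^\alpha) = \mathcal{O}(1/\varepsilon)$.
\end{itemize}

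\textbf{Step 3: summing.} Adding over $t=0,\dots,T$ gives $\mathcal{O}(T\log(1/\varepsilon)) = \mathcal{O}(\varepsilon^{-1/\alpha}\log(1/\varepsilon))$ in the linear regime. In the sublinear regime it gives $\mathcal{O}(T\cdot\varepsilon^{-1}) = \mathcal{O}(\varepsilon^{-1-1/\alpha})$. A sharper sum $\sum_t (t+1)^\alpha \approx T^{\alpha+1}/(\alpha+1)$ gives the same order, since $T^{\alpha+1} = \varepsilon^{-1-1/\alpha}$ up to constants. This matches both claimed bounds.

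\textbf{Main obstacle.} The delicate step is justifying that the constants $D, D', \rho$ in the inner convergence rates are uniform in $t$. Each subproblem has a different $\mathbf{G}$, namely $\mathbf{G}_t = \vec{x}_t^\prime - \tau\nabla f(\vec{x}_t^\prime) - \tau\linop{A}^\top\vec{y}_t^\prime$. I would handle this in two parts.
\begin{itemize}
\item \emph{Initial gaps.} \Cref{lem:iterate-convergence-decrease-cond} shows the iterates $\vec{z}_t^\prime$ are bounded. Hence $\mathbf{G}_t$ stays in a bounded set, and since $\set{H}$ is compact the initial gaps $\phi(\mathbf{H}_0)-\phi^\star$ and the curvature constants are uniformly bounded.
\item \emph{Linear rates.} For polytopes, the linear rate constant depends only on the pyramidal width of $\set{H}$ and on the strong convexity and smoothness of $\phi$, all of which equal $1$, so it is uniform. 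For SFW, the rate depends on the eigengap $\delta$ in \eqref{eqn:strict-complementarity-eigengap}, which varies with $\mathbf{G}_t$. Here I would simply assume, as part of the conditions of \Cref{thm:sfw-convergence-inexact-evec}, a uniform lower bound on $\delta$ along the iterates. Alternatively, one can take the limiting projection and argue by continuity near the limit. I would also absorb SFW's initial sublinear burn-in phase into the constant, appealing to warm starts.
\end{itemize}

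With uniformity established, the conversion from an objective-gap rate to an FW-gap rate uses the standard bound $g_{FW}\le\mathcal{O}(\sqrt{\phi-\phi^\star})$ for the linear case, which still decays geometrically. For the sublinear case, the standard $\mathcal{O}(1/k)$ guarantee on the minimum FW gap suffices.
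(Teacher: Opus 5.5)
Your proposal is correct and follows essentially the same counting argument as the paper: you substitute the assumed decay into the schedule to get $\epsilon_t = \Theta((t+1)^{-\alpha})$, bound the inner iterations per outer step by $\mathcal{O}(\log(1/\epsilon_t))$ or $\mathcal{O}(1/\epsilon_t)$, and sum over $T = \mathcal{O}(\varepsilon^{-1/\alpha})$ outer steps. The only differences are cosmetic or additive. The paper sums $\log(t+1)$ and $(t+1)^\alpha$ explicitly, while you bound each term by the $t=T$ one, which gives the same orders. Your discussion of why the inner-rate constants are uniform in $t$ (bounded $\mathbf{G}_t$ via bounded iterates, and a uniform eigengap assumption for SFW) goes beyond the paper, which leaves this implicit in the $\mathcal{O}(\cdot)$ notation.
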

\begin{proof}
From the assumptions, it follows that
\begin{equation*}
    \epsilon_t =  \frac{\rho \kappa C}{2\tau(t+1)^\alpha}
\end{equation*}
Since $\alpha>1$, the series $\sum_{t=0}^{\infty}\epsilon_t$ converges. 

First, consider the case where $\set{H}$ is a bounded
convex polytope, or where $\set{H}$ is a spectrahedron
and SFW is the chosen variant with required conditions for linear convergence satisfied. 
Then the Frank-Wolfe variant converges linearly and the number of inner iterations $N_t$ at outer iteration $t$ is given by
\begin{equation*}
N_t=\mathcal{O}\left(\log \left (\frac{1}{\epsilon_t} \right ) \right).
\end{equation*}
Therefore, setting $c = \frac{\rho \kappa C}{2\tau}$, the total number of inner iterations up to iteration $T$ is
\begin{align*}
    \sum_{t = 0}^T N_t &= \sum_{t = 0}^T \mathcal{O}\left(\log \left (\frac{1}{\epsilon_t} \right ) \right) \\
    &= \sum_{t = 0}^T \mathcal{O} \left(\alpha \log(t+1)) - \log(c) \right) \\
    &=\mathcal{O} \left( \sum_{t=0}^T \log(t+1) \right ) \\
    &= \mathcal{O} (T \log(T)),
\end{align*}
where we used the following inequality in the last step:
\begin{equation*}
    \int_0^{T-1} \log(x + 1)dx \leq \sum_{
    t = 0}^{T-1} \log(t + 1) \leq \int_0^{T-1} \log(x + 1)dx + \log(T-1)
\end{equation*}
Next, consider the case where $\set{H}$ is a general
convex and compact set, then the Frank-Wolfe variant has worst-case sublinear convergence and 
\begin{equation*}
N_t=\mathcal{O}\left(\frac{1}{\epsilon_t}\right).
\end{equation*}
In this case, the total number of inner iterations is given by 
\begin{align*}
    \sum_{t = 0}^T N_t &= \sum_{t = 0}^T \mathcal{O}\left(\frac{1}{\epsilon_t}\right) \\
    &= \sum_{t = 0}^T \mathcal{O}\left((t+1)^\alpha \right) \\
    &= \mathcal{O}(T^{1+\alpha}).
\end{align*}
Finally, suppose that the overall accuracy is measured by
the squared primal iterate-to-iterate distance. To ensure that $\|\vec x_T^\prime-\vec x_{T+1}^\prime\|^2\leq\varepsilon$, it suffices to run $\mathcal O(\varepsilon^{-1/\alpha})$ outer iterations.
This completes the proof.
\end{proof}
\begin{table}
\centering
\large  
\resizebox{\textwidth}{!}{%
\begin{tabular}{@{} l p{3.8cm} p{4.2cm} p{2.8cm} p{3.5cm} @{}}
\toprule
\textbf{Algorithm} & \multicolumn{2}{c}{\textbf{Convergence on Projection Subproblem \eqref{eqn:proj-subproblem-H}}} & \textbf{Per-Iteration Cost} & \textbf{Reference(s)} \\
\cmidrule(lr){2-3}
& \textbf{General $\set{H}$ (e.g., Spectrahedron in \eqref{eqn:sample-sdp})} 
& \textbf{Bounded Convex Polytope} 
&  &  \\
\midrule
Frank-Wolfe (FW) 
& $\mathcal{O}(1/k)$ 
& $\mathcal{O}(1/k)$ %
& Low (1 LMO call) 
& \citep{frank1956algorithm, jaggi2013revisiting} \\

Away-step FW (AFW) 
& $\mathcal{O}(1/t(k))$ 
& $\mathcal{O}(\exp(-\rho t(k)))$, $t(k) \geq k/2$ 
& Moderate (LMO + away step) 
& \citep{abadie1970integer, guelat1986some, lacoste2015global} \\

Pairwise FW (PFW) 
& $\mathcal{O}(1/t(k))$ 
& $\mathcal{O}(\exp(-\rho t(k)))$, $t(k) \geq k/(3|\set{A}|! + 1)$ 
& Moderate (LMO + pairwise update) 
& \citep{lacoste2015global} \\

Fully Corrective FW (FCFW) 
& $\mathcal{O}(1/t(k))$ 
& $\mathcal{O}(\exp(-\rho k))$ 
& High (QP over active set) 
& \citep{holloway1974extension, lacoste2015global} \\

Lazy FW (LFW) 
& Likely $\mathcal{O}(1/k)$ (not provided) 
& Likely $\mathcal{O}(\exp(-\Theta(k)))$ (not provided) 
& Very Low (cached LMO) 
& \citep{braun2017lazy} \\

Spectrahedron FW (SFW) 
& Linear convergence in expectation under certain assumptions (see \Cref{thm:sfw-convergence-inexact-evec} with $\theta_k = 0$)
& N/A 
& Moderate (LMO + pairwise update + away step) 
& \citep{garber2026randomized} \\
\bottomrule
\end{tabular}
}
\caption{Comparison of Frank-Wolfe algorithm variants with convergence rates in terms of iteration count \( k \).}
\label{tab:fw_comparison_t}
\end{table}
Based on this, if we assume that the primal iterate-to-iterate distance of \approxpdhg~ goes down at polynomial decay rate of $C/(t + 1)^\alpha$ for some $C > 0$ for \eqref{eqn:sample-sdp} and we use Spectrahedron FW for the corresponding projection subproblem, then we need $\mathcal{O}\left (\varepsilon^{-1/\alpha} \log \left ( \frac{1}{\varepsilon}\right ) \right )$ total inner iterations for solving \eqref{eqn:sample-sdp} to accuracy of $\varepsilon$ over the objective residual after the objective residual decreases below a specific threshold detailed in \Cref{thm:sfw-convergence-inexact-evec} assuming the projection subproblems satisfy strict complementarity (see \eqref{eqn:strict-complementarity-eigengap} for definition of strict complementarity).
We use spectral warm starts as a heuristic to achieve linear convergence with Spectrahedron FW by getting the initial objective residual for the projection subproblem to be low enough as per \Cref{thm:sfw-convergence-inexact-evec} but this is not a provably guaranteed way to achieve this.
Otherwise, we generally need $\mathcal{O}(\varepsilon^{-1-1/\alpha})$ total inner iterations to achieve an accuracy of $\varepsilon$.
\section{Experiments} \label{sec:experiments}
\subsection{Comparison of Spectrahedron Frank-Wolfe and Vanilla Frank-Wolfe}
In this experiment, we systematically benchmark Spectrahedron Frank-Wolfe (SFW) and Vanilla FW (VFW) projection algorithms on randomly generated symmetric matrices of varying sizes, ranging from $n=500$ to 
$n=10,000$ with all elements are from a standard normal distribution prior to symmetrization. 
Both employ low-rank factorization representation based on eigendecomposition of the projection iterate to conserve memory.
For each matrix, we compute the exact projection onto the spectrahedron using a closed-form eigendecomposition as a baseline. 
Both FW algorithms are initialized with a low-rank factorization and are run for a fixed number of iterations. 
All of the experiments here were conducted on a Linux machine equipped with an Intel Core Ultra 9 275HX processor featuring 24 physical cores and 24 hardware threads.
We record the objective value, relative error with respect to the exact solution that is based on full eigendecomposition, the primal-dual gap (FW gap), the number of iterations, and the computational time for each method. 
This setup enables a detailed comparison of the algorithms' efficiency and accuracy as the problem dimension increases and provides a controlled environment to assess the scalability and practical performance of Frank-Wolfe-based projection methods for large-scale semidefinite programs.
The results are summarized in \Cref{tab:fw_runtime_speed} and \Cref{tab:fw_accuracy_iterations} and they show that the speedup of SFW relative to the exact solver generally increases with dimension and that SFW generally scales better than VFW with increasing dimension, achieving a lower relative error and LMO error with lesser number of iterations than VFW.
The non-monotonic increase (ex. $n = 2000$) in runtime with dimension can be explained by difficulty posed by spread of the eigenvalue spectrum varying from one instance to the other.
\begin{table}[H]
\centering
\caption{Runtime and speedup comparison between exact solver, Spectrahedron Frank Wolfe (SFW), and Vanilla Frank Wolfe (VFW). The speedup is relative to the exact solver.}
\label{tab:fw_runtime_speed}
\begin{tabular}{r
                S[table-format=2.4]
                S[table-format=2.4]
                S[table-format=2.4]
                S[table-format=1.2]
                S[table-format=1.2]}
\toprule
{$n$} & {Exact time (s)} & {SFW time (s)} & {VFW time (s)} & {SFW speedup} & {VFW speedup} \\
\midrule
 500  & 0.2765 & 0.3279 & 2.3350 & 0.84 & 0.12 \\
1000  & 0.0705 & 0.2310 & 2.3540 & 0.31 & 0.03 \\
2000  & 0.3480 & 0.2867 & 0.7600 & 1.21 & 0.46 \\
4000  & 2.2146 & 1.1208 & 6.6909 & 1.98 & 0.33 \\
5000  & 4.0454 & 0.9805 & 9.2166 & 4.13 & 0.44 \\
7000  & 10.0460 & 2.9516 & 16.5774 & 3.40 & 0.61 \\
10000 & 28.4494 & 7.2194 & 31.3728 & 3.94 & 0.91 \\
\bottomrule
\end{tabular}
\end{table}
\begin{table}[H]
\centering
\caption{Accuracy, iteration count, and linear minimization oracle (LMO) error for Spectrahedron Frank Wolfe (SFW) and Vanilla Frank Wolfe (VFW).}
\label{tab:fw_accuracy_iterations}
\begin{tabular}{r
                S[table-format=1.2e-1]
                S[table-format=1.2e-1]
                S[table-format=2.0]
                S[table-format=2.0]
                S[table-format=1.2e-1]
                S[table-format=1.2e-1]}
\toprule
{$n$} 
& {SFW rel. error} 
& {VFW rel. error} 
& {SFW iters} 
& {VFW iters} 
& {SFW LMO error} 
& {VFW LMO error} \\
\midrule
 500  & 5.78e-16 & 7.37e-07 & 3 & 25 & 0.00e+00 & 3.59e-06 \\
1000  & 1.61e-16 & 6.33e-07 & 3 & 25 & 1.42e-14 & 4.22e-05 \\
2000  & 4.53e-16 & 6.55e-09 & 3 & 7  & 2.84e-14 & 9.23e-09 \\
4000  & 2.39e-06 & 1.39e-07 & 4 & 25 & 2.84e-14 & 4.83e-06 \\
5000  & 1.42e-14 & 3.04e-08 & 3 & 25 & 1.42e-14 & 8.56e-08 \\
7000  & 8.71e-06 & 3.44e-08 & 3 & 25 & 2.84e-09 & 1.00e-06 \\
10000 & 6.48e-07 & 2.70e-08 & 4 & 25 & 0.00e+00 & 1.36e-04 \\
\bottomrule
\end{tabular}
\end{table}

\subsection{Spectral warm starts}

The projection onto the spectrahedron is repeatedly solved along the
outer PDHG trajectory.  Since consecutive projection subproblems are
closely related, we seek to construct an inexpensive spectral
approximation that provides a favorable initialization for the subsequent
projection iterations.

Rather than solving the associated eigenvalue
problem to high accuracy, we compute only the dominant part of its spectrum using an adaptive Lanczos procedure.  
Starting with a relatively small Krylov subspace of dimension $k_{\mathrm{init}}$ and updating to a dimension of $k$, Lanczos computes
$k$ Ritz pairs:
\begin{equation*}
    (\mathbf{u}_i,\lambda_i)_{i=1}^{k},
    \qquad
    \lambda_1 \geq \lambda_2 \geq \cdots \geq \lambda_k .
\end{equation*}
The Ritz vectors are validated using their residuals:
\begin{equation*}
    r_i
    =
    \left\|
        A \mathbf{u}_i-\lambda_i \mathbf{u}_i
    \right\|_2, \quad \forall i \in \{0, \ldots, k\}.
\end{equation*}
We deliberately use a looser eigensolver tolerance than in the high-accuracy projection routine, since the purpose of the warm start is to obtain a useful approximation at low computational cost rather than to solve the projection problem to high precision.

Given the computed dominant eigenvalues, we construct the corresponding spectrahedral coefficients by water-filling.  
In particular, for a
trace-radius $R$, we set
\begin{equation*}
    \alpha_i = (\lambda_i-\theta)_+,
\end{equation*}
where $\theta$ is chosen so that
\begin{equation*}
    \sum_i \alpha_i = R.
\end{equation*}
The resulting factorized point is
\begin{equation*}
    X_{\mathrm{WS}}
    =
    U\operatorname{Diag}(\alpha)U^{\mathsf T},
\end{equation*}
where $\mathbf{U}=[\mathbf{u}_1,\ldots,\mathbf{u}_k]$ contains the computed Ritz vectors.

The Lanczos subspace is enlarged adaptively when the available spectrum
is insufficient for the water-filling construction.  
We use the following relative boundary test:
\begin{equation*}
    \beta_k = \frac{\alpha_k}{\max\{1,|\alpha_1|\}}
\end{equation*}
as an inexpensive measure of spectral completeness.  
If $\beta_k$ exceeds a prescribed threshold, we increase the Lanczos
dimension:
\begin{equation*}
    k \leftarrow k+k_{\mathrm{step}},
\end{equation*}
and recycle the dominant Ritz vector as the starting vector for the next eigensolve.  The procedure terminates once the computed spectrum is deemed sufficient or a prescribed maximum subspace dimension is reached.

Most importantly, the spectral warm start does not perform Frank-Wolfe iterations.  
Its computational role is solely to identify a sufficiently accurate dominant spectral subspace and to construct a feasible point using this for Spectrahedron Frank-Wolfe (SFW).  
The subsequent SFW iterations are then started from this point.
\begin{figure}[t]
    \centering
    \resizebox{\linewidth}{!}{%
    \begin{tikzpicture}[
        >=Latex,
        font=\footnotesize,
        node distance=9mm and 9mm,
        every node/.style={align=center},
        process/.style={
            rectangle, rounded corners=3pt, draw=black!70, thick,
            minimum width=29mm, minimum height=12mm,
            fill=gray!10
        },
        data/.style={
            rectangle, rounded corners=3pt, draw=blue!60!black, thick,
            minimum width=29mm, minimum height=12mm,
            fill=blue!8
        },
        decision/.style={
            diamond, aspect=2, draw=orange!70!black, thick,
            inner sep=0pt, minimum width=29mm, minimum height=14mm,
            fill=orange!12
        },
        arrow/.style={->, thick, black!75},
        lbl/.style={font=\scriptsize, inner sep=1.5pt}
    ]

    \node[data] (A)
        {PDHG operator input\\[-0.5mm]
         $\vec{x}_t-\tau\grad f(\vec{x}_t)-\tau\linop{A}^\top\vec{y}_t$};

    \node[process, right=of A] (lanczos)
        {Adaptive Lanczos\\[-0.5mm]
         $k=k_{\rm init}$, $\texttt{tol}=\tau_{\rm SWS}$};

    \node[data, right=of lanczos] (ritz)
        {Ritz pairs\\[-0.5mm]
         $(U_k,\lambda_k)$};

    \node[process, right=of ritz] (residual)
        {Ritz residual\\[-0.5mm] validation};

    \node[process, below=16mm of residual] (waterfill)
        {Water-filling\\[-0.5mm]
         $\alpha_i=(\lambda_i-\theta)_+$};

    \node[decision, left=of waterfill] (complete)
        {Spectrum\\[-0.5mm] sufficient?};

    \node[data, left=of complete] (xws)
        {Spectral warm start\\[-0.5mm]
         $X_{\rm WS}=U\,\mathrm{diag}(\alpha)\,U^\top$};

    \node[process, left=of xws] (sfw)
        {SFW projection /\\[-0.5mm] subsequent PDHG};

    \draw[arrow] (A) -- (lanczos);
    \draw[arrow] (lanczos) -- (ritz);
    \draw[arrow] (ritz) -- (residual);
    \draw[arrow] (residual) -- (waterfill);
    \draw[arrow] (waterfill) -- (complete);
    \draw[arrow] (complete) -- node[above, lbl] {yes} (xws);
    \draw[arrow] (xws) -- (sfw);

    \draw[arrow, dashed, orange!70!black]
        (complete.north) -- node[right, lbl, pos=0.35] {no}
        ($(complete.north)+(0,8mm)$)
        -| (lanczos.south);
    \node[lbl, text width=42mm, orange!50!black]
        at ($(lanczos.south)!0.5!(ritz.south)+(0,-7mm)$)
        {$k\leftarrow k+k_{\rm step}$;\\ recycle dominant Ritz vector};

    \node[font=\scriptsize\itshape, text width=60mm, below=5mm of $(xws.south)!0.5!(sfw.south)$]
        {No Frank--Wolfe iterations are performed\\ during the warm start.};

    \end{tikzpicture}%
    }
    \caption{
        Spectral warm start based on an adaptive Lanczos approximation.
        Starting from the PDHG operator, a low-accuracy Lanczos eigensolve
        computes the dominant Ritz pairs.  After residual validation, the
        eigenvalues are water-filled to construct a feasible spectrahedral
        approximation.  If the computed spectrum is insufficient, the
        Lanczos subspace is enlarged and the dominant Ritz vector is recycled.
        The resulting $X_{\rm WS}$ is then supplied to the subsequent
        spectrahedron projection/SFW procedure.
    }
    \label{fig:spectral-warm-start}
\end{figure}
\subsection{Inexact PDHG with Spectral Warm Starts and SFW}
We compare two implementations of the projection step in the outer PDHG
method. The first uses an accurate Lanczos-based projection with eigensolver
tolerance $10^{-8}$. The second combines an adaptive Lanczos spectral warm
start (nominal tolerance $10^{-2}$) with Spectrahedron Frank--Wolfe (SFW),
using at most 25 SFW iterations per projection and we set a floor of $10^{-6}$ for the inner subproblem tolerance computed using \eqref{eqn:inexact-error-tolerance-schedule-p-prime}. 
Both methods use the same MaxCut SDP formulation and are evaluated on GSet instances G38, G55, G60 and G70.
We report objective residual and diagonal feasibility error against both outer iterations and cumulative matrix-vector products (matvecs). 
The latter is the more informative measure of computational work because it accounts for the different costs of the two projection routines in a architecture-neutral manner unlike runtime, which could vary based on language of implementation for various components and the architecture used for evaluation.
The spectral warm-start experiments were conducted on HPC2N nodes equipped with two Intel Xeon Gold 6132 processors (28 physical CPU cores in total), 192 GB of RAM, and two NVIDIA Tesla V100 GPUs per node.
We ran warm-started SFW for a fixed number of epochs, measured the number of matvecs and ran accurate-Lanczos baseline for the same number of epochs up to the same number of matvecs used by the warm-started SFW run.

Across all three instances, the two methods follow similar outer convergence trajectories initially. 
The difference becomes most visible when the objective
residual begins its final decrease. 
On G55 and G60, the accurate-Lanczos run stagnates at an objective residual of roughly $10^{-3}$ within the available matvec budget, whereas warm-started SFW continues to reduce the residual to around $10^{-4}$ or below. 
On G38, both methods reach a much smaller objective residual and the warm-started SFW run reaches its minimum with fewer matvecs.
For G70, warm-started SFW reaches a smaller objective residual for the given matvec budget.
Thus, the benefit is not that every outer iteration is cheaper in isolation by the same factor, but that the proposed projection strategy converts a comparable matvec budget into more useful outer progress, especially on the larger instances.

The feasibility curves tell a consistent story. Both methods reduce the diagonal feasibility error by several orders of magnitude, but warm-started SFW generally reaches a lower terminal error within the available matvec budget.
The results indicate that solving every projection to the stringent $10^{-8}$ Lanczos tolerance is unnecessary for effective outer convergence on these instances. 
Reusing spectral information and correcting the projection with a small number of SFW steps yields a more favorable
accuracy-work trade-off.

\begin{figure}[t]
    \centering
    \begin{subfigure}{0.48\textwidth}
        \centering
        \includegraphics[width=\linewidth]{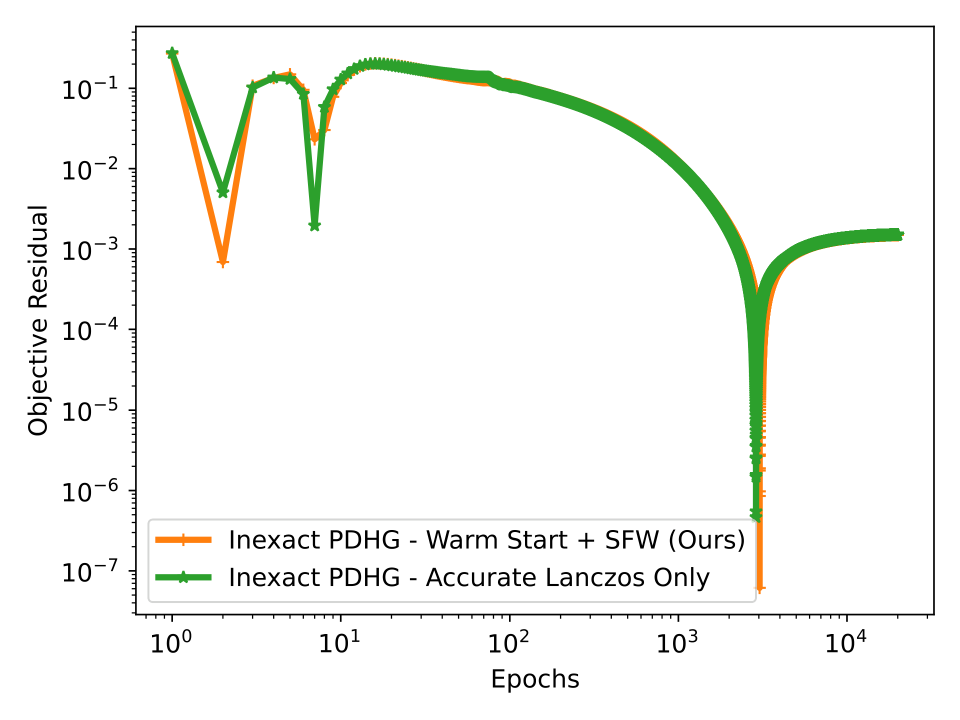}
        \caption{Objective residual vs. outer iterations.}
    \end{subfigure}
    \hfill
    \begin{subfigure}{0.48\textwidth}
        \centering
        \includegraphics[width=\linewidth]{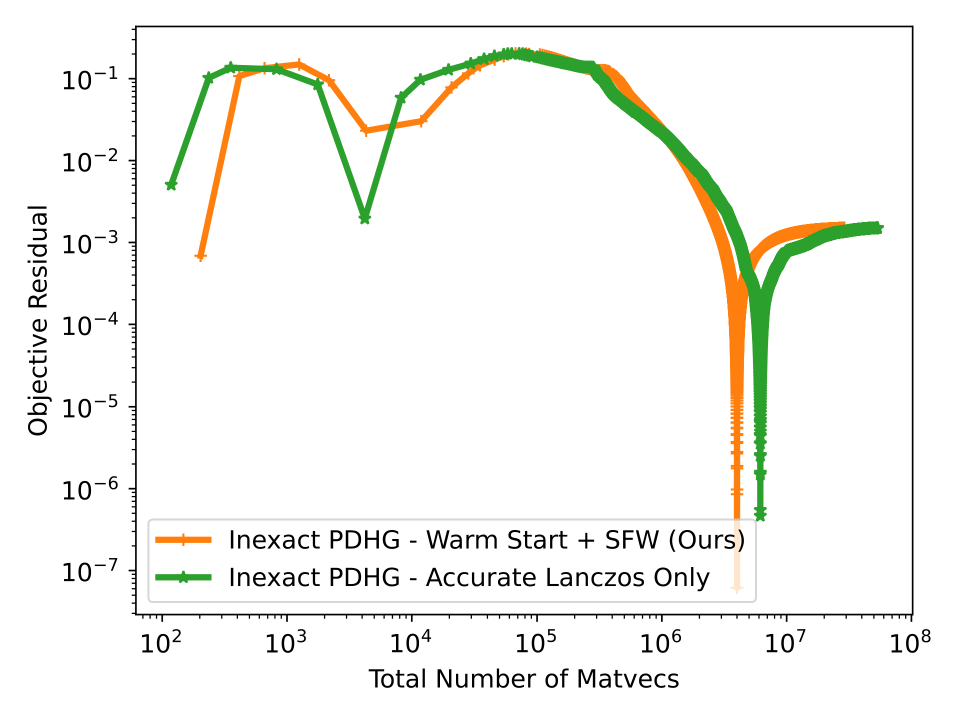}
        \caption{Objective residual vs. cumulative matvecs.}
    \end{subfigure}

    \vspace{0.25cm}
    \begin{subfigure}{0.48\textwidth}
        \centering
        \includegraphics[width=\linewidth]{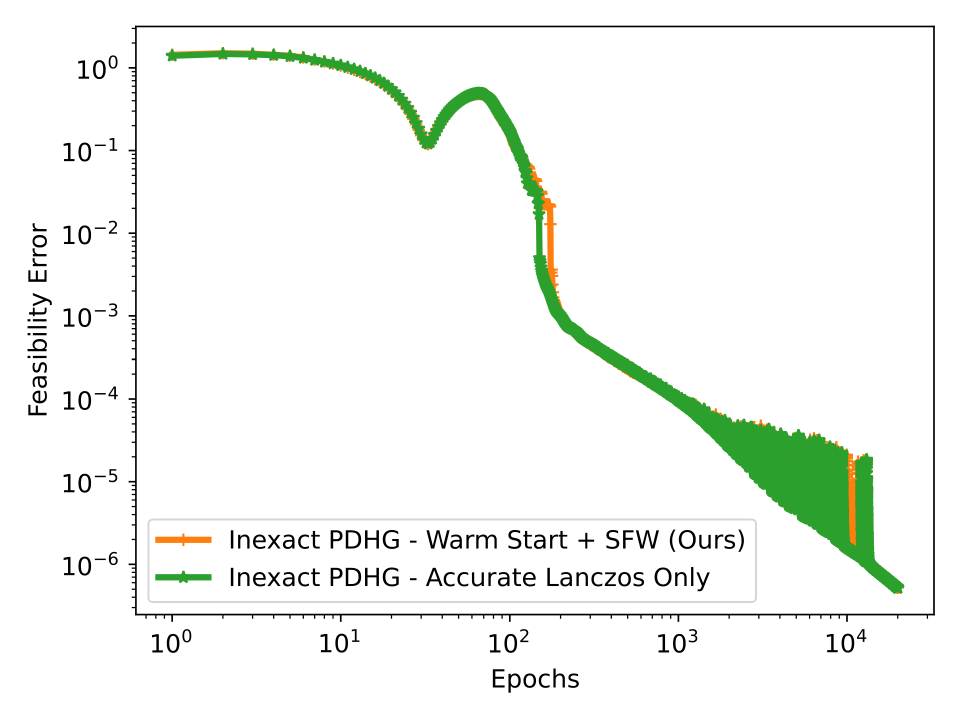}
        \caption{Feasibility error vs. outer iterations.}
    \end{subfigure}
    \hfill
    \begin{subfigure}{0.48\textwidth}
        \centering
        \includegraphics[width=\linewidth]{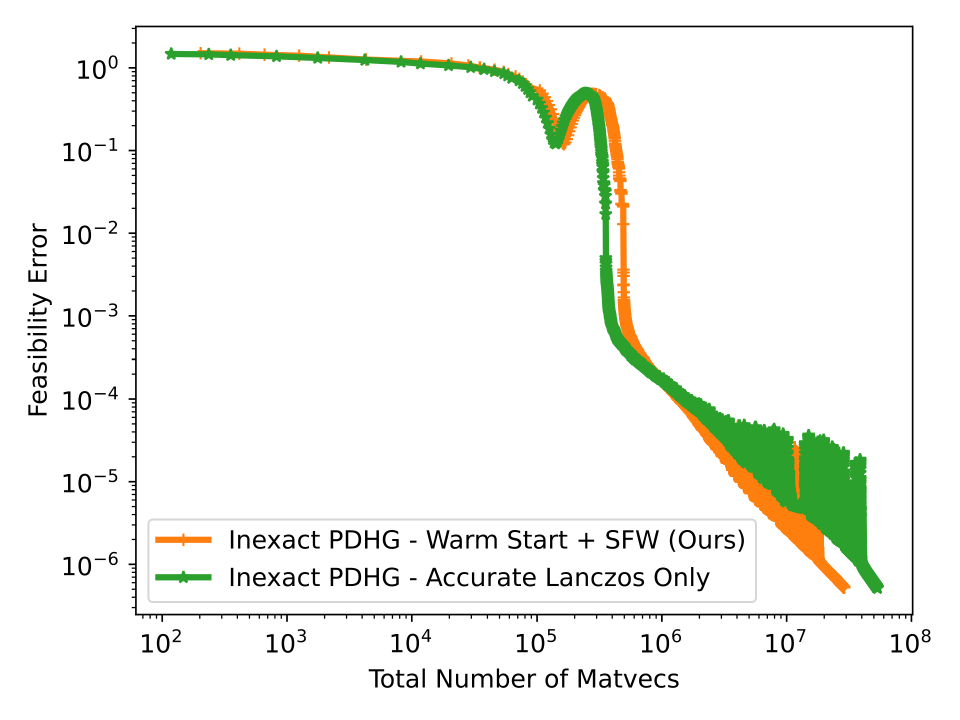}
        \caption{Feasibility error vs. cumulative matvecs.}
    \end{subfigure}
    \caption{Outer convergence on GSet G38 for Inexact PDHG with warm-started
    SFW and Inexact PDHG with accurate Lanczos projection ($10^{-8}$).
    The two methods achieve similar accuracy, while warm-started SFW reaches
    its smallest observed objective residual with less cumulative projection
    work.}
    \label{fig:g38_inexact_comparison}
\end{figure}

\begin{figure}[t]
    \centering
    \begin{subfigure}{0.48\textwidth}
        \centering
        \includegraphics[width=\linewidth]{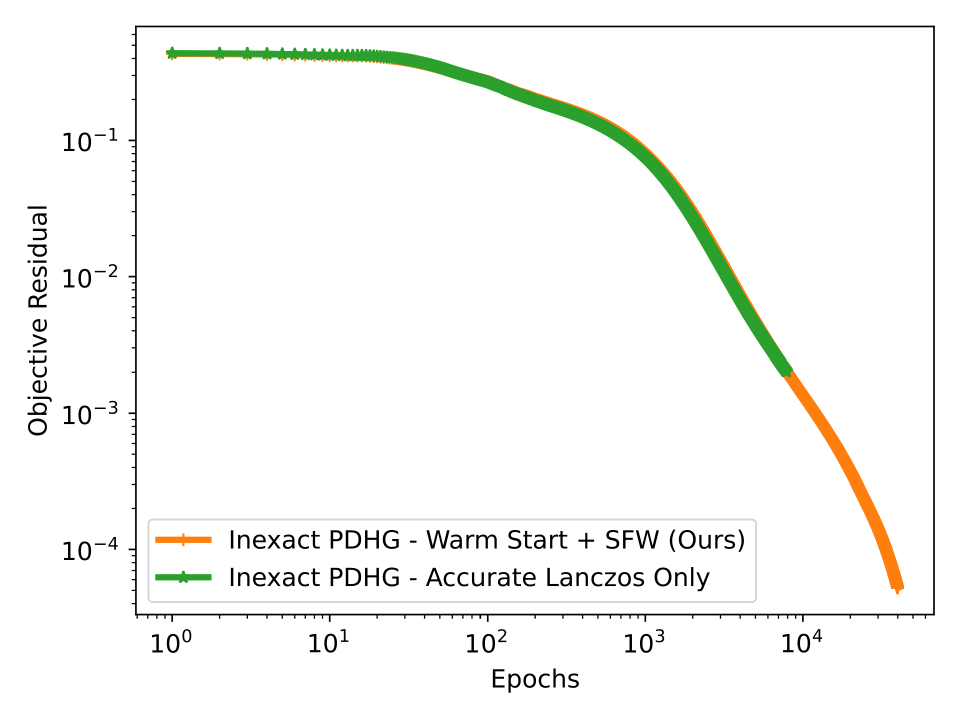}
        \caption{Objective residual vs. outer iterations.}
    \end{subfigure}
    \hfill
    \begin{subfigure}{0.48\textwidth}
        \centering
        \includegraphics[width=\linewidth]{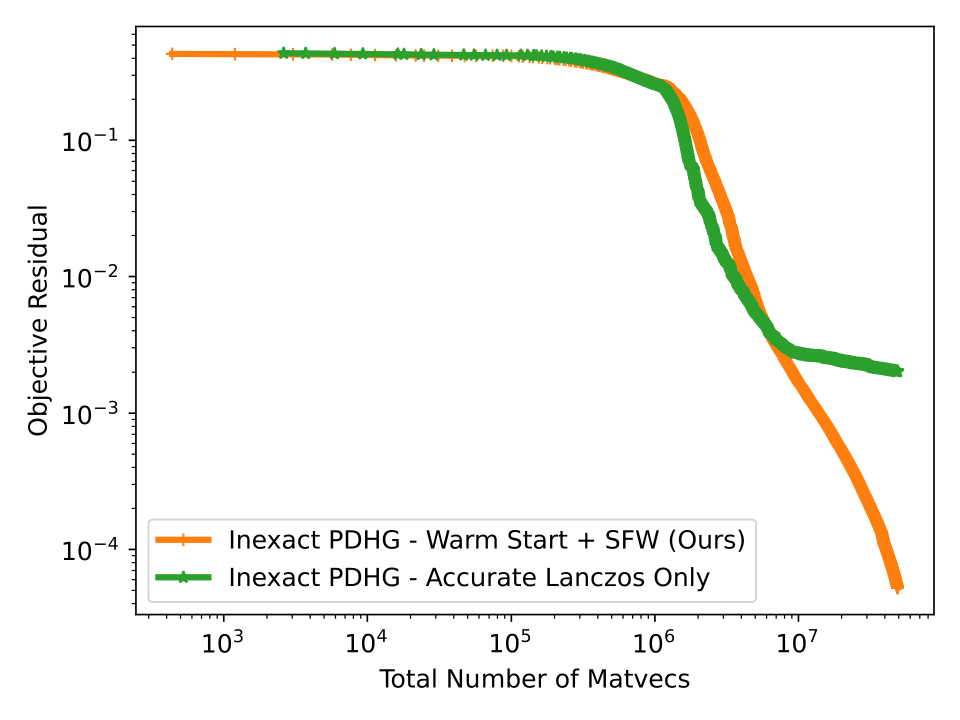}
        \caption{Objective residual vs. cumulative matvecs.}
    \end{subfigure}

    \vspace{0.25cm}
    \begin{subfigure}{0.48\textwidth}
        \centering
        \includegraphics[width=\linewidth]{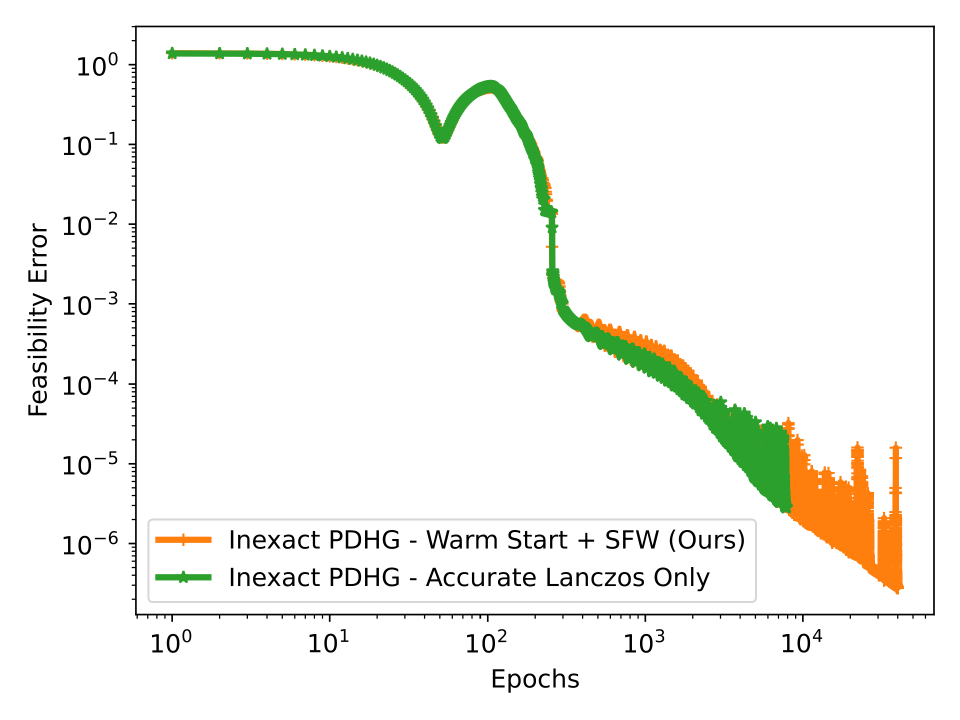}
        \caption{Feasibility error vs. outer iterations.}
    \end{subfigure}
    \hfill
    \begin{subfigure}{0.48\textwidth}
        \centering
        \includegraphics[width=\linewidth]{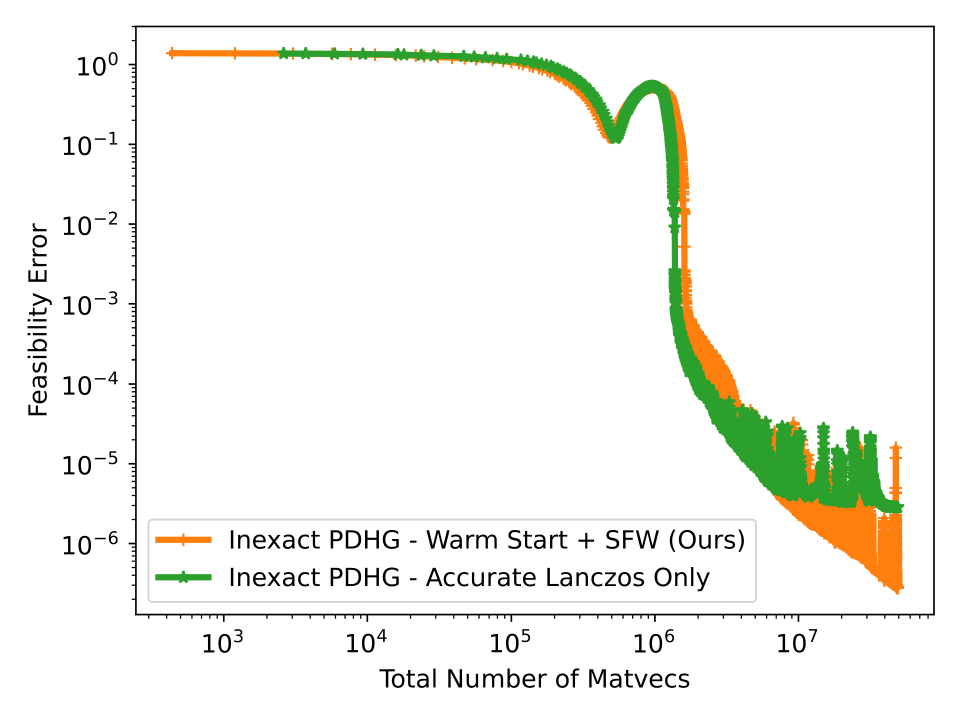}
        \caption{Feasibility error vs. cumulative matvecs.}
    \end{subfigure}
    \caption{Outer convergence on GSet G55. The methods initially behave
    similarly, but the accurate-Lanczos run levels off near an objective
    residual of $10^{-3}$, whereas warm-started SFW continues toward
    $10^{-4}$ with additional matvecs. The feasibility errors are comparable
    during the main decrease, with warm-started SFW attaining a smaller
    terminal error within the available matvec budget.}
    \label{fig:g55_inexact_comparison}
\end{figure}

\begin{figure}[t]
    \centering
    \begin{subfigure}{0.48\textwidth}
        \centering
        \includegraphics[width=\linewidth]{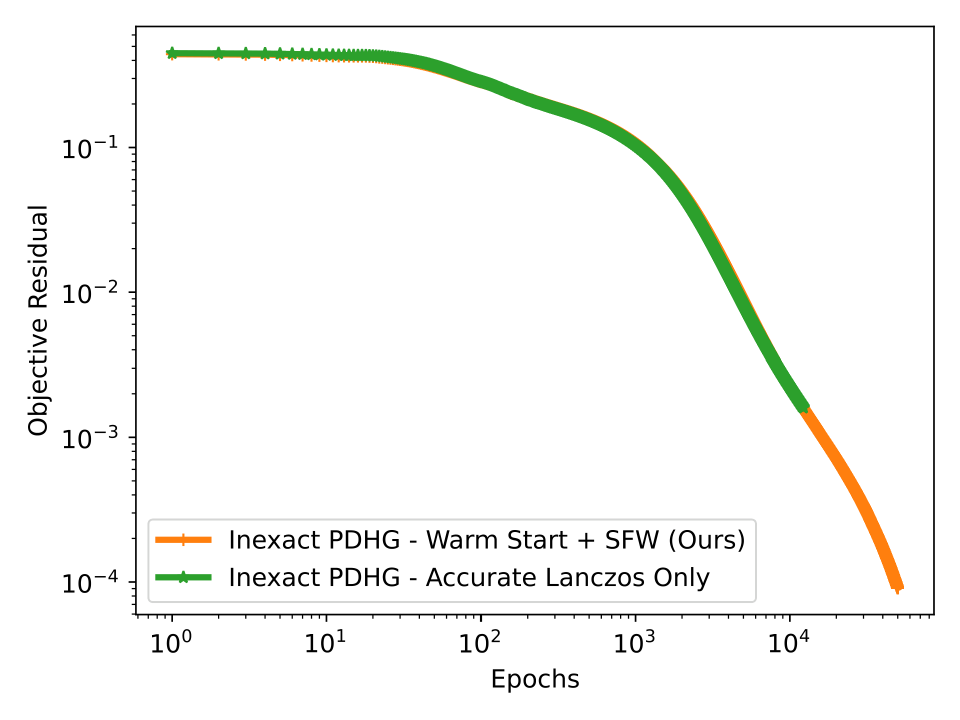}
        \caption{Objective residual vs. outer iterations.}
    \end{subfigure}
    \hfill
    \begin{subfigure}{0.48\textwidth}
        \centering
        \includegraphics[width=\linewidth]{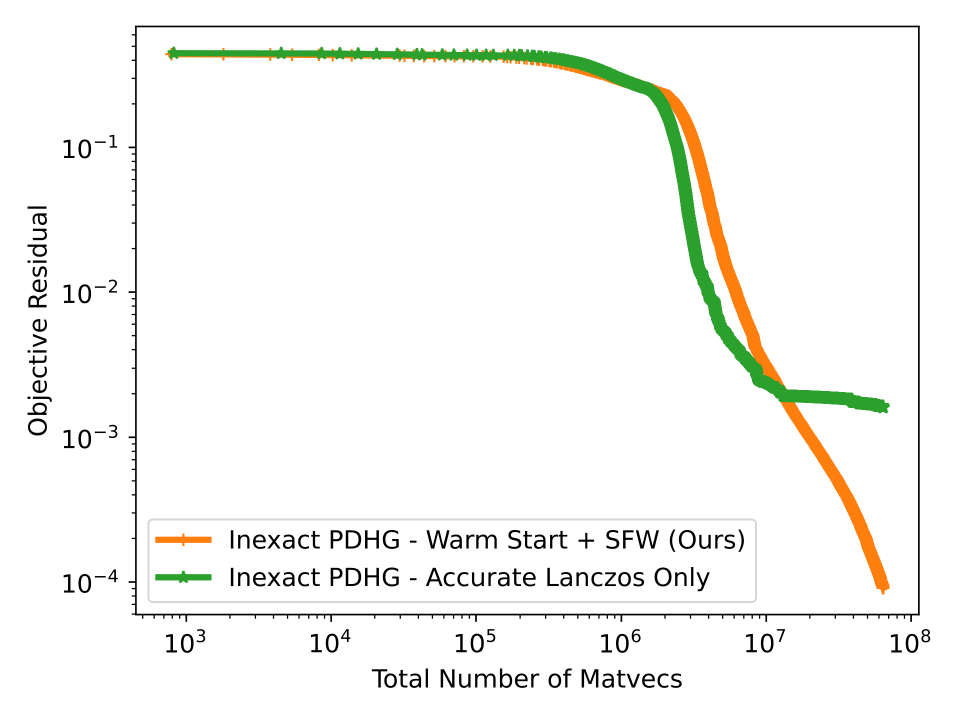}
        \caption{Objective residual vs. cumulative matvecs.}
    \end{subfigure}

    \vspace{0.25cm}
    \begin{subfigure}{0.48\textwidth}
        \centering
        \includegraphics[width=\linewidth]{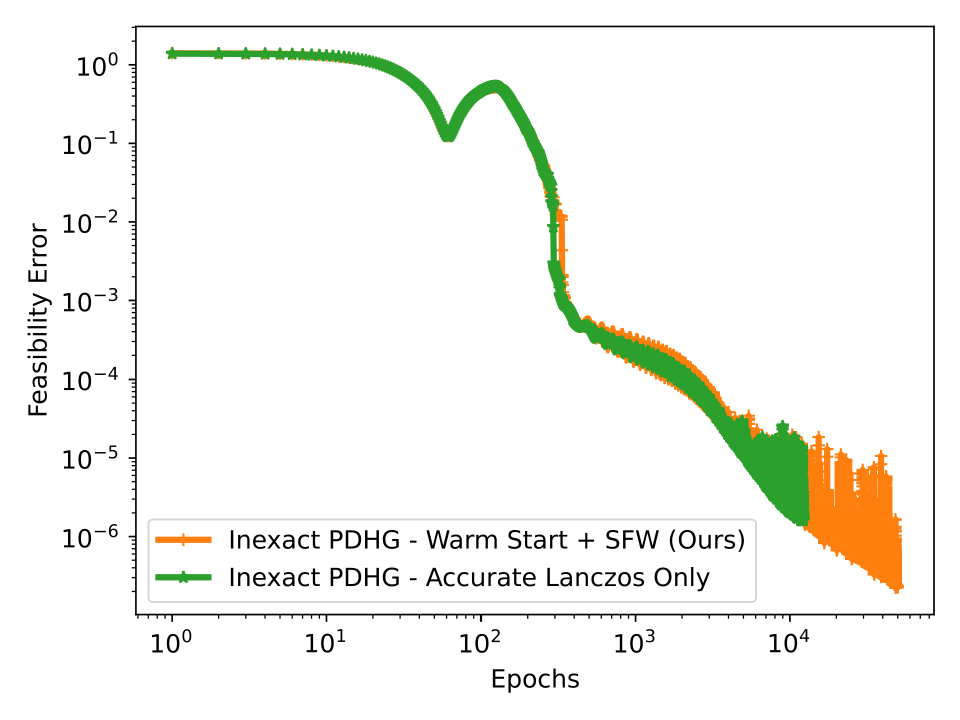}
        \caption{Feasibility error vs. outer iterations.}
    \end{subfigure}
    \hfill
    \begin{subfigure}{0.48\textwidth}
        \centering
        \includegraphics[width=\linewidth]{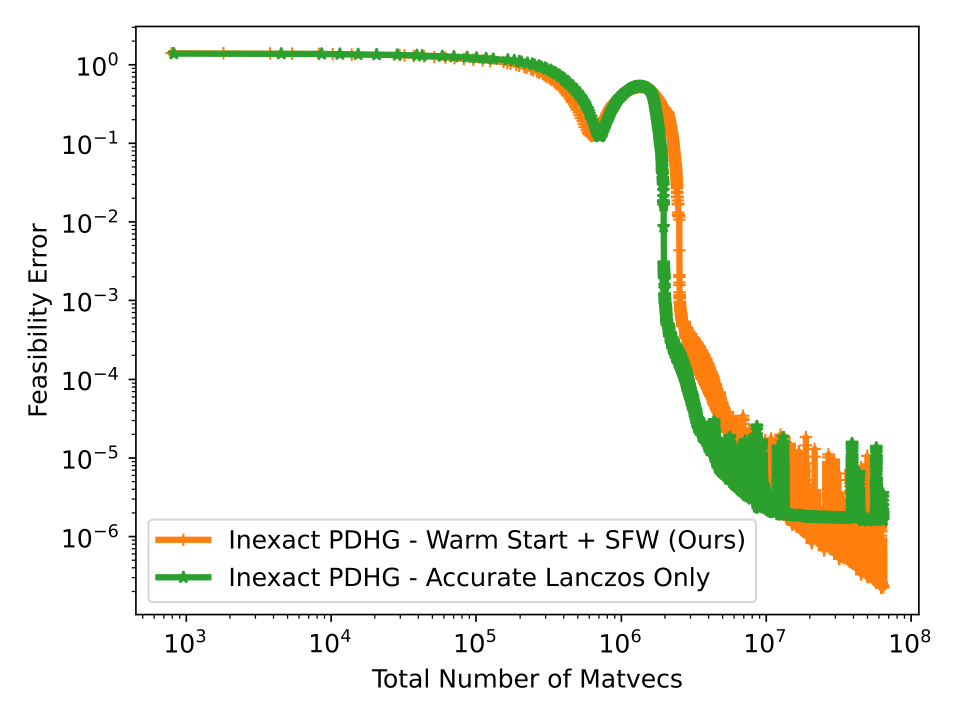}
        \caption{Feasibility error vs. cumulative matvecs.}
    \end{subfigure}
    \caption{Outer convergence on GSet G60. Warm-started SFW continues to
    decrease the objective residual after the accurate-Lanczos run has
    slowed down in terms of decrease in objective residual with matvecs, reaching below $10^{-4}$ in the plotted range. It also
    attains a lower terminal feasibility error, illustrating the advantage
    of measuring progress against cumulative matvec work.}
    \label{fig:g60_inexact_comparison}
\end{figure}

\begin{figure}[t]
    \centering
    \begin{subfigure}{0.48\textwidth}
        \centering
        \includegraphics[width=\linewidth]{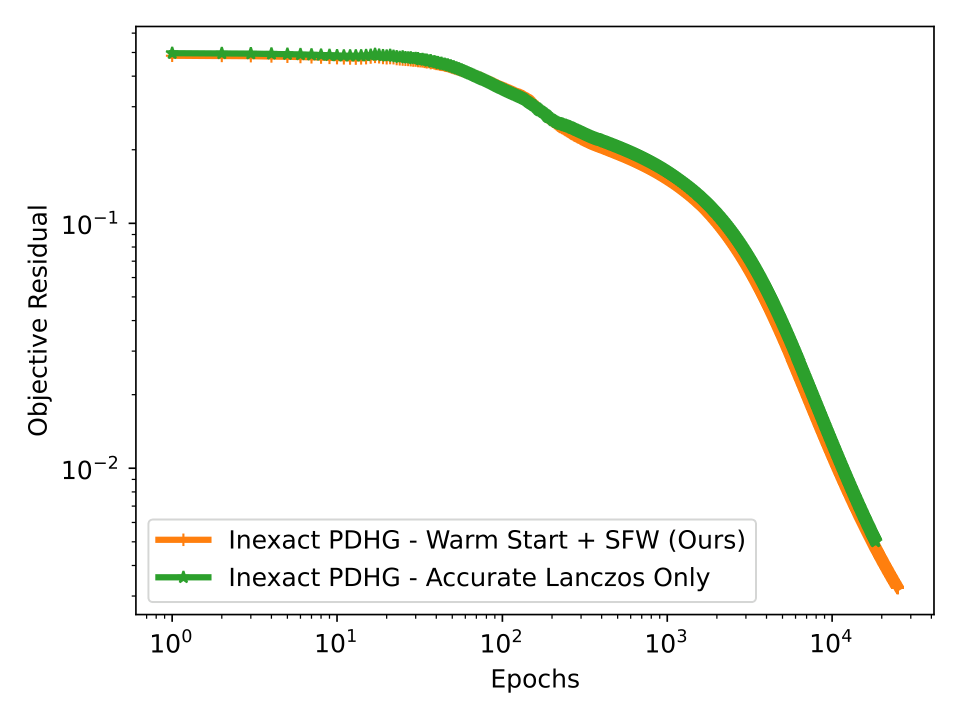}
        \caption{Objective residual vs. outer iterations.}
    \end{subfigure}
    \hfill
    \begin{subfigure}{0.48\textwidth}
        \centering
        \includegraphics[width=\linewidth]{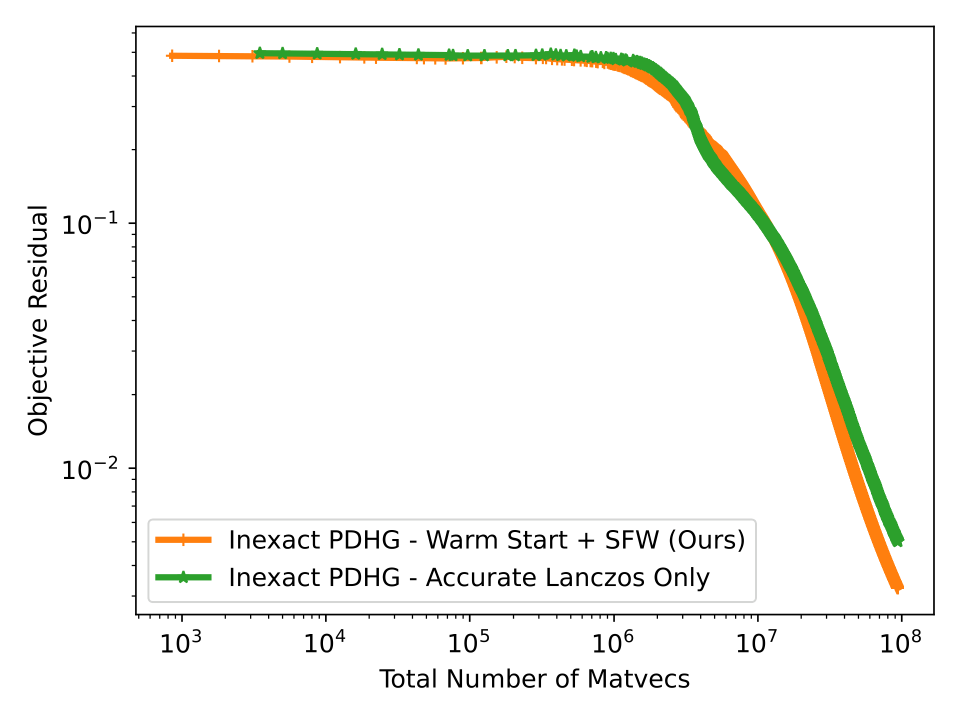}
        \caption{Objective residual vs. cumulative matvecs.}
    \end{subfigure}

    \vspace{0.25cm}
    \begin{subfigure}{0.48\textwidth}
        \centering
        \includegraphics[width=\linewidth]{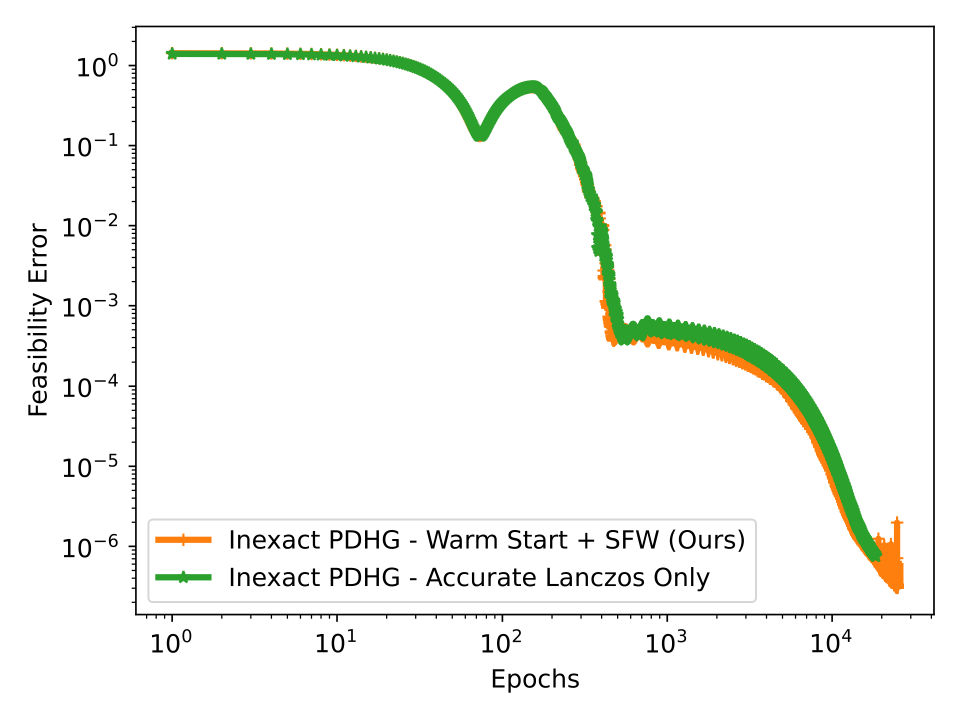}
        \caption{Feasibility error vs. outer iterations.}
    \end{subfigure}
    \hfill
    \begin{subfigure}{0.48\textwidth}
        \centering
        \includegraphics[width=\linewidth]{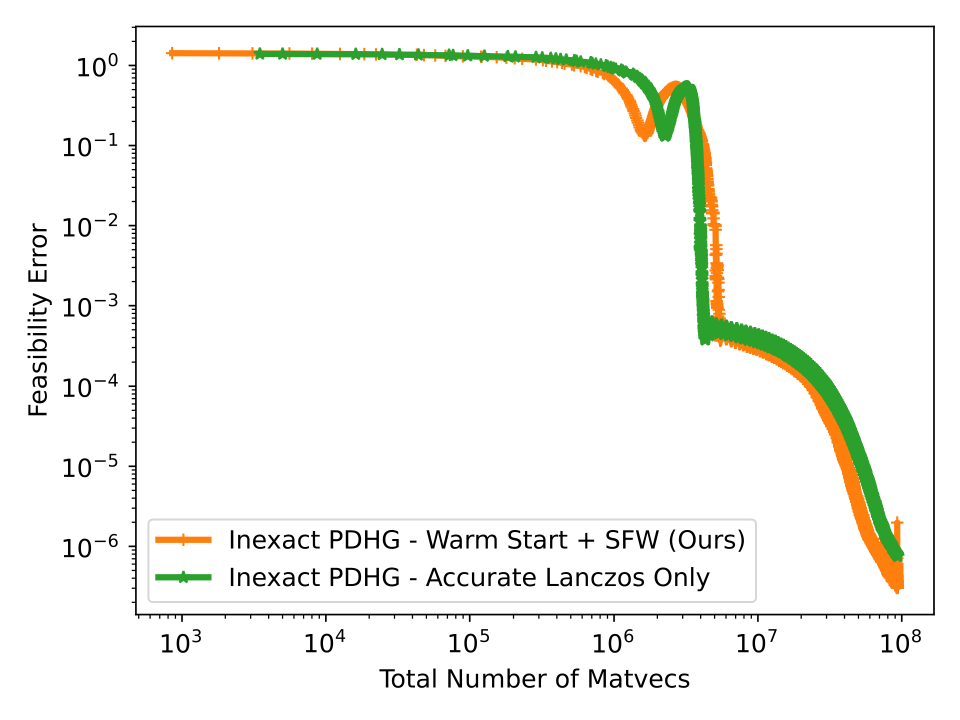}
        \caption{Feasibility error vs. cumulative matvecs.}
    \end{subfigure}
    \caption{Outer convergence on GSet G70. The methods behave similarly but the warm-started SFW run reaches a smaller objective residual and feasibility error than accurate-Lanczos run in the end.}
    \label{fig:g70_inexact_comparison}
\end{figure}

\paragraph{Projection diagnostics.}
The number of SFW iterations per projection varies across the PDHG run rather than being fixed at its maximum of 25 SFW iterations, and both the projection tolerance and final LMO error also vary over
the outer trajectory.
Generally, a lower projection error tolerance is demanded at the end of the PDHG run than at the beginning of the PDHG run. 
The corrected primal-rank plots for G38, G55 and G60 show a similar qualitative pattern for the two projection strategies: the rank rises rapidly to a large value during the early iterations, then decreases sharply and eventually settles at a much smaller rank. 
The two rank trajectories largely overlap, with only modest differences in the late stages since SFW slightly adjusts the rank after warm-start procedure for fine tuning the projection.
Thus, the work-normalized advantage of warm-started SFW is not accompanied by a
qualitatively different primal-rank trajectory. 
Rather, the plots are consistent with the interpretation that reusing spectral information and
applying a small, adaptive number of SFW corrections reduces projection work
while preserving the outer method's overall trajectory.

\begin{figure}[t]
    \centering
    \begin{subfigure}{0.48\textwidth}
        \centering
        \includegraphics[width=\linewidth]{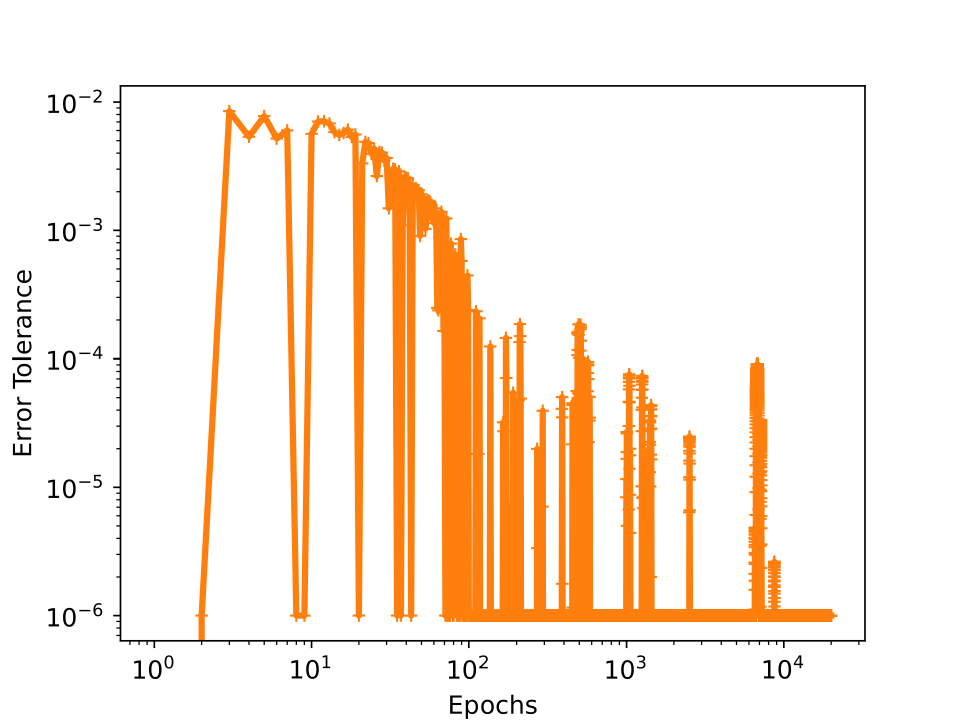}
        \caption{Projection error tolerance.}
    \end{subfigure}
    \hfill
    \begin{subfigure}{0.48\textwidth}
        \centering
        \includegraphics[width=\linewidth]{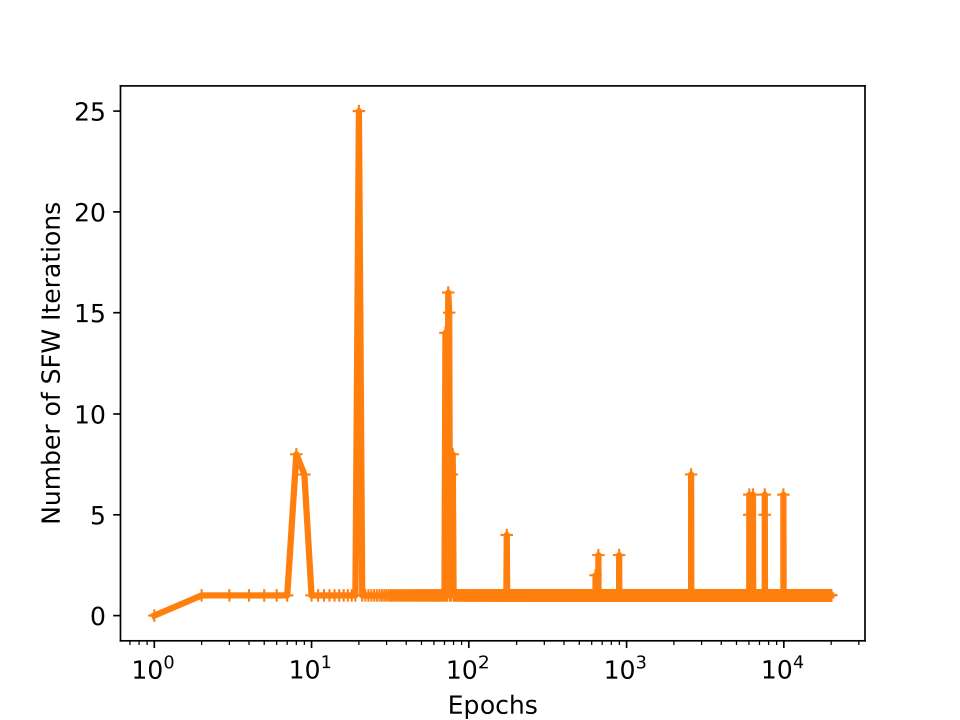}
        \caption{SFW iterations per projection.}
    \end{subfigure}

    \vspace{0.25cm}
    \begin{subfigure}{0.48\textwidth}
        \centering
        \includegraphics[width=\linewidth]{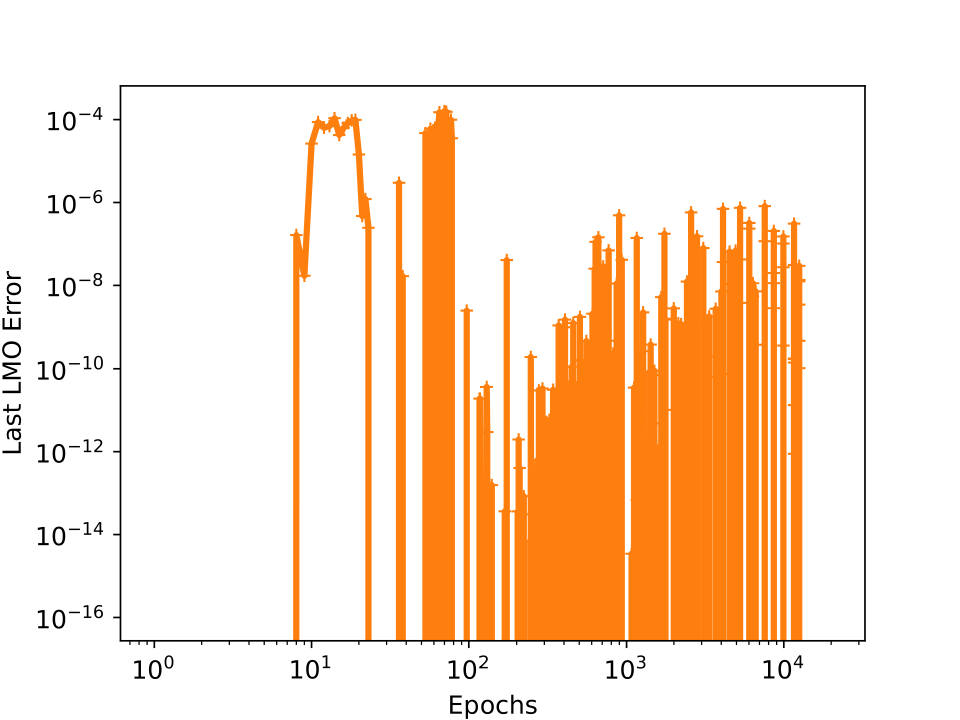}
        \caption{Final LMO error.}
    \end{subfigure}
    \hfill
    \begin{subfigure}{0.48\textwidth}
        \centering
        \includegraphics[width=\linewidth]{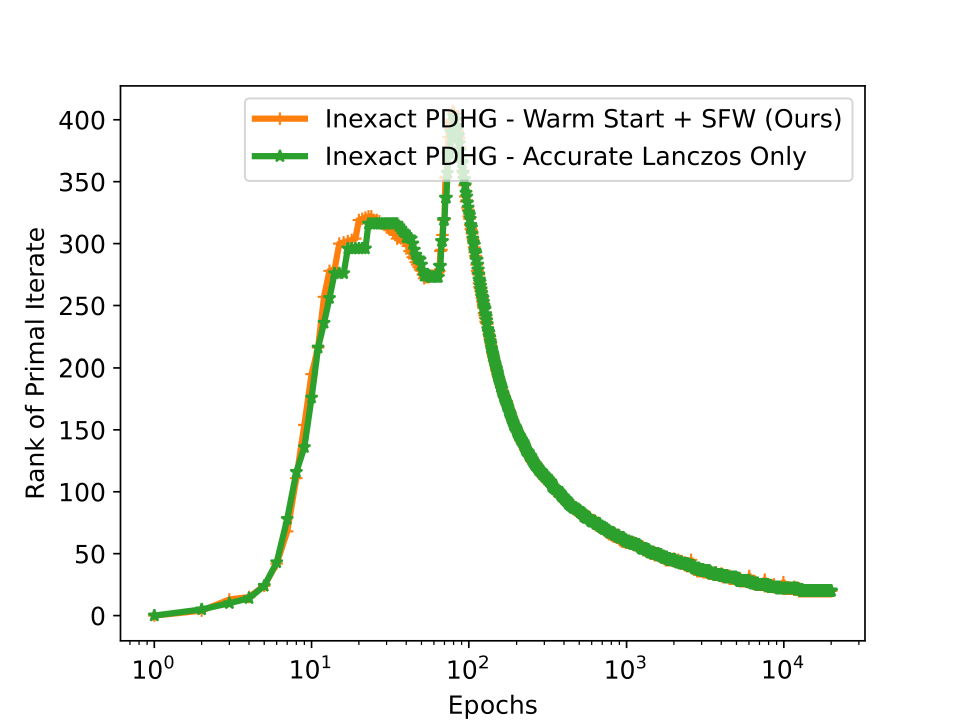}
        \caption{Rank of the primal iterate.}
    \end{subfigure}
    \caption{Projection diagnostics for warm-started SFW on G38. The inner
    iteration count adapts to the projection subproblem, while the LMO error
    and primal rank vary over the outer trajectory.}
    \label{fig:g38_projection_diagnostics}
\end{figure}

\begin{figure}[t]
    \centering
    \begin{subfigure}{0.48\textwidth}
        \centering
        \includegraphics[width=\linewidth]{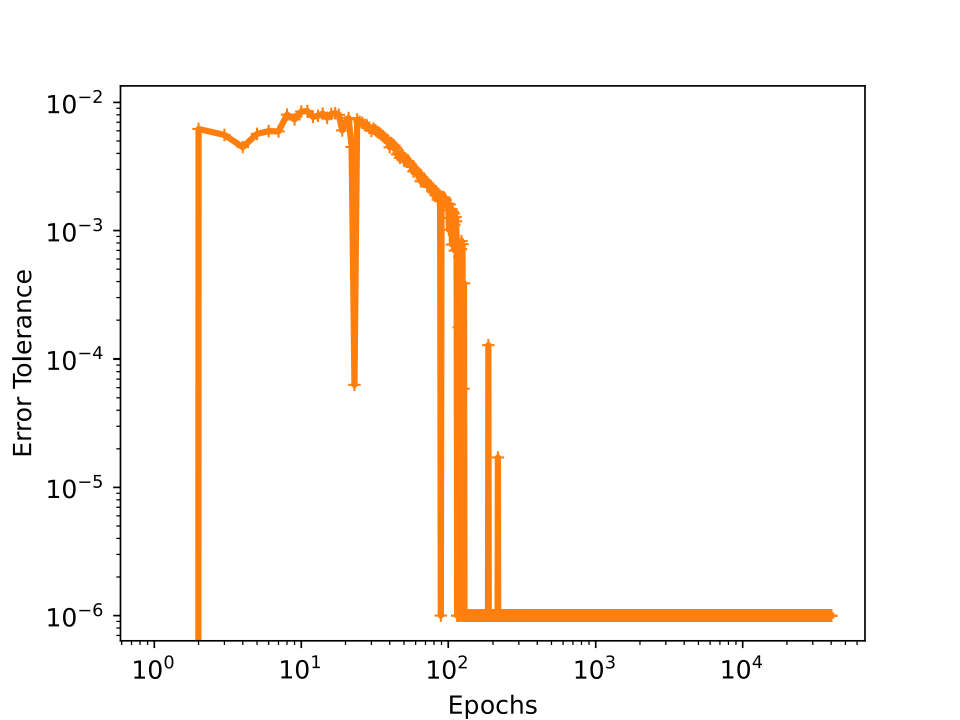}
        \caption{Projection error tolerance.}
    \end{subfigure}
    \hfill
    \begin{subfigure}{0.48\textwidth}
        \centering
        \includegraphics[width=\linewidth]{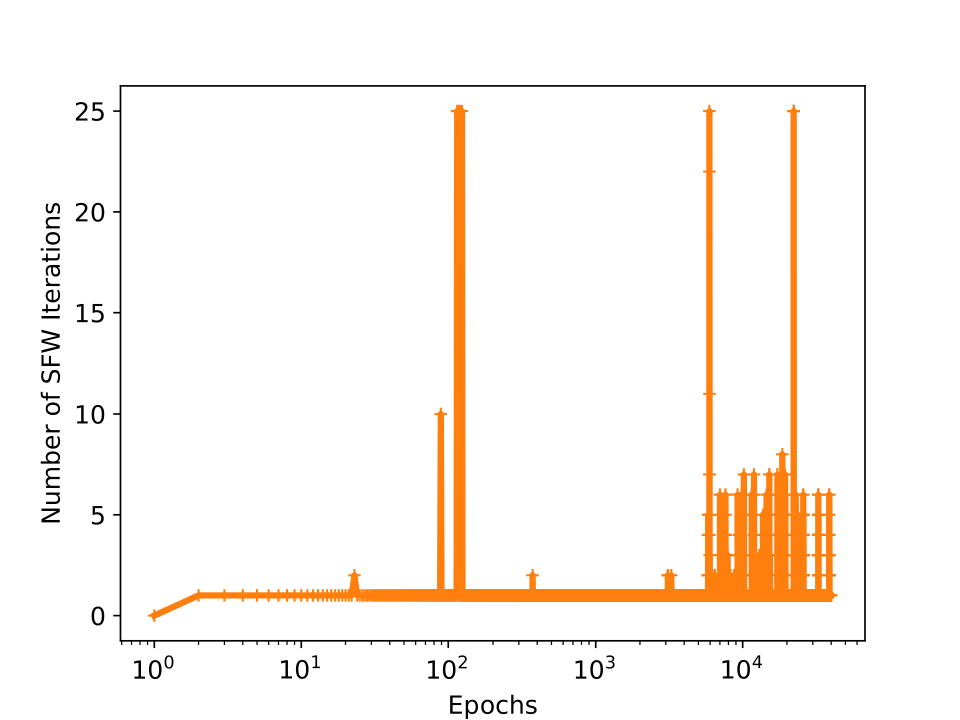}
        \caption{SFW iterations per projection.}
    \end{subfigure}

    \vspace{0.25cm}
    \begin{subfigure}{0.48\textwidth}
        \centering
        \includegraphics[width=\linewidth]{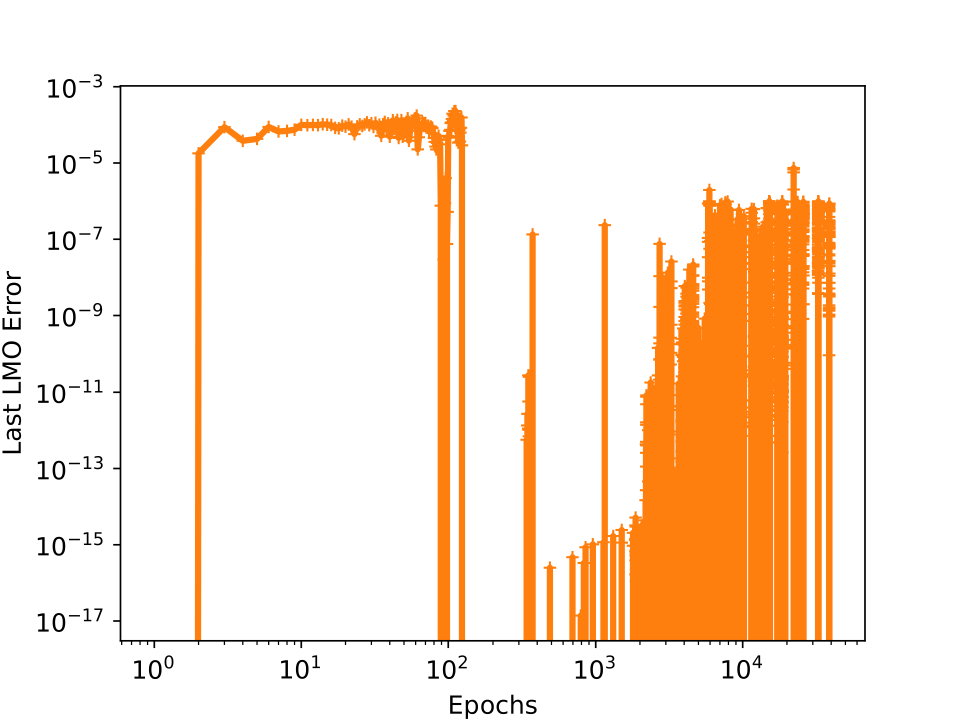}
        \caption{Final LMO error.}
    \end{subfigure}
    \hfill
    \begin{subfigure}{0.48\textwidth}
        \centering
        \includegraphics[width=\linewidth]{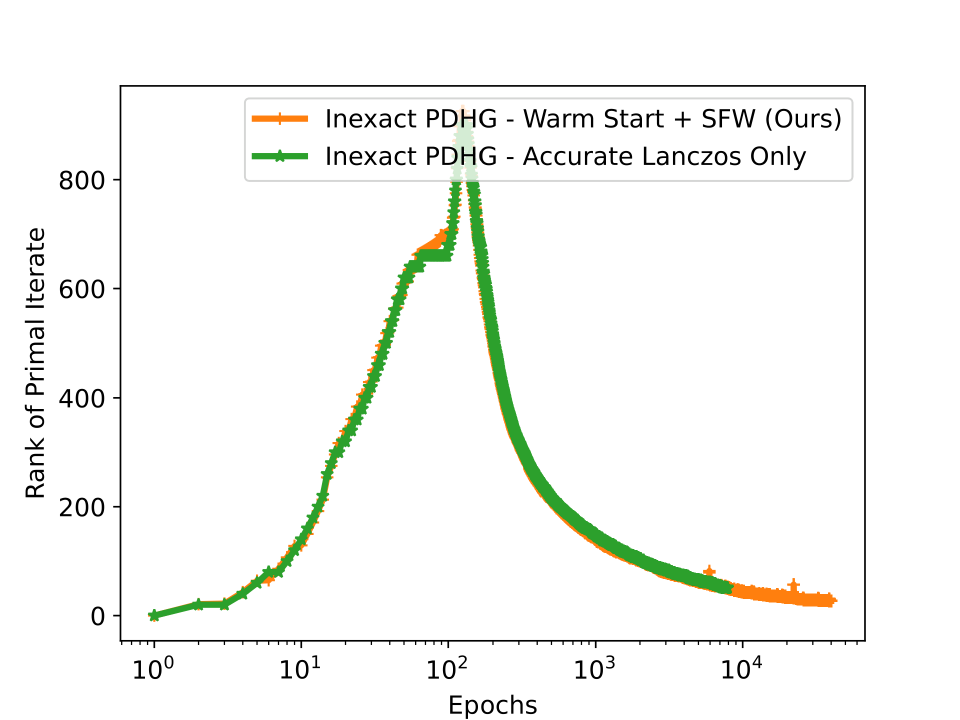}
        \caption{Rank of the primal iterate.}
    \end{subfigure}
    \caption{Projection diagnostics on G55. The rank of the primal iterate
    is shown for warm-started SFW and accurate Lanczos. Both trajectories
    exhibit a rapid rise followed by a sharp decline and largely overlap.}
    \label{fig:g55_projection_diagnostics}
\end{figure}

\begin{figure}[t]
    \centering
    \begin{subfigure}{0.48\textwidth}
        \centering
        \includegraphics[width=\linewidth]{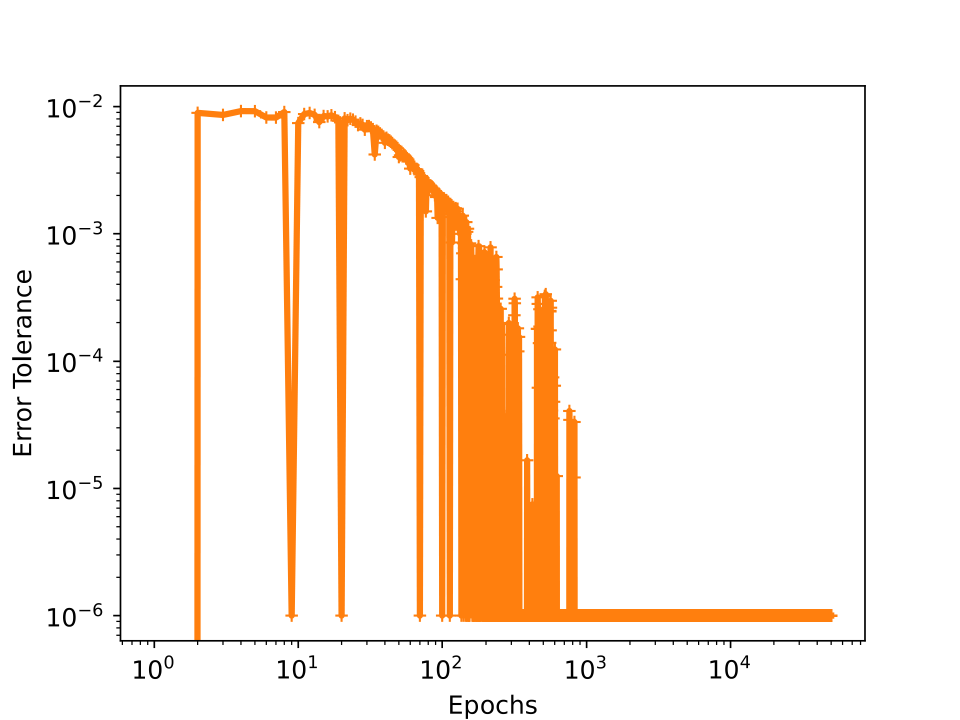}
        \caption{Projection error tolerance.}
    \end{subfigure}
    \hfill
    \begin{subfigure}{0.48\textwidth}
        \centering
        \includegraphics[width=\linewidth]{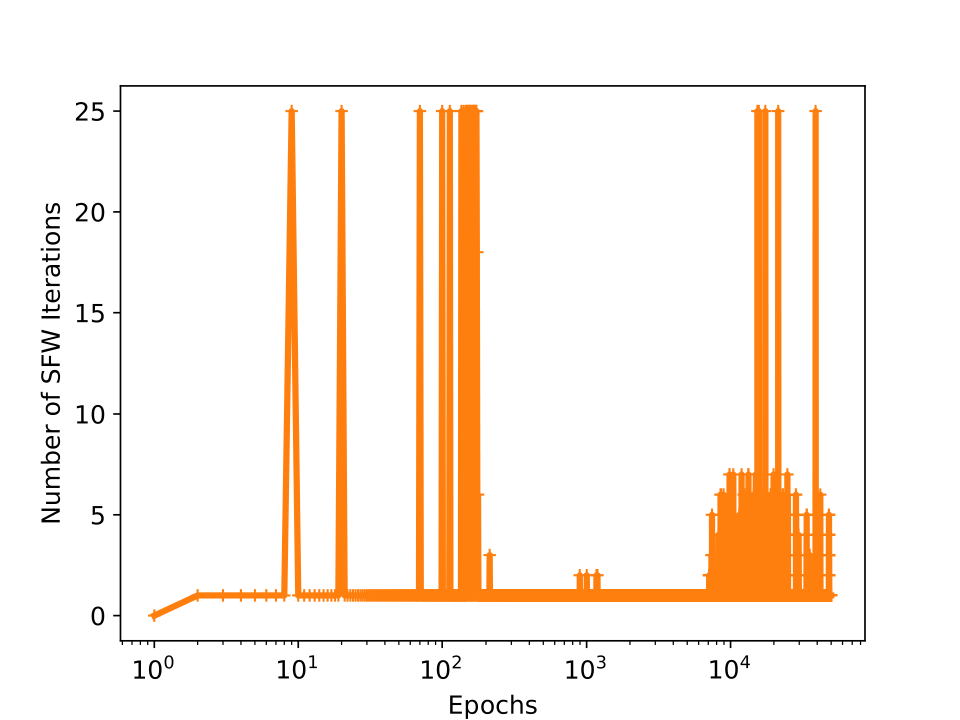}
        \caption{SFW iterations per projection.}
    \end{subfigure}

    \vspace{0.25cm}
    \begin{subfigure}{0.48\textwidth}
        \centering
        \includegraphics[width=\linewidth]{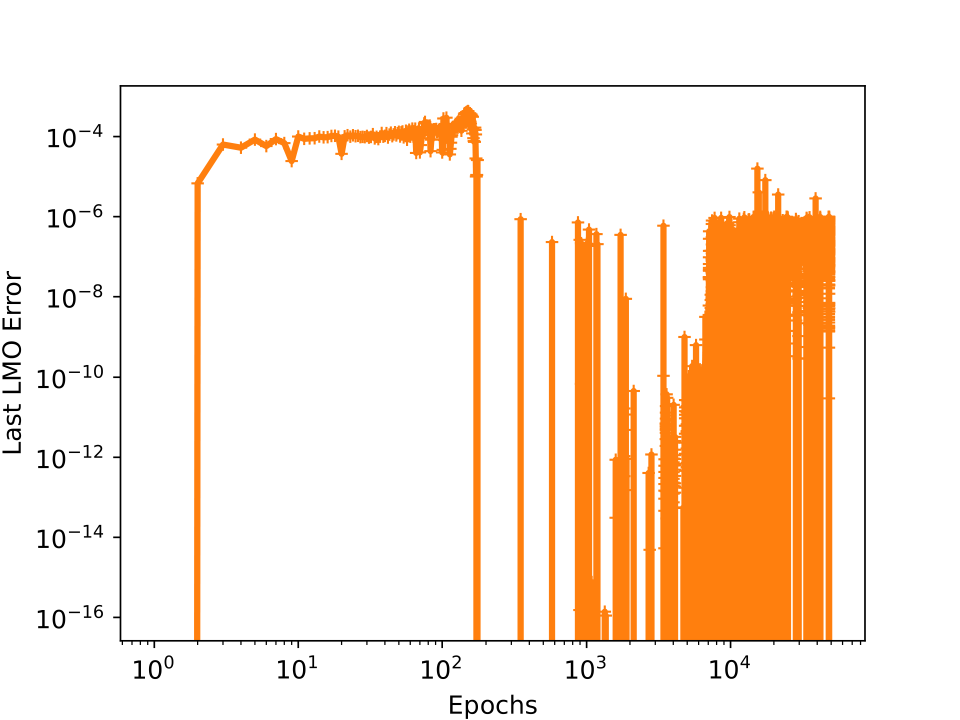}
        \caption{Final LMO error.}
    \end{subfigure}
    \hfill
    \begin{subfigure}{0.48\textwidth}
        \centering
        \includegraphics[width=\linewidth]{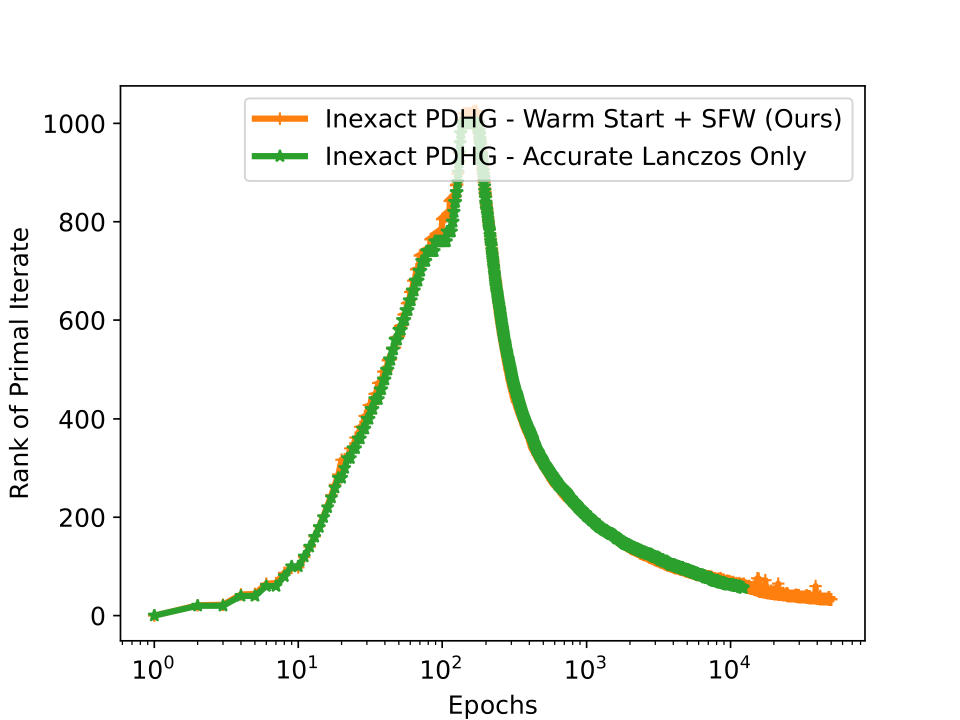}
        \caption{Rank of the primal iterate.}
    \end{subfigure}
    \caption{Projection diagnostics on G60. The rank of the primal iterate
    is shown for warm-started SFW and accurate Lanczos. As on G55, the rank
    trajectories have a similar rise-and-fall pattern and largely overlap.}
    \label{fig:g60_projection_diagnostics}
\end{figure}

\begin{figure}[t]
    \centering
    \begin{subfigure}{0.48\textwidth}
        \centering
        \includegraphics[width=\linewidth]{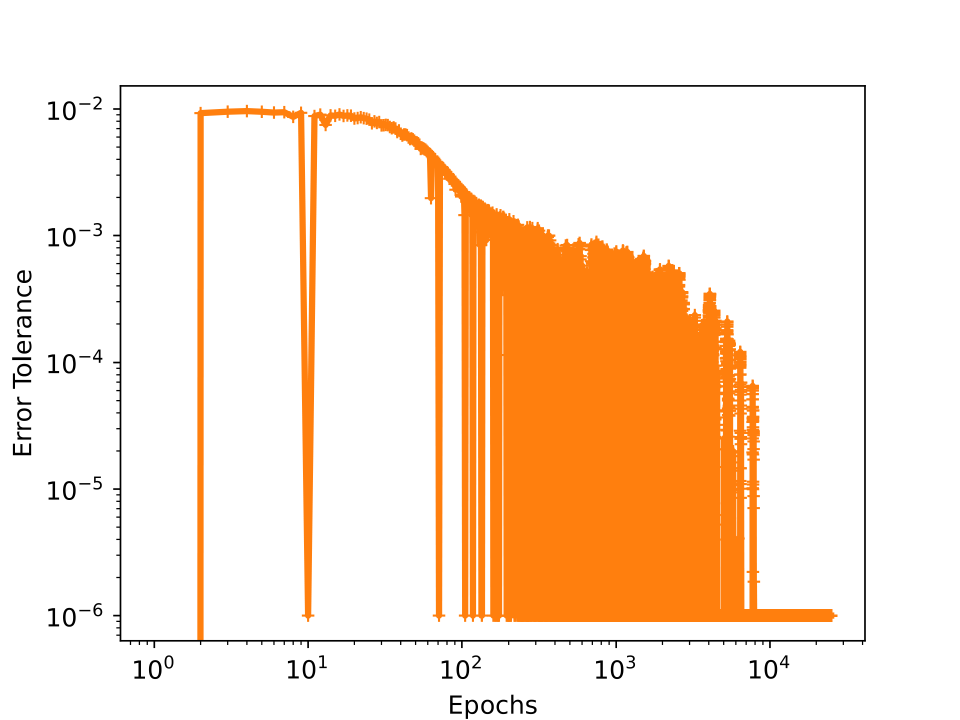}
        \caption{Projection error tolerance.}
    \end{subfigure}
    \hfill
    \begin{subfigure}{0.48\textwidth}
        \centering
        \includegraphics[width=\linewidth]{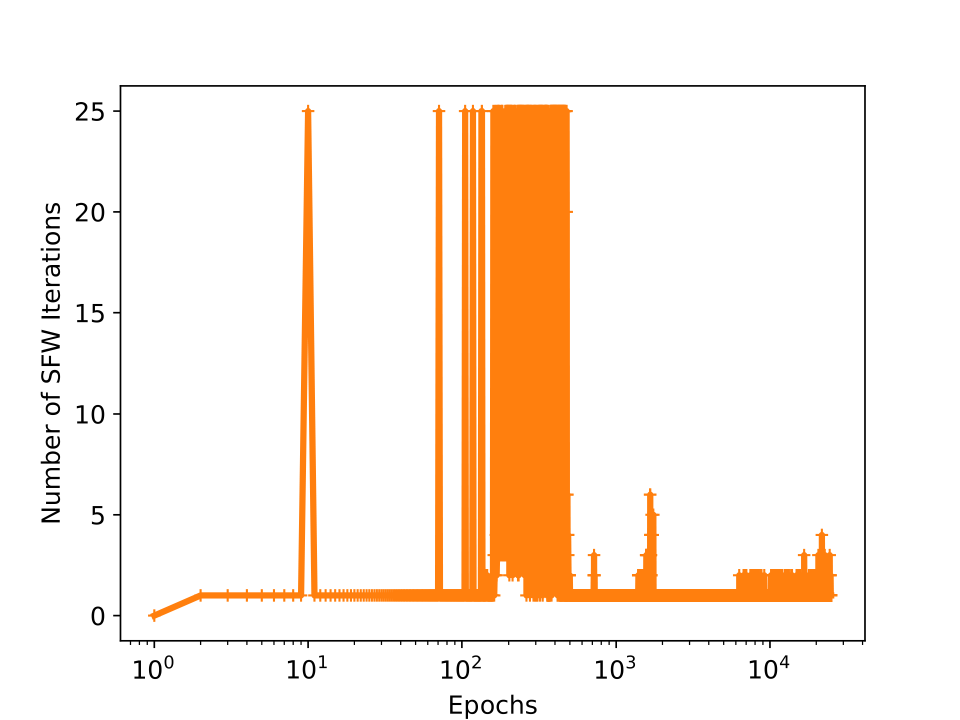}
        \caption{SFW iterations per projection.}
    \end{subfigure}

    \vspace{0.25cm}
    \begin{subfigure}{0.48\textwidth}
        \centering
        \includegraphics[width=\linewidth]{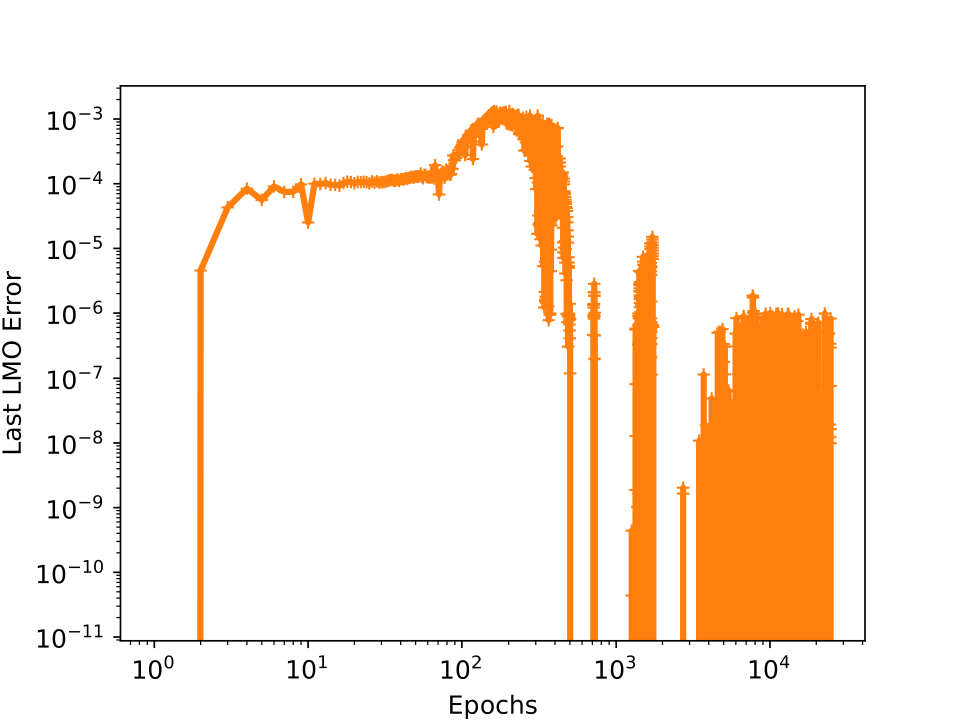}
        \caption{Final LMO error.}
    \end{subfigure}
    \hfill
    \begin{subfigure}{0.48\textwidth}
        \centering
        \includegraphics[width=\linewidth]{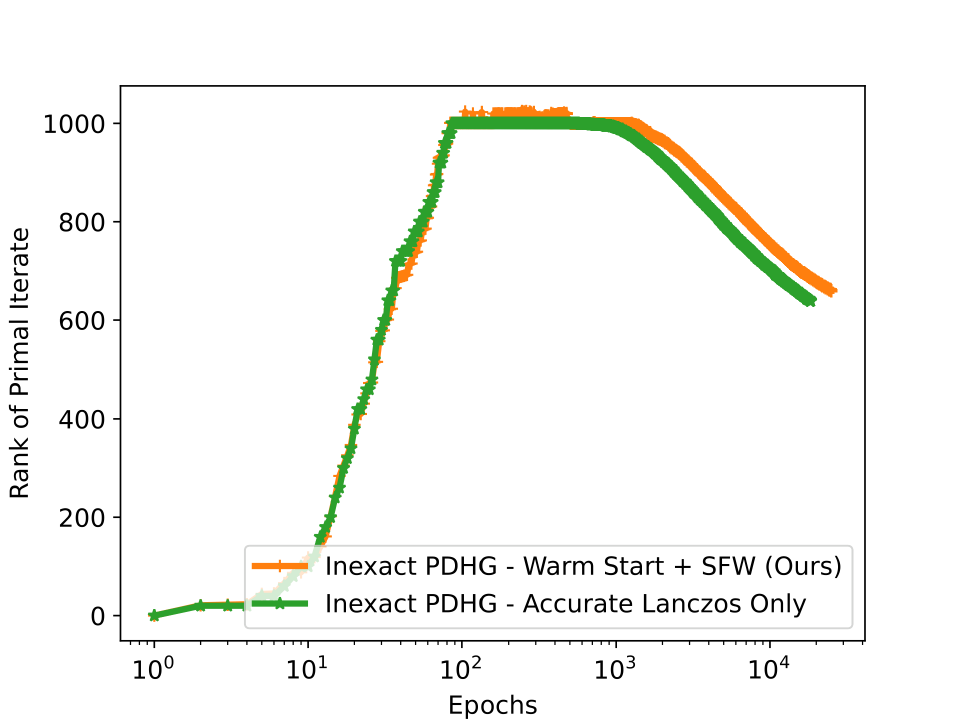}
        \caption{Rank of the primal iterate.}
    \end{subfigure}
    \caption{Projection diagnostics on G70. The rank of the primal iterate is continuing to decrease for G70 for both methods.}
    \label{fig:g70_projection_diagnostics}
\end{figure} %
\section{Conclusion}
\label{sec:conclusion}

We studied the computational trade-offs arising from inexact spectrahedral projections in primal-dual hybrid gradient (PDHG) methods for large-scale semidefinite optimization. 
Our approach combines adaptive Lanczos spectral warm starts with Spectrahedron Frank-Wolfe
(SFW), reusing spectral information from the preceding iterate and applying a small, controlled number of correction steps rather than solving each projection subproblem to high accuracy.

Experiments on the GSet MaxCut instances G38, G55, G60 and G70 demonstrate the practical benefits of this strategy. 
Compared with Inexact PDHG using an accurate Lanczos projection with eigensolver tolerance $10^{-8}$, the proposed warm-started SFW approach generally achieves more favorable objective-residual and feasibility-error reductions when computational work is measured by cumulative matrix-vector products. 
The improvement is particularly pronounced on G55 and G60,
where the accurate-Lanczos method makes less progress in the available
work budget, while warm-started SFW continues to reduce the objective residual. 
On G38, the two methods exhibit more similar convergence behavior, with warm-started SFW attaining its best observed objective residual with lower projection work. 
These results illustrate that the
accuracy required of an inner projection should be considered in the context of the outer optimization algorithm, rather than prescribed independently of it.

The rank trajectories provide additional insight into this behavior.
For G38, G55 and G60, both projection strategies exhibit rapid growth in the
rank of the primal iterate followed by a substantial decrease and eventual stabilization at much smaller ranks. 
For G70, the rank of the primal iterate exhibits similar behavior but the rank is still dropping at the end of the PDHG run.
Their trajectories are qualitatively similar, suggesting that the computational advantage of warm-started SFW does not require a fundamentally different outer
trajectory. 
Instead, it is consistent with exploiting spectral information across successive subproblems and adapting the amount of
inner work to the current projection.

Several questions remain open. 
A broader evaluation on semidefinite programs beyond MaxCut would help determine how these benefits depend on the structure of the constraint operator $\linop{A}$ and the spectral geometry of the projection subproblems. 
One must also characterize the work of determining the away step in Spectrahedron Frank-Wolfe, which amounts to solving a $r \times r$ linear system (see \Cref{app:away-step-atom-calc}) for a primal iterate of rank $r$, which can be small when the rank $r$ is much smaller than $n$ but could become more dominant as the rank grows or explodes in the middle of the PDHG run but for simplicity, this has been omitted out of this study.
A more systematic characterization of the relationship between spectral separation, projection accuracy,
and the required inner work could also guide principled choices of stopping criteria. 
Finally, incorporating computable a posteriori
diagnostics into the adaptive projection procedure may help balance inner accuracy and outer progress more reliably.

Overall, our results support adaptive, spectrally informed projection as a practical alternative to uniformly high-accuracy inner solves in large-scale first-order semidefinite optimization. 
The key is not to solve every projection as accurately as possible, but to allocate computational effort in a way that supports effective progress of the outer algorithm. \section*{Ethics, Reproducibility \& Acknowledgments}
\paragraph{Ethics statement}
This work adheres to the Umeå University Code of Ethics, ensuring responsible use of publicly available datasets and maintaining full transparency in experimental design and reporting. The study complies with ethical guidelines on research integrity and considers potential societal impacts. Since the contributions are primarily theoretical, the focus is on advancing our understanding of the theory of mathematical optimization rather than targeting specific applications that may have direct societal impacts. 

\paragraph{Reproducibility statement}
All datasets used in this work are referenced and publicly accessible.
The experimental configurations and computational environment are outlined in detail within the main text and the appendix.
The original code, accompanied by instructions, is included in the supplementary material.
We make our implementation available under {\href{https://github.com/KarthikPrakhya/InexactPDHG-SDP}{https://github.com/KarthikPrakhya/InexactPDHG-SDP}}.
Furthermore, all random seeds are provided to facilitate precise reproducibility.

\paragraph{Acknowledgments}
The authors would like to thank their advisor, Alp Yurtsever, for his guidance and support during the preparation of this manuscript. This work was supported by the Wallenberg AI, Autonomous Systems and Software Program (WASP) funded by the Knut and Alice Wallenberg Foundation. Computational resources were provided by the National Academic Infrastructure for Supercomputing in Sweden (NAISS), funded by the Swedish Research Council. This research was conducted using the resources of High Performance Computing Center North (HPC2N). 

\paragraph{Use of Generative AI} The authors used OpenAI’s ChatGPT and Anthropic's Claude for language editing and improving the clarity of the manuscript. All scientific content, mathematical results, and conclusions were developed and verified by the authors, who take full responsibility for the final manuscript.

\setlength{\bibsep}{6pt}
\bibliography{bibliography}
\bibliographystyle{bibliography}
\appendix
\section{Implementation Details for Inexact Projection onto a Spectrahedron}\label{app:sfw-impl-details}

In this section, we explain the procedure to scale and shift the projection subproblem prior to applying Spectrahedron Frank-Wolfe (see Algorithm  \ref{alg:spectrahedron-fw}).
In practice, we find that this helps speed up the convergence of Spectrahedron Frank-Wolfe by inducing a more favorable geometry for the projection subproblem. 
Consider the problem
\begin{equation}
\label{eq:weighted_projection_original}
\begin{aligned}
\min_{\mathbf{H} \in \set{S}_+^n}\quad
& \frac{1}{2}\|\mathbf{H}-\mathbf{G}\|_F^2\\
\text{s.t.}\quad
& \operatorname{Tr}(\mathbf{H})=R.
\end{aligned}
\end{equation}
This process of scaling and shifting generates a provably equivalent projection subproblem but this remains just a practical heuristic to improve the convergence of Spectrahedron Frank-Wolfe as we do not offer proof that it speeds up convergence and subsequent guarantees in \Cref{app:approx-eigvec-comp} only work with the formulation in \eqref{eq:weighted_projection_original}.

Let $\mathbf{H} = \mathbf{D}^{-1}\mathbf{Y} \mathbf{D}^{-1}$, where $\mathbf{D}$ is a positive diagonal matrix that acts as a diagonal preconditioner. Then
\begin{equation*}
    \operatorname{Tr}(\mathbf{H})
    =\operatorname{Tr}(\mathbf{D}^{-1}\mathbf{Y}\mathbf{D}^{-1})
    =\operatorname{Tr}(\mathbf{D}^{-2} \mathbf{Y})
    =R.
\end{equation*}
Prior to projection, a trace-based shift is applied to $\mathbf{G}$:
\begin{equation*}
    \mathbf{G} \rightarrow \mathbf{G} - \frac{\mathrm{Tr}(\mathbf{G})}{n}.
\end{equation*}
The trace-based shift above doesn't change the optimal projection but it has to be accounted for when computing the objective. 
As the details of reconstruction from this shift are straightforward, we omit the discussion for simplicity.
Then a diagonal preconditioner $\mathbf{D}$ is applied.
A reasonable choice that was used in the experiments is 
\begin{equation*}
    \mathbf{D} = \mathrm{Diag}\left(\frac{1}{\sqrt{\mathrm{diag}(\mathbf{G})} + \epsilon\mathbf{1}_n}\right ),
\end{equation*}
where $\epsilon > 0$ is a small additive constant added for stability, both the square root and reciprocal are applied element-wise, $\mathrm{diag}(\cdot)$ indicates diagonal of the input square matrix and $\mathrm{Diag}(\cdot)$ is a diagonal matrix whose diagonal is the input.

\subsection{Rescaled Objective for Projection Subproblem}

The objective then becomes
\begin{align*}
    \frac{1}{2}\|\mathbf{H}-\mathbf{G}\|_F^2
    &=\frac{1}{2}\|\mathbf{D}^{-1} \mathbf{Y}\mathbf{D}^{-1} - \mathbf{G}\|_F^2\\
    &=\frac{1}{2}\|\mathbf{D}^{-1}(\mathbf{Y} - \mathbf{D} \mathbf{G} \mathbf{D})\mathbf{D}^{-1}\|_F^2.
\end{align*}
Define $\widetilde{\mathbf{G}} = \mathbf{D}\mathbf{G}\mathbf{D}$.
Then the objective can be expressed as
\begin{align*}
    \phi(\mathbf{Y})
    &=\frac{1}{2}\|\mathbf{D}^{-1}(\mathbf{Y} - \widetilde{\mathbf{G}})\mathbf{D}^{-1}\|_F^2\\
    &=\frac{1}{2}
    \operatorname{Tr}\left[
    \mathbf{D}^{-1}(\mathbf{Y}-\widetilde{\mathbf{G}})^\top
    \mathbf{D}^{-2}(\mathbf{Y}-\widetilde{\mathbf{G}})\mathbf{D}^{-1}
    \right]\\
    &=\frac{1}{2}
    \operatorname{Tr}\left[
    (\mathbf{Y}-\widetilde{\mathbf{G}})^\top
    \mathbf{D}^{-2}(\mathbf{Y}-\widetilde{\mathbf{G}})\mathbf{D}^{-2}
    \right].
\end{align*}
When applying Spectrahedron Frank-Wolfe and the current rank of the projection iterate is $r$, we express the rescaled iterate in factorized form $\mathbf{Y} := R\mathbf{U}\mathrm{diag}(\bm{\alpha})\mathbf{U}^\top$ with $\mathbf{V} := \mathbf{U}\mathrm{diag}(\sqrt{\bm{\alpha}})$, where $\mathbf{U} \in \mathbb{R}^{n \times r}$, $\bm{\alpha} = (\alpha_1, \ldots, \alpha_r)$ and square-root is applied element-wise.
Assuming we consider only the terms that depend on $\mathbf{Y}$, this leads to an easy evaluation and update of the current objective if we rewrite in terms of $\mathbf{V}$, $\mathbf{U}$ and $\boldsymbol{\alpha}$ using the cyclical property of trace after expanding:
\begin{align*}
    \frac{1}{2}\left (\operatorname{Tr}\left [ (\mathbf{D}^{-1} \mathbf{Y} \mathbf{D}^{-1})^2\right ] - 2 \operatorname{Tr}\left[ \mathbf{D}^{-2} \widetilde{\mathbf{G}} \mathbf{D}^{-2}\mathbf{Y}\right ]\right ) = \frac{1}{2}\left (R^2 \operatorname{Tr}\left [ ((\mathbf{D}^{-1}\mathbf{V})^\top (\mathbf{D}^{-1}\mathbf{V}))^2 \right ] - 2 R \operatorname{Tr}\left[ \mathbf{D}^{-2} \widetilde{\mathbf{G}} \mathbf{D}^{-2} \mathbf{U}\mathrm{diag}(\alpha)\mathbf{U}^\top \right ]\right )
\end{align*}
We store and update $\mathbf{U}$, $\bm{\alpha}$ and $\widetilde{\mathbf{G}}\mathbf{U}$ with rank-1 updates for fast evaluation of the objective for each iteration of Spectrahedron Frank-Wolfe.

\subsection{Rescaled Gradient of Projection Subproblem}

Let $\mathbf{A} = \mathbf{Y} - \widetilde{\mathbf{G}}$, so that $d\mathbf{A} = d\mathbf{Y}$.
Then 
\begin{equation*}
    \phi(A)=\frac{1}{2}\operatorname{Tr}
    \left[\mathbf{A}^\top \mathbf{D}^{-2}\mathbf{A}\mathbf{D}^{-2}\right].
\end{equation*}
Differentiating gives
\begin{align*}
    d\phi
    =&\frac{1}{2}\operatorname{Tr}
    \left[(d\mathbf{A})^\top \mathbf{D}^{-2}\mathbf{A}\mathbf{D}^{-2}\right]\\
    &+\frac{1}{2}\operatorname{Tr}
    \left[\mathbf{A}^\top \mathbf{D}^{-2}(d\mathbf{A})\mathbf{D}^{-2}\right].
\end{align*}
Using cyclic invariance of the trace, the symmetry of $\mathbf{D}$,
and the Frobenius inner product, we get
\begin{align*}
d\phi
&=\frac{1}{2}\operatorname{Tr}
\left[(\mathbf{D}^{-2}\mathbf{A}\mathbf{D}^{-2})^\top d\mathbf{A}\right]\\
&\quad+\frac{1}{2}\operatorname{Tr}
\left[\mathbf{D}^{-2}\mathbf{A}^\top \mathbf{D}^{-2}d\mathbf{A}\right]\\
&=\operatorname{tr}
\left[(\mathbf{D}^{-2}\mathbf{A}\mathbf{D}^{-2})^\top d\mathbf{A}\right].
\end{align*}
Therefore, $\nabla \phi(\mathbf{A})=\mathbf{D}^{-2}\mathbf{A}\mathbf{D}^{-2}$ and hence
\begin{equation*}
\nabla \phi(\mathbf{Y})
=\mathbf{D}^{-2}(\mathbf{Y}-\widetilde{\mathbf{G}}) \mathbf{D}^{-2}.
\end{equation*}
If $\mathbf{Y} = R\mathbf{V}\mathbf{V}^\top$, this can also be written as
\begin{equation*}
    \nabla \phi(\mathbf{Y}) = R\mathbf{D}^{-2}\mathbf{V}\mathbf{V}^\top \mathbf{D}^{-2} -\mathbf{D}^{-2}\widetilde{\mathbf{G}} \mathbf{D}^{-2}.
\end{equation*}
With stored $\mathbf{U}$, one can easily construct an matrix-vector product operator for the gradient that can be used to determine the step direction for the forward step in Spectrahedron Frank-Wolfe using matrix-free iterative solvers like Lanczos.

\subsection{Linear minimization oracle}

The linear minimization oracle (LMO) for the rescaled projection
subproblem is
\begin{equation}
\label{eq:lmo}
\operatorname{LMO}(\mathbf{Y})
=
\argmax_{\substack{\mathbf{S} \in \set{S}_+^n\\
\operatorname{Tr}(\mathbf{D}^{-2}\mathbf{S})=R}}
\langle \nabla\phi(\mathbf{Y}),\mathbf{Y}-\mathbf{S}\rangle.
\end{equation}
Since the term involving $\mathbf{Y}$ is constant with respect
to $\mathbf{S}$, this is equivalent to minimizing
\begin{equation*}
\argmin_{\substack{\mathbf{S}\in \set{S}_+^n\\
\operatorname{Tr}(\mathbf{D}^{-2}\mathbf{S})=R}}
\langle \nabla\phi(\mathbf{Y}),\mathbf{S}\rangle.
\end{equation*}
Introduce the change of variables
$\mathbf{Z}=\mathbf{D}^{-1}\mathbf{S}\mathbf{D}^{-1}$.
Then
\[
\mathbf{S}=\mathbf{D}\mathbf{Z}\mathbf{D},
\qquad
\operatorname{Tr}(\mathbf{Z})=R,
\qquad
\mathbf{Z} \in \set{S}_+^n.
\]
Consequently, the minimization problem becomes
\begin{equation}
\label{eq:lmo_transformed}
    \argmin_{\substack{\mathbf{Z} \in \set{S}_+^n \\
    \operatorname{Tr}(\mathbf{Z})=R}}
    \langle \mathbf{D}\nabla\phi(\mathbf{Y})\mathbf{D},
    \mathbf{Z}\rangle.
\end{equation}
Let $\mathbf{s}$ be a unit eigenvector associated with the
smallest eigenvalue of
$\mathbf{D}\nabla\phi(\mathbf{Y})\mathbf{D}$.
An optimal solution of \eqref{eq:lmo_transformed} is $\mathbf{Z}^\star=R\mathbf{s}\mathbf{s}^\top$.
Transforming back gives
\begin{equation*}
\mathbf{S}^\star
=R(\mathbf{D}\mathbf{s})(\mathbf{D}\mathbf{s})^\top.
\end{equation*}
Thus, the LMO reduces to computing a smallest-eigenvalue
eigenvector of $\mathbf{D}\nabla\phi(\mathbf{Y})\mathbf{D}$.
Using the factorization
$\mathbf{Y}= R \mathbf{V}\mathbf{V}^\top$, the matrix-vector
product required by an iterative eigensolver can be evaluated
without explicitly forming the full gradient matrix.

\subsection{Expansion of the linear minimization oracle}

Using
\begin{equation*}
    \nabla\phi(\mathbf{Y})
    =\mathbf{D}^{-2}
    (\mathbf{Y}-\widetilde{\mathbf{G}})
    \mathbf{D}^{-2},
    \qquad
    \mathbf{Y}=R\mathbf{V}\mathbf{V}^\top,
\end{equation*}
and
\begin{equation*}
    \mathbf{S}^\star
    =R(\mathbf{D}\mathbf{s})(\mathbf{D}\mathbf{s})^\top,
\end{equation*}
we obtain
\begin{align*}
    &\langle\nabla\phi(\mathbf{Y}),
    \mathbf{Y}-\mathbf{S}^\star\rangle\\
    &=
    \langle
    \mathbf{D}^{-2}\mathbf{Y}\mathbf{D}^{-2},
    \mathbf{Y}\rangle
    -
    \langle
    \mathbf{D}^{-2}\widetilde{\mathbf{G}}\mathbf{D}^{-2},
    \mathbf{Y}\rangle\\
    &\quad
    -\langle
    \mathbf{D}^{-2}\mathbf{Y}\mathbf{D}^{-2},
    \mathbf{S}^\star
    \rangle + \langle
    \mathbf{D}^{-2}\widetilde{\mathbf{G}}\mathbf{D}^{-2},
    \mathbf{S}^\star
    \rangle.
\end{align*}
The first term satisfies
\begin{align*}
    &\langle
    \mathbf{D}^{-2}\mathbf{Y}\mathbf{D}^{-2},
    \mathbf{Y}\rangle \\
    &\quad= R^2 \langle
    \mathbf{D}^{-2}\mathbf{V}\mathbf{V}^\top\mathbf{D}^{-2},
    \mathbf{V}\mathbf{V}^\top\rangle\\
    &\quad=
    R^2 \operatorname{Tr}\left[
    (\mathbf{V}^\top\mathbf{D}^{-2}\mathbf{V})^2
    \right]\\
    &\quad=
    R^2 \operatorname{Tr}\left [ ((\mathbf{D}^{-1}\mathbf{V})^\top (\mathbf{D}^{-1}\mathbf{V}))^2 \right ].
\end{align*}
The terms involving the oracle atom can be expressed as
\begin{align*}
    &\langle
    \mathbf{D}^{-2}\mathbf{Y}\mathbf{D}^{-2},
    \mathbf{S}^\star
    \rangle \\
    &\quad= R^2\langle
    \mathbf{D}^{-2}\mathbf{V}\mathbf{V}^\top \mathbf{D}^{-2},
    (\mathbf{D}\mathbf{s})(\mathbf{D}\mathbf{s})^\top
    \rangle\\
    &\quad=
    R^2\mathbf{s}^\top
    \mathbf{D}^{-1}\mathbf{V}\mathbf{V}^\top
    \mathbf{D}^{-1}\mathbf{s}\\
    &\quad=
    R^2 \|\mathbf{V}^\top\mathbf{D}^{-1}\mathbf{s}\|_2^2,
\end{align*}
and
\begin{align*}
    &\langle
    \mathbf{D}^{-2}\widetilde{\mathbf{G}}\mathbf{D}^{-2},
    \mathbf{S}^\star
    \rangle \\
    &\quad=R \langle
    \mathbf{D}^{-2}\widetilde{\mathbf{G}}\mathbf{D}^{-2},
    (\mathbf{D}\mathbf{s})(\mathbf{D}\mathbf{s})^\top
    \rangle\\
    &\quad=
    R\mathbf{s}^\top
    \mathbf{D}^{-1}\widetilde{\mathbf{G}}\mathbf{D}^{-1}
    \mathbf{s}.
\end{align*}
Finally,
\begin{equation*}
    \langle
    \mathbf{D}^{-2}\widetilde{\mathbf{G}}\mathbf{D}^{-2},
    \mathbf{Y}\rangle
    =
    R\operatorname{Tr}\left[ \mathbf{D}^{-2} \widetilde{\mathbf{G}} \mathbf{D}^{-2} \mathbf{U}\mathrm{diag}(\alpha)\mathbf{U}^\top \right ].
\end{equation*}
Combining these expressions gives
\begin{align*}
    &\langle\nabla\phi(\mathbf{Y}),
    \mathbf{Y}-\mathbf{S}^\star\rangle\\
    &=
    R^2 \operatorname{Tr}\left [ ((\mathbf{D}^{-1}\mathbf{V})^\top (\mathbf{D}^{-1}\mathbf{V}))^2 \right ]\\
    &\quad-
    R\operatorname{Tr}\left[ \mathbf{D}^{-2} \widetilde{\mathbf{G}} \mathbf{D}^{-2} \mathbf{U}\mathrm{diag}(\alpha)\mathbf{U}^\top \right ]\\
    &\quad-
    R^2\|\mathbf{V}^\top\mathbf{D}^{-1}\mathbf{s}\|_2^2\\
    &\quad+
    R\mathbf{s}^\top\mathbf{D}^{-1}
    \widetilde{\mathbf{G}}\mathbf{D}^{-1}\mathbf{s}.
\end{align*}
The calculation of each of these four terms again only require the storage and update of $\mathbf{U}$, $\bm{\alpha}$ and $\widetilde{G}\mathbf{U}$ for every iteration of the Spectrahedron Frank-Wolfe algorithm.

\subsection{Away-Step Atom Calculation}\label{app:away-step-atom-calc}

Suppose the current iterate admits the factorization
\begin{equation*}
    \mathbf{Y}=R\mathbf{V}\mathbf{V}^\top,
    \qquad
    \mathbf{V}=\mathbf{U}\operatorname{diag}(\sqrt{\boldsymbol\alpha}).
\end{equation*}
For an away step, we seek the active rank-one atom that maximizes
the inner product with the gradient with respect to the image of the current projection iterate, which is spanned by columns of $\mathbf{V}$.
Writing an active atom as
$\mathbf{v}\mathbf{v}^\top$, its score is
\begin{equation*}
    \langle\nabla\phi(\mathbf{Y}),
    \mathbf{v}\mathbf{v}^\top\rangle
    =\mathbf{v}^\top\nabla\phi(\mathbf{Y})\mathbf{v}.
\end{equation*}
To express this selection in factor coordinates, let
$\mathbf{v}=\mathbf{V}\mathbf{c}$. The corresponding score is
\begin{equation*}
    \mathbf{c}^\top
    \mathbf{V}^\top\nabla\phi(\mathbf{Y})\mathbf{V}
    \mathbf{c}.
\end{equation*}
If the atom is normalized by
$\operatorname{Tr}(\mathbf{D}^{-2}\mathbf{v}\mathbf{v}^\top)=R$,
then the constraint becomes
\begin{equation*}
    \mathbf{c}^\top
    \mathbf{V}^\top\mathbf{D}^{-2}\mathbf{V}
    \mathbf{c}=R.
\end{equation*}
The resulting generalized Rayleigh quotient is
\begin{equation*}
    \max_{\mathbf{c}}
    \quad
    \mathbf{c}^\top
    \mathbf{V}^\top\nabla\phi(\mathbf{Y})\mathbf{V}
    \mathbf{c}
    \quad
    \text{s.t.}\quad
    \mathbf{c}^\top
    \mathbf{V}^\top\mathbf{D}^{-2}\mathbf{V}
    \mathbf{c}=R.
\end{equation*}
Introducing a Lagrange multiplier $\lambda$, the stationarity
condition is
\begin{equation*}
    \mathbf{V}^\top\nabla\phi(\mathbf{Y})\mathbf{V}\mathbf{c}
    =
    \lambda\,
    \mathbf{V}^\top\mathbf{D}^{-2}\mathbf{V}\mathbf{c}.
\end{equation*}
Substituting the gradient expression yields
\begin{align*}
    &\left[
    R\mathbf{V}^\top\mathbf{D}^{-2}
    \mathbf{V}\mathbf{V}^\top\mathbf{D}^{-2}\mathbf{V}
    -
    \mathbf{V}^\top\mathbf{D}^{-2}
    \widetilde{\mathbf{G}}\mathbf{D}^{-2}\mathbf{V}
    \right]\mathbf{c}\\
    &\qquad=
    \lambda\,
    \mathbf{V}^\top\mathbf{D}^{-2}\mathbf{V}\mathbf{c}.
\end{align*}
This gives a $r \times r$ generalized eigenvalue formulation for the
away-atom selection in factor coordinates.

\section{Randomized Lanczos and Approximate Eigenvector Computation} \label{app:approx-eigvec-comp}
For all the experiments, we use the thick-restart Lanczos method provided as part of ARPACK \citep{lehoucq1998arpack} as the default eigensolver.
Here, we develop careful analysis of the effect of errors in the estimation of the dominant eigenpair on the convergence of Spectrahedron Frank-Wolfe but we make a simplifying assumption that a simple Randomized Lanczos method \citep{kuczynski1992estimating} is used to compute an approximation of the eigenpair of the gradient $\nabla \phi(\mathbf{H}_k)$ corresponding to the smallest eigenvalue of $\nabla \phi(\mathbf{H}_k)$ (see \Cref{alg:approx_min_evec}).
Not having to compute an exact eigenpair reduces the computational overhead incurred by the projection subproblem in \approxpdhg.
\citep{jaggi2013revisiting} shows that the LMO can be solved inexactly and therefore the Frank-Wolfe algorithm allows for an inexact eigenvector computation with some upper bound on the error in the eigenvector computation. 
Here, we will show that the Frank-Wolfe algorithm and the Spectrahedron Frank-Wolfle algorithm bot converge to an inexact projection onto $\set{H}$ with desired level of precision with sufficiently high probability assuming that the number of iterations of the randomized Lanczos algorithm exceeds a certain threshold.
\begin{algorithm} 
\setstretch{1.25}
\caption{ApproxMinEvec via randomized storage-optimal Lanczos \citep{yurtsever2017sketchy}}
\label{alg:approx_min_evec}
\begin{algorithmic}[1]
\State \textbf{Input: } Input matrix $\nabla \phi(\mathbf{H}_k) \in \set{S}_n$ and maximum number $q$ of iterations
\State \textbf{Output: } Approximate minimum eigenpair $(\lambda, \vec{v}) \in \mathbb{R} \times \mathbb{R}^n$ of the matrix $\nabla \phi(\mathbf{H}_k)$
\Function{ApproxMinEvec}{$\nabla \phi(\mathbf{H}_k)$, $q$}
    \State Initialize random vector: $\vec{v}_1 = \operatorname{randn}(n, 1)$ %
    \State Normalize: $\vec{v}_1 = \vec{v}_1/\|\vec{v}_1\|$
    \For{$i = 1, 2, 3, \ldots, \min\{q, n-1\}$}
        \State Compute Rayleigh quotient: $\omega_i = \operatorname{Re}(\vec{v}_i^*(\nabla \phi(\mathbf{H}_k)\vec{v}_i))$
        \State Lanczos recurrence: $\vec{v}_{i+1} = \nabla \phi(\mathbf{H}_k)\vec{v}_i - \omega_i \vec{v}_i - \rho_{i-1} \vec{v}_{i-1}$
        \State Compute norm: $\rho_i = \|\vec{v}_i\|$
        \If{$\rho_i = 0$} 
            \State \textbf{break}
        \EndIf
        \State Normalize: $\vec{v}_{i+1} = \vec{v}_{i+1}/\rho_i$
    \EndFor
    \State Form tridiagonal matrix: $\mathbf{T} = \operatorname{tridiag}(\rho_{1:(i-1)}, \omega_{1:i}, \rho_{1:(i-1)})$
    \State Solve eigenproblem: $[\lambda, \vec{u}] = \operatorname{MinEvec}(\mathbf{T})$
    \State Reconstruct eigenvector: $\vec{v} = \sum_{j=1}^i u_j \vec{v}_j$
\EndFunction
\end{algorithmic}
\end{algorithm}

The following lemma from \citep{kuczynski1992estimating, yurtsever2017sketchy} shows that the Randomized Lanczos method achieve a certain level of precision $\frac{\xi}{8}\|\nabla \phi(\mathbf{H}_k)\|_F$ for some $\xi \in (0, 1]$  in estimating the smallest eigenvector with probability of $1 - 2\zeta$ after certain number of iterations that depends on both dimension $n$, $\xi$ and $\zeta$. 
\begin{lemma}[Randomized Lanczos method \citep{kuczynski1992estimating, yurtsever2017sketchy}]
\label{lemma:randomized-lanczos}
Let $\nabla \phi(\mathbf{H}_k) \in \set{S}_n$. For $\xi \in (0, 1]$ and $\zeta \in (0, 0.5]$, the randomized Lanczos method given by \Cref{alg:approx_min_evec} computes a unit vector $\vec{v} \in \mathbb{R}^n$ that satisfies
\begin{equation}\label{eqn:lanczos-accuracy-bound}
\vec{v}^* \nabla \phi(\mathbf{H}_k) \vec{v} \leq \lambda_{\min}(\nabla \phi(\mathbf{H}_k)) + \frac{\xi}{8}\|\nabla \phi(\mathbf{H}_k)\|_F \quad \text{with probability at least } 1 - 2\zeta
\end{equation}
after $q \geq \frac{1}{2} + \xi^{-1/2} \log(n/\zeta^2)$ iterations. The arithmetic cost is at most $q$ matrix-vector multiplies with $\nabla \phi(\mathbf{H}_k)$ and $\mathcal{O}(qn)$ extra operations. The working storage is $\mathcal{O}(n + q)$.
\end{lemma}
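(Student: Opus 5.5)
This lemma is the classical result of Kuczy\'nski and Wo\'zniakowski on the randomized Lanczos method, in the form used by \citep{yurtsever2017sketchy}. The plan is to reduce it to their bound on the largest eigenvalue of a positive semidefinite matrix, and then convert their relative error into the additive Frobenius-norm error stated in \eqref{eqn:lanczos-accuracy-bound}.

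First, I turn the smallest-eigenvalue problem into a largest-eigenvalue problem for a PSD matrix. Write $\mathbf{M} := \nabla \phi(\mathbf{H}_k)$ and set $\mathbf{B} := \|\mathbf{M}\|\,I - \mathbf{M}$, where $\|\mathbf{M}\|$ is the spectral norm. Then $\mathbf{B}$ is PSD and $\lambda_{\max}(\mathbf{B}) = \|\mathbf{M}\| - \lambda_{\min}(\mathbf{M}) \leq 2\|\mathbf{M}\|$. A shift by a multiple of $I$ leaves the Krylov subspace $\mathrm{span}\{\vec{v}_1, \mathbf{M}\vec{v}_1, \ldots, \mathbf{M}^{q-1}\vec{v}_1\}$ unchanged. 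Hence \Cref{alg:approx_min_evec} applied to $\mathbf{M}$ returns the same Ritz vector $\vec{v}$, the minimizer of the Rayleigh quotient over the Krylov space, as Lanczos applied to $\mathbf{B}$ returns for its maximum Rayleigh quotient.

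Second, I invoke the Kuczy\'nski--Wo\'zniakowski theorem for a uniformly random start vector; the Gaussian normalized start in the algorithm is uniform on the sphere. The theorem gives
\begin{equation*}
\vec{v}^\top \mathbf{B}\vec{v} \geq (1-\tilde\xi)\lambda_{\max}(\mathbf{B})
\end{equation*}
with probability at least $1-2\zeta$, once $q \geq \tfrac12 + \tilde\xi^{-1/2}\log(n/\zeta^2)$. This is the step I expect to be the main obstacle. Their statement bounds the failure probability by roughly $1.648\sqrt{n}\,e^{-\sqrt{\tilde\xi}(2q-1)}$, a Chebyshev-polynomial bound, and this must be matched to the stated iteration count and constants. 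I would follow the bookkeeping in \citep{yurtsever2017sketchy} rather than rederive it. The key facts are that the squared cosine between the start vector and the top eigenvector is at least of order $\zeta^2/n$ with probability at least $1-\zeta$, and that the Chebyshev polynomial of degree $q-1$ amplifies the gap $\tilde\xi$ like $e^{2\sqrt{\tilde\xi}(q-1/2)}$.

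Third, I translate back to $\mathbf{M}$. We have
\begin{equation*}
\vec{v}^\top\mathbf{M}\vec{v} = \|\mathbf{M}\| - \vec{v}^\top\mathbf{B}\vec{v} \leq \lambda_{\min}(\mathbf{M}) + \tilde\xi\,\lambda_{\max}(\mathbf{B}) \leq \lambda_{\min}(\mathbf{M}) + 2\tilde\xi\|\mathbf{M}\|.
\end{equation*}
Using $\|\mathbf{M}\| \leq \|\mathbf{M}\|_F$ and choosing $\tilde\xi = \xi/16$ gives the additive error $\tfrac{\xi}{8}\|\mathbf{M}\|_F$. This choice changes the iteration requirement only by a constant factor in front of $\xi^{-1/2}$. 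If the stated constant $\xi^{-1/2}$ must hold exactly as written, I would instead cite the lemma verbatim from \citep{yurtsever2017sketchy}, since that is precisely their Lemma, with the same constants.

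Finally, the cost claims follow directly from the algorithm. Each iteration uses one matrix-vector product with $\mathbf{M}$ plus $\mathcal{O}(n)$ vector operations. The tridiagonal eigenproblem costs $\mathcal{O}(q^2)$, which is dominated by $\mathcal{O}(qn)$ since $q<n$. The storage is $\mathcal{O}(n+q)$ for the storage-optimal variant, which recomputes the Lanczos basis in a second pass to reconstruct $\vec{v}$ instead of storing all $q$ basis vectors.
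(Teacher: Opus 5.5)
Your route is the paper's route. The paper's proof is only a pointer to Theorem 4.2(a) of \citep{kuczynski1992estimating}, and your reduction spells out what that citation leaves implicit: the shift $\mathbf{B} = \|\mathbf{M}\| I - \mathbf{M}$, the fact that the shift leaves the Krylov space and hence the Ritz vector unchanged, and the conversion $\vec{v}^\top\mathbf{M}\vec{v} \le \lambda_{\min}(\mathbf{M}) + 2\tilde\xi\|\mathbf{M}\| \le \lambda_{\min}(\mathbf{M}) + 2\tilde\xi\|\mathbf{M}\|_F$.

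Your second step, however, is needlessly lossy. That is why you come out a factor $4$ short of the stated iteration count and feel forced to fall back on citing the constants. Skip the intermediate form $q \ge \tfrac12 + \tilde\xi^{-1/2}\log(n/\zeta^2)$ and use the Kuczy\'nski--Wo\'zniakowski tail bound $1.648\sqrt{n}\,e^{-\sqrt{\tilde\xi}(2q-1)}$ directly. With $\tilde\xi = \xi/16$ and $q \ge \tfrac12 + \xi^{-1/2}\log(n/\zeta^2)$ you get $\sqrt{\tilde\xi}\,(2q-1) \ge \tfrac{\sqrt{\xi}}{4}\cdot 2\xi^{-1/2}\log(n/\zeta^2) = \log(\sqrt{n}/\zeta)$. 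The failure probability is then at most $1.648\,\zeta \le 2\zeta$. The constants $\tfrac18$ and $\log(n/\zeta^2)$ in the statement are calibrated for exactly this shift, so your reduction proves the accuracy claim as stated, with no loss.

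Your cost paragraph does not hold together. Regenerating $\vec{v}_1,\dots,\vec{v}_q$ in a second pass costs another $q$ matrix-vector products, so that variant uses $2q$ matvecs with $\mathcal{O}(n+q)$ storage. By contrast, \Cref{alg:approx_min_evec} as written forms $\sum_j u_j\vec{v}_j$ from stored Lanczos vectors, so it uses $q$ matvecs but $\mathcal{O}(qn)$ storage. Neither version gives ``at most $q$ matvecs'' and ``$\mathcal{O}(n+q)$ storage'' at the same time, so claim only the pair you actually establish. Nothing downstream depends on this: \Cref{thm:lanczos-iter-fw} and \Cref{thm:lanczos-iter-sfw} use only the accuracy guarantee and the lower bound on $q$.
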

\begin{proof}
We refer to the proof of Theorem 4.2(a) in \citep{kuczynski1992estimating}.
\end{proof}
Given \Cref{lemma:randomized-lanczos}, the problem reduces to finding a suitable schedule $\{\xi_k\}_{k \geq 0}$ and suitable $\zeta$ such that we find the eigenvector with a controlled level of precision with constant probability such that the Frank-Wolfe algorithm or its variant converges to the approximate projection onto $\set{H}$. 
In order to do this, we need to first quantify what inexact computation of the LMO means and it can be done as follows.
Suppose that $\hat{\mathbf{S}}_k$ is the inexact LMO computed for the $k$th iteration in place of the exact LMO given by \eqref{eqn:lmo-step}:
\begin{equation}\label{eqn:lmo-step-inexact}
    \langle \nabla \phi(\mathbf{H}_k), \hat{\mathbf{S}}_k\rangle  = \min_{\mathbf{U} \in \set{H}}\langle \nabla \phi(\mathbf{H}_k), \mathbf{U}\rangle + \theta_k, 
\end{equation}
where $\theta_k > 0$ is the error in the LMO step for the $k$th iteration. 
Given that \eqref{eqn:lmo-step-inexact} is equivalent to the definition of inexact LMO call presented in \citep{jaggi2013revisiting} up to a scaling constant, we can now present and prove the following result that presents a suitable schedule $\{\xi_k\}_{k \geq 0}$:
\begin{theorem}\label{thm:lanczos-iter-fw} 
Suppose that the vanilla Frank-Wolfe algorithm (see \Cref{alg:approx_proj_fw}) is applied to the projection subproblem given in \eqref{eqn:proj-subproblem-H} and that $\set{H} = \set{S}_+^n \cap \{\mathbf{X}~:~\operatorname{Tr}(\mathbf{X}) = R \}$. 
For any prescribed overall failure probability $p \in (0,1)$ and any horizon $K \geq 1$, choose $\xi_k= \frac{c}{(k+1)^2}$ for some $c \in (0, 1]$ and run the randomized Lanczos method with per-call failure probability at most $p/K$ for each of the first $K$ iterations. Then, with probability of at least $1-p$,
the vanilla Frank-Wolfe algorithm will converge to the projection $\mathbf{H}^\star$ of $\mathbf{G}$ onto $\set{H}$ at rate $\mathcal{O}(1/k)$ for $1 \leq k \leq K$ after at least $\mathcal{O}\left((K + 1)\log\left(\frac{nK}{p}\right)\right )$ Lanczos iterations per Frank-Wolfe iteration.
\end{theorem}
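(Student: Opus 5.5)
The plan is to reduce the statement to Jaggi's inexact-LMO convergence theorem \citep{jaggi2013revisiting}. Recall that theorem: if at step $k$ the oracle returns $\hat{\mathbf{S}}_k$ with additive error at most $\frac{1}{2}\eta_k \delta C_f$, where $\eta_k = 2/(k+2)$, then Frank-Wolfe satisfies $\phi(\mathbf{H}_k)-\phi(\mathbf{H}^\star)\le \frac{2C_f}{k+2}(1+\delta)$. Line search only improves on the open-loop step, so it is covered. For $\phi(\mathbf{H})=\frac12\|\mathbf{H}-\mathbf{G}\|_F^2$ the curvature constant satisfies $C_f\le \mathrm{diam}(\set{H})^2 \le 2R^2$. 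Convergence of $\phi$ then yields the iterate rate, because $\phi$ is $1$-strongly convex and therefore $\frac12\|\mathbf{H}_k-\mathbf{H}^\star\|_F^2\le \phi(\mathbf{H}_k)-\phi(\mathbf{H}^\star)$.

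The first step translates the Lanczos guarantee of \Cref{lemma:randomized-lanczos} into an LMO error $\theta_k$ as in \eqref{eqn:lmo-step-inexact}. Take $\hat{\mathbf{S}}_k = R\vec{v}\vec{v}^\top$. Then $\theta_k = R(\vec{v}^\top\nabla\phi(\mathbf{H}_k)\vec{v}-\lambda_{\min}) \le \frac{R\xi_k}{8}\|\nabla\phi(\mathbf{H}_k)\|_F$. Next bound the gradient uniformly: $\|\mathbf{H}_k-\mathbf{G}\|_F\le R+\|\mathbf{G}\|_F =: M$, since $\mathbf{H}_k\in\set{H}$ implies $\|\mathbf{H}_k\|_F\le \operatorname{Tr}(\mathbf{H}_k)=R$. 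Hence $\theta_k\le \frac{RMc}{8(k+1)^2}$. This is at most $\frac{1}{2}\eta_k\delta C_f = \frac{\delta C_f}{k+2}$ for the fixed constant $\delta = \frac{RMc}{4C_f}$, because $\frac{1}{(k+1)^2}\le \frac{2}{k+2}$. The choice $\xi_k\propto (k+1)^{-2}$ is therefore more than sufficient, and the rate $\frac{2C_f(1+\delta)}{k+2}=\mathcal{O}(1/k)$ follows. If $C_f$ is only bounded above rather than known exactly, I would instead define $\delta$ using the upper bound $2R^2$, which changes only constants.

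The probabilistic step is a union bound. Apply \Cref{lemma:randomized-lanczos} with $\zeta = p/(2K)$, so each call fails with probability at most $p/K$. The probability that any of the first $K$ calls fails is then at most $p$, so with probability at least $1-p$ every $\theta_k$ obeys the bound above and the deterministic argument applies for all $1\le k\le K$. The iteration count per call is $q_k\ge \frac12+\xi_k^{-1/2}\log(n/\zeta^2) = \frac12 + \frac{k+1}{\sqrt{c}}\log\!\big(4nK^2/p^2\big)$. For $k\le K$ this is $\mathcal{O}\big((K+1)\log(nK/p)\big)$, using $\log(4nK^2/p^2)\le 2\log(2nK/p)$ and absorbing $c$ into the constant.

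The main subtlety I expect is matching the form of the error requirement. Jaggi's inexactness is relative to $\eta_k C_f$, whereas ours is absolute with a gradient-norm factor, so the uniform bound $M$ on $\|\nabla\phi(\mathbf{H}_k)\|_F$ is essential. A second subtlety is that the FW gap stopping test in \Cref{alg:approx_proj_fw} is evaluated with an inexact $\hat{\mathbf{S}}_k$. I would note that the computed gap underestimates the true gap by at most $\theta_k$, so early termination only introduces an additional $\mathcal{O}(\theta_k)$ slack, which again decays as $\mathcal{O}(1/k^2)$.
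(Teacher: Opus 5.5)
Your proposal is correct and follows essentially the paper's route: you translate the Lanczos guarantee into $\theta_k \le \frac{R\xi_k}{8}(R+\|\mathbf{G}\|_F)$, feed this into Jaggi's inexact-oracle analysis of Frank--Wolfe with curvature $2R^2$, and take a union bound with $\zeta = p/(2K)$. The only difference is minor. The paper unrolls $h_{k+1}\le(1-\eta_k)h_k+\eta_k\theta_k+\eta_k^2 R^2$ to obtain an $\mathcal{O}(\log k/k^2)$ error term, whereas you absorb $\theta_k$ into Jaggi's step-size-relative accuracy parameter; as you note, this shows that $\xi_k \propto 1/(k+1)$ would already suffice and would give a smaller Lanczos count.
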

\begin{proof} Let $h_k := \phi(\vec{H}_k) - \phi(\vec{H}^\star)$. Then following the proof of Theorem 2 in \citep{jaggi2013revisiting} and assuming an inexact LMO step as in \eqref{eqn:lmo-step-inexact}, we obtain the following recursive relationship:
\begin{equation*}
    h_{k + 1} \leq (1 - \eta_k)h_k + \eta_k \theta_k + \frac{\eta_k^2 C_\phi}{2}
\end{equation*}
where $C_\phi$ is the curvature constant corresponding to the objective in \eqref{eqn:proj-subproblem-H} and is equal to the squared Euclidean diameter of the set $\set{H}$, which is given by $2R^2$.
Unrolling this inequality with step size $\eta_k = \frac{2}{k + 2}$ and using Theorem 1 from \citep{jaggi2013revisiting} gives us that
\begin{equation*}
    h_k \leq \frac{4R^2}{k + 2} + \frac{2}{k(k + 1)} \sum_{i = 0}^{k-1} (i + 1)\theta_i.
\end{equation*}
Now suppose that an approximate FW step of $\hat{\mathbf{S}}_k := R\vec{v}\vec{v}^\top$ is taken with $\vec{v}$ being an approximate eigenvector.
Then we can derive an upper bound for $\theta_k$ as follows:

\begin{equation} \label{eqn:lanczos-max-iter-lem-proof-1}
    \langle \nabla \phi(\mathbf{H}_k), \hat{\mathbf{S}}_k\rangle  = \min_{\mathbf{U} \in \set{H}}\langle \nabla \phi(\mathbf{H}_k), \mathbf{U}\rangle + \theta_k \iff \theta_k = R(\omega - \lambda_{min}(\nabla \phi(\mathbf{H}_k))),
\end{equation}
where $\omega := \vec{v}^\top\nabla \phi(\mathbf{H}_k)\vec{v}$.
Given \eqref{eqn:lanczos-accuracy-bound} and \eqref{eqn:lanczos-max-iter-lem-proof-1}, we need to have that
\begin{equation}\label{eqn:lanczos-max-iter-lem-proof-2}
    \theta_k \leq \frac{R\xi_k \norm{\nabla \phi(\mathbf{H}_k)}_F}{8},
\end{equation}
where $\xi_k \in (0, 1]$.
Suppose we choose $\xi_k \sim \frac{c}{(k + 1)^2}$ with $c \in (0, 1]$, then $\sum_{k = 0}^\infty \xi_k < \infty$.
Next, we note that 
\begin{equation*}
    \norm{\nabla \phi(\mathbf{H}_k)}_F \leq \norm{\mathbf{H}_k - \mathbf{G}}_F \leq \norm{\mathbf{H}_k}_F + \norm{\mathbf{G}}_F \leq R + \norm{\mathbf{G}}_F,
\end{equation*}
so $\norm{\nabla \phi(\mathbf{H}_k)}_F$ is bounded.
Given \eqref{eqn:lanczos-max-iter-lem-proof-2} and the fact that $\norm{\nabla \phi(\mathbf{H}_k)}_F$ is bounded, we must have that $\theta_k \sim \mathcal{O}((k + 1)^{-2})$ and this in turn implies that $\frac{2}{k(k + 1)} \sum_{i = 0}^{k-1} (i + 1) \theta_i \sim \mathcal{O}\left (\frac{\log(k + 1)}{k(k + 1)} \right )$ since $\sum_{i = 0}^{k - 1} (i + 1)\theta_i \leq C\sum_{i = 0}^{k - 1}\frac{1}{i + 1}  = \mathcal{O}(\log(k + 1))$ for some $C > 0$ that is independent of $k$.

Now to prove the lower bound on the maximum number of Lanczos iterations, we simply invoke \Cref{lemma:randomized-lanczos} as follows.
For the $k$th iteration out of $K$ iterations, assume that the probability of failing to find a suitable approximation of the eigenvector corresponding to the smallest eigenvalue $\lambda_{min}(\nabla \phi(\mathbf{H}_k))$ is $p_k$.
Then the probability that any one of the $K$ iterations fails to find the minimum eigenvector is given by $\sum_{k = 1}^K p_k$.
To ensure that the overall probability of failure is $p$, we can enforce that $p_k = p/K$.
Given the objective of estimating the eigenvector with constant probability of $1 - 2\zeta$, \eqref{eqn:lanczos-accuracy-bound} entails that $p_k = 2\zeta$. 
Therefore, we must have that $\zeta = \frac{p}{2K}$.
Given this and the fact that $\xi_k = \frac{c}{(k + 1)^2}$, we can say that 
\begin{equation*}
    q_k \geq \mathcal{O}\left( \left ( \frac{c}{(k + 1)^2} \right )^{-1/2}\log\left ( \frac{n}{(p/(2K))^2}\right ) \right ) = \mathcal{O}\left ((k + 1)\log\left(\frac{nK}{p}\right ) \right ).
\end{equation*}
Consequently, the worst-case total number of Lanczos iterations per Frank-Wolfe iteration is bounded below by $\mathcal{O}\left ((K + 1)\log\left(\frac{nK}{p}\right ) \right )$. This concludes the proof.
\end{proof}
Now we try to detail the effect of an inexact LMO in the sense of \eqref{eqn:lmo-step-inexact} for Spectrahedron FW (see \Cref{alg:spectrahedron-fw}).
For sake of brevity in the proofs, we will follow the same notation as \citep{garber2026randomized} and refer to objective residual at iteration $k$ of \Cref{alg:spectrahedron-fw} as $h_k := \phi(\mathbf{H}_k) - \phi(\mathbf{H}^\star)$.
We also define $\delta_k := \lambda_{n - r^\star}(\nabla \phi(\mathbf{H}_k)) - \lambda_{n - r^\star + 1}(\nabla \phi(\mathbf{H}_k))$ as the eigengap at $\nabla \phi(\mathbf{H}_k)$ for $r^* < n$ and $\delta_{k, r^\star} := \lambda_{n - r^\star + 1}(\nabla \phi(\mathbf{H}_k)) - \lambda_{n}(\nabla \phi(\mathbf{H}_k))$ as the in-face eigengap at $\nabla\phi(\mathbf{H}_k)$.
We denote the projection operator onto span of the eigenvectors corresponding to the smallest $r^\star$ eigenvalues of $\nabla \phi(\mathbf{H}_k)$ as $\mathbf{P}_k$ and  $\mathbf{P}_k^\bot$ as the projection onto the span of the eigenvectors corresponding to $n - r^\star$ largest eigenvalues of $\nabla \phi(\mathbf{H}_k)$.
Finally, we define $r_k := \operatorname{rank}(\mathbf{H}_k)$.
First, we present the Poincar\'{e} separation theorem and the Weyl's inequality which are useful in proving the desired results:
\begin{lemma}[Poincar\'{e} Separation Theorem \citep{magnus2019matrix}] \label{lem:poincare-sep-thm} Let $\mathbf{A} \in \set{S}_n$ with eigenvalues ordered as follows: $\lambda_1(\mathbf{A}) \geq \ldots \geq \lambda_n(\mathbf{A})$.
Let $\mathbf{U} \in \mathbb{R}^{n \times r}$ be a semi-orthogonal matrix (i.e. $\mathbf{U}^\top \mathbf{U} = \mathbf{I}_{r \times r}$.
Similarly, denote the eigenvalues of $\mathbf{U}^\top \mathbf{A}\mathbf{U}$ by $\mu_1 \geq \ldots \geq \mu_r$.
Then the following holds:
\begin{equation*}
    \lambda_i(\mathbf{A}) \geq \mu_i \geq \lambda_{n - r + i}(\mathbf{A})
\end{equation*}
for all $i = 1, \ldots, r$.
\end{lemma}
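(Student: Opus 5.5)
We derive the Poincaré separation inequalities from the Courant--Fischer min--max characterization of eigenvalues. Two choices fix the setup. First, the compression $\mathbf{B} := \mathbf{U}^\top \mathbf{A}\mathbf{U} \in \set{S}_r$ is symmetric. Second, because $\mathbf{U}^\top\mathbf{U} = \mathbf{I}_{r\times r}$, the map $\vec{w} \mapsto \mathbf{U}\vec{w}$ is an isometry from $\mathbb{R}^r$ onto the $r$-dimensional subspace $\mathrm{Im}(\mathbf{U})$. In particular, $\vec{w}^\top \mathbf{B}\vec{w} = (\mathbf{U}\vec{w})^\top \mathbf{A}(\mathbf{U}\vec{w})$ and $\norm{\mathbf{U}\vec{w}} = \norm{\vec{w}}$, so Rayleigh quotients of $\mathbf{B}$ are exactly Rayleigh quotients of $\mathbf{A}$ restricted to $\mathrm{Im}(\mathbf{U})$.

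\emph{Upper bound $\mu_i \le \lambda_i(\mathbf{A})$.} We use the max--min form
\begin{equation*}
\mu_i = \max_{\dim \set{W} = i,\ \set{W}\subseteq\mathbb{R}^r}\ \min_{\vec{w}\in\set{W},\,\vec{w}\neq 0} \frac{\vec{w}^\top\mathbf{B}\vec{w}}{\vec{w}^\top\vec{w}}.
\end{equation*}
For any $i$-dimensional $\set{W}\subseteq\mathbb{R}^r$, the image $\mathbf{U}\set{W}$ is an $i$-dimensional subspace of $\mathbb{R}^n$, since $\mathbf{U}$ is injective. By the isometry, the inner minimum over $\set{W}$ for $\mathbf{B}$ equals the minimum over $\mathbf{U}\set{W}$ for $\mathbf{A}$. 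That minimum is at most the maximum of the same quantity over all $i$-dimensional subspaces of $\mathbb{R}^n$, which is $\lambda_i(\mathbf{A})$. Taking the maximum over $\set{W}$ gives $\mu_i \le \lambda_i(\mathbf{A})$.

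\emph{Lower bound $\mu_i \ge \lambda_{n-r+i}(\mathbf{A})$.} The cleanest route is to apply the upper bound to $-\mathbf{A}$, whose compression is $-\mathbf{B}$. The eigenvalues of $-\mathbf{B}$ in decreasing order are $-\mu_r \ge \cdots \ge -\mu_1$, so $\lambda_j(-\mathbf{B}) = -\mu_{r-j+1}$. Likewise $\lambda_j(-\mathbf{A}) = -\lambda_{n-j+1}(\mathbf{A})$. The upper bound then reads $-\mu_{r-j+1} \le -\lambda_{n-j+1}(\mathbf{A})$. Setting $i = r-j+1$, so that $n-j+1 = n-r+i$, gives $\mu_i \ge \lambda_{n-r+i}(\mathbf{A})$ for all $i=1,\ldots,r$.

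The only delicate point is this index bookkeeping when reversing the order of eigenvalues under negation. As a check, the boundary cases $i=1$ and $i=r$ should reproduce the familiar bounds $\mu_1 \le \lambda_1(\mathbf{A})$ and $\mu_r \ge \lambda_n(\mathbf{A})$, which follow directly from the Rayleigh-quotient characterization. Everything else is a direct consequence of Courant--Fischer together with the injectivity and isometry of $\mathbf{U}$.
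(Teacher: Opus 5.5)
Your proof is correct: the max--min form of Courant--Fischer, combined with the injectivity and isometry of $\vec{w}\mapsto\mathbf{U}\vec{w}$, gives $\mu_i \le \lambda_i(\mathbf{A})$, and applying that bound to $-\mathbf{A}$ with the reindexing $i = r-j+1$ correctly yields $\mu_i \ge \lambda_{n-r+i}(\mathbf{A})$. The paper gives no proof of this lemma and only cites \citep{magnus2019matrix}, so your argument supplies the standard min--max proof of the cited result.
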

\begin{lemma}[Weyl's Inequality \citep{franklin2000matrix}] \label{lem:weyl-thm} Let $\mathbf{A}, \mathbf{B} \in \set{S}_n$, then
\begin{equation*}
    |\lambda_k(\mathbf{A}) - \lambda_k(\mathbf{B})| \leq \|\mathbf{A} - \mathbf{B}\|_{op} \leq \|\mathbf{A} - \mathbf{B}\|_F,
\end{equation*}
where $\|\cdot\|_{op}$ denotes the operator norm and $k \in \{1, \ldots, n\}$.
\end{lemma}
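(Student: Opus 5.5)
The statement to prove is Weyl's inequality (\Cref{lem:weyl-thm}). I plan to derive it from the Courant--Fischer min-max characterization of eigenvalues, and then bound the operator norm by the Frobenius norm.

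The first step is to record the variational formula. For $\mathbf{M} \in \set{S}_n$ with eigenvalues $\lambda_1(\mathbf{M}) \geq \ldots \geq \lambda_n(\mathbf{M})$,
\begin{equation*}
    \lambda_k(\mathbf{M}) = \max_{\dim \set{V} = k}\ \min_{\vec{x} \in \set{V},\, \norm{\vec{x}} = 1} \vec{x}^\top \mathbf{M}\vec{x}.
\end{equation*}
This is standard. It can be justified by taking $\set{V}$ to be the span of the top $k$ eigenvectors, which gives ``$\geq$''. For ``$\leq$'', any $k$-dimensional $\set{V}$ intersects the span of the bottom $n-k+1$ eigenvectors nontrivially. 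Alternatively, one can read it off from the Poincar\'e separation theorem (\Cref{lem:poincare-sep-thm}) with $r = k$ and $i = k$, which gives $\mu_k \leq \lambda_k$ for every semi-orthogonal $\mathbf{U}$, with equality attained on the eigenvector span.

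The second step is the comparison. Write $\mathbf{A} = \mathbf{B} + \mathbf{E}$ with $\mathbf{E} := \mathbf{A} - \mathbf{B}$. For every unit $\vec{x}$,
\begin{equation*}
    \vec{x}^\top \mathbf{A}\vec{x} \leq \vec{x}^\top \mathbf{B}\vec{x} + \lambda_1(\mathbf{E}) \leq \vec{x}^\top \mathbf{B}\vec{x} + \norm{\mathbf{E}}_{op}.
\end{equation*}
Taking the min over unit $\vec{x} \in \set{V}$ and then the max over $k$-dimensional $\set{V}$ preserves this inequality, so $\lambda_k(\mathbf{A}) \leq \lambda_k(\mathbf{B}) + \norm{\mathbf{E}}_{op}$. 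Swapping the roles of $\mathbf{A}$ and $\mathbf{B}$ gives the reverse bound, and together they yield $|\lambda_k(\mathbf{A}) - \lambda_k(\mathbf{B})| \leq \norm{\mathbf{A} - \mathbf{B}}_{op}$.

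The third step is the norm comparison. For symmetric $\mathbf{E}$ we have $\norm{\mathbf{E}}_{op} = \max_i |\lambda_i(\mathbf{E})|$ and $\norm{\mathbf{E}}_F^2 = \sum_i \lambda_i(\mathbf{E})^2$. The second follows from the spectral decomposition together with the orthogonal invariance of the Frobenius norm. Hence $\norm{\mathbf{E}}_{op} \leq \norm{\mathbf{E}}_F$.

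The only step requiring real care is the min-max characterization, specifically the dimension-counting argument for the upper bound. Everything after that is monotonicity of max/min under a uniform additive perturbation.
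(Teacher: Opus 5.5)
Your proof is correct and complete. It combines three steps:
\begin{itemize}
\item the Courant--Fischer characterization, justified either by your dimension count or, equivalently, by the Poincar\'e separation theorem with $r=i=k$;
\item the uniform bound $\vec{x}^\top(\mathbf{A}-\mathbf{B})\vec{x}\leq\norm{\mathbf{A}-\mathbf{B}}_{op}$;
\item the norm comparison $\max_i|\lambda_i(\mathbf{E})|\leq\bigl(\sum_i\lambda_i(\mathbf{E})^2\bigr)^{1/2}$.
\end{itemize}
The paper gives no proof of this lemma and only cites \citep{franklin2000matrix}, where the standard textbook proof is this same min-max argument, so there is nothing to reconcile.
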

We will now state the unchanged and relevant results from \citep{garber2026randomized} without proof.
The calculation of $\mathbf{v}_{t,+}$ and $\mathbf{u}_{k,+}$ involves inexact computation of the leading eigenvector of $-\nabla \phi(\mathbf{H}_k)$ and therefore only the forward and pairwise steps are affected by this inexact evaluation.
In contrast, the drop and away steps are computed by solving a small dense linear system as detailed in \Cref{app:away-step-atom-calc}, so we simply cite the relevant results from \citep{garber2026randomized} for these steps.
\begin{lemma}[Lemma 2, \citep{garber2026randomized}]\label{lem:permissible-step-lem-garber} Let $\mathbf{H} \in \mathbb{S}_+^n$ and $\mathbf{v} \in \mathrm{Im}(\mathbf{H})$ such that $\mathbf{v} \neq \mathbf{0}$.
Let $\mathbf{Y} = \mathbf{H} - \gamma \mathbf{v}\mathbf{v}^\top$ for some $\gamma \geq 0$. 
If $\gamma \leq (\mathbf{v}^\top \mathbf{H}^\dagger \mathbf{v})^{-1}$, then $\mathbf{Y} \in \mathbb{S}_+^n$.
If $\gamma = (\mathbf{v}^\top \mathbf{H}^\dagger \mathbf{v})^{-1}$, then $\mathrm{rank}(\mathbf{Y}) = \mathrm{rank}(\mathbf{H}) - 1$ and $\mathbf{H}^\dagger \mathbf{v} \in \mathrm{ker}(\mathbf{Y})$.
\end{lemma}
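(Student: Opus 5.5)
The statement to prove is \Cref{lem:permissible-step-lem-garber}. I would reduce everything to the range of $\mathbf{H}$ and use the Schur-complement characterization of positive semidefiniteness.

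\textbf{Reduction to the range.} Write the eigendecomposition $\mathbf{H} = \mathbf{U}\boldsymbol{\Lambda}\mathbf{U}^\top$, where $\mathbf{U} \in \mathbb{R}^{n \times r}$ has orthonormal columns spanning $\mathrm{Im}(\mathbf{H})$, $r = \mathrm{rank}(\mathbf{H})$, and $\boldsymbol{\Lambda} \succ 0$ is diagonal. Then $\mathbf{H}^\dagger = \mathbf{U}\boldsymbol{\Lambda}^{-1}\mathbf{U}^\top$. Since $\mathbf{v} \in \mathrm{Im}(\mathbf{H})$, we can write $\mathbf{v} = \mathbf{U}\mathbf{a}$ with $\mathbf{a} \neq \mathbf{0}$. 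This gives
\begin{equation*}
\mathbf{Y} = \mathbf{U}(\boldsymbol{\Lambda} - \gamma \mathbf{a}\mathbf{a}^\top)\mathbf{U}^\top, \qquad \mathbf{v}^\top\mathbf{H}^\dagger\mathbf{v} = \mathbf{a}^\top\boldsymbol{\Lambda}^{-1}\mathbf{a} > 0.
\end{equation*}
Hence $\mathbf{Y} \succeq 0$ if and only if $\mathbf{M} := \boldsymbol{\Lambda} - \gamma\mathbf{a}\mathbf{a}^\top \succeq 0$. Moreover, $\mathrm{rank}(\mathbf{Y}) = \mathrm{rank}(\mathbf{M})$ because $\mathbf{U}$ has orthonormal columns.

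\textbf{Positive semidefiniteness.} Conjugate by $\boldsymbol{\Lambda}^{-1/2}$ to get
\begin{equation*}
\boldsymbol{\Lambda}^{-1/2}\mathbf{M}\boldsymbol{\Lambda}^{-1/2} = \mathbf{I} - \gamma \mathbf{b}\mathbf{b}^\top, \qquad \mathbf{b} := \boldsymbol{\Lambda}^{-1/2}\mathbf{a}.
\end{equation*}
The eigenvalues of this matrix are $1$, with multiplicity $r-1$, and $1 - \gamma\|\mathbf{b}\|^2$. Since $\|\mathbf{b}\|^2 = \mathbf{a}^\top\boldsymbol{\Lambda}^{-1}\mathbf{a} = \mathbf{v}^\top\mathbf{H}^\dagger\mathbf{v}$, the matrix is positive semidefinite exactly when $\gamma \le (\mathbf{v}^\top\mathbf{H}^\dagger\mathbf{v})^{-1}$. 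By congruence the same holds for $\mathbf{M}$, and therefore for $\mathbf{Y}$. This proves the first claim.

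\textbf{Equality case.} When $\gamma = (\mathbf{v}^\top\mathbf{H}^\dagger\mathbf{v})^{-1}$, the eigenvalue $1 - \gamma\|\mathbf{b}\|^2$ equals zero and the other $r-1$ eigenvalues equal one. So $\mathrm{rank}(\mathbf{M}) = r-1$, and hence $\mathrm{rank}(\mathbf{Y}) = \mathrm{rank}(\mathbf{H}) - 1$.

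For the kernel claim, I would verify it directly. Since $\mathbf{H}\mathbf{H}^\dagger$ is the orthogonal projector onto $\mathrm{Im}(\mathbf{H})$ and $\mathbf{v}$ lies in that range, $\mathbf{H}\mathbf{H}^\dagger\mathbf{v} = \mathbf{v}$. Therefore
\begin{equation*}
\mathbf{Y}\mathbf{H}^\dagger\mathbf{v} = \mathbf{H}\mathbf{H}^\dagger\mathbf{v} - \gamma\mathbf{v}(\mathbf{v}^\top\mathbf{H}^\dagger\mathbf{v}) = \mathbf{v} - \mathbf{v} = \mathbf{0}.
\end{equation*}

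The only delicate point is the pseudo-inverse identities $\mathbf{H}^\dagger = \mathbf{U}\boldsymbol{\Lambda}^{-1}\mathbf{U}^\top$ and $\mathbf{H}\mathbf{H}^\dagger\mathbf{v} = \mathbf{v}$. Both follow from the definition of the Moore–Penrose inverse for symmetric positive semidefinite matrices together with the hypothesis $\mathbf{v} \in \mathrm{Im}(\mathbf{H})$. Without that hypothesis the argument fails, since the rank-one term would leave the range of $\mathbf{H}$ and $\mathbf{Y}$ would not be positive semidefinite for any $\gamma > 0$.
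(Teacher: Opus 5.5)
Your proof is correct and complete. The reduction to $\mathbf{M}=\boldsymbol{\Lambda}-\gamma\mathbf{a}\mathbf{a}^\top$ on the range and the congruence to $\mathbf{I}-\gamma\mathbf{b}\mathbf{b}^\top$ are right, including the case $r=1$. You in fact prove the first claim as an ``if and only if.'' The direct check $\mathbf{Y}\mathbf{H}^\dagger\mathbf{v}=\mathbf{0}$ settles the kernel claim.

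The paper gives no argument for this lemma to compare against. It imports the lemma from \citep{garber2026randomized} in the block of results the authors state ``without proof,'' so your argument serves as a self-contained justification of a cited fact.

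One cosmetic point: you announce a Schur-complement characterization, but you never use one; what you actually do is a congruence argument. A genuine Schur-complement route would also work. For $\gamma>0$, $\mathbf{Y}\succeq 0$ iff $\begin{bmatrix}\mathbf{H} & \mathbf{v}\\ \mathbf{v}^\top & \gamma^{-1}\end{bmatrix}\succeq 0$. Given $\mathbf{v}\in\mathrm{Im}(\mathbf{H})$, this holds iff $\gamma^{-1}-\mathbf{v}^\top\mathbf{H}^\dagger\mathbf{v}\ge 0$. The rank statement then follows from computing the rank of that block matrix in two ways via rank additivity of Schur complements. Your congruence version is more elementary and reads off the rank directly, so you could simply drop the Schur-complement sentence.
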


\begin{lemma}[Drop Step of SFW, Lemma 3, \cite{garber2026randomized}] \label{lem:sfw-lemma-drop-step} Consider \Cref{alg:spectrahedron-fw} applied to the projection subproblem defined in \eqref{eqn:proj-subproblem-H} with $\set{H} = \set{S}_+^n \cap \{\mathbf{X}~:~\operatorname{Tr}(\mathbf{X}) = R \}$.
For any iteration $k$ of \Cref{alg:spectrahedron-fw} such that $h_k <  \min \left \{ \frac{\delta_k^2}{18}, \frac{R\delta_k}{6}\right \}$ and $\operatorname{rank}(\mathbf{H}_k) > r^\star$, we have that $\phi(\mathbf{H}_{k+1}^{\text{drop}}) < \phi(\mathbf{H}_k)$.
\end{lemma}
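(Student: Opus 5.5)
The plan is to compare $\phi(\mathbf{H}^{\text{drop}}_{k+1})$ with $\phi(\mathbf{H}_k)$ by exact quadratic expansion. Write $\mathbf{v}:=\vec{v}_{k,-}$, $\lambda:=\lambda_k<1$ and $s:=\lambda/(1-\lambda)$. Then $\mathbf{H}^{\text{drop}}_{k+1}-\mathbf{H}_k = s(\mathbf{H}_k-\mathbf{v}\mathbf{v}^\top)$. Since $\phi$ is exactly quadratic with identity Hessian,
\begin{equation*}
\phi(\mathbf{H}^{\text{drop}}_{k+1})-\phi(\mathbf{H}_k) = -s\,\ip{\nabla\phi(\mathbf{H}_k)}{\mathbf{v}\mathbf{v}^\top-\mathbf{H}_k} + \tfrac{s^2}{2}\norm{\mathbf{H}_k-\mathbf{v}\mathbf{v}^\top}^2 .
\end{equation*}
It therefore suffices to show two things. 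The linear gain $\ip{\nabla\phi(\mathbf{H}_k)}{\mathbf{v}\mathbf{v}^\top-\mathbf{H}_k}$ is at least a constant times $R\delta_k$. The step $s$ is small enough that the quadratic term cannot cancel that gain.

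\textbf{Step 1: lower bound on $\mathbf{v}^\top\nabla\phi(\mathbf{H}_k)\mathbf{v}$.}
\begin{itemize}
\item Let $\mathbf{U}$ be an orthonormal basis of $\mathrm{Im}(\mathbf{H}_k)$, which has dimension $r_k\ge r^\star+1$.
\item By \Cref{lem:poincare-sep-thm}, the largest eigenvalue of $\mathbf{U}^\top\nabla\phi(\mathbf{H}_k)\mathbf{U}$ is at least $\lambda_{n-r_k+1}(\nabla\phi(\mathbf{H}_k))\ge\lambda_{n-r^\star}(\nabla\phi(\mathbf{H}_k))$.
\item Since $\mathbf{v}$ maximizes the Rayleigh quotient over $\mathrm{Im}(\mathbf{H}_k)$ with $\norm{\mathbf{v}}^2=R$, this gives $\mathbf{v}^\top\nabla\phi(\mathbf{H}_k)\mathbf{v}\ge R\,\lambda_{n-r^\star}(\nabla\phi(\mathbf{H}_k))$.
\end{itemize}

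\textbf{Step 2: upper bound on $\ip{\nabla\phi(\mathbf{H}_k)}{\mathbf{H}_k}$.}
\begin{itemize}
\item Strong convexity gives $\norm{\mathbf{H}_k-\mathbf{H}^\star}\le\sqrt{2h_k}$. Because $\nabla\phi(\mathbf{H})=\mathbf{H}-\mathbf{G}$, this is also the size of $\nabla\phi(\mathbf{H}_k)-\nabla\phi(\mathbf{H}^\star)$.
\item Optimality of $\mathbf{H}^\star$ gives $\ip{\nabla\phi(\mathbf{H}^\star)}{\mathbf{H}^\star}=R\lambda_n(\nabla\phi(\mathbf{H}^\star))$.
\item The exact expansion $h_k=\ip{\nabla\phi(\mathbf{H}^\star)}{\mathbf{H}_k-\mathbf{H}^\star}+\tfrac12\norm{\mathbf{H}_k-\mathbf{H}^\star}^2$ gives $\ip{\nabla\phi(\mathbf{H}^\star)}{\mathbf{H}_k-\mathbf{H}^\star}\le h_k$.
\item Splitting $\ip{\nabla\phi(\mathbf{H}_k)}{\mathbf{H}_k}$ accordingly, and using $\norm{\mathbf{H}_k}_F\le R$, yields $\ip{\nabla\phi(\mathbf{H}_k)}{\mathbf{H}_k}\le R\lambda_n(\nabla\phi(\mathbf{H}^\star))+R\sqrt{2h_k}+h_k$.
\item By \Cref{lem:weyl-thm}, $\lambda_n(\nabla\phi(\mathbf{H}^\star))\le\lambda_n(\nabla\phi(\mathbf{H}_k))+\sqrt{2h_k}$.
\end{itemize}

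\textbf{Step 3: the linear gain.} Combining Steps 1 and 2 with $\lambda_{n-r^\star}-\lambda_n\ge\delta_k$ gives
\begin{equation*}
\ip{\nabla\phi(\mathbf{H}_k)}{\mathbf{v}\mathbf{v}^\top-\mathbf{H}_k}\ge R\delta_k-2R\sqrt{2h_k}-h_k .
\end{equation*}
Under $h_k<\delta_k^2/18$ we have $\sqrt{2h_k}<\delta_k/3$, and under $h_k<R\delta_k/6$ the last term is below $R\delta_k/6$. Hence the gain is strictly greater than $R\delta_k/6$. These are exactly the constants in the hypothesis, which suggests this is the intended route.

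\textbf{Step 4: controlling the quadratic term (main obstacle).} I would show that $\lambda$, and hence $s$, is small. The same two bounds as above give that the mass of $\mathbf{H}_k$ on the top eigenspace is small:
\begin{equation*}
\ip{\mathbf{P}_k^\perp}{\mathbf{H}_k}\le\big(\ip{\nabla\phi(\mathbf{H}_k)}{\mathbf{H}_k}-R\lambda_n\big)/\delta_k=\mathcal{O}\big((R\sqrt{h_k}+h_k)/\delta_k\big).
\end{equation*}
The maximizing $\mathbf{v}$ has a large component in $\mathrm{range}(\mathbf{P}_k^\perp)$, because its Rayleigh quotient exceeds $R\lambda_{n-r^\star}$. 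Within $\mathrm{Im}(\mathbf{H}_k)$, the directions lying in that subspace carry only small eigenvalues of $\mathbf{H}_k$.
\begin{itemize}
\item The first thing to try is a Cauchy--Schwarz bound $(\mathbf{a}^\top\mathbf{v})^2\le(\mathbf{a}^\top\mathbf{H}_k\mathbf{a})(\mathbf{v}^\top\mathbf{H}_k^\dagger\mathbf{v})$ for $\mathbf{v}\in\mathrm{Im}(\mathbf{H}_k)$. With $\mathbf{a}$ taken as the $\mathbf{P}_k^\perp$-component of $\mathbf{v}$, this should give $\lambda=(\mathbf{v}^\top\mathbf{H}_k^\dagger\mathbf{v})^{-1}\lesssim\ip{\mathbf{P}_k^\perp}{\mathbf{H}_k}/R$.
\item That bound makes $s=\mathcal{O}(\sqrt{h_k}/\delta_k)$.
\item With $\norm{\mathbf{H}_k-\mathbf{v}\mathbf{v}^\top}^2\le 4R^2$ and the gain from Step 3, strict decrease then requires $s<R\delta_k/(12R^2)$, i.e.\ $s<\delta_k/(12R)$.
\end{itemize}
Matching this requirement exactly to the stated thresholds is the delicate part. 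If the constants do not close, I would instead bound $\norm{\mathbf{H}^{\text{drop}}_{k+1}-\mathbf{H}_k}$ directly via Weyl-type perturbation around $\mathbf{H}^\star$, whose rank is $r^\star<r_k$.
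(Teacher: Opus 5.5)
Your Steps 1--3 are correct, and they show where both thresholds enter the linear term: with $g_k:=\langle\nabla\phi(\mathbf{H}_k),\mathbf{v}\mathbf{v}^\top-\mathbf{H}_k\rangle$ you get $g_k>R\delta_k/6$. (The paper itself only defers to Lemma 3 of Garber, rescaled to trace $R$.) However, a negative directional derivative does not prove the lemma. The drop step has a prescribed length $s=\lambda_k/(1-\lambda_k)$, so everything hinges on Step 4, and Step 4 does not go through, for three reasons.

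\emph{The mass claim is unsupported.} That $\mathbf{v}$ puts a constant fraction of its mass on $\mathrm{range}(\mathbf{P}_k^\perp)$ does not follow from $\mathbf{v}^\top\nabla\phi(\mathbf{H}_k)\mathbf{v}\ge R\lambda_{n-r^\star}(\nabla\phi(\mathbf{H}_k))$. That inequality only yields $\|\mathbf{P}_k^\perp\mathbf{v}\|^2\ge R\delta_k/(\lambda_1(\nabla\phi(\mathbf{H}_k))-\lambda_{n-r^\star+1}(\nabla\phi(\mathbf{H}_k)))$, which degrades with the spread of the gradient spectrum.

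\emph{The constants cannot close.} Even granting the mass claim, your bound on $\langle\mathbf{P}_k^\perp,\mathbf{H}_k\rangle$ is of order $R\sqrt{h_k}/\delta_k$. It inherits the in-face part of the Frank--Wolfe gap $\langle\nabla\phi(\mathbf{H}_k),\mathbf{H}_k\rangle-R\lambda_n(\nabla\phi(\mathbf{H}_k))$. That gives $s=O(\sqrt{h_k}/\delta_k)$, and your requirement $s<\delta_k/(12R)$ would then need $h_k\lesssim\delta_k^4/R^2$. The hypotheses do not imply this when $\delta_k\ll R$.

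\emph{$\lambda_k$ near $1$ is not excluded.} Nothing in Step 4 keeps $\lambda_k$ away from $1$, where $s$ blows up.

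The missing idea is to bound $\lambda_k$ at order $h_k/(R\delta_k)$. Pair the PSD inequality $\mathbf{H}_k-\lambda_k\mathbf{v}\mathbf{v}^\top\succeq0$ (\Cref{lem:permissible-step-lem-garber}) with $\mathbf{B}^\star:=\nabla\phi(\mathbf{H}^\star)-\lambda_n(\nabla\phi(\mathbf{H}^\star))\mathbf{I}\succeq0$, whose pairing with $\mathbf{H}_k$ carries no in-face term:
\[
\lambda_k\,\mathbf{v}^\top\mathbf{B}^\star\mathbf{v}\le\langle\mathbf{B}^\star,\mathbf{H}_k\rangle=\langle\nabla\phi(\mathbf{H}^\star),\mathbf{H}_k-\mathbf{H}^\star\rangle\le h_k .
\]
Your Step 1, Weyl's inequality and $\lambda_n(\nabla\phi(\mathbf{H}^\star))\le\lambda_{n-r^\star+1}(\nabla\phi(\mathbf{H}^\star))$ give $\mathbf{v}^\top\mathbf{B}^\star\mathbf{v}\ge R(\delta_k-2\sqrt{2h_k})>R\delta_k/3$. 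Hence $\lambda_k<3h_k/(R\delta_k)<\min\{\delta_k/(6R),1/2\}$.

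For the quadratic term, the same PSD inequality gives $\mathbf{v}^\top\mathbf{H}_k\mathbf{v}\ge\lambda_kR^2$. With $\|\mathbf{H}_k\|_F\le R$ this yields $\|\mathbf{H}_k-\mathbf{v}\mathbf{v}^\top\|_F^2\le2R^2(1-\lambda_k)$, which cancels the factor $1-\lambda_k$ in $s$:
\[
\phi(\mathbf{H}^{\text{drop}}_{k+1})-\phi(\mathbf{H}_k)=-s\,g_k+\tfrac{s^2}{2}\|\mathbf{H}_k-\mathbf{v}\mathbf{v}^\top\|_F^2\le s\,(\lambda_kR^2-g_k)<0,
\]
since $\lambda_kR^2<R\delta_k/6<g_k$. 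So the threshold $h_k<\delta_k^2/18$ is used a second time, to make $\lambda_kR^2<R\delta_k/6$. That is the step your proposal lacks.
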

\begin{proof}
The proof is similar to Lemma 3 in \citep{garber2026randomized} except that $\mathrm{Tr}(\mathbf{H}_k) = R$, which leads to the condition $h_k <  \min \left \{ \frac{\delta_k^2}{18}, \frac{R\delta_k}{6}\right \}$ instead.
\end{proof}
\begin{lemma}[Pairwise step of SFW, Lemma 4, \cite{garber2026randomized}] \label{lem:sfw-lemma-pfw-step} 
Consider \Cref{alg:spectrahedron-fw} applied to the projection subproblem defined in \eqref{eqn:proj-subproblem-H} with $\set{H} = \set{S}_+^n \cap \{\mathbf{X}~:~\operatorname{Tr}(\mathbf{X}) = R \}$.
For any iteration $k$ of \Cref{alg:spectrahedron-fw} such that $h_k < \min\left \{\frac{\delta_k^3}{24R}, \frac{\delta_k^2}{12} \right \}$, we have that
\begin{equation*}
    \phi(\mathbf{H}_{k+1}^{PFW}) \leq \phi(\mathbf{H}_k) - \min\left \{\frac{\delta_k^2}{9R}, \frac{\gamma_k\delta_k}{3}\right \}\|\mathbf{P}_k^\perp \vec{u}_{k,-}\|_2^2.
\end{equation*}
\end{lemma}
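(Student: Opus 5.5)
The plan is to exploit the fact that $\phi(\mathbf{H}) = \frac{1}{2}\|\mathbf{H}-\mathbf{G}\|_F^2$ is quadratic. This makes the second-order expansion exact:
\begin{equation*}
\phi\big(\mathbf{H}_k + \gamma_k(\mathbf{u}\mathbf{u}^\top - \vec{u}_{k,-}\vec{u}_{k,-}^\top)\big) = \phi(\mathbf{H}_k) + \gamma_k\ip{\nabla\phi(\mathbf{H}_k)}{\mathbf{u}\mathbf{u}^\top - \vec{u}_{k,-}\vec{u}_{k,-}^\top} + \gamma_k^2\big(R^2 - (\mathbf{u}^\top\vec{u}_{k,-})^2\big)
\end{equation*}
for every $\mathbf{u}$ with $\|\mathbf{u}\|^2=R$. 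Here I used $\|\mathbf{u}\mathbf{u}^\top - \vec{u}_{k,-}\vec{u}_{k,-}^\top\|_F^2 = 2R^2 - 2(\mathbf{u}^\top\vec{u}_{k,-})^2$. The right-hand side is minimized over such $\mathbf{u}$ exactly by the leading eigenvector of $\gamma_k\vec{u}_{k,-}\vec{u}_{k,-}^\top - \nabla\phi(\mathbf{H}_k)$, i.e.\ by $\vec{u}_{k,+}$. Hence $\phi(\mathbf{H}^{PFW}_{k+1})$ is bounded above by the same expression evaluated at any comparison vector $\mathbf{u}$ of norm $\sqrt{R}$.

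Feasibility also needs checking. By \Cref{lem:permissible-step-lem-garber}, $\mathbf{H}_k - \gamma_k\vec{u}_{k,-}\vec{u}_{k,-}^\top \succeq 0$. The trace is preserved because both rank-one atoms have trace $R$.

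Second, I control the spectrum of $\nabla\phi(\mathbf{H}_k)$ using the smallness of $h_k$. Since $\phi$ is $1$-strongly convex, $\|\mathbf{H}_k - \mathbf{H}^\star\|_F^2 \le 2h_k$, and so $\|\nabla\phi(\mathbf{H}_k) - \nabla\phi(\mathbf{H}^\star)\|_F \le \sqrt{2h_k}$. At the optimum, the optimality conditions give $\nabla\phi(\mathbf{H}^\star)$ equal to $\lambda_n(\nabla\phi(\mathbf{H}^\star))$ times the identity on $\mathrm{Im}(\mathbf{H}^\star)$, which is the bottom $r^\star$-dimensional eigenspace. Weyl's inequality (\Cref{lem:weyl-thm}) therefore yields two facts about $\nabla\phi(\mathbf{H}_k)$:
\begin{itemize}
\item its bottom $r^\star$ eigenvalues lie within $2\sqrt{2h_k}$ of each other, so the in-face spread satisfies $\delta_{k,r^\star}\le 2\sqrt{2h_k}$;
\item its gap $\delta_k$ is comparable to the optimal gap.
\end{itemize}
The hypothesis $h_k < \delta_k^2/12$ is what makes $2\sqrt{2h_k}$ a fixed fraction (below $1$) of $\delta_k$.

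Third, I choose the comparison vector. Write $\vec{u}_{k,-} = \mathbf{P}_k\vec{u}_{k,-} + \mathbf{P}_k^\perp\vec{u}_{k,-}$ and take
\begin{equation*}
\mathbf{u} = \sqrt{R}\,\frac{\mathbf{P}_k\vec{u}_{k,-} + s\,\mathbf{w}}{\|\mathbf{P}_k\vec{u}_{k,-} + s\,\mathbf{w}\|},
\end{equation*}
or simply $\mathbf{u}\propto\mathbf{P}_k\vec{u}_{k,-}$, where $\mathbf{w}$ is a bottom eigenvector of $\nabla\phi(\mathbf{H}_k)$ and $s$ is a tunable scalar. The linear term is then negative:
\begin{equation*}
\vec{u}_{k,-}^\top\nabla\phi\,\vec{u}_{k,-} - \mathbf{u}^\top\nabla\phi\,\mathbf{u} \gtrsim (\delta_k - \delta_{k,r^\star})\|\mathbf{P}_k^\perp\vec{u}_{k,-}\|^2 - \delta_{k,r^\star}\|\mathbf{P}_k\vec{u}_{k,-}\|^2,
\end{equation*}
up to the normalization by $R$. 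The quadratic term is $\gamma_k^2\big(R^2-(\mathbf{u}^\top\vec{u}_{k,-})^2\big)$, and it is of order $\gamma_k^2 R\,\|\mathbf{P}_k^\perp\vec{u}_{k,-}\|^2$ because $\mathbf{u}$ is nearly aligned with $\vec{u}_{k,-}$.

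To match the scale of the linear term, I would instead use the mixed step $\mathbf{H}_k + \eta\gamma_k(\cdots)$ with $\eta\in[0,1]$, which is still dominated by the exactly minimized step since $\phi$ is convex along the segment. Optimizing the resulting scalar quadratic in $\eta$ gives a decrease of the form $\min\{c\,\delta_k^2/R,\ c'\gamma_k\delta_k\}\,\|\mathbf{P}_k^\perp\vec{u}_{k,-}\|^2$. The constants $1/9$ and $1/3$ should come from using $\delta_k - 2\sqrt{2h_k}\ge 2\delta_k/3$.

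The main obstacle is the cross term $\delta_{k,r^\star}\|\mathbf{P}_k\vec{u}_{k,-}\|^2$, which is not proportional to $\|\mathbf{P}_k^\perp\vec{u}_{k,-}\|^2$. My first attempt would be to bound the in-face spread by $h_k$ rather than $\sqrt{h_k}$, using first-order optimality on the face: $\mathbf{H}_k$ near the optimal face means $\ip{\nabla\phi(\mathbf{H}_k)}{\mathbf{H}_k-\mathbf{H}^\star}\ge h_k$. The second hypothesis $h_k<\delta_k^3/(24R)$ should then let this term be absorbed into a third of the main decrease. This is where the trace-$R$ rescaling of Garber's constants enters.
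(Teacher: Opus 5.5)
There is a genuine gap. Your first steps are sound: the exact quadratic expansion, the fact that $\vec{u}_{k,+}$ minimizes it over $\|\mathbf{u}\|^2=R$ at the \emph{fixed} step $\gamma_k$, and feasibility via \Cref{lem:permissible-step-lem-garber}. The plan breaks at the linear term and at the damping.

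\textbf{The linear term.} The cross term $\delta_{k,r^\star}\|\mathbf{P}_k\vec{u}_{k,-}\|^2$ that you call the main obstacle is an artifact of bounding the in-face Rayleigh quotients of $\vec{u}_{k,-}$ and $\mathbf{u}$ separately. Write $\vec{u}_{k,-}=\frac{\tau}{\sqrt{R}}\mathbf{z}+\sqrt{\frac{R-\tau^2}{R}}\,\mathbf{w}$ with $\tau=\|\mathbf{P}_k^\perp\vec{u}_{k,-}\|$, where $\mathbf{z}\propto\mathbf{P}_k^\perp\vec{u}_{k,-}$ and $\mathbf{w}\propto\mathbf{P}_k\vec{u}_{k,-}$ both have norm $\sqrt{R}$. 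Compare with $\mathbf{u}=\frac{\tau'}{\sqrt{R}}\mathbf{z}+\sqrt{\frac{R-\tau'^2}{R}}\,\mathbf{w}$, $0\le\tau'\le\tau$; your choice $\mathbf{u}\propto\mathbf{P}_k\vec{u}_{k,-}$ is the case $\tau'=0$. Since $\mathbf{P}_k$ is a spectral projector of $\nabla\phi(\mathbf{H}_k)$, the mixed terms vanish and $\mathbf{w}^\top\nabla\phi(\mathbf{H}_k)\mathbf{w}$ cancels exactly. This gives $\mathbf{u}^\top\nabla\phi(\mathbf{H}_k)\mathbf{u}-\vec{u}_{k,-}^\top\nabla\phi(\mathbf{H}_k)\vec{u}_{k,-}=-\frac{\tau^2-\tau'^2}{R}\bigl(\mathbf{z}^\top\nabla\phi(\mathbf{H}_k)\mathbf{z}-\mathbf{w}^\top\nabla\phi(\mathbf{H}_k)\mathbf{w}\bigr)\le-(\tau^2-\tau'^2)\delta_k$ by the Poincar\'e separation theorem alone. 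No in-face gap, no Weyl comparison with $\nabla\phi(\mathbf{H}^\star)$, and no strict complementarity are needed; the statement involves only the current gap $\delta_k$.

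Your proposed remedy would fail anyway. The in-face spread is generically of order $\sqrt{h_k}$, not $h_k$. For example, perturbing $\mathbf{H}^\star$ inside its face by $\mathrm{diag}(\epsilon,-\epsilon,0,\dots,0)$ in a diagonal instance with $r^\star=2$ gives $h_k=\epsilon^2$ and spread $2\epsilon$ (cf.\ the lower bound in \Cref{lem:sfw-eigengap}). A term of size about $R\sqrt{h_k}$ cannot be absorbed into a multiple of $\|\mathbf{P}_k^\perp\vec{u}_{k,-}\|^2$, which may be arbitrarily small.

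\textbf{The damping.} $\vec{u}_{k,+}$ is optimal only among points $\mathbf{H}_k+\gamma_k(\mathbf{u}\mathbf{u}^\top-\vec{u}_{k,-}\vec{u}_{k,-}^\top)$. For $\eta\in(0,1)$, the point $\mathbf{H}_k+\eta\gamma_k(\mathbf{u}\mathbf{u}^\top-\vec{u}_{k,-}\vec{u}_{k,-}^\top)$ is not of this form: it would require $\mathbf{u}'\mathbf{u}'^\top=\eta\mathbf{u}\mathbf{u}^\top+(1-\eta)\vec{u}_{k,-}\vec{u}_{k,-}^\top$, which is generically rank two. Convexity along the segment only bounds interior values by a convex combination of endpoint values, not the reverse. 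So your claim that the mixed step is ``dominated by the exactly minimized step'' is unjustified.

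The paper keeps the step $\gamma_k$ and damps by rotating only partially, $\tau'=(1-c)\tau$ with $c=\min\{\delta_k/(3\gamma_kR),1\}$. This yields
\[
\phi(\mathbf{H}^{PFW}_{k+1})\le\phi(\mathbf{H}_k)-\gamma_k c\,\tau^2\bigl(\delta_k-\gamma_k(Rc+2\tau^2)\bigr).
\]
The extra $2\gamma_k\tau^2$ is where the hypothesis on $h_k$ actually enters, through an a priori bound on $\tau^2$ that your plan lacks:

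\begin{itemize}
\item With $\tau'=0$ and the feasible step $\eta^\star=\min\{\delta_k/(2R),\gamma_k\}$, one has $\phi(\mathbf{H}_k+\eta^\star(\mathbf{w}\mathbf{w}^\top-\vec{u}_{k,-}\vec{u}_{k,-}^\top))\le\phi(\mathbf{H}_k)-\eta^\star\tau^2\delta_k/2$.
\item That point is feasible, so its value is at least $\phi(\mathbf{H}^\star)$, which gives $\tau^2\le 2h_k/(\eta^\star\delta_k)$.
\item Using $\gamma_k\le1$, the hypothesis $h_k<\frac{\delta_k^2}{12}\min\{\frac{\delta_k}{2R},1\}$ then gives $2\gamma_k\tau^2\le\delta_k/3$.
\item Together with $\gamma_kRc\le\delta_k/3$, the decrease is at least $\gamma_kc\tau^2\delta_k/3=\min\{\frac{\delta_k^2}{9R},\frac{\gamma_k\delta_k}{3}\}\tau^2$.
\end{itemize}

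Your route to these constants through $\delta_k-2\sqrt{2h_k}\ge\frac{2\delta_k}{3}$ does not follow from $h_k<\delta_k^2/12$. That hypothesis only gives $2\sqrt{2h_k}<\sqrt{2/3}\,\delta_k$.
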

\begin{proof}
First decompose  $\mathbf{u}_{k,-}$ as
\begin{equation*}
    \mathbf{u}_{k,-} = \frac{\tau}{\sqrt{R}} \mathbf{z} + \sqrt{\frac{R-\tau^2}{R}}\mathbf{w},
\end{equation*}
where $\tau = \|\mathbf{P}_k^\perp \mathbf{u}_{k,-}\|$, $\mathbf{z} = \sqrt{R}\frac{\mathbf{P}_k^\perp \mathbf{u}_{k,-}}{\|\mathbf{P}_t^\perp \mathbf{u}_{k,-}\|}$ and $\mathbf{w} = \sqrt{R}\frac{\mathbf{P}_k \mathbf{u}_{k,-}}{\|\mathbf{P}_t \mathbf{u}_{k,-}\|}$.
Using the definition of $\mathbf{P}_k$ and $\mathbf{P}_k^\perp$, it follows from the Poincar\'{e} theorem (\Cref{lem:poincare-sep-thm}) that $\mathbf{z}^\top \nabla \phi(\mathbf{H}_k) \mathbf{z} \geq R\lambda_{n-r^\star}(\nabla \phi(\mathbf{H}_k))$ and $R\lambda_{n-r^\star+1}(\nabla \phi(\mathbf{H}_k)) \geq \mathbf{w}^\top \nabla \phi(\mathbf{H}_k) \mathbf{w}$ so that
\begin{equation}\label{eqn:pfw-proof-poincare-ineq}
    \mathbf{z}^\top \nabla \phi(\mathbf{H}_k) \mathbf{z}
    - \mathbf{w}^\top \nabla \phi(\mathbf{H}_k) \mathbf{w}
    \geq
    R\lambda_{n-r^\star}(\nabla \phi(\mathbf{H}_k))
    - R\lambda_{n-r^\star+1}(\nabla \phi(\mathbf{H}_k))
    = R\delta_k.
\end{equation}
Now define another unit vector scaled by $R$:
\begin{equation*}
    \mathbf{u}
    = \frac{\tau'}{\sqrt{R}} \mathbf{z} + \sqrt{\frac{R -\tau'^2}{R}}\mathbf{w}
\end{equation*}
with $0 \leq \tau' \leq \tau$. Fix some step-size $\eta \in [0,1]$
and define a new iterate $\mathbf{Y}_\eta$ as follows:
\begin{equation*}
    \mathbf{Y}_{\eta}
    = \mathbf{H}_k
    + \eta\left(
        \mathbf{u}\mathbf{u}^\top
        - \mathbf{u}_{k,-}\mathbf{u}_{k,-}^\top
    \right).
\end{equation*}
Using the $1$-smoothness of the projection objective $\phi$, it follows that
\begin{align}\label{eqn:pfw-proof-smoothness}
    \phi(\mathbf{Y}_{\eta})
    &\leq \phi(\mathbf{H}_k)
    + \eta
    \left\langle
        \mathbf{u}\mathbf{u}^\top
        - \mathbf{u}_{k,-}\mathbf{u}_{k,-}^\top,
        \nabla_t
    \right\rangle
    + \frac{\eta^2}{2}
    \left\|
        \mathbf{u}\mathbf{u}^\top
        - \mathbf{u}_{k,-}\mathbf{u}_{k,-}^\top
    \right\|_F^2
    \nonumber \\
    &= \phi(\mathbf{H}_k)
    + \eta\left(
        (\tau'^2-\tau^2)\mathbf{z}^\top\nabla \phi(\mathbf{H}_k)\mathbf{z}
        -(\tau'^2-\tau^2)\mathbf{w}^\top\nabla \phi(\mathbf{H}_k)\mathbf{w}
    \right)
    + \eta^2
    \left(R^2 -(\mathbf{u}^\top\mathbf{u}_{k,-})^2\right)
    \nonumber \\
    &\overset{(a)}\leq
    \phi(\mathbf{H}_k)
    - \eta (\tau^2-\tau'^2)\delta_k
    + \eta^2
    \left(
        R^2 -
        \left(
            \tau\tau'
            +\sqrt{R-\tau^2}\sqrt{R-\tau'^2}
        \right)^2
    \right)
    \nonumber \\
    &= \phi(\mathbf{H}_k)
    - \eta (\tau^2-\tau'^2)\delta_k \nonumber \\
    &\quad + \eta^2 
    \left(
        R^2 -
        \left(
            \tau^2 \tau'^2
            + (R^2 - R\tau^2-R\tau'^2+\tau^2\tau'^2)
            + 2\tau\tau'\sqrt{R-\tau^2}\sqrt{R-\tau'^2}
        \right)
    \right) \nonumber \\
    &= \phi(\mathbf{H}_k)
    - \eta (\tau^2-\tau'^2)\delta_k
    + \eta^2
    \left(
        R\tau^2+R\tau'^2-2\tau^2\tau'^2
        -2\tau\tau'\sqrt{R-\tau^2}\sqrt{R-\tau'^2}
    \right) \nonumber \\
    &\overset{(b)}{\leq}
    \phi(\mathbf{H}_k)
    - \eta (\tau^2-\tau'^2)\delta_k
    + \eta^2
    \left(
        R\tau^2+R\tau'^2-2\tau^2\tau'^2
        -2\tau\tau'(R-\tau^2)
    \right) \nonumber \\
    &= \phi(\mathbf{H}_k)
    - \eta (\tau^2-\tau'^2)\delta_k
    + \eta^2 R (\tau-\tau')^2
    + 2\eta^2\tau^2\tau'(\tau-\tau') \nonumber\\
    &= \phi(\mathbf{H}_k)
    - \eta (\tau-\tau')
    \left(
        (\tau+\tau')\delta_k
        -\eta\left(R(\tau-\tau')+ 2\tau^2\tau' \right)
    \right),
\end{align}
where (a) is due to \eqref{eqn:pfw-proof-poincare-ineq}
and (b) follows from the fact that $0 \leq \tau'\leq\tau$.
First note that setting $\tau'=0$ in (14) yields
\begin{equation*}
    \phi(\mathbf{Y}_{\eta})
    \leq \phi(\mathbf{H}_k)
    -\eta \tau^2(\delta_k-R\eta).
\end{equation*}
Using the step-size
\begin{equation*}
    \eta^\star=\min\left\{
        \frac{\delta_k}{2R},\,\gamma_k
    \right\},
\end{equation*}
that satisfies that $\mathbf{Y}_{\eta^\star} \in \set{S}_+^n$ as per \Cref{lem:permissible-step-lem-garber} since $\gamma_k = (\mathbf{u}_{k,-}^\top \mathbf{H}_k^\dagger \mathbf{u}_{k,-})^{-1}$, then gives
\begin{equation*}
    \phi(\mathbf{Y}_{\eta^\star})-\phi(\mathbf{H}^\star)
    \leq h_k
    -\frac{
        \min\left\{
            \frac{\delta_k}{2R},\,\gamma_k
        \right\}
        R\delta_k\tau^2
    }{2},
\end{equation*}
which implies that
\begin{equation}\label{eqn:pfw-proof-tau-sq-bound}
    \tau^2
    \leq
    \frac{2h_k}{
        \min\left\{
            \frac{\delta_k}{2},\,\gamma_k
        \right\}\delta_k
    }.
\end{equation}
Consider now the case $\tau'=(1-c)\tau$ for some scalar
$c\in[0,1]$ to be specified in the sequel. 
Using the $1$-smoothness of $\phi$ again, the definition of $\mathbf{u}_{k,+}$ in \Cref{alg:spectrahedron-fw}, and applying \eqref{eqn:pfw-proof-smoothness} with $\eta=\gamma_k$
(note that by definition $\gamma_k\in(0,1]$), we have that
\begin{align*}
    \phi(\mathbf{H}_{t+1}^{\mathrm{PFW}})
    &\leq \phi(\mathbf{H}_k)
    +\gamma_k
    \left\langle
        \mathbf{u}_{k,+}\mathbf{u}_{k,+}^{\top}
        -\mathbf{u}_{k,-}\mathbf{u}_{k,-}^{\top},
        \nabla \phi (\mathbf{H}_k)
    \right\rangle
    +\frac{\gamma_k^2}{2}
    \left\|
        \mathbf{u}_{k,+}\mathbf{u}_{k,+}^{\top}
        -\mathbf{u}_{k,-}\mathbf{u}_{k,-}^{\top}
    \right\|_F^2\\
    &\leq \phi(\mathbf{H}_k)
    -\gamma_k c\tau^2
    \left(
        \delta_k-\gamma_k \left (Rc+ 2\tau^2 \right )
    \right).
\end{align*}
Since $\mathbf{u}$ and $c \in [0, 1]$ are both arbitrary at this stage as long as $\mathbf{u}$ is chosen so that $\tau^\prime \leq \tau$, this bound is not affected by inexact computation of $\mathbf{u}_{k,+}$ as long as $\eta = \gamma_k$. 
Now set
\begin{equation*}
    c=\min\left\{\frac{\delta_k}{3\gamma_k R},1\right\}
\end{equation*}
in to obtain
\begin{align*}
    \phi(\mathbf{X}_{k+1}^{\mathrm{PFW}})
    &\leq \phi(\mathbf{X}_k)
    -\gamma_k
    \min\left\{\frac{\delta_k}{3\gamma_k},1\right\}
    \tau^2
    \left(\frac{2\delta_k}{3}-\frac{2\gamma_k\tau^2}{R}\right)\\
    &\leq \phi(\mathbf{H}_k)
    -\gamma_k
    \min\left\{\frac{\delta_k}{3\gamma_k},1\right\}
    \tau^2
    \left(
        \frac{2\delta_k}{3}
        -\frac{4\gamma_k h_k}{
            \min\left\{\frac{\delta_k}{2},\gamma_k\right\}
            \delta_k R^2
        }
    \right),
\end{align*}
where the last inequality follows from substituting \eqref{eqn:pfw-proof-tau-sq-bound} into the term
$2\gamma_k\tau^2$.
Given that $\tau^2=\|\mathbf{P}_k^\perp\mathbf{u}_{k,-}\|^2$
and that $\gamma_k\in(0,1]$, it holds that
\begin{equation*}
    h_k
    \leq \frac{\delta_k^2}{12}
    \min\left\{\frac{\delta_k}{2R},1\right\}
    \leq \frac{\delta_k^2}{12\gamma_k}
    \min\left\{\frac{\delta_k}{2R},\gamma_k\right\},
\end{equation*}
implies that
\begin{equation*}
    \phi(\mathbf{H}_{t+1}^{\mathrm{PFW}})
    \leq \phi(\mathbf{H}_k)
    - \frac{\gamma_k\delta_k}{3}
    \min\left\{\frac{\delta_k}{3\gamma_k R},1\right\}
    \|\mathbf{P}_k^\perp\mathbf{u}_{k,-}\|^2.
\end{equation*}
\end{proof}
\begin{lemma}[Lemma 5, \cite{garber2026randomized}] \label{lem:sfw-trace-bound} 
Consider \Cref{alg:spectrahedron-fw} applied to the projection subproblem defined in \eqref{eqn:proj-subproblem-H} with $\set{H} = \set{S}_+^n \cap \{\mathbf{X}~:~\operatorname{Tr}(\mathbf{X}) = R \}$.
For any iteration $k$ of \Cref{alg:spectrahedron-fw}, we have that $\mathbb{E}_k\left [ \|\mathbf{P}_k^\perp \vec{u}_{k,-}\|_2^2 \right ] \geq \frac{1}{r_k} \operatorname{Tr}(\mathbf{P}_k^\bot \mathbf{H}_k)$.
\end{lemma}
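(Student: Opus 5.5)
The plan is to exploit the structure of the random vector $\vec{u}_{k,-}$ in \Cref{alg:spectrahedron-fw}: it is a uniformly random unit vector in $\mathrm{Im}(\mathbf{H}_k)$, scaled by $\sqrt{R}$. The key object is the eigendecomposition $\mathbf{H}_k = \mathbf{V}\mathbf{\Lambda}\mathbf{V}^\top$, where $\mathbf{V}\in\mathbb{R}^{n\times r_k}$ has orthonormal columns spanning $\mathrm{Im}(\mathbf{H}_k)$. We write $\vec{u}_{k,-} = \sqrt{R}\,\mathbf{V}\vec{s}$ with $\vec{s}$ uniform on the unit sphere of $\mathbb{R}^{r_k}$.

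\textbf{Step 1 (second moment).} We use the standard fact $\mathbb{E}[\vec{s}\vec{s}^\top] = \frac{1}{r_k}\mathbf{I}_{r_k}$. This gives $\mathbb{E}_k[\vec{u}_{k,-}\vec{u}_{k,-}^\top] = \frac{R}{r_k}\mathbf{V}\mathbf{V}^\top$. Since $\mathbf{P}_k^\perp$ is an orthogonal projection, $\|\mathbf{P}_k^\perp\vec{u}_{k,-}\|_2^2 = \operatorname{Tr}(\mathbf{P}_k^\perp\vec{u}_{k,-}\vec{u}_{k,-}^\top)$. Linearity of trace and expectation then yields
\begin{equation*}
\mathbb{E}_k\left[\|\mathbf{P}_k^\perp\vec{u}_{k,-}\|_2^2\right] = \frac{R}{r_k}\operatorname{Tr}(\mathbf{P}_k^\perp\mathbf{V}\mathbf{V}^\top).
\end{equation*}

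\textbf{Step 2 (compare with $\operatorname{Tr}(\mathbf{P}_k^\perp\mathbf{H}_k)$).} We have $\operatorname{Tr}(\mathbf{P}_k^\perp\mathbf{H}_k) = \sum_i \lambda_i(\mathbf{H}_k)\,\|\mathbf{P}_k^\perp\vec{v}_i\|^2$. Each $\lambda_i(\mathbf{H}_k) \le \operatorname{Tr}(\mathbf{H}_k) = R$ because $\mathbf{H}_k\succeq 0$. It follows that
\begin{equation*}
\operatorname{Tr}(\mathbf{P}_k^\perp\mathbf{H}_k) \le R\sum_i\|\mathbf{P}_k^\perp\vec{v}_i\|^2 = R\operatorname{Tr}(\mathbf{P}_k^\perp\mathbf{V}\mathbf{V}^\top).
\end{equation*}
In operator form, this is the inequality $\mathbf{H}_k\preceq R\,\mathbf{V}\mathbf{V}^\top$, paired with the PSD matrix $\mathbf{P}_k^\perp$.

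\textbf{Step 3 (combine).} Substituting Step 2 into Step 1 gives $\mathbb{E}_k[\|\mathbf{P}_k^\perp\vec{u}_{k,-}\|_2^2] \ge \frac{1}{r_k}\operatorname{Tr}(\mathbf{P}_k^\perp\mathbf{H}_k)$, which is the claim.

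The only delicate point is the scaling convention. The factor $R$ from scaling $\vec{u}_{k,-}$ by $\sqrt{R}$ must exactly cancel the bound $\lambda_{\max}(\mathbf{H}_k)\le R$. Otherwise a spurious $R$ would appear, and the non-unit-trace version would differ from the original Lemma 5. I would also check that $\mathbb{E}_k$ is conditional on $\mathbf{H}_k$, so that $\mathbf{P}_k^\perp$, $\mathbf{V}$ and $r_k$ can be treated as fixed. Finally, I would check that the sampling is rotation-invariant within $\mathrm{Im}(\mathbf{H}_k)$, for example a normalized Gaussian $\mathbf{V}\vec{g}$; this justifies the identity $\mathbb{E}[\vec{s}\vec{s}^\top]=\mathbf{I}/r_k$.
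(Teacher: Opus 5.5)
Your proof is correct. The paper gives no proof of this lemma: it imports Lemma 5 of \cite{garber2026randomized} as an ``unchanged'' result, even though Garber states it for the unit-trace spectrahedron.

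Your argument is the natural one behind the cited lemma, and it supplies the check the paper skips. Isotropy of $\vec{u}_{k,-}$ in $\mathrm{Im}(\mathbf{H}_k)$ gives $\mathbb{E}_k[\vec{u}_{k,-}\vec{u}_{k,-}^\top]=\frac{R}{r_k}\mathbf{V}\mathbf{V}^\top$. The bound $\mathbf{H}_k\preceq\lambda_{\max}(\mathbf{H}_k)\,\mathbf{V}\mathbf{V}^\top\preceq R\,\mathbf{V}\mathbf{V}^\top$, paired with $\mathbf{P}_k^\perp\succeq 0$, then finishes the proof. The factor $R$ from $\|\vec{u}_{k,-}\|^2=R$ cancels exactly against $\lambda_{\max}(\mathbf{H}_k)\le\operatorname{Tr}(\mathbf{H}_k)=R$. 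So the constant $1/r_k$ really does survive the move to trace $R$, which justifies how the lemma is later used in \Cref{thm:sfw-convergence-inexact-evec}.

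You are also right that only isotropy is used, not full rotation invariance. That is how the algorithm's informal ``random unit vector in $\mathrm{Im}(\mathbf{H}_k)$'' has to be read. For a non-isotropic choice, for example a fixed $\vec{u}_{k,-}\in\mathrm{Im}(\mathbf{H}_k)$ lying in the range of $\mathbf{P}_k$, the left side can be zero while $\operatorname{Tr}(\mathbf{P}_k^\perp\mathbf{H}_k)>0$.
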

\begin{lemma}[Eigengap, Lemma 6, \cite{garber2026randomized}] \label{lem:sfw-eigengap} Assume that $r^\star > 2$.
For any iteration $k$ such that $\operatorname{Tr}(\mathbf{P}_k^\bot \mathbf{H}_k) \leq \frac{h_k}{4\|\nabla \phi(\mathbf{H}_k)\|_2}$, then it holds that 
\begin{equation*}
    \lambda_{n - r^\star + 1}(\nabla \phi(\mathbf{H}_k)) - \lambda_n(\nabla \phi(\mathbf{H}_k)) \geq \sqrt{\frac{h_k}{8r^\star}}.
\end{equation*}
\end{lemma}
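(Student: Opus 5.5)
The plan is to combine the $1$-strong convexity of $\phi$ at the optimum with a spectral splitting of $\nabla \phi(\mathbf{H}_k)$ along $\mathbf{P}_k$ and $\mathbf{P}_k^\perp$. Write $\nabla_k := \nabla \phi(\mathbf{H}_k)$. Since $\phi(\mathbf{H}) = \frac{1}{2}\|\mathbf{H}-\mathbf{G}\|_F^2$ is $1$-strongly convex (\Cref{def:strong-convexity}), applying the strong convexity inequality at $\mathbf{H}_k$ toward $\mathbf{H}^\star$ gives
\begin{equation*}
    h_k \leq \langle \nabla_k, \mathbf{H}_k - \mathbf{H}^\star\rangle - \tfrac{1}{2}\|\mathbf{H}_k - \mathbf{H}^\star\|_F^2 .
\end{equation*}
Both $\mathbf{H}_k$ and $\mathbf{H}^\star$ have trace $R$, so the inner product is unchanged if $\nabla_k$ is replaced by the shifted matrix $\nabla_k' := \nabla_k - \lambda_n(\nabla_k)\mathbf{I}$. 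This shifted matrix is PSD and satisfies $\|\nabla_k'\|_2 \leq 2\|\nabla_k\|_2$.

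The eigenprojections $\mathbf{P}_k$ and $\mathbf{P}_k^\perp$ commute with $\nabla_k'$, so the inner product splits as
\begin{equation*}
    \langle \nabla_k', \mathbf{H}_k-\mathbf{H}^\star\rangle = \langle \mathbf{P}_k\nabla_k'\mathbf{P}_k, \mathbf{P}_k(\mathbf{H}_k-\mathbf{H}^\star)\mathbf{P}_k\rangle + \langle \mathbf{P}_k^\perp\nabla_k'\mathbf{P}_k^\perp, \mathbf{H}_k-\mathbf{H}^\star\rangle .
\end{equation*}
I bound the two terms separately.

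For the off-face term, both $\mathbf{P}_k^\perp\nabla_k'\mathbf{P}_k^\perp$ and $\mathbf{H}^\star$ are PSD, so the $\mathbf{H}^\star$ contribution is nonpositive and can be dropped. What remains is at most $\|\nabla_k'\|_2\operatorname{Tr}(\mathbf{P}_k^\perp \mathbf{H}_k) \leq 2\|\nabla_k\|_2 \operatorname{Tr}(\mathbf{P}_k^\perp\mathbf{H}_k)$. By the hypothesis of the lemma this is at most $h_k/2$.

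For the in-face term, $\mathbf{P}_k\nabla_k'\mathbf{P}_k$ has rank at most $r^\star$ and eigenvalues in $[0,\delta_{k,r^\star}]$, where $\delta_{k,r^\star} = \lambda_{n-r^\star+1}(\nabla_k)-\lambda_n(\nabla_k)$. Hence its Frobenius norm is at most $\sqrt{r^\star}\,\delta_{k,r^\star}$. Cauchy--Schwarz, together with $\|\mathbf{P}_k(\mathbf{H}_k-\mathbf{H}^\star)\mathbf{P}_k\|_F \leq \|\mathbf{H}_k-\mathbf{H}^\star\|_F$, then gives the bound $\sqrt{r^\star}\,\delta_{k,r^\star}\, x$ with $x := \|\mathbf{H}_k-\mathbf{H}^\star\|_F$.

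Combining the two bounds yields $h_k \leq \frac{h_k}{2} + \sqrt{r^\star}\,\delta_{k,r^\star}\,x - \frac{x^2}{2}$. Maximizing over $x \geq 0$ gives $\frac{h_k}{2} \leq \frac{r^\star \delta_{k,r^\star}^2}{2}$, that is, $\delta_{k,r^\star} \geq \sqrt{h_k/r^\star}$. This is stronger than the claimed $\sqrt{h_k/(8r^\star)}$, so the statement follows, and the assumption $r^\star>2$ is not needed for this route.

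The main point to double-check is the shift-and-split step. The shift must leave the inner product invariant, which it does because $\operatorname{Tr}(\mathbf{H}_k-\mathbf{H}^\star)=0$. The cross terms must vanish, which they do because $\mathbf{P}_k$ is a spectral projector of $\nabla_k$. The sign argument dropping the $\mathbf{H}^\star$ term requires the shifted gradient to be PSD. If any constant in the hypothesis normalization differs (for example, if $\|\nabla_k\|_2$ is meant as the spectral norm of the unshifted gradient), the factor $2$ in $\|\nabla_k'\|_2 \leq 2\|\nabla_k\|_2$ is exactly what absorbs it. The slack between $\sqrt{h_k/r^\star}$ and $\sqrt{h_k/(8r^\star)}$ leaves room for looser bookkeeping there.
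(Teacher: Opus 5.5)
Your proof is correct. The quadratic identity $h_k=\langle\nabla\phi(\mathbf{H}_k),\mathbf{H}_k-\mathbf{H}^\star\rangle-\tfrac12\|\mathbf{H}_k-\mathbf{H}^\star\|_F^2$ holds, and the shift by $\lambda_n(\nabla\phi(\mathbf{H}_k))\mathbf{I}$ leaves the inner product unchanged. The cross terms vanish because $\mathbf{P}_k$ is a spectral projector of $\nabla\phi(\mathbf{H}_k)$, the PSD sign argument on the off-face block is valid, and the final maximization over $x$ is right. The bound $2\|\nabla\phi(\mathbf{H}_k)\|_2\operatorname{Tr}(\mathbf{P}_k^\perp\mathbf{H}_k)\le h_k/2$ holds whether $\|\cdot\|_2$ means the spectral or the Frobenius norm; the paper actually applies the lemma with $G=\sup\|\nabla\phi\|_F$, which only strengthens the hypothesis.

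The paper itself gives no proof of this lemma; it imports it from \citep{garber2026randomized} without proof. Your argument is therefore a self-contained substitute rather than a reconstruction of something in the text.

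For comparison, the constant $8$ in the statement is exactly what a cruder route produces. Use plain convexity $h_k\le\langle\nabla\phi(\mathbf{H}_k),\mathbf{H}_k-\mathbf{H}^\star\rangle$, split as you do, and then bound $\|\mathbf{H}_k-\mathbf{H}^\star\|_F\le\sqrt{2h_k}$ separately by quadratic growth. This gives $h_k/2\le\sqrt{r^\star}\,\delta_{k,r^\star}\sqrt{2h_k}$, i.e.\ $\delta_{k,r^\star}\ge\sqrt{h_k/(8r^\star)}$. Keeping the $-\tfrac12x^2$ term and optimizing over $x$, as you do, is what buys the factor $8$.

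Your version also never uses $r^\star>2$. In fact it never uses $\operatorname{rank}(\mathbf{H}^\star)=r^\star$ at all, only that $\mathbf{P}_k$ is a rank-$r^\star$ spectral projector of $\nabla\phi(\mathbf{H}_k)$. This matters here: the proof of \Cref{thm:sfw-convergence-inexact-evec} invokes this lemma for the range $n>r^\star\ge 2$. The case $r^\star=2$ is not covered by the lemma as stated, but it is covered by your argument.
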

We now use Weyl's inequality (\Cref{lem:weyl-thm} to find an upper bound for  $\delta_{k,r^\star}$ as follows:

\begin{lemma}[Vanishing in-face eigengap at optimality]
\label{lem:in-face-gap-optimality}
Suppose
$\operatorname{rank}(\mathbf{H}^\star)=r^\star$.
Then
\begin{equation*}
\lambda_{n-r^\star+1}(\nabla \phi(\mathbf{H}^\star))
=
\lambda_n(\nabla \phi(\mathbf{H}^\star)).
\end{equation*}
Consequently, the in-face eigengap at the optimum is zero.
\end{lemma}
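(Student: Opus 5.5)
The plan is to use the first-order (KKT) optimality conditions of the projection subproblem \eqref{eqn:proj-subproblem-H}. They pin down $\nabla \phi(\mathbf{H}^\star) = \mathbf{H}^\star - \mathbf{G}$ exactly on the range of $\mathbf{H}^\star$ and bound it from below everywhere else.

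Since $\mathbf{H}^\star$ minimizes the strongly convex $\phi$ over $\set{H} = \set{S}_+^n \cap \{\operatorname{Tr}(\mathbf{X}) = R\}$, optimality gives $\langle \nabla\phi(\mathbf{H}^\star), \mathbf{U} - \mathbf{H}^\star\rangle \geq 0$ for all $\mathbf{U} \in \set{H}$. Taking $\mathbf{U} = R\vec{v}\vec{v}^\top$ over unit vectors $\vec{v}$ turns this into
\begin{equation*}
    R\,\lambda_n(\nabla\phi(\mathbf{H}^\star)) = \min_{\mathbf{U}\in\set{H}}\langle \nabla\phi(\mathbf{H}^\star), \mathbf{U}\rangle = \langle \nabla\phi(\mathbf{H}^\star), \mathbf{H}^\star\rangle .
\end{equation*}
This is the vanishing of the Frank--Wolfe gap from \Cref{lem:lmo-error-inexact-subdiff} at the exact projection.

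Set $\mu := \lambda_n(\nabla\phi(\mathbf{H}^\star))$ and $\mathbf{S} := \nabla\phi(\mathbf{H}^\star) - \mu\mathbf{I} \succeq 0$. Then $\langle \mathbf{S}, \mathbf{H}^\star\rangle = \langle\nabla\phi(\mathbf{H}^\star),\mathbf{H}^\star\rangle - \mu R = 0$. Because $\mathbf{S}$ and $\mathbf{H}^\star$ are both PSD with zero inner product, we get $\mathbf{S}\mathbf{H}^\star = \mathbf{0}$. Hence $\nabla\phi(\mathbf{H}^\star)\vec{u} = \mu\vec{u}$ for every $\vec{u} \in \mathrm{Im}(\mathbf{H}^\star)$.

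Since $\operatorname{rank}(\mathbf{H}^\star) = r^\star$, this is an $r^\star$-dimensional eigenspace of $\nabla\phi(\mathbf{H}^\star)$ for the eigenvalue $\mu$. Moreover, $\mu$ is the smallest eigenvalue. Therefore $\mu$ has multiplicity at least $r^\star$ at the bottom of the spectrum, so $\lambda_{n-r^\star+1}(\nabla\phi(\mathbf{H}^\star)) = \cdots = \lambda_n(\nabla\phi(\mathbf{H}^\star)) = \mu$. Equivalently, choosing a semi-orthogonal $\mathbf{U}$ spanning $\mathrm{Im}(\mathbf{H}^\star)$, we have $\mathbf{U}^\top\nabla\phi(\mathbf{H}^\star)\mathbf{U} = \mu\mathbf{I}_{r^\star}$. \Cref{lem:poincare-sep-thm} with $i=1$ then gives $\lambda_{n-r^\star+1} \leq \mu = \lambda_n$, and the reverse inequality $\lambda_{n-r^\star+1} \geq \lambda_n$ is trivial.

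As a sanity check, the explicit water-filling form gives the same conclusion. Write $\mathbf{H}^\star = \sum_i (\lambda_i(\mathbf{G})-\theta)_+\vec{g}_i\vec{g}_i^\top$. Then $\mathbf{H}^\star - \mathbf{G}$ has eigenvalue $-\theta$ on the top $r^\star$ eigenvectors of $\mathbf{G}$, and eigenvalue $-\lambda_i(\mathbf{G}) \geq -\theta$ on the rest.

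The only delicate step is the complementarity argument $\langle \mathbf{S},\mathbf{H}^\star\rangle = 0 \Rightarrow \mathbf{S}\mathbf{H}^\star = 0$. I would justify it by writing
\begin{equation*}
    \langle \mathbf{S}, \mathbf{H}^\star\rangle = \|\mathbf{S}^{1/2}(\mathbf{H}^\star)^{1/2}\|_F^2 ,
\end{equation*}
so the product vanishes. Everything else is bookkeeping with the eigenvalue ordering.
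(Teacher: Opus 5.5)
Your proposal is correct and follows the same route as the paper. First-order optimality shows that $\mathbf{H}^\star$ minimizes $\langle \nabla\phi(\mathbf{H}^\star),\cdot\rangle$ over the spectrahedron, so $\mathrm{Im}(\mathbf{H}^\star)$ lies in the eigenspace of $\lambda_n(\nabla\phi(\mathbf{H}^\star))$, which therefore has dimension at least $r^\star$. Your complementarity step $\langle \mathbf{S},\mathbf{H}^\star\rangle = \|\mathbf{S}^{1/2}(\mathbf{H}^\star)^{1/2}\|_F^2 = 0$ supplies the justification the paper leaves implicit when it asserts that every minimizer is supported on that eigenspace. You also correctly keep the factor $R$ in the optimal value $R\lambda_n(\nabla\phi(\mathbf{H}^\star))$, which the paper's write-up drops.
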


\begin{proof}
By the first-order optimality condition, $\mathbf{H}^\star$
minimizes the linear function
\begin{equation*}
    \min_{\substack{\mathbf{X} \in \set{S}_+^n \\ \mathrm{Tr}(\mathbf{X}) = R}}
    \langle \nabla \phi(\mathbf{H}^\star),\mathbf{X}\rangle.
\end{equation*}
By the Rayleigh--Ritz principle, the optimal value of this
problem is $\lambda_n(\nabla \phi(\mathbf{H}^\star))$. Moreover, every
optimal solution is supported on the eigenspace associated
with $\lambda_n(\mathbf{H}^\star)$. Therefore,
\begin{equation*}
    \operatorname{Range}(\mathbf{H}^\star)
    \subseteq
    \operatorname{Eig}_{\lambda_n(\mathbf{H}^\star)}
    (\nabla \phi(\mathbf{H}^\star)).
\end{equation*}
Since $\operatorname{rank}(\mathbf{H}^\star)=r^\star$,
the eigenspace associated with the smallest eigenvalue has
dimension at least $r^\star$. Hence,
\[
\lambda_{n-r^\star+1}(\nabla\phi(\mathbf{H}^\star))
=
\lambda_n(\nabla\phi(\mathbf{H}^\star)),
\]
as claimed.
\end{proof}
\begin{lemma}[Upper bound on the in-face eigengap]
\label{lem:in-face-gap-upper-bound}
Under the assumptions of
Lemma~\ref{lem:in-face-gap-optimality}, it holds that
\begin{equation*}
\delta_{k, r^\star}
\leq
2\|\mathbf{H}_k-\mathbf{H}^\star\|_F \leq 2\sqrt{2h_k},
\end{equation*}
\end{lemma}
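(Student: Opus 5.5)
The argument has two steps: first bound the in-face eigengap by $2\|\mathbf{H}_k-\mathbf{H}^\star\|_F$ using Weyl's inequality, then bound that distance by $\sqrt{2h_k}$ using strong convexity of $\phi$.

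\textbf{Step 1 (eigengap versus distance).} Since $\nabla\phi(\mathbf{H}) = \mathbf{H}-\mathbf{G}$, we have the identity $\nabla\phi(\mathbf{H}_k) - \nabla\phi(\mathbf{H}^\star) = \mathbf{H}_k - \mathbf{H}^\star$. Applying \Cref{lem:weyl-thm} to the index $n-r^\star+1$ gives
\begin{equation*}
\lambda_{n-r^\star+1}(\nabla\phi(\mathbf{H}_k)) \leq \lambda_{n-r^\star+1}(\nabla\phi(\mathbf{H}^\star)) + \|\mathbf{H}_k-\mathbf{H}^\star\|_F .
\end{equation*}
Applying it to the index $n$ gives
\begin{equation*}
\lambda_{n}(\nabla\phi(\mathbf{H}_k)) \geq \lambda_{n}(\nabla\phi(\mathbf{H}^\star)) - \|\mathbf{H}_k-\mathbf{H}^\star\|_F .
\end{equation*}
Subtracting the second inequality from the first yields
\begin{equation*}
\delta_{k,r^\star} \leq \bigl[\lambda_{n-r^\star+1}(\nabla\phi(\mathbf{H}^\star)) - \lambda_n(\nabla\phi(\mathbf{H}^\star))\bigr] + 2\|\mathbf{H}_k-\mathbf{H}^\star\|_F .
\end{equation*}
By \Cref{lem:in-face-gap-optimality}, the bracketed term vanishes, which proves the first inequality of the statement.

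\textbf{Step 2 (distance versus objective residual).} The objective $\phi$ is $1$-strongly convex, and $\mathbf{H}^\star$ minimizes $\phi$ over the convex set $\set{H}$. Use the first-order optimality condition $\langle\nabla\phi(\mathbf{H}^\star),\mathbf{H}_k-\mathbf{H}^\star\rangle\geq 0$, which holds because $\mathbf{H}_k\in\set{H}$. Combining it with \Cref{def:strong-convexity} with $\beta=1$ gives
\begin{equation*}
h_k = \phi(\mathbf{H}_k)-\phi(\mathbf{H}^\star) \geq \tfrac12\|\mathbf{H}_k-\mathbf{H}^\star\|_F^2 .
\end{equation*}
For this quadratic objective the same bound also follows directly by expanding $\phi(\mathbf{H}_k)-\phi(\mathbf{H}^\star)$. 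It follows that $\|\mathbf{H}_k-\mathbf{H}^\star\|_F\leq\sqrt{2h_k}$, so $2\|\mathbf{H}_k-\mathbf{H}^\star\|_F \leq 2\sqrt{2h_k}$, which is the second inequality.

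\textbf{Main points to check.} Neither step is delicate. The two things to verify are that Weyl's inequality is applied with a consistent eigenvalue ordering, and that the sign of the optimality inequality is correct; it needs $\mathbf{H}_k\in\set{H}$, which holds because every iterate of \Cref{alg:spectrahedron-fw} is feasible. The result also inherits the hypotheses of \Cref{lem:in-face-gap-optimality}, since that lemma is what makes the eigengap at $\mathbf{H}^\star$ zero.
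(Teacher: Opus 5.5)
Your proof is correct and follows essentially the same route as the paper. You apply Weyl's inequality at indices $n-r^\star+1$ and $n$, cancel the gap at $\mathbf{H}^\star$ using Lemma~\ref{lem:in-face-gap-optimality}, and then use the quadratic-growth bound $\|\mathbf{H}_k-\mathbf{H}^\star\|_F\le\sqrt{2h_k}$; the differences are minor: the paper goes through the spectral norm before bounding by the Frobenius norm, and you make explicit the first-order optimality condition $\langle\nabla\phi(\mathbf{H}^\star),\mathbf{H}_k-\mathbf{H}^\star\rangle\ge 0$ that the paper's appeal to ``strong convexity'' leaves implicit.
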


\begin{proof}
By Weyl's inequality (\Cref{lem:weyl-thm}) and $1$-smoothness of $\phi$,
\begin{equation*}
    \lambda_{n-r^\star+1}(\nabla(\phi(\mathbf{H}_k))
    \leq
    \lambda_{n-r^\star+1}(\nabla\phi(\mathbf{H}^\star))
    +\|\mathbf{H}_k-\mathbf{H}^\star\|_2
\end{equation*}
and
\begin{equation*}
\lambda_n(\nabla\phi(\mathbf{H}_k))
\geq
\lambda_n(\nabla\phi(\mathbf{H}^\star))
-\|\mathbf{H}_k-\mathbf{H}^\star\|_2.
\end{equation*}
Combining these inequalities and applying
Lemma~\ref{lem:in-face-gap-optimality} yields
\begin{equation*}
\begin{aligned}
\delta_{k, r^\star}
&\leq
\lambda_{n-r^\star+1}(\nabla\phi(\mathbf{H}^\star))
-\lambda_n(\nabla\phi(\mathbf{H}^\star))
+2\|\mathbf{H}_k-\mathbf{H}^\star\|_2\\
&=2\|\mathbf{H}_k-\mathbf{H}^\star\|_2.
\end{aligned}
\end{equation*}
Since the spectral norm is bounded by the Frobenius norm,
the strong-convexity of $\phi$ gives
\begin{equation*}
\begin{aligned}
\delta_{k, r^\star}
&\leq
2\|\mathbf{H}_k-\mathbf{H}^\star\|_F\\
&\leq 2\sqrt{2h_k}.
\end{aligned}
\end{equation*}
This completes the proof.
\end{proof}
The following lemma captures the effect of an inexact LMO or inexact eigenvector on the reduction in the objective due to traditional FW update or an Away step under an  eigengap and is a slight modification of Lemma 7 from \citep{garber2026randomized}:
\begin{lemma}[FW/Away Steps under an  Eigengap, Lemma 7, \citep{garber2023linear}]\label{lem:sfw-afw-step-eigengap} 
Consider \Cref{alg:spectrahedron-fw} applied to the projection subproblem defined in \eqref{eqn:proj-subproblem-H} with $\set{H} = \set{S}_+^n \cap \{\mathbf{X}~:~\operatorname{Tr}(\mathbf{X}) = R \}$.
Assume $r^\star \geq 2$.
For any iteration $k$ of \Cref{alg:spectrahedron-fw} and suppose that $r_k \geq r^\star$ and $\lambda_{r_k}(\mathbf{H}_k) \geq \frac{\delta_{k, r^\star}}{16}$.
Assume the eigenvector of $\nabla \phi(\mathbf{H}_k)$ corresponding to the smallest eigenvalue of $\nabla \phi(\mathbf{H}_k)$ is computed inexactly in the sense of \eqref{eqn:lmo-step-inexact}.
Then we must have that 
\begin{equation*}
    \min \{\phi(\mathbf{H}_{t+1}^{\text{FW}}), \phi(\mathbf{H}_{t+1}^{\text{AFW}})\} \leq \phi(\mathbf{H}_k) - \frac{\delta_{k, r^\star}^2}{64} + \frac{\delta_{k, r^\star}\theta_k}{32R},
\end{equation*}
where $\theta_k \leq \frac{R\xi_k \norm{\nabla \phi(\mathbf{H}_k)}_F}{8}$ and $\xi_k \in (0, 1]$.
\end{lemma}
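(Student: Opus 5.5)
We adapt the proof of Lemma 7 in \citep{garber2026randomized}. The idea is to exhibit, for each of the two candidate steps, an explicit step size that gives the claimed decrease. The only change from the exact case is that the forward direction is built from an approximate eigenvector. This perturbs the linear term of the smoothness bound by exactly the LMO error $\theta_k$ in \eqref{eqn:lmo-step-inexact}. Throughout we use that $\phi$ is $1$-smooth, so for any feasible direction $\mathbf{D}$ and step $\eta$,
\begin{equation*}
    \phi(\mathbf{H}_k+\eta\mathbf{D})\leq\phi(\mathbf{H}_k)+\eta\langle\nabla\phi(\mathbf{H}_k),\mathbf{D}\rangle+\tfrac{\eta^2}{2}\|\mathbf{D}\|_F^2 .
\end{equation*}

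First we handle the away step, which involves no eigensolver error. Write $\lambda_{\max}^{\mathrm{Im}}$ for the largest value of $\vec{v}^\top\nabla\phi(\mathbf{H}_k)\vec{v}/R$ over $\vec{v}\in\mathrm{Im}(\mathbf{H}_k)$. Since $r_k\geq r^\star$, the Poincar\'e separation theorem (\Cref{lem:poincare-sep-thm}) applied to the range of $\mathbf{H}_k$ gives $\lambda_{\max}^{\mathrm{Im}}\geq\lambda_{n-r^\star+1}(\nabla\phi(\mathbf{H}_k))$. Hence the away atom $\vec{v}_{k,-}$ has Rayleigh value at least $\lambda_n+\delta_{k,r^\star}$. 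The assumption $\lambda_{r_k}(\mathbf{H}_k)\geq\delta_{k,r^\star}/16$, combined with \Cref{lem:permissible-step-lem-garber}, guarantees that an away step of length about $\delta_{k,r^\star}/(16R)$ stays in the spectrahedron. The direction of the rescaled update $\frac{1}{1-\eta}(\mathbf{H}_k-\eta\vec{v}_{k,-}\vec{v}_{k,-}^\top)$ is, to first order, $\mathbf{H}_k-\vec{v}_{k,-}\vec{v}_{k,-}^\top$.

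The FW step is the mirror image. Its direction is $\vec{v}_{k,+}\vec{v}_{k,+}^\top-\mathbf{H}_k$, and by \eqref{eqn:lmo-step-inexact} the approximate eigenvector satisfies
\begin{equation*}
    \langle\nabla\phi(\mathbf{H}_k),\vec{v}_{k,+}\vec{v}_{k,+}^\top\rangle=R\lambda_n+\theta_k .
\end{equation*}
Following Garber's argument, we consider the sum of the two linear terms, using that $\min\{a,b\}\leq(a+b)/2$. Then $\langle\nabla\phi(\mathbf{H}_k),\mathbf{H}_k\rangle$ cancels, leaving
\begin{equation*}
    \langle\nabla\phi(\mathbf{H}_k),\vec{v}_{k,+}\vec{v}_{k,+}^\top-\vec{v}_{k,-}\vec{v}_{k,-}^\top\rangle\leq-R\,\delta_{k,r^\star}+\theta_k .
\end{equation*}
Alternatively, we can argue casewise on the sign of $\langle\nabla\phi(\mathbf{H}_k),\mathbf{H}_k\rangle-R\lambda_n$ relative to $R\delta_{k,r^\star}/2$. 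Either way, at least one of the two steps has linear coefficient at most $-R\delta_{k,r^\star}/2+\theta_k$.

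We then pick a step size $\eta=\delta_{k,r^\star}/(32R)$ or similar, feasible in both the FW and the away line searches, and bound the quadratic term using $\|\mathbf{D}\|_F^2\leq 4R^2$. This yields a decrease of $\delta_{k,r^\star}^2/64$ plus an additive $\eta\theta_k=\delta_{k,r^\star}\theta_k/(32R)$. Since the algorithm's line searches only do better than this fixed $\eta$, the claimed bound follows. The bound $\theta_k\leq R\xi_k\|\nabla\phi(\mathbf{H}_k)\|_F/8$ is just \Cref{lemma:randomized-lanczos} combined with \eqref{eqn:lanczos-max-iter-lem-proof-1}, exactly as in the proof of \Cref{thm:lanczos-iter-fw}.

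The main obstacle is the constant bookkeeping in the away step. Its rescaling by $1/(1-\eta)$ makes the step nonlinear in $\eta$, and we must also check that the chosen $\eta$ respects the away-step cap $\lambda_k$, which is where the assumption $\lambda_{r_k}(\mathbf{H}_k)\geq\delta_{k,r^\star}/16$ enters. I would first try to reproduce Garber's constants with $R=1$ and then track the powers of $R$. If the constants come out slightly off, it suffices to adjust $\eta$ by a constant factor.
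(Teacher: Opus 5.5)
Your skeleton matches the paper's proof: Poincar\'e separation gives $\vec{v}_{k,-}^\top\nabla\phi(\mathbf{H}_k)\vec{v}_{k,-}\geq R\lambda_{n-r^\star+1}(\nabla\phi(\mathbf{H}_k))$, the inexact-LMO identity handles $\vec{v}_{k,+}$, the two linear terms are averaged, a common step size is capped via $\lambda_{r_k}(\mathbf{H}_k)\geq\delta_{k,r^\star}/16$, and $1$-smoothness finishes. The gap is in the constants, which are the whole content of this lemma, and your fallback of rescaling $\eta$ does not close it.

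Your averaging actually gives $\min\{a,b\}\leq\frac{1}{2}(-R\delta_{k,r^\star}+\theta_k)$. You then carry forward the weaker coefficient $-R\delta_{k,r^\star}/2+\theta_k$, and with that coefficient no choice of $\eta$ yields the stated bound. Matching the coefficient $\delta_{k,r^\star}/(32R)$ of $\theta_k$ forces $\eta\leq\delta_{k,r^\star}/(32R)$. At such $\eta$ the linear decrease is at most $\delta_{k,r^\star}^2/64$, and the positive quadratic term strictly reduces it. Concretely, at your $\eta=\delta_{k,r^\star}/(32R)$ with $\|\mathbf{D}\|_F^2\leq 4R^2$, the $\delta$-part is $-\delta_{k,r^\star}^2/64+\delta_{k,r^\star}^2/512$. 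Your tally omitted the quadratic term.

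What works, and what the paper does, is to keep the coefficient $\theta_k/2$ and take the largest admissible step $\eta=\delta_{k,r^\star}/(16R)$. This step is feasible for the away line search because $\lambda_k\geq\lambda_{r_k}(\mathbf{H}_k)/R$. The linear part is then $-\delta_{k,r^\star}^2/32+\delta_{k,r^\star}\theta_k/(32R)$, so you must show the quadratic part is at most $\delta_{k,r^\star}^2/64$.

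The away-step rescaling you flag as the main obstacle is exact, not first-order: $\frac{1}{1-\eta}(\mathbf{H}_k-\eta\vec{v}_{k,-}\vec{v}_{k,-}^\top)=\mathbf{H}_k+\frac{\eta}{1-\eta}(\mathbf{H}_k-\vec{v}_{k,-}\vec{v}_{k,-}^\top)$. Moreover $\langle\mathbf{H}_k-\vec{v}_{k,-}\vec{v}_{k,-}^\top,\nabla\phi(\mathbf{H}_k)\rangle\leq 0$, by maximality of $\vec{v}_{k,-}$ over $\mathrm{Im}(\mathbf{H}_k)$ and $\operatorname{Tr}(\mathbf{H}_k)=R$. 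Hence the linear term is at most $\eta$ times that inner product.

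For the quadratic term, the paper uses $\|\mathbf{D}\|_F^2\leq 2R^2$, since the cross term $\langle\vec{v}\vec{v}^\top,\mathbf{H}_k\rangle$ is nonnegative and $\|\mathbf{H}_k\|_F\leq R$. It combines this with $\eta\leq\frac{1}{2}$, which holds because $r_k\geq 2$ forces $\lambda_{r_k}(\mathbf{H}_k)\leq R/2$. This gives $\eta^2R^2/(1-\eta)^2\leq 4\eta^2R^2=\delta_{k,r^\star}^2/64$. Your cruder $4R^2$ bound combined with $(1-\eta)^{-2}\leq 4$ would instead give $\delta_{k,r^\star}^2/32$ and cancel the entire decrease. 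It only suffices if you parametrize the away step directly by $\mu=\eta/(1-\eta)$.
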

\begin{proof} 
Suppose $r_k := \operatorname{rank}(\mathbf{H}_k) \geq r^\star$. 
Construct $\mathbf{U} \in \mathbb{R}^{n \times r_k}$ as the orthonormal basis of $\operatorname{Im}(\mathbf{H}_k)$. 
Let $(\mu_1, \vec{w})$ be the eigenpair of $\mathbf{U}^\top \nabla \phi(\mathbf{H}_k)\mathbf{U}$  such that $\mu_1$ is largest eigenvalue of $\mathbf{U}^\top \nabla \phi(\mathbf{H}_k)\mathbf{U}$, so that $\vec{v} = \mathbf{U}\vec{w} \in \operatorname{Im}(\mathbf{H}_k)$.
Then the Poincar\'{e} separation theorem (\Cref{lem:poincare-sep-thm}) implies that 
\begin{equation*}
    \mathbf{v}^\top \nabla \phi(\mathbf{H}_k) \mathbf{v} = \mathbf{w}^\top \mathbf{U}^\top\nabla \phi(\mathbf{H}_k) \mathbf{U}\mathbf{w} = \mu_1 \geq \lambda_{n - r^\star + 1}(\nabla \phi(\mathbf{H}_k)),
\end{equation*}
which in turn implies that 
\begin{equation*}
    \vec{v}_{t,-}^\top \nabla \phi(\mathbf{H}_k) \vec{v}_{t, -} \geq R\lambda_{n - r^\star + 1}(\nabla \phi(\mathbf{H}_k))
\end{equation*}
based on definition of $\vec{v}_{t,-}$.
Given that an inexact LMO or eigenvector is computed in the sense of \eqref{eqn:lmo-step-inexact}, we can use similar reasoning as in the proof of \Cref{thm:lanczos-iter-fw} to say that
\begin{equation*}
    R\lambda_n(\nabla \phi(\mathbf{H}_k)) = \vec{v}_{t,+}^\top \nabla \phi(\mathbf{H}_k)\vec{v}_{t,+} - \theta_k
\end{equation*}
such that $\theta_k \leq \frac{R\xi_k \norm{\nabla \phi(\mathbf{H}_k)}_F}{8}$ (see \eqref{eqn:lanczos-max-iter-lem-proof-1} and \eqref{eqn:lanczos-max-iter-lem-proof-2}).
Together and after using the fact that $\max\{a, b\} \geq \frac{a+b}{2}$ for any real $a$ and $b$, this implies that 
\begin{align}\label{eqn:sfw-lemma7-proof-1}
    \max \left \{ \langle \mathbf{H}_k - \vec{v}_{k,+}\vec{v}_{k,+}^\top, \nabla \phi(\mathbf{H}_k) \rangle, \langle \vec{v}_{k,-}\vec{v}_{k,-}^\top - \mathbf{H}_k, \nabla \phi(\mathbf{H}_k) \rangle \right \} &\geq \frac{1}{2}\langle \vec{v}_{t,-}\vec{v}_{t,-}^\top - \vec{v}_{t,+}\vec{v}_{t,+}^\top, \nabla \phi(\mathbf{H}_k) \rangle \nonumber \\
    &\geq \frac{1}{2}\left (R\lambda_{n - r^\star + 1}(\nabla \phi(\mathbf{H}_k)) - R\lambda_n(\nabla \phi(\mathbf{H}_k)) - \theta_k \right ) \nonumber \\
    &\geq \frac{R\delta_{k, r^\star}}{2} - \frac{\theta_k}{2}.
\end{align}
Since $\phi$ is 1-smooth, we have that 
\begin{align}\label{eqn:sfw-lemma7-proof-2}
    \phi(\mathbf{H}_{t+1}^{\text{FW}}) &\leq \phi(\mathbf{H}_k + \eta(\vec{v}_{k,+}\vec{v}_{k,+}^\top - \mathbf{H}_k)) \nonumber \\
    &\leq \phi(\mathbf{H}_k) + \eta \langle \mathbf{v}_{k, +}\mathbf{v}_{k, +}^\top - \mathbf{H}_k, \nabla \phi(\mathbf{H}_k) \rangle + \frac{\eta^2}{2}\|\mathbf{v}_{k, +}\mathbf{v}_{k, +}^\top - \mathbf{H}_k\|_F^2 \nonumber \\
    &\leq \phi(\mathbf{H}_k) + \eta \langle \mathbf{v}_{k, +}\mathbf{v}_{k, +}^\top - \mathbf{H}_k, \nabla \phi(\mathbf{H}_k) \rangle + \eta^2 R^2 \nonumber \\
    &\leq \phi(\mathbf{H}_k) + \eta \langle \mathbf{v}_{k, +}\mathbf{v}_{k, +}^\top - \mathbf{H}_k, \nabla \phi(\mathbf{H}_k) \rangle + \frac{\eta^2 R^2}{(1 - \eta)^2}
\end{align}
where the first line follows from the fact that $\eta^{\text{FW}}$ is set by line-search and the second-to-last line follows from the fact that
\begin{align*}
    \|\mathbf{v}_{k, +}\mathbf{v}_{k, +}^\top - \mathbf{H}_k\|_F^2 &\leq \|\mathbf{v}_{k, +}\mathbf{v}_{k, +}^\top\|_F^2 + \|\mathbf{H}_k\|_F^2 - 2\ip{\vec{v}_{k,+}\vec{v}_{k,+}^\top}{\mathbf{H}_k} \\
    &\leq \mbox{Tr}((\mathbf{v}_{k, +}\mathbf{v}_{k, +}^\top)^\top \mathbf{v}_{k, +}\mathbf{v}_{k, +}^\top) + \sum_{i = 1}^n \lambda_i(\mathbf{H}_k)^2 \\
    &\leq R^2 + \left (\sum_{i = 1}^n \lambda_i(\mathbf{H}_k) \right )^2 = 2R^2,
\end{align*}
where we used the fact that the third term is an inner product of two positive semidefinite matrices and is there non-negative in the second step.
Similarly, for any step-size $\eta \in \left [0, \frac{\lambda_{r_k}(\mathbf{H}_k)}{R} \right] \subset [0, (\mathbf{v}_{k, -}^\top \mathbf{H}_k^\dagger \mathbf{v}_{k, -})^{-1}]$, it holds that 
\begin{align}\label{eqn:sfw-lemma7-proof-3}
    \phi(\mathbf{H}_{t+1}^{\text{AFW}}) &\leq \phi\left ( \frac{1}{1 - \eta} (\mathbf{H}_k - \eta \vec{v}_{k,-}\vec{v}_{k,-}^\top) \right ) \nonumber \\ 
    &= \phi \left ( \mathbf{H}_k + \frac{\eta}{1 - \eta}(\mathbf{H}_k - \vec{v}_{k,-}\vec{v}_{k,-}^\top)\right ) \nonumber \\
    &\leq \phi(\mathbf{H}_k) + \frac{\eta}{1 - \eta}\langle \mathbf{H}_k - \vec{v}_{k,-}\vec{v}_{k,-}^\top, \nabla \phi (\mathbf{H}_k) \rangle + \frac{\eta^2 R^2}{(1 - \eta)^2} \nonumber \\
    &\leq \phi(\mathbf{H}_k) + \eta \langle \mathbf{H}_k - \vec{v}_{k,-}\vec{v}_{k,-}^\top, \nabla \phi (\mathbf{H}_k) \rangle + \frac{\eta^2R^2}{(1 - \eta)^2},
\end{align}
where the first line follows from the fact that $\eta^{\text{AFW}}$ is set by line-search and the last line follows from the fact that $\eta \in [0, 1)$ and from the fact that $ \langle \mathbf{H}_k - \vec{v}_{k,-}\vec{v}_{k,-}^\top, \nabla \phi (\mathbf{H}_k) \rangle \leq 0$.
Combining \eqref{eqn:sfw-lemma7-proof-1}, \eqref{eqn:sfw-lemma7-proof-2} and \eqref{eqn:sfw-lemma7-proof-3}, it follows that 
\begin{align*}
    \min\{\phi(\mathbf{H}_{t+1}^{\text{FW}}), \phi(\mathbf{H}_{t+1}^{\text{AFW}})\} &\leq \phi(\mathbf{H}_k) - \eta \max \left \{ \langle \mathbf{H}_k - \vec{v}_{t,+}\vec{v}_{t,+}^\top), \nabla \phi(\mathbf{H}_k) \rangle, \langle \vec{v}_{t,-}\vec{v}_{t,-}^\top - \mathbf{H}_k, \nabla \phi(\mathbf{H}_k) \rangle \right \} + \frac{\eta^2 R^2}{(1 - \eta)^2} \nonumber \\
    &\leq \phi(\mathbf{H}_k) - \frac{\eta R \delta_{k, r^\star}}{2} + \frac{\eta \theta_k}{2} + \frac{\eta^2 R^2}{(1 - \eta)^2}
\end{align*}
for all $\eta \in [0, \frac{\lambda_{r_k}(\mathbf{H}_k)}{R}]$.
Since $r_k \geq r^\star \geq 2$ and given the constraint $\operatorname{Tr}(\mathbf{H}_k) = R$, we must have that $\lambda_{r_k}(\mathbf{H}_k) \leq \frac{R}{2}$.
Given this and the assumption that $\lambda_{r_k}(\mathbf{H}_k) \geq \frac{\delta_{k, r^\star}}{16}$, we can set $\eta = \frac{\delta_{k, r^\star}}{16R}$ to get
\begin{align*}
    \min\{\phi(\mathbf{H}_{t+1}^{\text{FW}}), \phi(\mathbf{H}_{t+1}^{\text{AFW}})\} &\leq \phi(\mathbf{H}_k) - \frac{\delta_{k, r^\star}^2}{32} + \frac{\delta_{k, r^\star}\theta_k}{32R} + \frac{\delta_{k, r^\star}^2}{64} \\  
    &= \phi(\mathbf{H}_k) - \frac{\delta_{k, r^\star}^2}{64} + \frac{\delta_{k, r^\star}\theta_k}{32R},
\end{align*}
where we used the fact that $\lambda_{r_k}(\mathbf{H}_k) \leq \frac{R}{2}$ implies that $\eta \leq \frac{1}{2}$ to bound the last term in the first step as follows: $\frac{\eta^2}{(1 - \eta)^2} \leq \frac{\eta^2}{\left (1 - \frac{1}{2} \right )^2} = \frac{\delta_{k, r^\star}^2}{64}$.
\end{proof}
We now present a useful lemma from \citep{garber2023linear} that helps prove the bound for the case where the rank $r^\star$ of the optimal projection is $1$:
\begin{lemma}[Lemma 4, \citep{garber2023linear}]\label{lem:sfw-key-ineq-quad-growth-rank-1}
Let $\mathbf{H} \in \set{S}_+^n \cap \{\mathbf{X}~:~\operatorname{Tr}(\mathbf{X}) = R \}$ and suppose that $\lambda_{n - 1}(\nabla \phi (\mathbf{H})) - \lambda_n(\nabla \phi (\mathbf{H})) \geq \delta_{\mathbf{H}}$ for some $\delta_{\mathbf{H}} > 0$.
Let $\vec{u}_n$ be an eigenvector of $\nabla \phi(\mathbf{H})$ associated with the eigenvalue $\lambda_n(\nabla \phi(\mathbf{H}))$.
Then the following must hold:
\begin{equation}\label{eqn:sfw-rank-1-key-ineq}
    \langle \mathbf{X} - R\vec{u}_n \vec{u}_n^\top, \nabla \phi (\mathbf{H}) \rangle \geq \delta_{\mathbf{H}}(R - \vec{u}_n^\top \mathbf{X} \mathbf{u}_n) \geq \frac{\delta_{\mathbf{H}}}{2R} \|\mathbf{X} - R\vec{u}_n \vec{u}_n^\top\|_F^2
\end{equation}
for all $\mathbf{X} \in \set{S}_+^n \cap \{\mathbf{X}~:~\operatorname{Tr}(\mathbf{X}) = R \}$.
Also, this implies that if $\mathbf{H}^\star$ is an optimal solution to the projection problem in \eqref{eqn:proj-subproblem-H} that satisfies the strict complementarity condition in \eqref{eqn:strict-complementarity-eigengap} with $r^\star = 1$, then the following quadratic growth property holds for the projection objective $\phi$:
\begin{equation}\label{eqn:sfw-rank-1-quad-growth-condition}
    \|\mathbf{X} - \mathbf{H}^\star\|_F^2 \leq \frac{2R}{\delta}(\phi(\mathbf{X}) - \phi(\mathbf{H}^\star))     
\end{equation}
for all $\mathbf{X} \in \set{S}_+^n \cap \{\mathbf{X}~:~\operatorname{Tr}(\mathbf{X}) = R \}$.
\end{lemma}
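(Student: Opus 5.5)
The plan is to reduce the first inequality in \eqref{eqn:sfw-rank-1-key-ineq} to a spectral decomposition of $\nabla \phi(\mathbf{H})$. Then the second inequality and the quadratic growth \eqref{eqn:sfw-rank-1-quad-growth-condition} follow from elementary trace identities and the first-order optimality of $\mathbf{H}^\star$.

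\textbf{First inequality.} Write $\nabla \phi(\mathbf{H}) = \sum_{i=1}^n \lambda_i \vec{u}_i\vec{u}_i^\top$ with orthonormal eigenvectors, where $\vec{u}_n$ is the eigenvector of $\lambda_n$. For any $\mathbf{X}$ in the spectrahedron, set $a_i := \vec{u}_i^\top \mathbf{X}\vec{u}_i \geq 0$; then $\sum_i a_i = \operatorname{Tr}(\mathbf{X}) = R$. We have $\langle \mathbf{X}, \nabla\phi(\mathbf{H})\rangle = \sum_i \lambda_i a_i$ and $\langle R\vec{u}_n\vec{u}_n^\top, \nabla\phi(\mathbf{H})\rangle = R\lambda_n$. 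Therefore
\begin{equation*}
\langle \mathbf{X} - R\vec{u}_n\vec{u}_n^\top, \nabla\phi(\mathbf{H})\rangle = \sum_{i<n} (\lambda_i - \lambda_n) a_i \geq \delta_{\mathbf{H}} \sum_{i<n} a_i = \delta_{\mathbf{H}}(R - a_n),
\end{equation*}
using $\lambda_i - \lambda_n \geq \lambda_{n-1}-\lambda_n \geq \delta_{\mathbf{H}}$ for every $i<n$. This is exactly the first claimed inequality, since $a_n = \vec{u}_n^\top\mathbf{X}\vec{u}_n$.

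\textbf{Second inequality.} Expand the Frobenius norm:
\begin{equation*}
\|\mathbf{X} - R\vec{u}_n\vec{u}_n^\top\|_F^2 = \|\mathbf{X}\|_F^2 - 2R a_n + R^2 .
\end{equation*}
Since $\mathbf{X}\succeq 0$ with trace $R$, we have $\|\mathbf{X}\|_F^2 \leq (\operatorname{Tr}\mathbf{X})^2 = R^2$, so the right-hand side is at most $2R^2 - 2Ra_n = 2R(R-a_n)$. Multiplying by $\delta_{\mathbf{H}}/(2R)$ gives the second inequality.

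\textbf{Quadratic growth.} Apply the above with $\mathbf{H} = \mathbf{H}^\star$ and $\delta_{\mathbf{H}} = \delta$ from \eqref{eqn:strict-complementarity-eigengap} with $r^\star=1$. By the argument of \Cref{lem:in-face-gap-optimality}, $\mathbf{H}^\star$ minimizes $\langle \nabla\phi(\mathbf{H}^\star), \cdot\rangle$ over the spectrahedron, so its range lies in the $\lambda_n$-eigenspace. Because the gap is positive, that eigenspace is one-dimensional, and with the trace constraint this forces $\mathbf{H}^\star = R\vec{u}_n\vec{u}_n^\top$. Convexity of $\phi$ gives $\phi(\mathbf{X}) - \phi(\mathbf{H}^\star) \geq \langle \mathbf{X}-\mathbf{H}^\star, \nabla\phi(\mathbf{H}^\star)\rangle$. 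Combining this with \eqref{eqn:sfw-rank-1-key-ineq} yields $\phi(\mathbf{X})-\phi(\mathbf{H}^\star) \geq \frac{\delta}{2R}\|\mathbf{X}-\mathbf{H}^\star\|_F^2$, which rearranges to \eqref{eqn:sfw-rank-1-quad-growth-condition}.

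The only delicate step is the identification $\mathbf{H}^\star = R\vec{u}_n\vec{u}_n^\top$. It needs the positive eigengap to rule out a multidimensional bottom eigenspace, and the support argument from \Cref{lem:in-face-gap-optimality}. The rest is routine.
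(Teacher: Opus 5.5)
Your proof is correct and follows the paper's route. The paper likewise expands $\nabla\phi(\mathbf{H})$ in its eigenbasis with $\vec{u}_i^\top\mathbf{X}\vec{u}_i \geq 0$ summing to $R$, uses $\|\mathbf{X}\|_F \leq R$ for the second inequality, and applies convexity of $\phi$ at $\mathbf{H}^\star$ for quadratic growth. Your first-order-optimality argument that $\mathbf{H}^\star = R\vec{u}_n\vec{u}_n^\top$ (range inside the one-dimensional bottom eigenspace) is more careful than the paper, which asserts this identification only from $\operatorname{rank}(\mathbf{H}^\star)=1$.
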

\begin{proof} Fix some $\mathbf{X} \in \set{S}_+^n \cap \{\mathbf{X}~:~\operatorname{Tr}(\mathbf{X}) = R \}$ and let us write the eigendecomposition of $\nabla \phi (\mathbf{H})$ as $\nabla \phi (\mathbf{H}) = \sum_{i = 1}^n \lambda_i \mathbf{u}_i \mathbf{u}_i^\top$, where eigenvalues are ordered in decreasing order.
Given that $\lambda_{n - 1} - \lambda_n = \delta_{\mathbf{H}}$, it follows that 
\begin{align*}
    \langle \mathbf{X} - R\mathbf{u}_n \mathbf{u}_n^\top, \nabla \phi (\mathbf{H}) \rangle &= \left \langle \mathbf{X} - R\vec{u}_n \vec{u}_n^\top,  \sum_{i = 1}^n \lambda_i \vec{u}_i \vec{u}_i^\top \right \rangle \nonumber\\
    &= \sum_{i = 1}^n \lambda_i \vec{u}_i^\top \mathbf{X} \mathbf{u}_i - R\lambda_n \nonumber \\
    &\geq (\lambda_n + \delta_{\mathbf{H}})\sum_{i = 1}^{n-1} \vec{u}_i^\top \mathbf{X}\vec{u}_i + \lambda_n \vec{u}_n^\top \mathbf{X}\vec{u}_n - R\lambda_n \nonumber \\
    &= \delta_{\mathbf{H}}\sum_{i = 1}^{n - 1} \vec{u}_i^\top \mathbf{X}\vec{u}_i + \lambda_n \left ( \sum_{i = 1}^n \vec{u}_n^\top \mathbf{X}\vec{u}_n - R\right )\\
    &= \delta_{\mathbf{H}}\sum_{i = 1}^{n - 1} \vec{u}_i^\top \mathbf{X}\vec{u}_i = \delta_{\mathbf{H}}(R - \mathbf{u}_n^\top \mathbf{X}\mathbf{u}_n).
\end{align*}
where the last two equalities follows from the fact that $\sum_{i = 1}^n \vec{u}_i^\top \mathbf{X} \vec{u}_i = R$.
Since $\|\mathbf{X}\|_F \leq R$ and $\|\mathbf{u}_n \mathbf{u}_n^\top \|^2 = 1$, we can write 
\begin{align*}
    \langle \mathbf{X} - R\vec{u}_n \vec{u}_n^\top, \nabla \phi(\mathbf{H}) \rangle &\geq \delta_{\mathbf{H}}(R - \vec{u}_n^\top \mathbf{X} \vec{u}_n) \\
    &\geq \frac{\delta_{\mathbf{H}}}{2R}(R^2 \|\vec{u}_n \vec{u}_n^\top\|^2 + \|\vec{X}\|_F^2 - 2R\vec{u}_n^\top \mathbf{X} \vec{u}_n) \\
    &= \frac{\delta_{\mathbf{H}}}{2R}\|\mathbf{X} - R\vec{u}_n \vec{u}_n^\top\|_F^2.
\end{align*}
This proves the first inequality described in \eqref{eqn:sfw-rank-1-key-ineq}.
If we set $\mathbf{H} = \mathbf{H}^\star = R\vec{u}_n \vec{u}_n^\top$ since the optimal projection $\mathbf{H}^\star$ is assumed to be rank 1 and $\delta_{\mathbf{H}} = \delta$ to reflect the strict complementarity condition in \eqref{eqn:strict-complementarity-eigengap}, then we have that 
\begin{equation*}
    \frac{\delta}{2R}\|\mathbf{X} - \mathbf{H}^\star\|_F^2 \leq \langle \mathbf{X} - \mathbf{H}^\star, \nabla \phi(\mathbf{H}^\star) \rangle \leq \phi(\mathbf{X}) - \phi(\mathbf{H}^\star),
\end{equation*}
where we used the convexity of $\phi$ in the last step above.
This concludes the proof of the quadratic growth property of $\phi$ in \eqref{eqn:sfw-rank-1-quad-growth-condition}.
\end{proof}
The following lemma is based on Theorem 2 and its proof from \citep{garber2023linear} and shows that the objective residual decreases given a traditional FW step using LMO for the special case where the projection is of rank 1 and quantifies this decrease given the case when eigenvector of $\nabla \phi(\mathbf{H}_k)$ corresponding to the smallest eigenvalue of $\nabla \phi(\mathbf{H}_k)$ is computed inexactly in the sense of \eqref{eqn:lmo-step-inexact}.
\begin{lemma}[Objective Residual Decrease from FW Step for Rank-1 Optimum]\label{lem:obj-residual-sfw-rank-1} Consider \Cref{alg:spectrahedron-fw} applied to the projection subproblem defined in \eqref{eqn:proj-subproblem-H} with $\set{H} = \set{S}_+^n \cap \{\mathbf{X}~:~\operatorname{Tr}(\mathbf{X}) = R \}$. 
Suppose $r^\star = 1$.
Assume the eigenvector of $\nabla \phi(\mathbf{H}_k)$ corresponding to the smallest eigenvalue of $\nabla \phi(\mathbf{H}_k)$ is computed inexactly in the sense of \eqref{eqn:lmo-step-inexact}.
For each iteration $k$ of \Cref{alg:spectrahedron-fw} such that $\|\nabla \phi(\mathbf{H}_k) - \nabla \phi(\mathbf{H}^\star)\|_F \leq \frac{\delta}{3}$, we must have that 
\begin{equation*}
    \phi(\mathbf{H}_{k + 1}^{FW}) - \phi(\mathbf{H}^\star) \leq \left (1 - \min\left \{ \frac{\delta}{12R}, \frac{1}{2}\right \} \right )(\phi(\mathbf{H}_k) - \phi(\mathbf{H}^\star)) +  
    \begin{cases}
        \frac{\delta^2 \|\mathbf{G}\|_F}{18R} + \left (\frac{\delta}{6R} - \frac{\delta^2}{36R^2} \right )\theta_k, & \delta \leq 6R \\
        2R\|G\|_F, & \mbox{otherwise}
    \end{cases}
\end{equation*}
where $\theta_k \leq \frac{R\xi_k \norm{\nabla \phi(\mathbf{H}_k)}_F}{8}$ and $\xi_k \in (0, 1]$.
\end{lemma}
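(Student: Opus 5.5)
The plan is to mimic the proof of Theorem 2 in \citep{garber2023linear}, with an extra additive term coming from the inexact LMO. Write $\nabla_k := \nabla \phi(\mathbf{H}_k)$ and let $\vec{u}_n$ be a unit eigenvector of $\nabla_k$ for $\lambda_n(\nabla_k)$.

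\textbf{Step 1: eigengap at $\mathbf{H}_k$.} By Weyl's inequality (\Cref{lem:weyl-thm}) and the hypothesis $\|\nabla_k - \nabla \phi(\mathbf{H}^\star)\|_F \leq \delta/3$, the gap $\lambda_{n-1}(\nabla_k) - \lambda_n(\nabla_k)$ is at least $\delta - 2\delta/3 = \delta/3$. We therefore apply \Cref{lem:sfw-key-ineq-quad-growth-rank-1} with $\mathbf{H} = \mathbf{H}_k$ and $\delta_{\mathbf{H}} = \delta/3$. Taking $\mathbf{X} = \mathbf{H}_k$ gives
\begin{equation*}
\langle \mathbf{H}_k - R\vec{u}_n\vec{u}_n^\top, \nabla_k\rangle \geq \frac{\delta}{6R}\|\mathbf{H}_k - R\vec{u}_n\vec{u}_n^\top\|_F^2 .
\end{equation*}
The inexact atom $\vec{v}_{k,+}\vec{v}_{k,+}^\top$ satisfies $\langle \vec{v}_{k,+}\vec{v}_{k,+}^\top, \nabla_k\rangle = R\lambda_n(\nabla_k) + \theta_k$, as in \eqref{eqn:lanczos-max-iter-lem-proof-1}. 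Hence the inexact FW direction has
\begin{equation*}
\langle \mathbf{H}_k - \vec{v}_{k,+}\vec{v}_{k,+}^\top, \nabla_k\rangle \geq \frac{\delta}{6R}\|\mathbf{H}_k - R\vec{u}_n\vec{u}_n^\top\|_F^2 - \theta_k .
\end{equation*}
By convexity of $\phi$, this inner product is also at least $h_k - \theta_k$.

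\textbf{Step 2: one FW step with a fixed step size.} Since $\phi$ is $1$-smooth and the FW step uses exact line search, for any $\eta\in[0,1]$
\begin{equation*}
\phi(\mathbf{H}_{k+1}^{FW}) \leq \phi(\mathbf{H}_k) - \eta\langle \mathbf{H}_k - \vec{v}_{k,+}\vec{v}_{k,+}^\top, \nabla_k\rangle + \frac{\eta^2}{2}\|\vec{v}_{k,+}\vec{v}_{k,+}^\top - \mathbf{H}_k\|_F^2 .
\end{equation*}
To control the quadratic term I compare $\vec{v}_{k,+}\vec{v}_{k,+}^\top$ with $R\vec{u}_n\vec{u}_n^\top$. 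The triangle inequality gives
\begin{equation*}
\|\vec{v}_{k,+}\vec{v}_{k,+}^\top - \mathbf{H}_k\|_F^2 \leq 2\|\mathbf{H}_k - R\vec{u}_n\vec{u}_n^\top\|_F^2 + 2\|\vec{v}_{k,+}\vec{v}_{k,+}^\top - R\vec{u}_n\vec{u}_n^\top\|_F^2 .
\end{equation*}
Applying \eqref{eqn:sfw-rank-1-key-ineq} with $\mathbf{X} = \vec{v}_{k,+}\vec{v}_{k,+}^\top$ bounds the last term by $\frac{6R}{\delta}\theta_k$. I then split $\eta\langle\cdot\rangle$ as half the $h_k$ bound plus half the distance bound, and choose $\eta = \min\{\delta/(6R), 1\}$ (roughly) so that the quadratic term in $\|\mathbf{H}_k - R\vec{u}_n\vec{u}_n^\top\|_F^2$ is absorbed by the matching negative term. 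This should leave a contraction factor $1 - \eta/2 = 1 - \min\{\delta/(12R), 1/2\}$ on $h_k$, plus additive errors linear in $\theta_k$ with coefficients of the form $\eta - \eta^2$. With $\eta = \delta/(6R)$, that is the stated coefficient $\frac{\delta}{6R} - \frac{\delta^2}{36R^2}$.

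\textbf{Step 3: the $\|\mathbf{G}\|_F$ terms and the case split.} Wherever a residual gradient inner product has to be bounded crudely, I will use $\|\nabla_k\|_F \leq R + \|\mathbf{G}\|_F$ (as in the proof of \Cref{thm:lanczos-iter-fw}) together with the diameter bound $\|\mathbf{X} - \mathbf{Y}\|_F^2 \leq 2R^2$ on the spectrahedron. This is where the term $\frac{\delta^2\|\mathbf{G}\|_F}{18R}$ should come from, namely $\eta^2$ times the inner product $\langle R\vec{u}_n\vec{u}_n^\top - \vec{v}_{k,+}\vec{v}_{k,+}^\top, \mathbf{G}\rangle$ with $\eta = \delta/(6R)$. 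When $\delta > 6R$, the step size is $\eta = 1$ and the same crude bounds give the $2R\|\mathbf{G}\|_F$ remainder.

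\textbf{Main obstacle.} The delicate part is Step 2: arranging the split of the descent term so that both the $\|\mathbf{H}_k - R\vec{u}_n\vec{u}_n^\top\|_F^2$ contributions cancel and the leftover constants match exactly. I would first try the $\frac12$/$\frac12$ split described above. If the constants do not close, I would fall back on the cruder bound $\|\vec{v}_{k,+}\vec{v}_{k,+}^\top - \mathbf{H}_k\|_F^2 \leq 2R^2$ and absorb the resulting slack into the $\|\mathbf{G}\|_F$-dependent remainder.
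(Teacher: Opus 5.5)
Your Step 1 is fine and matches the paper. Weyl gives the gap $\delta/3$ at $\mathbf{H}_k$, \Cref{lem:sfw-key-ineq-quad-growth-rank-1} applies with $\delta_{\mathbf{H}}=\delta/3$, and $\langle \mathbf{H}_k - \vec{v}_{k,+}\vec{v}_{k,+}^\top, \nabla\phi(\mathbf{H}_k)\rangle = D_k - \theta_k$. Here $D_k := \langle \mathbf{H}_k - R\vec{u}_n\vec{u}_n^\top, \nabla\phi(\mathbf{H}_k)\rangle \ge \max\{h_k, \frac{\delta}{6R}d_k^2\}$ and $d_k := \|\mathbf{H}_k - R\vec{u}_n\vec{u}_n^\top\|_F$.

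The gap is in Step 2, and it is structural rather than bookkeeping.
\begin{itemize}
\item The bound $\|\vec{v}_{k,+}\vec{v}_{k,+}^\top - \mathbf{H}_k\|_F^2 \le 2d_k^2 + 2\|\vec{v}_{k,+}\vec{v}_{k,+}^\top - R\vec{u}_n\vec{u}_n^\top\|_F^2$ doubles the coefficient of $d_k^2$ in the quadratic term to $\eta^2$. Your half/half split only supplies $-\frac{\eta\delta}{12R}d_k^2$, so absorption needs $\eta \le \frac{\delta}{12R}$. At your choice $\eta = \frac{\delta}{6R}$, a positive $\frac{\delta^2}{72R^2}d_k^2$ is left over.
\item The best this route yields, at $\eta = \frac{\delta}{12R}$ (for $\delta\le 12R$), is $h_{k+1} \le (1-\frac{\delta}{24R})h_k + \frac{\delta}{8R}\theta_k$. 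That is a valid inequality, but it has half the stated contraction and a $\theta_k$-coefficient $\eta + \frac{6R\eta^2}{\delta}$ instead of $\eta - \eta^2$. It also has no $\|\mathbf{G}\|_F$ term.
\item At $\eta = 1$ a $\theta_k$ term survives on your route, whereas the stated second case has none.
\item The inner product $\langle \mathbf{G}, R\vec{u}_n\vec{u}_n^\top - \vec{v}_{k,+}\vec{v}_{k,+}^\top\rangle$ that you credit for the $\|\mathbf{G}\|_F$ term in Step 3 never arises on this route, so Steps 2 and 3 are inconsistent.
\item Your fallback $\|\vec{v}_{k,+}\vec{v}_{k,+}^\top - \mathbf{H}_k\|_F^2 \le 2R^2$ leaves an additive $\eta^2R^2$ with no $-\eta^2\theta_k$, which is again not the stated form.
\end{itemize}

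The missing step is an exact expansion in place of the triangle inequality. Both atoms have Frobenius norm $R$, so, using $\mathbf{H}_k = \mathbf{G} + \nabla\phi(\mathbf{H}_k)$,
\begin{equation*}
\tfrac{1}{2}\bigl(\|\vec{v}_{k,+}\vec{v}_{k,+}^\top - \mathbf{H}_k\|_F^2 - d_k^2\bigr) = \langle \mathbf{H}_k, R\vec{u}_n\vec{u}_n^\top - \vec{v}_{k,+}\vec{v}_{k,+}^\top\rangle = \langle \mathbf{G}, R\vec{u}_n\vec{u}_n^\top - \vec{v}_{k,+}\vec{v}_{k,+}^\top\rangle - \theta_k \le 2R\|\mathbf{G}\|_F - \theta_k .
\end{equation*}
This keeps the coefficient of $d_k^2$ at $\frac{\eta^2}{2}$. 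Combining it with $d_k^2 \le \frac{6R}{\delta}D_k$ gives, for every $\eta\in[0,1]$,
\begin{equation*}
h_{k+1} \le h_k - \Bigl(\eta - \frac{3R\eta^2}{\delta}\Bigr)D_k + 2\eta^2R\|\mathbf{G}\|_F + (\eta - \eta^2)\theta_k .
\end{equation*}
For $\eta \le \frac{\delta}{3R}$ the bracket is nonnegative, so you can use $D_k \ge h_k$.
\begin{itemize}
\item Taking $\eta = \frac{\delta}{6R}$ gives exactly the first case.
\item Taking $\eta = 1$ is admissible when $\delta > 6R$; the contraction is $\frac{3R}{\delta} < \frac12$ and $\eta - \eta^2 = 0$, which gives the second case.
\end{itemize}
This one identity supplies three things at once: the factor $\frac12$ that makes $\eta = \frac{\delta}{6R}$ work, the $-\eta^2\theta_k$ term, and the $\|\mathbf{G}\|_F$ remainder.
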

\begin{proof}
Let $\nabla \phi (\mathbf{H}_k) = \sum_{i = 1}^n \lambda_i \vec{u}_i \vec{u}_i^\top$ with $\mathbf{u}_i$ being the corresponding eigenvector for the $i$th eigenvalue.
We can lower-bound the distance between the smallest and second-largest eigenvalue of $\nabla \phi(\mathbf{H}_k)$ as follows:
\begin{align}
    \lambda_{n - 1} - \lambda_n &= \lambda_{n - 1}(\nabla \phi(\mathbf{H}^\star)) - \lambda_n(\nabla \phi(\mathbf{H}^\star)) + (\lambda_{n-1} - \lambda_{n - 1}(\nabla \phi(\mathbf{H}^\star))) + (\lambda_n(\nabla \phi(\mathbf{H}^\star)) - \lambda_n) \nonumber \\
    &\geq \lambda_{n - 1}(\nabla \phi(\mathbf{H}^\star)) - \lambda_n(\nabla \phi(\mathbf{H}^\star)) - 2\|\nabla \phi(\mathbf{H}_k) - \nabla \phi(\mathbf{H}^\star)\|_F \nonumber \\
    &\geq \delta - \frac{2\delta}{3} = \frac{\delta}{3}.
\end{align}
where we used Weyl's inequality (\Cref{lem:weyl-thm}) twice in the second-from-last step to say that
\begin{equation*}
        |\lambda_{n - 1} - \lambda_{n - 1}(\nabla \phi (\mathbf{H}^\star)| \leq \|\nabla \phi(\mathbf{H}_k) - \nabla \phi(\mathbf{H}^\star)\|_F \quad\mbox{and}\quad
        |\lambda_n - \lambda_n(\nabla \phi (\mathbf{H}^\star)| \leq \|\nabla \phi(\mathbf{H}_k) - \nabla \phi(\mathbf{H}^\star)\|_F
\end{equation*}
implies that 
\begin{equation*}
    \lambda_{n - 1} - \lambda_{n - 1}(\nabla \phi (\mathbf{H}^\star)) + \lambda_n(\nabla \phi (\mathbf{H}^\star)) - \lambda_n \geq -2\|\nabla \phi(\mathbf{H}_k) - \nabla \phi(\mathbf{H}^\star)\|_F.
\end{equation*}
Since $\lambda_n < \lambda_{n - 1}$, the LMO step admits a unique optimal solution $\vec{u}_n$ that is the eigenvector corresponding to $\lambda_n$.
Given that the LMO step and eigenvector of $-\nabla \phi(\mathbf{H}_k)$ is computed inexactly in the sense of \eqref{eqn:lmo-step-inexact}, we have that 
\begin{equation*}
    \langle R\vec{u}_n \vec{u}_n^\top, \nabla \phi(\mathbf{H}_k) \rangle =  \langle \vec{v}_{k,+} \vec{v}_{k,+}^\top, \nabla \phi(\mathbf{H}_k) \rangle - \theta_k 
\end{equation*}
such that $\theta_k \leq \frac{R\xi_k \norm{\nabla \phi(\mathbf{H}_k)}_F}{8}$ and $\xi_k \in (0, 1]$ (see \eqref{eqn:lanczos-max-iter-lem-proof-1} and \eqref{eqn:lanczos-max-iter-lem-proof-2}).
Given this and the fact that $\phi$ is 1-smooth, it holds that 
\begin{align} \label{eqn:fw-proof-rank-1-ineq}
    \phi(\mathbf{H}_{k + 1}^{\text{FW}}) &\leq \phi(\mathbf{H}_k) + \eta \langle \vec{v}_{k,+}\vec{v}_{k,+}^\top - \mathbf{H}_k, \nabla \phi (\mathbf{H}_k) \rangle + \frac{\eta^2}{2}\|\vec{v}_{k,+}\vec{v}_{k,+}^\top - \mathbf{H}_k\|_F^2 \nonumber \\
    &\leq \phi(\mathbf{H}_k) + \eta \langle R\vec{u}_n\vec{u}_n^\top - \mathbf{H}_k, \nabla \phi (\mathbf{H}_k) \rangle + \eta \theta_k + \frac{\eta^2}{2}\|R\vec{u}_n\vec{u}_n^\top - \mathbf{H}_k\|_F^2\nonumber \\
    &\qquad + \frac{\eta^2}{2}(\|\vec{v}_{k,+}\vec{v}_{k,+}^\top - \mathbf{H}_k\|_F^2 - \|R\vec{u}_n\vec{u}_n^\top - \mathbf{H}_k\|_F^2).
\end{align}
We now bound the last term as follows.
Using \(\mathbf{H}_k=\mathbf{G}+\nabla\phi(\mathbf{H}_k)\) and the inexact LMO condition, we obtain
\begin{align} \label{eqn:fw-proof-rank-1-ineq-last-term}
    \frac{\eta^2}{2}\left(
    \|\mathbf{v}_{k,+}\mathbf{v}_{k,+}^\top-\mathbf{H}_k\|_F^2
    -\|R\mathbf{u}_n\mathbf{u}_n^\top-\mathbf{H}_k\|_F^2
    \right)
    &=\eta^2\langle \mathbf{H}_k,
    R\mathbf{u}_n\mathbf{u}_n^\top-\mathbf{v}_{k,+}\mathbf{v}_{k,+}^\top\rangle \nonumber \\
    &=\eta^2\langle \mathbf{G},
    R\mathbf{u}_n\mathbf{u}_n^\top-\mathbf{v}_{k,+}\mathbf{v}_{k,+}^\top\rangle
    - \eta^2\theta_k \nonumber \\
    &\leq 2\eta^2 R \|\mathbf{G}\|_F-\eta^2\theta_k,
\end{align}
where we used the fact that both rank-one matrices have the same squared Frobenius norm of $R^2$.
We then substitute both \eqref{eqn:fw-proof-rank-1-ineq-last-term} and \eqref{eqn:sfw-rank-1-key-ineq} with $\delta_{\mathbf{H}} = \delta/3$ into \eqref{eqn:fw-proof-rank-1-ineq} and then subtract $\phi(\mathbf{H}^\star)$ from both sides of \eqref{eqn:fw-proof-rank-1-ineq} to get 
\begin{multline*}
    h_{k + 1} \leq h_k + \left ( \eta - \frac{3R\eta^2}{\delta} \right )\langle R\vec{u}_n\vec{u}_n^\top - \mathbf{H}_k, \nabla \phi (\mathbf{H}_k) \rangle + 2\eta^2 R \|\mathbf{G}\|_F + (\eta - \eta^2)\theta_k \\
    \leq \left (1 - \left ( \eta - \frac{3R\eta^2}{\delta}\right ) \right )h_k + 2\eta^2 R \|\mathbf{G}\|_F + (\eta - \eta^2)\theta_k
\end{multline*}
for all $0 \leq \eta \leq \frac{\delta}{3R}$, where we used \eqref{eqn:sfw-rank-1-key-ineq} in the first inequality and convexity of $\phi$ for the second inequality above.
We can then consider the following two cases: $\frac{\delta}{6R} \leq 1$ and $\frac{\delta}{6R} > 1$. 
In the first case, we can set $\eta = \frac{\delta}{6R}$ to get 
$h_{k + 1} \leq \left (1 - \frac{\delta}{12R} \right ) h_k  + \frac{\delta^2 \|\mathbf{G}\|_F}{18R} + \left (\frac{\delta}{6R} - \frac{\delta^2}{36R^2} \right )\theta_k$. 
For the second case, we can choose $\eta = 1$ and use the fact that $\delta > 6R$ to get $h_{t + 1} \leq \frac{1}{2}h_k + 2R \|\mathbf{G}\|_F$.
\end{proof}
Usually, $\delta < 6R$, so the more practically relevant error bound is $\frac{\delta^2 \|\mathbf{G}\|_F}{18R} + \left (\frac{\delta}{6R} - \frac{\delta^2}{36R^2} \right )\theta_k$, which is small since $\frac{\delta}{R}$ is usually quite small in practice and $\mathbf{G}$ is scaled and shifted prior to projection.
Therefore, the above result shows a linear convergence to a small neighborhood around the optimal projection in the case $r^\star = 1$ given inexact eigenvector computation.
Now given all the previously proved results from \citep{garber2023linear} and \citep{garber2026randomized}, we can prove the following equivalent result to Theorem 2 in \citep{garber2026randomized} that handles the scenario when the eigenvector of $-\nabla \phi(\mathbf{H}_k)$ is computed inexactly in the sense of \eqref{eqn:lmo-step-inexact}:  
\begin{theorem} \label{thm:sfw-convergence-inexact-evec}
Consider \Cref{alg:spectrahedron-fw} applied to the projection subproblem defined in \eqref{eqn:proj-subproblem-H} with $\set{H} = \set{S}_+^n \cap \{\mathbf{X}~:~\operatorname{Tr}(\mathbf{X}) = R \}$.
Assume the eigenvector of $\nabla \phi(\mathbf{H}_k)$ corresponding to the smallest eigenvalue of $\nabla \phi(\mathbf{H}_k)$ is computed inexactly in the sense of \eqref{eqn:lmo-step-inexact}.
For the inexact eigenvector computation, assume that $\theta_k \leq \frac{R\xi_k \norm{\nabla \phi(\mathbf{H}_k)}_F}{8}$ for some $\xi_k \in (0, 1]$ (see \eqref{eqn:lanczos-max-iter-lem-proof-1} and \eqref{eqn:lanczos-max-iter-lem-proof-2}). 
The sequence of iterates $\{\mathbf{H}_k\}_{k \geq 0}$ generated by \Cref{alg:spectrahedron-fw} are non-increasing ($h_{k + 1} \leq h_k$ for all $k \geq 0$) and for all $k \geq \operatorname{rank}(\mathbf{H}_1) + 2$, we have that 
\begin{equation} \label{eqn:upper-bound-obj-drop-steps}
    h_k \leq \frac{8R^2}{k - \operatorname{rank}(\mathbf{H}_1) + 4}.
\end{equation}
The overall number of drop steps $t_{\text{drop}}$ cannot exceed $(k + \operatorname{rank}(\mathbf{H}_1) - 2)/2$.
For each iteration $k$ of \Cref{alg:spectrahedron-fw} where a drop step isn't taken and $h_k \leq \frac{\delta^2}{18}$ and $r^\star = 1$, we have that 
\begin{equation}\label{eqn:sfw-rank-1-fw-rate}
    h_{k+1} \leq h_k\left ( 1 - \min \left \{ \frac{\delta}{12R}, \frac{1}{2}\right \} \right ) +  
    \begin{cases}
        \frac{\delta^2 \|\mathbf{G}\|_F}{18R} + \left (\frac{\delta}{6R} - \frac{\delta^2}{36R^2} \right )\theta_k, & \delta \leq 6R \\
        2R\|G\|_F, & \mbox{otherwise}
    \end{cases}.
\end{equation}
If $r^\star = n$ and $h_k <  \frac{\lambda_{r^\star}^2}{8} $ and a drop step isn't taken, then we have that 
\begin{equation}\label{eqn:sfw-rank-n-fw-rate}
    h_{k+1} \leq h_k\left ( 1 - \frac{1}{512n}\right ) + \frac{\sqrt{2h_k}\theta_k}{16R}.
\end{equation}
Finally, if $n > r^\star \geq 2$ and $h_k < \min \left \{ \epsilon_1, \epsilon_2, \epsilon_3, \epsilon_4 \right \}$ with $\epsilon_1 := \frac{\delta^2}{32}$, $\epsilon_2 := \left \{\frac{\delta^2}{72}, \frac{R\delta}{12}\right \}$, $\epsilon_3 := \frac{\lambda_{r^\star}^2}{8}$ and $\epsilon_4 := \frac{\delta^3}{192R}$ and if a drop step isn't taken, then we get
\begin{equation}\label{eqn:sfw-rank-r-star-l-n-fw-rate}
    \mathbb{E}[h_{k+1}|\mathbf{H}_k] \leq h_k\left (1 - \frac{1}{r^\star}\min\left \{ \frac{\delta}{48GR}\min\left \{\frac{\delta}{3},\lambda_{r^\star} \right \}, \frac{1}{512}\right \} \right ) + \frac{\sqrt{2h_k}\theta_k}{16R},
\end{equation}
where the expectation is with respect to the random choice of $\vec{u}_{k, -}$ and $G := \sup_{\mathbf{H} \in \set{S}_+^n \cap \{\mathbf{X}~:~\operatorname{Tr}(\mathbf{X}) = R \}} \|\nabla \phi(\mathbf{H})\|_F$.
\end{theorem}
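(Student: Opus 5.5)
The proof follows the structure of Theorem 2 in \citep{garber2026randomized}: prove each regime separately, and show that eigenvector inexactness enters only through the forward and pairwise steps. The drop and away steps are computed exactly from the $r_k\times r_k$ system of \Cref{app:away-step-atom-calc}, so they are unaffected.

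\emph{Monotonicity.} A drop step is accepted only if $\phi(\mathbf{H}^{\text{drop}}_{k+1})\le\phi(\mathbf{H}_k)$. Otherwise $\mathbf{H}_{k+1}$ is the best of three candidates, and the away candidate with $\eta=0$ returns $\mathbf{H}_k$. Hence $h_{k+1}\le h_k$ regardless of $\theta_k$.

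\emph{Sublinear bound \eqref{eqn:upper-bound-obj-drop-steps} and drop-step count.} Each drop step lowers the rank by one (\Cref{lem:permissible-step-lem-garber}). Each non-drop step raises it by at most one. Therefore, by iteration $k$, the number of drop steps is at most $(k+\operatorname{rank}(\mathbf{H}_1)-2)/2$. On the remaining, at least $(k-\operatorname{rank}(\mathbf{H}_1)+2)/2$, non-drop iterations the FW candidate gives the standard recursion $h_{k+1}\le (1-\eta)h_k+\eta\theta_k+\eta^2R^2$ used in the proof of \Cref{thm:lanczos-iter-fw}. Unrolling with $\eta=2/(j+2)$ over these iterations yields the $8R^2/(k-\operatorname{rank}(\mathbf{H}_1)+4)$ bound. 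I expect one of two things here: either a summable $\theta_k$ contributes a lower-order term that must be absorbed, or the statement should be read with the exact-LMO constant and the $\theta$ contribution appended. I would state whichever the recursion actually gives.

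\emph{Rank-one case \eqref{eqn:sfw-rank-1-fw-rate}.} Strong convexity gives $\|\mathbf{H}_k-\mathbf{H}^\star\|_F\le\sqrt{2h_k}\le\delta/3$ when $h_k\le\delta^2/18$. By $1$-smoothness, $\|\nabla\phi(\mathbf{H}_k)-\nabla\phi(\mathbf{H}^\star)\|_F\le\delta/3$, so \Cref{lem:obj-residual-sfw-rank-1} applies directly. Because $\mathbf{H}_{k+1}$ is the minimum over the candidates, its bound transfers to $h_{k+1}$.

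\emph{Case $r^\star=n$ \eqref{eqn:sfw-rank-n-fw-rate}.} By Weyl (\Cref{lem:weyl-thm}), $h_k<\lambda_{n}(\mathbf{H}^\star)^2/8$ gives $\lambda_n(\mathbf{H}_k)\ge\lambda_n(\mathbf{H}^\star)-\sqrt{2h_k}\ge\lambda_n(\mathbf{H}^\star)/2$. I would then show $\lambda_{r_k}(\mathbf{H}_k)\ge\delta_{k,r^\star}/16$ using \Cref{lem:in-face-gap-upper-bound}. \Cref{lem:sfw-afw-step-eigengap} then gives a decrease of $\delta_{k,r^\star}^2/64$ minus $\delta_{k,r^\star}\theta_k/(32R)$. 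For the error term, \Cref{lem:in-face-gap-upper-bound} gives $\delta_{k,r^\star}\le2\sqrt{2h_k}$, which produces $\sqrt{2h_k}\theta_k/(16R)$. For the main term, the eigengap lower bound (\Cref{lem:sfw-eigengap}, with $\mathbf{P}_k^\perp=0$ so its hypothesis holds trivially) gives $\delta_{k,r^\star}^2\ge h_k/(8n)$, hence a decrease of $h_k/(512n)$.

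\emph{Case $2\le r^\star<n$ \eqref{eqn:sfw-rank-r-star-l-n-fw-rate}.} This is the main obstacle. First, Weyl and $\sqrt{2h_k}$-closeness transfer the eigengap $\delta$ and $\lambda_{r^\star}$ to the iterate: $h_k<\epsilon_1$ gives $\delta_k\ge\delta/2$, and $h_k<\epsilon_3$ gives $\lambda_{r^\star}(\mathbf{H}_k)\ge\lambda_{r^\star}/2$. Since no drop step was taken, \Cref{lem:sfw-lemma-drop-step} together with $\epsilon_2$ forces $r_k=r^\star$. Next I split on $\operatorname{Tr}(\mathbf{P}_k^\perp\mathbf{H}_k)$. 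If it is at most $h_k/(4\|\nabla\phi(\mathbf{H}_k)\|_2)$, combine \Cref{lem:sfw-eigengap} with \Cref{lem:sfw-afw-step-eigengap} exactly as in the $r^\star=n$ case; this yields the $1/(512r^\star)$ rate plus the $\sqrt{2h_k}\theta_k/(16R)$ error. Otherwise, use the pairwise step. \Cref{lem:sfw-lemma-pfw-step} applies (its hypothesis is ensured by $\epsilon_4$), and its proof notes that inexact $\vec{u}_{k,+}$ does not affect the bound. Taking expectations with \Cref{lem:sfw-trace-bound} then gives a decrease of $\frac{1}{r^\star}\min\{\tfrac{\delta_k^2}{9R},\tfrac{\gamma_k\delta_k}{3}\}\cdot\frac{h_k}{4G}$. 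Finally, lower-bound $\gamma_k\ge\lambda_{r^\star}(\mathbf{H}_k)/R$ and simplify the constants to reach the $\frac{\delta}{48GR}\min\{\delta/3,\lambda_{r^\star}\}$ factor.

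Matching these constants exactly, and making sure $\theta_k$ enters only through FW-type steps in both branches, is where I expect most of the bookkeeping.
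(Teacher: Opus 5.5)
Your proposal is correct and follows the paper's proof essentially step for step. The shared steps are monotonicity from the minimum over candidates, the rank-counting bound on drop steps, and \Cref{lem:obj-residual-sfw-rank-1} for $r^\star=1$. For $r^\star\ge 2$, both split on $\operatorname{Tr}(\mathbf{P}_k^\perp\mathbf{H}_k)$ between the pairwise step (\Cref{lem:sfw-lemma-pfw-step}, \Cref{lem:sfw-trace-bound}) and the eigengap plus FW/away argument (\Cref{lem:sfw-eigengap}, \Cref{lem:sfw-afw-step-eigengap}, \Cref{lem:in-face-gap-upper-bound}). Your remark that $\mathbf{P}_k^\perp=0$ when $r^\star=n$ fills in a step the paper leaves implicit.

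On the sublinear bound, the paper resolves your hedge by simply invoking the exact-LMO rate $4R^2/(m+2)$ on the non-drop steps, and it never tracks $\theta_k$ there. Your suspicion that a $\theta$-dependent term is needed under the stated hypotheses is therefore well founded. Note also that the number of non-drop steps after $\mathbf{H}_1$ is $m\ge(k-\operatorname{rank}(\mathbf{H}_1))/2$, not $(k-\operatorname{rank}(\mathbf{H}_1)+2)/2$; this count is what produces the $+4$.
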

\begin{proof}
The non-increasing nature of the sequence $\{\mathbf{h}_k\}_{k \geq 0}$ is clear since (1) a drop step is taken only if it doesn't increase the objective and (2) if a drop step is not taken, then line search for $\eta_k^{FW}$ necessitates that $\phi(\mathbf{H}_{k + 1}) \leq \phi(\mathbf{H}_{k + 1}^{FW}) \leq \phi(\mathbf{H}_k)$ and it holds that $\mathbf{H}_{k+1} = \arg\min_{\mathbf{H} \in \{\mathbf{H}^{\text{FW}}_{k+1}, \mathbf{H}^{\text{AFW}}_{k+1}, \mathbf{H}^{\text{PFW}}_{k+1}\}} \phi(\mathbf{H})$.
The upper bound on the number of drop steps can be shown as follows. 
Each drop step reduces the rank of resulting matrix $\mathbf{H}_k$ by one and an LMO update, away step or pairwise step increase the rank by at most one.
Therefore, if denote the number of drop steps up to the beginning of some iteration $k$ by $k_{\text{drop}}$, it holds that
\begin{equation*}
    1 \leq \operatorname{rank}(\mathbf{H}_1) \leq \operatorname{rank}(\mathbf{H}_1) - k_{\text{drop}} + (k - 1 - k_{\text{drop}}).
\end{equation*}
Rearranging the above inequality gives the desired upper bound on $k_{\text{drop}}$.
To get \eqref{eqn:upper-bound-obj-drop-steps}, we use can use this upper bound and the well-known result that $h^{\text{FW}}_k \leq \frac{4R^2}{k + 2}$ for standard FW over a spectrahedron (see \citep{jaggi2013revisiting} for an example).

To prove \eqref{eqn:sfw-rank-1-fw-rate}, we can use the 1-smoothness and strong convexity of $\phi$ to say that 
\begin{equation*}
    \|\nabla \phi (\mathbf{H}_k) - \nabla \phi (\mathbf{H}^\star)\|_F \leq \|\mathbf{H}_k - \mathbf{H}^\star\|_F \leq \sqrt{2h_k}.
\end{equation*}
It then follows that if $h_k <  \frac{\delta^2}{18}$ then $\|\nabla \phi (\mathbf{H}_k) - \nabla \phi (\mathbf{H}^\star)\|_F < \frac{\delta}{3}$, so \eqref{eqn:sfw-rank-1-fw-rate} follows directly from  \Cref{lem:obj-residual-sfw-rank-1}.

To prove \eqref{eqn:sfw-rank-r-star-l-n-fw-rate}, we proceed as follows. 
Using Weyl's theorem (\Cref{lem:weyl-thm}), we get
\begin{equation} \label{eqn:sfw-main-thm-proof-1}
    |\lambda_i(\nabla \phi(\mathbf{H}_k)) - \lambda_i(\nabla \phi(\mathbf{H}^\star))| \leq \|\nabla \phi(\mathbf{H}_k) - \nabla \phi(\mathbf{H}^\star)\|_F \leq \|\mathbf{H}_k - \mathbf{H}^\star\|_F \leq \sqrt{2h_k},
\end{equation}
where the second-from-last inequality follows from $1$-smoothness of $\phi$ and the second from the strong convexity of $\phi$.
From this, it follows that $h_k \leq \epsilon_1 := \frac{\delta^2}{32}$ implies that 
\begin{align}\label{eqn:sfw-main-thm-proof-2}
    \delta_k &= \lambda_{n - r^\star}(\nabla \phi (\mathbf{H}_k)) - \lambda_{n - r^\star + 1}(\nabla \phi (\mathbf{H}_k)) \nonumber \\
    &= \lambda_{n - r^\star}(\nabla \phi (\mathbf{H}^\star)) - \lambda_{n - r^\star + 1}(\nabla \phi (\mathbf{H}^\star)) - (\lambda_{n - r^\star + 1}(\nabla \phi (\mathbf{H}_k)) - \lambda_{n - r^\star + 1}(\nabla \phi (\mathbf{H}^\star))) \nonumber \\
    &\quad - (\lambda_{n - r^\star}(\nabla \phi (\mathbf{H}^\star)) - \lambda_{n - r^\star}(\nabla \phi (\mathbf{H}_k))) \nonumber \\
    &\geq \lambda_{n - r^\star}(\nabla \phi (\mathbf{H}^\star)) - \lambda_{n - r^\star + 1}(\nabla \phi (\mathbf{H}^\star)) - |\lambda_{n - r^\star + 1}(\nabla \phi (\mathbf{H}_k)) - \lambda_{n - r^\star + 1}(\nabla \phi (\mathbf{H}^\star))| \nonumber \\
    &\quad - |\lambda_{n - r^\star}(\nabla \phi (\mathbf{H}^\star)) - \lambda_{n - r^\star}(\nabla \phi (\mathbf{H}_k))| \nonumber \\
    &\geq \delta - 2\sqrt{2h_k} = \frac{\delta}{2},
\end{align}
where we used \eqref{eqn:strict-complementarity-eigengap} and \eqref{eqn:sfw-main-thm-proof-1} in the second-to-last step.
Define $\epsilon_2 := \min\left \{\frac{(\delta/2)^2}{18}, \frac{R\delta/2}{6}\right \} = \left \{\frac{\delta^2}{72}, \frac{R\delta}{12}\right \}$.
Given \Cref{lem:sfw-lemma-drop-step}, if $h_k < \min\{ \epsilon_1, \epsilon_2\}$ and $\operatorname{rank}(\mathbf{H}_k) > r^\star$, then a drop step is taken.
Otherwise, if a drop step is not taken, it must be the case that $\operatorname{rank}(\mathbf{H}_k) \leq r^\star$.
If $h_k < \epsilon_3 := \frac{\lambda_{r^\star}^2}{8}$ with $\lambda_{r^\star} := \lambda_{r^\star}(\mathbf{H}^\star)$ and using the strong convexity of $\phi$ and Weyl's inequality (\Cref{lem:weyl-thm}), we have that 
\begin{equation*}
    |\lambda_{r^\star}(\mathbf{H}_k) - \lambda_{r^\star}(\mathbf{H}^\star)| \leq \|\mathbf{H}_k - \mathbf{H}^\star\|_F \leq \sqrt{2 h_k} \leq \frac{\lambda_{r^\star}}{2}.
\end{equation*}
Based on this, we must have that 
\begin{equation}\label{eqn:sfw-main-thm-proof-3}
    \lambda_{r^\star}(\mathbf{H}_k) > \lambda_{r^\star}(\mathbf{H}^\star) - \frac{\lambda_{r^\star}}{2} = \lambda_{r^\star} -  \frac{\lambda_{r^\star}}{2} = \frac{\lambda_{r^\star}}{2} 
\end{equation}
so it holds that $\operatorname{rank}(\mathbf{H}_k) \geq r^\star$.
Based on this, if $h_k < \min\{\epsilon_1, \epsilon_2, \epsilon_3\}$ and a drop step hasn't been performed, it must hold that $\operatorname{rank}(\mathbf{H}_k) = r^\star$.
Given this, we can express $\mathbf{H}_k$ as follows:
\begin{equation*}
    \mathbf{H}_k = \mathbf{U}\bm{\Lambda}\mathbf{U}^\top,
\end{equation*}
where $\bm{\Lambda} = \operatorname{Diag}(\lambda_1(\mathbf{H}_k), \ldots, \lambda_{r^\star}(\mathbf{H}_k))$ and $\mathbf{U} \in \mathbb{R}^{n \times r^\star}$.
Since $\gamma_k := (\vec{u}_{k, -}^\top \mathbf{H}_k^\dagger \vec{u}_{k,-})^{-1}$.
Since $\vec{u}_{k, -} = \mathbf{U}\vec{v}$ for some $\vec{v} = (v_1, \ldots, v_{r^\star}) \in \mathbb{R}^{r^\star}$, we have that 
\begin{equation*}
    \vec{u}_{t, -}^\top\mathbf{H}_k^\dagger \vec{u}_{t,-} = \sum_{i = 1}^{r^\star} \frac{v_i^2}{\lambda_i(\mathbf{H}_k)} \leq \sum_{i = 1}^{r^\star} \frac{v_i^2}{\lambda_{r^\star}(\mathbf{H}_k)} = \frac{\|\vec{v}\|^2}{\lambda_{r^\star}(\mathbf{H}_k)} = \frac{R}{\lambda_{r^\star}(\mathbf{H}_k)},
\end{equation*}
where the last step follows from the fact that $\|\vec{v}\| = \|\mathbf{U}\vec{v}\| = \|\vec{u}_{t,-}\|$.
Therefore, it also holds that $\gamma_k \geq \frac{\lambda_{r^\star}(\mathbf{H}_k)}{R} \geq \frac{\lambda_{r^\star}}{2R}$.
Define also $\epsilon_4 := \frac{(\delta/2)^3}{24R} = \frac{\delta^3}{192R}$. Then suppose that $h_k < \min \{\epsilon_1, \epsilon_2, \epsilon_3, \epsilon_4\}$, $\operatorname{Tr}(\mathbf{P}_k^\bot \mathbf{H}_k) > \frac{h_k}{4G}$ and a drop step hasn't been taken, then we can apply \Cref{lem:sfw-lemma-pfw-step} and \Cref{lem:sfw-trace-bound} to say that 
\begin{align*}
    \mathbb{E}_k [\phi(\mathbf{H}_{k+1}^{\text{PFW}})] &\leq \phi(\mathbf{H}_k) - \mathbb{E}_k\left [ \min \left \{\frac{\delta_k^2}{9R}, \frac{\gamma_k \delta_k}{3} \right \} \|\mathbf{P}_k^\bot \mathbf{u}_{k,-}\|^2\right ] \nonumber \\
    &\leq \phi(\mathbf{H}_k) - \min \left \{\frac{\delta^2}{36R}, \frac{\lambda_{r^\star}\delta}{12R} \right \}\mathbb{E}_k\left [\|\mathbf{P}_k^\bot \mathbf{u}_{k,-}\|^2\right ] \nonumber \\
    &\leq \phi(\mathbf{H}_k) - \frac{\delta}{48Gr^\star R}\min \left \{\frac{\delta}{3}, \lambda_{r^\star} \right \}h_k,
\end{align*}
where the second-to-last inequality follows from the fact that $\gamma_k \geq \frac{\lambda_{r^\star}}{2R}$ and \eqref{eqn:sfw-main-thm-proof-2}.
Therefore, we have that
\begin{align}\label{eqn:sfw-main-thm-proof-4}
    \mathbb{E}_k[\phi(\mathbf{H}_{k+1})] - \phi(\mathbf{H}^\star) &\leq \mathbb{E}_k\left [ \min \{\phi(\mathbf{H}_{k+1}^{\text{FW}}), \phi(\mathbf{H}_{k+1}^{\text{AFW}}), \phi(\mathbf{H}_{k+1}^{\text{PFW}})\}\right ] - \phi(\mathbf{H}^\star) \nonumber \\
    &\leq \min \left \{\phi(\mathbf{H}_{k+1}^{\text{FW}}), \phi(\mathbf{H}_{k+1}^{\text{AFW}}), \mathbb{E}_k\left [ \phi(\mathbf{H}_{k+1}^{\text{PFW}})\right ]\right \}  - \phi(\mathbf{H}^\star) \nonumber \\
    &\leq h_k \left (1 - \frac{\delta}{48Gr^\star R}\min \left \{ \frac{\delta}{3}, \lambda_{r^\star}\right \} \right ).
\end{align}
Here, $\mathbb{E}_k[\cdot]$ is with respect to random choice of $\vec{u}_{k, -}$.
In the case where a drop step hasn't been taken and both $\operatorname{Tr}\left( \mathbf{P}_k^\bot \mathbf{H}_k\right ) \leq \frac{h_k}{4G}$ and $h_k < \min \{\epsilon_1, \epsilon_2, \epsilon_3, \epsilon_4\}$ hold, \Cref{lem:sfw-eigengap} implies that 
\begin{equation}\label{eqn:sfw-main-thm-proof-5}
    \delta_{k, r^\star} := \lambda_{n - r^\star + 1}(\nabla \phi(\mathbf{H}_k)) - \lambda_n(\nabla \phi(\mathbf{H}_k)) \geq \sqrt{\frac{h_k}{8r^\star}}.
\end{equation}
\Cref{lem:in-face-gap-upper-bound} implies that 
\begin{equation}\label{eqn:sfw-main-thm-proof-6}
    \delta_{k, r^\star} \leq 2\sqrt{2h_k}.
\end{equation}
\eqref{eqn:sfw-main-thm-proof-6} and \eqref{eqn:sfw-main-thm-proof-3} together imply that
\begin{equation}\label{eqn:sfw-main-thm-proof-7}
    \delta_{k, r^\star}  \leq 2\sqrt{2h_k} \leq 2\sqrt{2\epsilon_3} \leq \lambda_{r^\star} \leq 2\lambda_{r^\star}(\mathbf{H}_k).
\end{equation}
Given that $\lambda_{r^\star}(\mathbf{H}_k) \geq \frac{\delta_{k, r^\star}}{2} \geq \frac{\delta_{k, r^\star}}{16}$ and the fact that $r_k := \operatorname{rank}(\mathbf{H}_k) \geq r^\star$, the assumptions of  \Cref{lem:sfw-afw-step-eigengap} are satisfied.
Combining \Cref{lem:sfw-afw-step-eigengap} with \eqref{eqn:sfw-main-thm-proof-5}, we get
\begin{align}\label{eqn:sfw-main-thm-proof-8}
    \min \{\phi(\mathbf{H}_{k+1}^{\text{FW}}), \phi(\mathbf{H}_{k+1}^{\text{AFW}})\} - \phi(\mathbf{H}^\star) &\leq h_k - \frac{\delta_{k, r^\star}^2}{64} + \frac{\delta_{k, r^\star}\theta_k}{32R} \nonumber \\
    &\leq h_k \left ( 1 - \frac{1}{512r^\star} \right ) + \frac{\delta_{k, r^\star}\theta_k}{32R}.
\end{align}
Combining \eqref{eqn:sfw-main-thm-proof-4}, \eqref{eqn:sfw-main-thm-proof-6} and \eqref{eqn:sfw-main-thm-proof-8}, we get \eqref{eqn:sfw-rank-r-star-l-n-fw-rate}.

Finally, we can obtain \eqref{eqn:sfw-rank-n-fw-rate} for the special case where $r^\star = n$ by noting that if $h_k < \min\{\epsilon_1, \epsilon_2, \epsilon_3\}$, then \eqref{eqn:sfw-main-thm-proof-3} and \eqref{eqn:sfw-main-thm-proof-6} both hold and \eqref{eqn:sfw-main-thm-proof-7} follows from both of them, so the assumptions of  \Cref{lem:sfw-afw-step-eigengap} are again satisfied.
\eqref{eqn:sfw-rank-n-fw-rate} then follows similarly from \Cref{lem:sfw-afw-step-eigengap} with \eqref{eqn:sfw-main-thm-proof-5}.
\end{proof}
Given that \Cref{thm:sfw-convergence-inexact-evec} provides a recurrence relationship for the objective residual in \Cref{alg:spectrahedron-fw} given the rank $r^\star$ of the optimal projection for \eqref{eqn:proj-subproblem-H}, it now remains to determine the error tolerance schedule for the inexact eigenvector computation or to determine the appropriate rate of decay for $\{\theta_k\}_{k \geq 0}$ in \eqref{eqn:lmo-step-inexact}, which is clearly given in the following lemma:
\begin{lemma}\label{lem:geometric-recurrence-inexact} Let $0 \leq r < 1$.
Suppose that there are sequences $\{h_k\}_{k \geq 0}$ and $\{\vartheta_k\}_{k \geq 0}$ satisfy the following recurrence relationship:
\begin{equation*}
    h_{k + 1} \leq rh_k + \vartheta_k. 
\end{equation*}
Now suppose that $\vartheta_k \leq M s^k$ with $0 \leq s < 1$ for all $k$.
Then we get the following closed-form upper bound for the sequence $\{h_k\}_{k \geq 0}$:
\begin{equation}\label{eqn:d}
    h_k \leq r^k h_0 + Mr^{k - 1} \frac{1 - \left ( \frac{s}{r}\right )^k}{1 - \left ( \frac{s}{r}\right )}.
\end{equation}
Based on the above bound, we get the following five regimes:
\begin{enumerate}
    \item If $0 < s < r$, then $h_k = \mathcal{O}(r^k)$.
    \item If $s > r > 0$, then $h_k = \mathcal{O}(s^k)$.
    \item If $s = r$, then $h_k = \mathcal{O}(kr^k)$.
    \item If $r = 0$, then $h_k \leq \vartheta_{k-1} \leq Ms^{k-1}$ for $k \geq 1$. If $s > 0$, then $h_k = \mathcal{O}(s^k)$. If $s = 0$ and $r = 0$, then $h_k = 0$ for $k \geq 1$.
    \item If $s = 0$ and $r > 0$, then $h_k \leq r^k h_0 + Mr^{k-1}$.
\end{enumerate}
\end{lemma}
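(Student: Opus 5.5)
I will prove the closed-form bound \eqref{eqn:d} by unrolling the recurrence, and then read off the five regimes from it. By induction on $k$, repeated substitution of $h_{j+1} \leq r h_j + \vartheta_j$ gives
\begin{equation*}
    h_k \leq r^k h_0 + \sum_{j=0}^{k-1} r^{k-1-j}\vartheta_j \leq r^k h_0 + M\sum_{j=0}^{k-1} r^{k-1-j}s^j .
\end{equation*}
This step only uses $r \geq 0$, so multiplying an inequality by $r$ preserves it. When $r>0$ and $s \neq r$, I factor out $r^{k-1}$ and sum the geometric series in the ratio $s/r$:
\begin{equation*}
    \sum_{j=0}^{k-1} r^{k-1-j}s^j = r^{k-1}\sum_{j=0}^{k-1}(s/r)^j = r^{k-1}\frac{1-(s/r)^k}{1-(s/r)} .
\end{equation*}
This yields \eqref{eqn:d}. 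When $s = r$, the quotient is read as its limit $k$, which is also what the sum equals directly.

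\textbf{Regimes 1 to 3.} For $0<s<r$, the fraction is bounded by $1/(1-s/r)$, so $h_k \leq r^k h_0 + \frac{M}{r-s} r^k = \mathcal{O}(r^k)$. For $s>r>0$, I rewrite the sum as
\begin{equation*}
    M r^{k-1}\frac{(s/r)^k-1}{(s/r)-1} = M\frac{s^k - r^k}{s-r} \leq \frac{M}{s-r}s^k .
\end{equation*}
Since $r^k h_0 \leq s^k h_0$, this gives $h_k = \mathcal{O}(s^k)$. For $s=r$, the sum is exactly $k r^{k-1}$, so $h_k \leq r^k h_0 + Mk r^{k-1} = \mathcal{O}(k r^k)$.

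\textbf{Regimes 4 and 5.} When $r=0$, I do not use the unrolled formula, because it involves $r^{k-1}$ and $s/r$. Instead the recurrence directly gives $h_k \leq \vartheta_{k-1} \leq M s^{k-1}$ for $k\geq 1$. This is $\mathcal{O}(s^k)$ when $s>0$, and it is $0$ when $s=0$, with the convention $0^0 = 1$ only entering through $\vartheta_0 \leq M$. Here I also note that $h_k\geq 0$ is implicit, as residuals are nonnegative, so "$h_k=0$" means the upper bound vanishes. When $s=0$ and $r>0$, only $\vartheta_0$ can be nonzero, since $\vartheta_j \leq 0$ for $j\geq 1$. The unrolled sum is then at most $r^{k-1}M$, giving $h_k \leq r^k h_0 + M r^{k-1}$.

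The only delicate point is the bookkeeping of degenerate cases. Specifically, I must avoid dividing by $r$ when $r=0$, treat $s=r$ as the limit of the quotient, and fix the convention $0^0=1$ for $s=0$. I will handle each of these explicitly rather than through \eqref{eqn:d}. Everything else is routine geometric-series manipulation.
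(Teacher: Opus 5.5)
Your route is the paper's: unroll to $h_k \le r^k h_0 + \sum_{j=0}^{k-1} r^{k-1-j}\vartheta_j$, insert $\vartheta_j \le M s^j$, and sum the geometric series in $s/r$. The paper stops after comparing $s<r$ with $s>r$. It never addresses $s=r$ or $r=0$, where \eqref{eqn:d} is undefined, nor $s=0$. Your explicit treatment of those cases is therefore more complete than the paper's proof, in particular the rewriting $Mr^{k-1}\frac{(s/r)^k-1}{(s/r)-1}=M\frac{s^k-r^k}{s-r}$ and the direct use of the recurrence when $r=0$.

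The one step that does not go through is the subcase $r=s=0$. You adopt the convention $0^0=1$, so the hypothesis only gives $\vartheta_0\le M$. Your own bound is then $h_1\le\vartheta_0\le M$, not $0$. The sentence claiming the bound ``is $0$ when $s=0$, with the convention $0^0=1$ only entering through $\vartheta_0\le M$'' therefore contradicts itself at $k=1$.

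No argument can repair this, because the claim itself fails there. Take $r=s=0$, $M=1$, $h_0=0$, $h_1=\vartheta_0=1$, $h_k=0$ for $k\ge2$, and $\vartheta_k=0$ for $k\ge1$. These satisfy every hypothesis and are nonnegative, yet $h_1=1$. What your argument does prove is $h_1\le M$ and, using $h_k\ge 0$, $h_k=0$ for all $k\ge2$. The version ``for $k\ge1$'' needs the extra condition $\vartheta_0\le 0$. The paper's proof is silent on this case, so you should state the corrected conclusion rather than assert the one in the statement.

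A minor wording point: in regime 5, $\vartheta_j\le 0$ for $j\ge1$ means those terms can be dropped from an upper bound, not that they vanish.
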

\begin{proof} If we unroll the recurrence relationship for $\{h_k\}_{k \geq 0}$, we get 
\begin{equation*}
    h_k \leq r^k h_0 + \sum_{j = 0}^{t-1}r^{k - 1 - j}\vartheta_j.
\end{equation*}
Substituting $\vartheta_k \leq Ms^k$ into the above, we get
\begin{equation*}
    h_k \leq r^k h_0 + Mr^{k - 1}\sum_{k = 0}^{k - 1}\left ( \frac{s}{r}\right )^j = r^k h_0 + Mr^{k - 1} \frac{1 - \left ( \frac{s}{r}\right )^k}{1 - \left ( \frac{s}{r}\right )}.
\end{equation*}
If $s < r$, then the second term decays at the rate $\mathcal{O}(r^k)$ and if $s > r$, then the second term decays at the rate $\mathcal{O}(s^k)$.
Since the first term decays at $\mathcal{O}(r^k)$, the dominant term determines the decay rate for the sequence $\{h_k\}_{k \geq 0}$.
This proves the desired result.
\end{proof}

\begin{theorem}\label{thm:lanczos-iter-sfw} 
Suppose that the Spectrahedron Frank-Wolfe algorithm (see \Cref{alg:spectrahedron-fw}) is applied to the projection subproblem given in \eqref{eqn:proj-subproblem-H} and that $\set{H} = \set{S}_+^n \cap \{\mathbf{X}~:~\operatorname{Tr}(\mathbf{X}) = R \}$. 
Suppose that Spectrahedron Frank-Wolfe converges at rate $h_k \leq \mathcal{O}(r^k)$ for some $r \in (0, 1)$ based on whether $r^\star = n$ or $n > r^\star \geq 2$ (see \Cref{thm:sfw-convergence-inexact-evec} and our earlier note about $r^\star = 1$ after \Cref{lem:obj-residual-sfw-rank-1}) when $\theta_k = 0$ (i.e. in the exact case).
For any prescribed overall failure probability $p \in (0,1)$ and any horizon $K \geq 1$, choose $\xi_k= \mathcal{O}(s^k)$  for some $s \in (0, 1)$ and run the randomized Lanczos method with per-call failure probability at most $p/K$ for each of the first $K$ iterations. Then, with probability of at least $1-p$,
the Spectrahedron Frank-Wolfe algorithm will converge to the projection $\mathbf{H}^\star$ of $\mathbf{G}$ onto $\set{H}$ at the rate
\begin{equation*}
h_k=
\begin{cases}
\mathcal{O}(r^k),&s<\sqrt r,\\
\mathcal{O}(kr^k),&s=\sqrt r,\\
\mathcal{O}((s\sqrt r)^k),&s>\sqrt r.
\end{cases}
\end{equation*}
for $1 \leq k \leq K$ after at least $\mathcal{O}\left(s^{-K/2}\log\left(\frac{nK}{p}\right)\right )$ Lanczos iterations per Frank-Wolfe iteration.
\end{theorem}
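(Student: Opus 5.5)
The plan is to turn the per-iteration recurrences of \Cref{thm:sfw-convergence-inexact-evec} into a linear recurrence with a geometrically decaying forcing term, and then read off the three regimes from \Cref{lem:geometric-recurrence-inexact}. The probabilistic part and the count of Lanczos iterations are handled separately, exactly as in the proof of \Cref{thm:lanczos-iter-fw}.

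\textbf{Step 1 (high-probability event and Lanczos count).} On each of the first $K$ Frank-Wolfe iterations, invoke \Cref{lemma:randomized-lanczos} with precision $\xi_k$ and $\zeta = p/(2K)$, so each call fails with probability at most $p/K$. A union bound then gives that, with probability at least $1-p$, every call returns an approximate eigenvector satisfying \eqref{eqn:lanczos-accuracy-bound}. On this event, \eqref{eqn:lanczos-max-iter-lem-proof-1}--\eqref{eqn:lanczos-max-iter-lem-proof-2} give
\[
\theta_k \le \tfrac{R\xi_k}{8}\|\nabla\phi(\mathbf{H}_k)\|_F \le \tfrac{R(R+\|\mathbf{G}\|_F)}{8}\,\xi_k = \mathcal{O}(s^k).
\]
The number of Lanczos iterations required at step $k$ is $q_k \ge \tfrac12 + \xi_k^{-1/2}\log(4nK^2/p^2)$, which is $\mathcal{O}(s^{-k/2}\log(nK/p))$. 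Maximizing over $k\le K$ yields the stated per-iteration bound $\mathcal{O}(s^{-K/2}\log(nK/p))$.

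\textbf{Step 2 (reduce to a linear recurrence).} In the regimes $r^\star=n$ and $n>r^\star\ge 2$, \eqref{eqn:sfw-rank-n-fw-rate} and \eqref{eqn:sfw-rank-r-star-l-n-fw-rate} have the form
\[
h_{k+1} \le r\,h_k + c\sqrt{h_k}\,\theta_k, \qquad c = \tfrac{\sqrt2}{16R},
\]
in expectation in the second case. There I would take total expectations and use Jensen's inequality, $\mathbb{E}\sqrt{h_k}\le\sqrt{\mathbb{E}h_k}$. The forcing term is $\vartheta_k := c\sqrt{h_k}\,\theta_k$, which still depends on $h_k$, so I would bound $\sqrt{h_k}$ in two passes.
\begin{itemize}
\item \emph{Crude pass.} Monotonicity from \Cref{thm:sfw-convergence-inexact-evec} gives $h_k\le h_0$, so $\vartheta_k \le M_0 s^k$. \Cref{lem:geometric-recurrence-inexact} then shows $h_k$ decays geometrically, at worst like $\max\{r,s\}^k$.
\item \emph{Refined pass.} Use the decay rate $\sqrt{h_k}=\mathcal{O}(r^{k/2})$ of the underlying exact contraction, as in the hypothesis of the theorem. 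This gives $\vartheta_k \le M (s\sqrt r)^k$.
\end{itemize}

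\textbf{Step 3 (case analysis).} Apply \Cref{lem:geometric-recurrence-inexact} with forcing rate $s' := s\sqrt r$. The comparison $s'<r$ is equivalent to $s<\sqrt r$, which gives $\mathcal{O}(r^k)$. Equality $s'=r$ gives $\mathcal{O}(kr^k)$, and $s'>r$ gives $\mathcal{O}((s\sqrt r)^k)$. These are exactly the three stated cases.

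\textbf{Main obstacle.} The delicate point is the refined pass in Step 2: justifying $\sqrt{h_k}=\mathcal{O}(r^{k/2})$ inside the inexact recurrence. An honest induction only closes when $s\le\sqrt r$; when $s>\sqrt r$ a naive bootstrap degrades the rate toward $s^k$. I would first try an induction hypothesis $h_k\le C\,a^k$ with $a=\max\{r, s\sqrt r\}$, choosing $C$ large enough to absorb the constants. If that fails in the regime $s>\sqrt r$, I would state explicitly that, as in the theorem's hypothesis, the square-root factor is controlled by the exact-case rate. I would also check that the local conditions of \Cref{thm:sfw-convergence-inexact-evec} (for example $h_k<\min_i\epsilon_i$) remain valid for all $k\le K$, which follows from monotonicity of $h_k$.
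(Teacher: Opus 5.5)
\textbf{Gap: the bound $\sqrt{h_k}=\mathcal{O}(r^{k/2})$ in your refined pass is circular.} Your Step 1, Step 3 and the fallback in your last paragraph reproduce the paper's proof exactly. The paper bounds $\theta_k=\mathcal{O}(s^k)$ through $\|\nabla\phi(\mathbf{H}_k)\|_F\le R+\|\mathbf{G}\|_F$ and counts Lanczos iterations with the same union bound and $\zeta=p/(2K)$. It then simply \emph{asserts} $\sqrt{h_k}=\mathcal{O}(r^{k/2})$ before applying \Cref{lem:geometric-recurrence-inexact} with perturbation factor $s\sqrt r$.

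The obstacle you flagged is real, and appealing to the hypothesis does not remove it. That hypothesis concerns the exact run ($\theta_k=0$). It gives no information about the iterates produced with inexact eigenvectors. Moreover, when $s\ge\sqrt r$ the premise $h_k=\mathcal{O}(r^k)$ is strictly stronger than the conclusion it is used to derive ($\mathcal{O}(kr^k)$, resp.\ $\mathcal{O}((s\sqrt r)^k)$ with $s\sqrt r>r$), so the argument is circular. Your proposed induction $h_k\le Ca^k$ does not help either. With $a=r$ (the case $s<\sqrt r$) it would require $r+(\text{something positive})\le r$. With $a=s\sqrt r$ it needs $s\le\sqrt a$, i.e.\ $s\le\sqrt r$, which contradicts $s\sqrt r>r$.

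\textbf{What the recurrence actually gives.} On the good Lanczos event, put $u_k:=h_k/r^k$, or $\mathbb{E}h_k/r^k$ after your Jensen step (a sound addition the paper lacks). From $h_{k+1}\le rh_k+cMs^k\sqrt{h_k}$ you get $u_{k+1}\le u_k+\tfrac{cM}{r}(s/\sqrt r)^k\sqrt{u_k}$. Hence $\sqrt{u_{k+1}}\le\sqrt{u_k}+\tfrac{cM}{2r}(s/\sqrt r)^k$, and summing,
\[
\sqrt{h_k}\le r^{k/2}\Big(\sqrt{h_0}+\tfrac{cM}{2r}\textstyle\sum_{j<k}(s/\sqrt r)^j\Big).
\]
This gives $\mathcal{O}(r^k)$ for $s<\sqrt r$, but only $\mathcal{O}(k^2r^k)$ for $s=\sqrt r$ and $\mathcal{O}(s^{2k})$ for $s>\sqrt r$. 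So repeated bootstrapping drifts to $s^{2k}$, not $s^k$. These rates are sharp for the recurrence. For example, with equality and $s^2>r$, the quantity $w_k:=h_k/s^{2k}$ obeys $w_{k+1}=(r/s^2)w_k+(cM/s^2)\sqrt{w_k}$ and converges to $\big(cM/(s^2-r)\big)^2>0$.

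Consequently only the first case of the statement can be proved from the recurrences of \Cref{thm:sfw-convergence-inexact-evec}. Even that case needs two caveats. Monotonicity keeps $h_k$ below the thresholds $\epsilon_i$ once it starts there. Contraction is only guaranteed on non-drop iterations, so $r$ must be read as an effective rate. For the other two cases you must either weaken the claim to $\mathcal{O}(k^2r^k)$ and $\mathcal{O}(s^{2k})$, or supply an independent a priori bound on $h_k$ along the inexact run. Neither your plan nor the paper's proof provides such a bound.
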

\begin{proof}
Given \eqref{eqn:lanczos-accuracy-bound} and \eqref{eqn:lanczos-max-iter-lem-proof-1}, then \eqref{eqn:lanczos-max-iter-lem-proof-2} has to hold:
\begin{equation*}
    \theta_k \leq \frac{R\xi_k \norm{\nabla \phi(\mathbf{H}_k)}_F}{8},
\end{equation*}
where $\xi_k \in (0, 1]$.
Suppose we choose $\xi_k = \mathcal{O}(s^k)$ for some $0 \leq s < 1$.
If $h_k = \mathcal{O}(r^k)$ for $0 < r < 1$, then
\begin{equation*}
\sqrt{h_k} = \mathcal{O}(r^{k/2}).
\end{equation*}
Since $\theta_k = \mathcal{O}(s^k)$, we obtain the following bound
\begin{equation*}
    \sqrt{h_k}\theta_k = \mathcal{O}\left((s\sqrt{r})^k\right).
\end{equation*}
Consequently, in the rank-$n$ and intermediate-rank cases, the
additive error term in the recurrence satisfies
\begin{equation*}
    \frac{\sqrt{2h_k}}{16R}\theta_k
    = \mathcal{O}\left((s\sqrt{r})^k\right).
\end{equation*}
Applying \Cref{lem:geometric-recurrence-inexact} with contraction
factor $r$ and perturbation factor $s\sqrt{r}$, we obtain
\begin{equation*}
h_k =
\begin{cases}
\mathcal{O}(r^k), & s\sqrt{r} < r,\\
\mathcal{O}(kr^k), & s\sqrt{r} = r,\\
\mathcal{O}\left((s\sqrt{r})^k\right), & s\sqrt{r} > r.
\end{cases}
\end{equation*}
Next, we note that 
\begin{equation*}
    \norm{\nabla \phi(\mathbf{H}_k)}_F \leq \norm{\mathbf{H}_k - \mathbf{G}}_F \leq \norm{\mathbf{H}_k}_F + \norm{\mathbf{G}}_F \leq R + \norm{\mathbf{G}}_F,
\end{equation*}
so $\norm{\nabla \phi(\mathbf{H}_k)}_F$ is bounded.
Given \eqref{eqn:lanczos-max-iter-lem-proof-2} and the fact that $\norm{\nabla \phi(\mathbf{H}_k)}_F$ is bounded, we must have that $\theta_k = \mathcal{O}(s^k)$.
Applying \Cref{lem:geometric-recurrence-inexact}, we obtain the geometric convergence of Spectrahedron FW.

Now to prove the lower bound on the maximum number of Lanczos iterations, we simply invoke \Cref{lemma:randomized-lanczos} as follows.
For the $k$th iteration out of $K$ iterations, assume that the probability of failing to find a suitable approximation of the eigenvector corresponding to the smallest eigenvalue $\lambda_{min}(\nabla \phi(\mathbf{H}_k))$ is $p_k$.
Then the probability that any one of the $K$ iterations fails to find the minimum eigenvector is given by $\sum_{k = 1}^K p_k$.
To ensure that the overall probability of failure is $p$, we can enforce that $p_k = p/K$.
Given the objective of estimating the eigenvector with constant probability of $1 - 2\zeta$, \eqref{eqn:lanczos-accuracy-bound} entails that $p_k = 2\zeta$. 
Therefore, we must have that $\zeta = \frac{p}{2K}$.
Given this and the fact that $\xi_k = \mathcal{O}(s^k)$, we can say that 
\begin{equation*}
    q_k \geq \mathcal{O}\left( ( s^k )^{-1/2}\log\left ( \frac{n}{(p/(2K))^2}\right ) \right ) = \mathcal{O}\left (s^{-k/2}\log\left(\frac{nK}{p}\right ) \right ).
\end{equation*}
Consequently, the worst-case total number of Lanczos iterations per Frank-Wolfe iteration is bounded below by $\mathcal{O}\left(s^{-K/2}\log\left(\frac{nK}{p}\right)\right )$. 
This concludes the proof.
\end{proof}

\section{Numerical Tuning and Additional Results}
\label{app:numerics}

\subsection{Tuning the PDHG step sizes}

We first tune the primal--dual step sizes on G38, G55, G60 and G70. 
The plots compare pairs $(\tau,\sigma)$ with $\tau\sigma=0.5$. The behavior depends on the graph: several choices make progress on G38, while the contrast is clearer on G55, G60 and G70, where $(\tau,\sigma)=(1,0.5)$ produces the strongest decrease in both objective residual and feasibility error over the displayed iteration range. 
We therefore use $(\tau,\sigma)=(1,0.5)$ for the subsequent comparisons. 
This is a consistent choice across the four instances rather than a claim that it is the pointwise best pair for every metric on every
graph.

\begin{figure}[t]
\centering
\begin{subfigure}{0.48\textwidth}
    \centering
    \includegraphics[width=\linewidth]{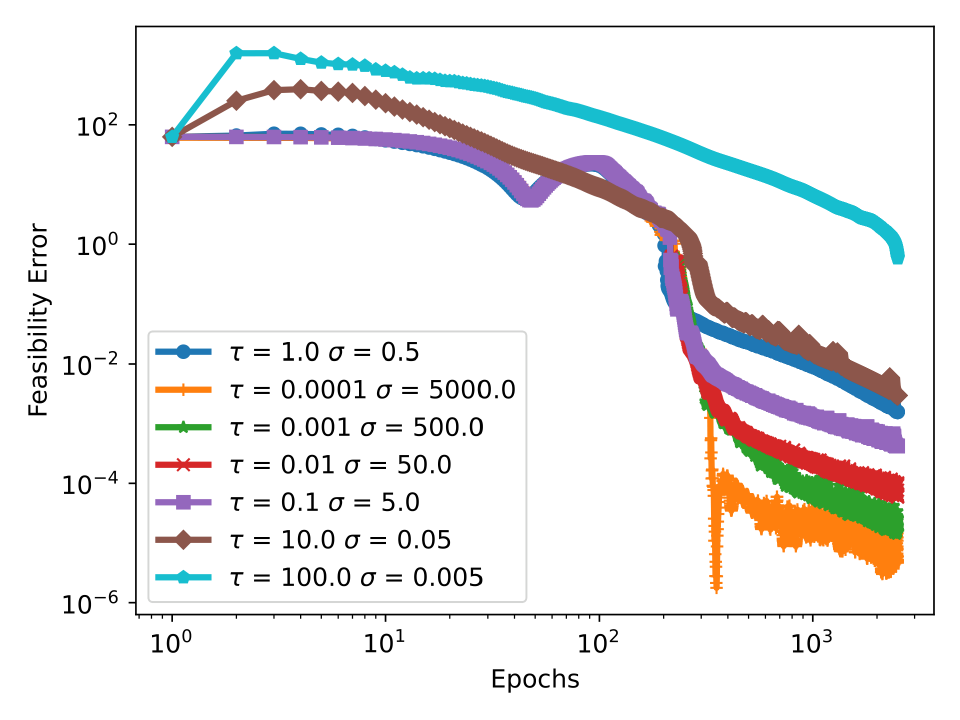}
    \caption{Feasibility error on G38.}
\end{subfigure}
\hfill
\begin{subfigure}{0.48\textwidth}
    \centering
    \includegraphics[width=\linewidth]{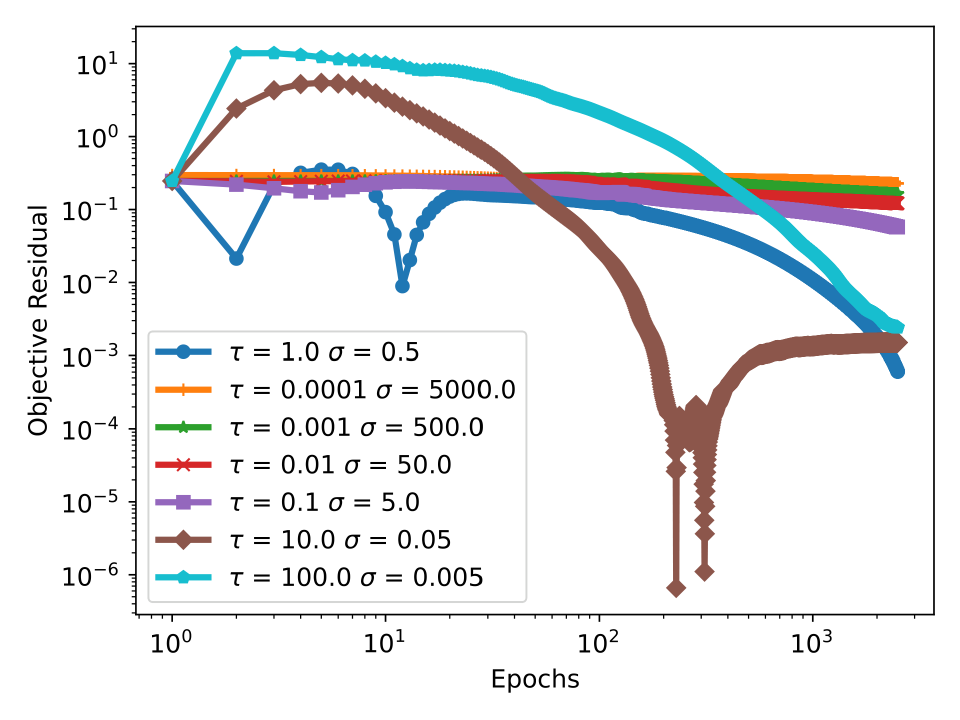}
    \caption{Objective residual on G38.}
\end{subfigure}
\caption{PDHG step-size tuning on G38. The tested pairs satisfy
$\tau\sigma=0.5$.}
\label{fig:pdhg_tuning_g38}
\end{figure}

\begin{figure}[t]
\centering
\begin{subfigure}{0.48\textwidth}
    \centering
    \includegraphics[width=\linewidth]{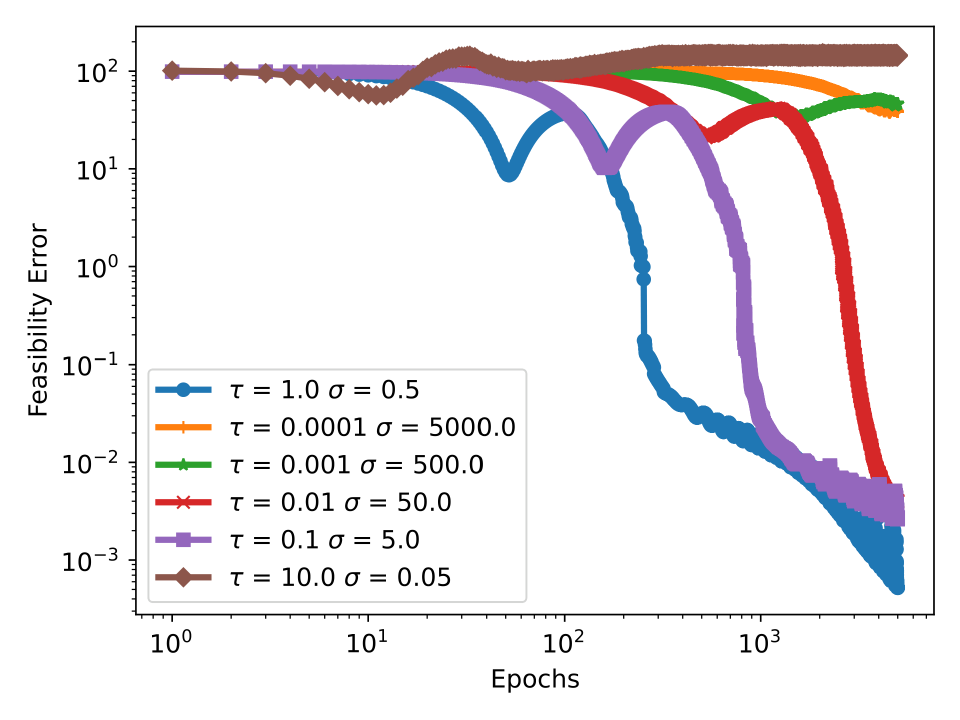}
    \caption{Feasibility error on G55.}
\end{subfigure}
\hfill
\begin{subfigure}{0.48\textwidth}
    \centering
    \includegraphics[width=\linewidth]{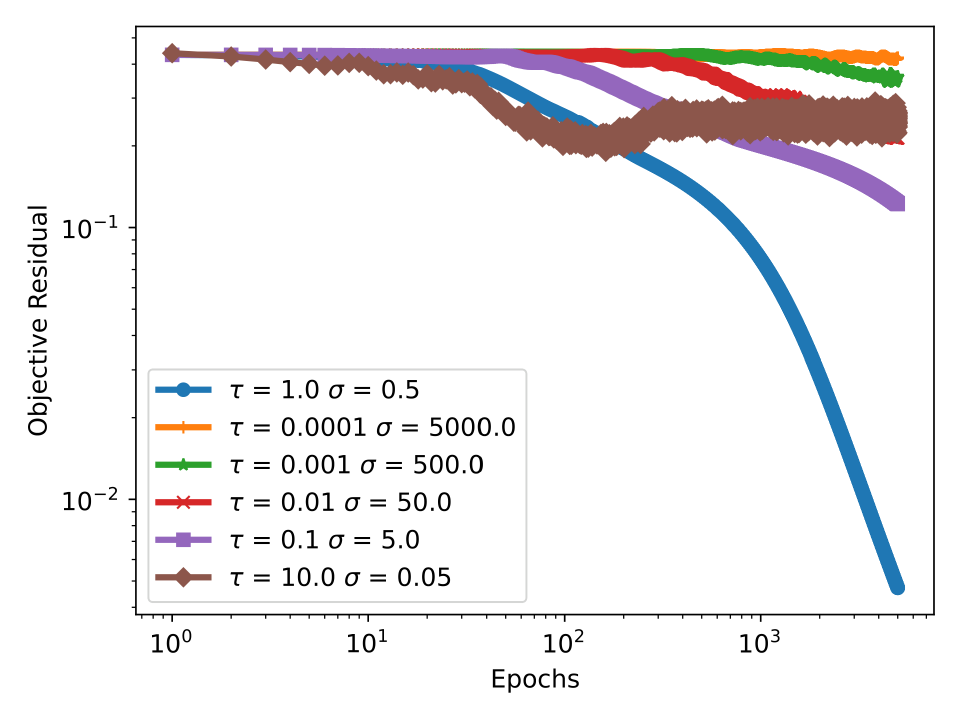}
    \caption{Objective residual on G55.}
\end{subfigure}
\caption{PDHG step-size tuning on G55. The pair $(\tau,\sigma)=(1,0.5)$
gives the clearest simultaneous reduction of objective residual and
feasibility error among the displayed choices.}
\label{fig:pdhg_tuning_g55}
\end{figure}

\begin{figure}[t]
\centering
\begin{subfigure}{0.48\textwidth}
    \centering
    \includegraphics[width=\linewidth]{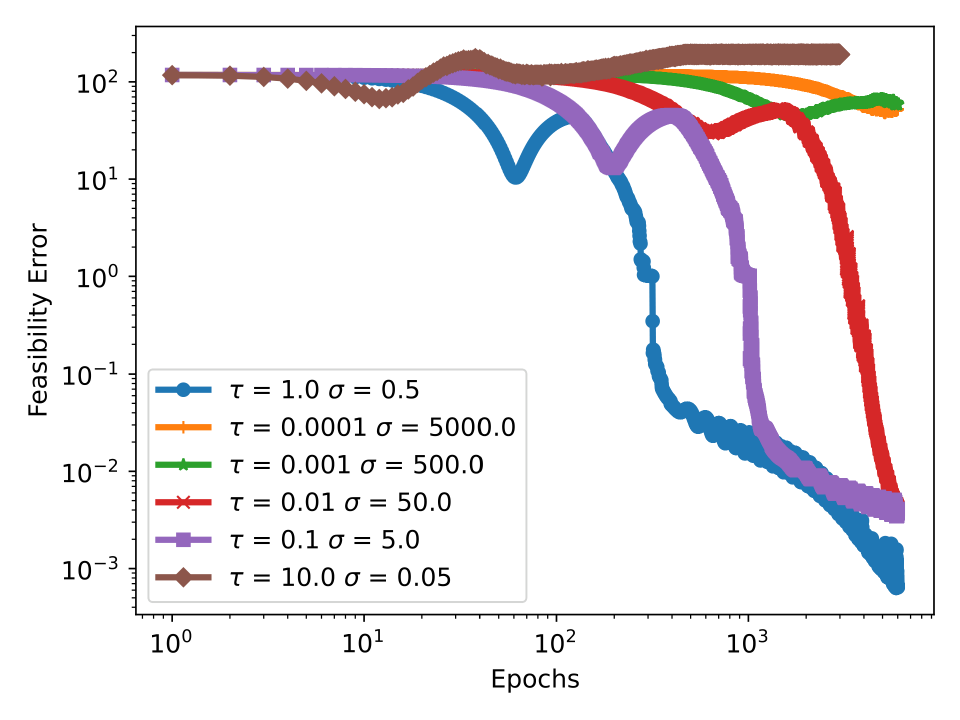}
    \caption{Feasibility error on G60.}
\end{subfigure}
\hfill
\begin{subfigure}{0.48\textwidth}
    \centering
    \includegraphics[width=\linewidth]{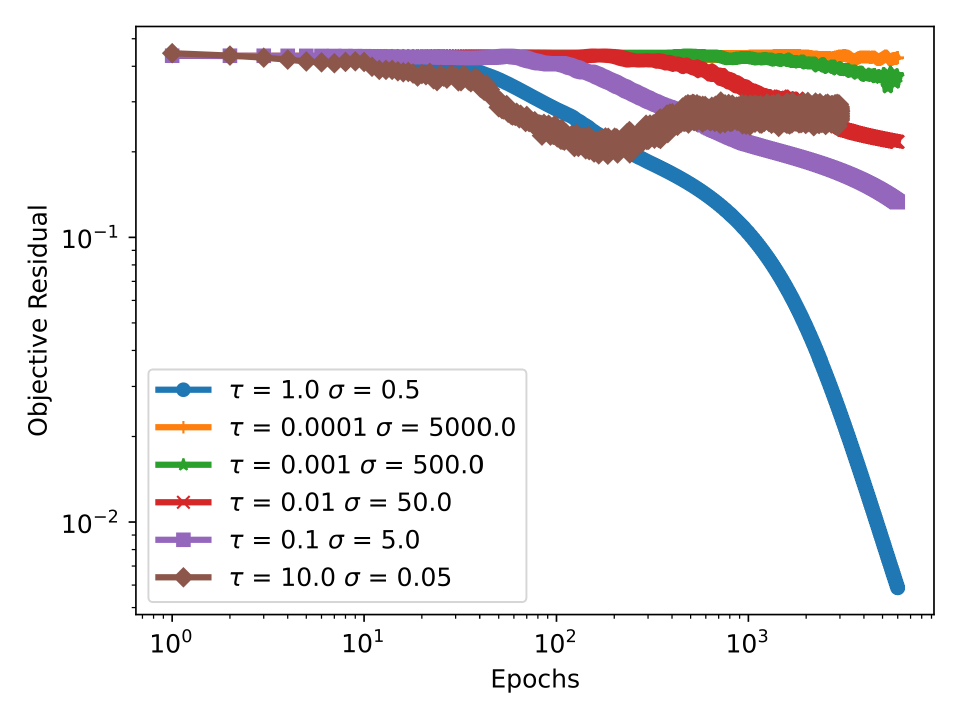}
    \caption{Objective residual on G60.}
\end{subfigure}
\caption{PDHG step-size tuning on G60. As on G55, $(\tau,\sigma)=(1,0.5)$
is the most effective of the plotted choices over the displayed horizon.}
\label{fig:pdhg_tuning_g60}
\end{figure}

\begin{figure}[t]
\centering
\begin{subfigure}{0.48\textwidth}
    \centering
    \includegraphics[width=\linewidth]{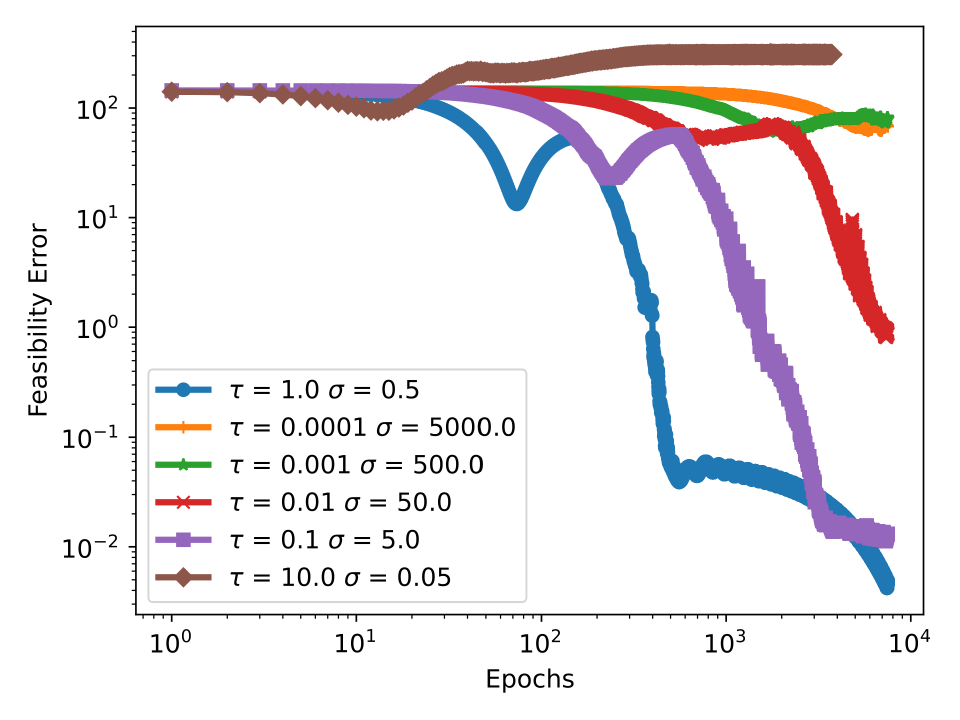}
    \caption{Feasibility error on G70.}
\end{subfigure}
\hfill
\begin{subfigure}{0.48\textwidth}
    \centering
    \includegraphics[width=\linewidth]{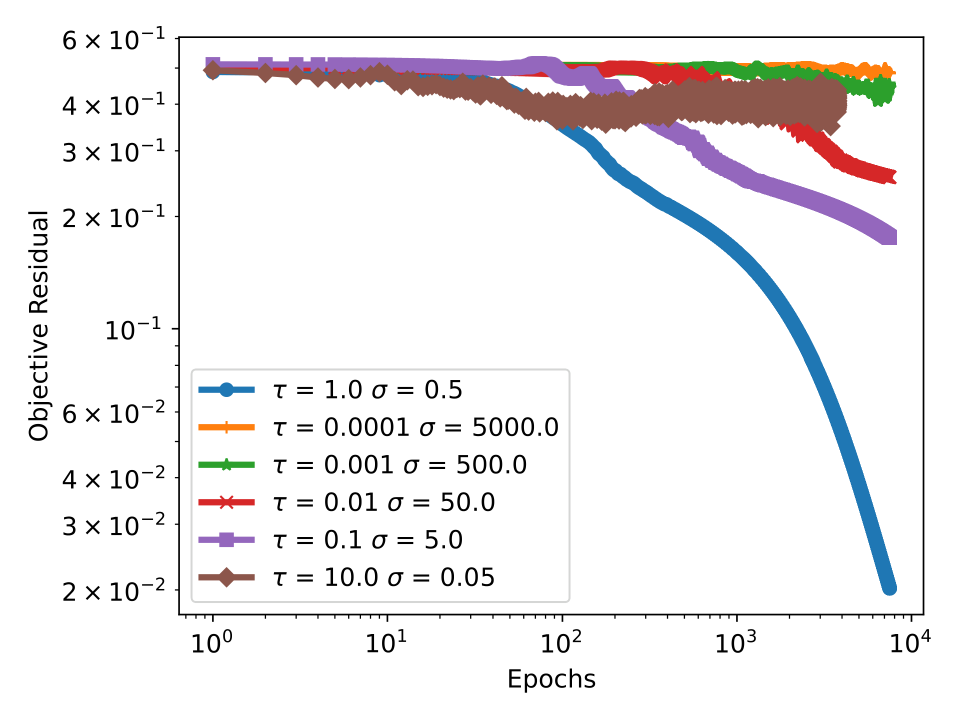}
    \caption{Objective residual on G70.}
\end{subfigure}
\caption{PDHG step-size tuning on G70. As on G60, $(\tau,\sigma)=(1,0.5)$
is the most effective of the plotted choices over the displayed horizon.}
\label{fig:pdhg_tuning_g70}
\end{figure}

\subsection{Sensitivity to the inexact-projection parameter
\texorpdfstring{$\rho$}{rho}}

We next vary $\rho$ in the adaptive error rule for Inexact PDHG with spectral warm starts and SFW, keeping $(\tau,\sigma)=(1,0.5)$ fixed. 
The objective residual and feasibility curves for a given instance are nearly indistinguishable over a broad range of $\rho$, both against outer iterations and against cumulative matvecs. 
This visual similarity is reflected in the
total-work counts in Tables~\ref{tab:rho_inner_work} and \ref{tab:rho_matvec_work}: the best value of $\rho$ differs by instance, but the total work changes only moderately over the tested range.
For this reason, $\rho$ was not tuned for G70.

\begin{figure}[t]
\centering
\begin{subfigure}{0.48\textwidth}
    \centering
    \includegraphics[width=\linewidth]{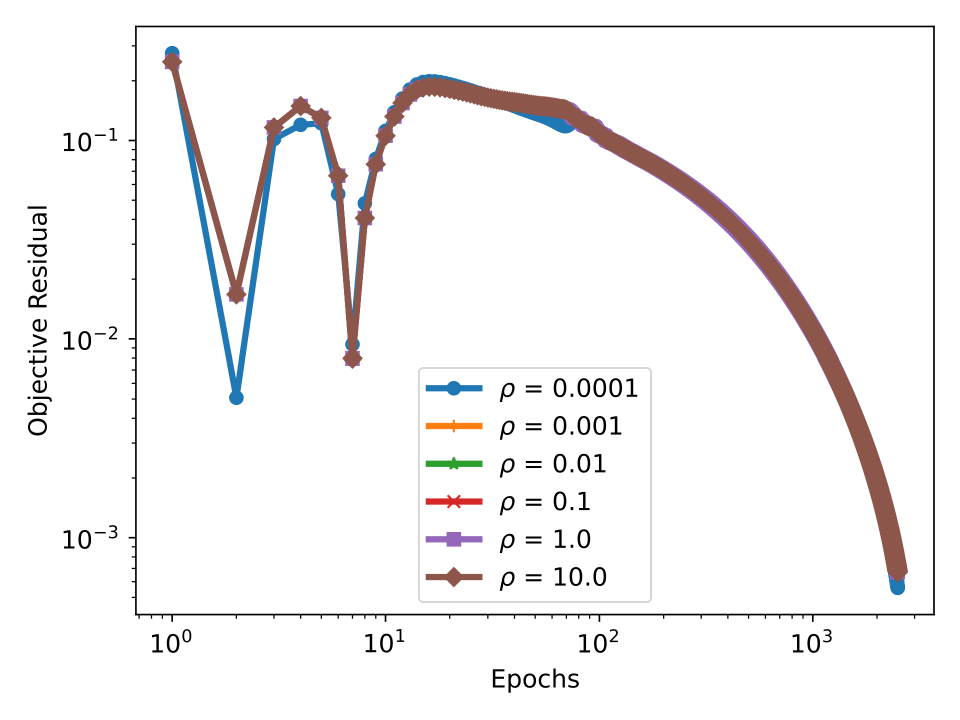}
    \caption{Objective residual vs. iterations.}
\end{subfigure}
\hfill
\begin{subfigure}{0.48\textwidth}
    \centering
    \includegraphics[width=\linewidth]{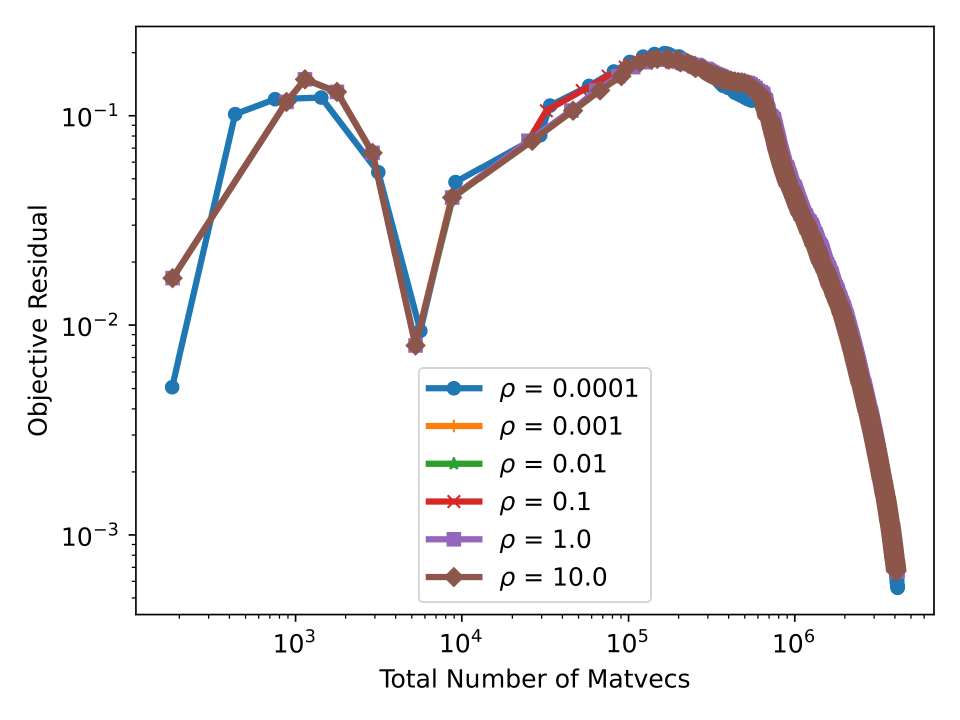}
    \caption{Objective residual vs. matvecs.}
\end{subfigure}

\vspace{0.2cm}
\begin{subfigure}{0.48\textwidth}
    \centering
    \includegraphics[width=\linewidth]{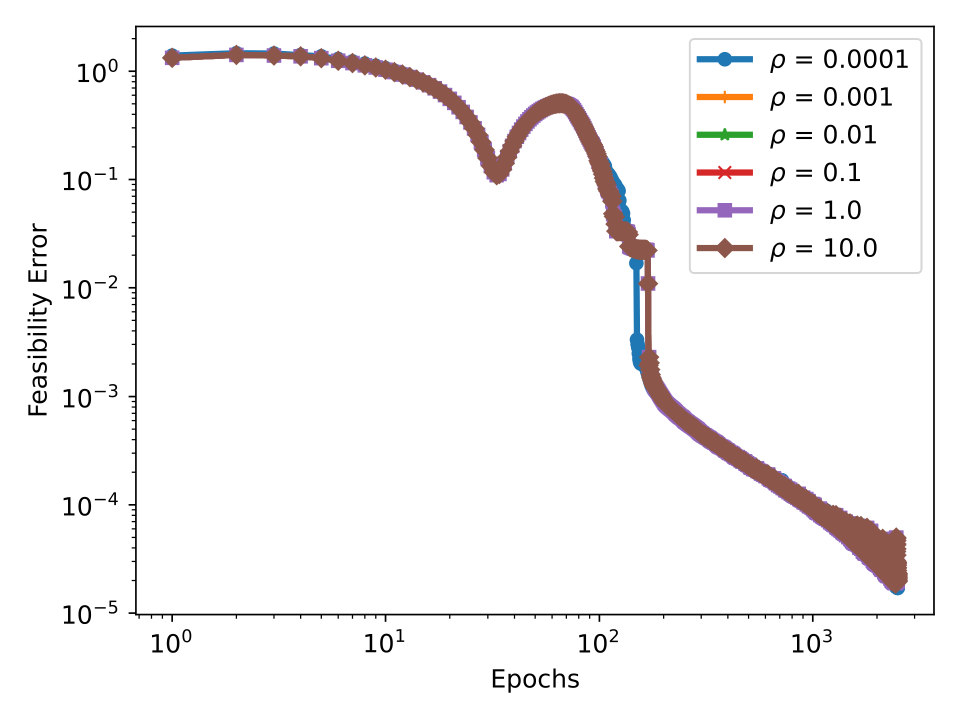}
    \caption{Feasibility error vs. iterations.}
\end{subfigure}
\hfill
\begin{subfigure}{0.48\textwidth}
    \centering
    \includegraphics[width=\linewidth]{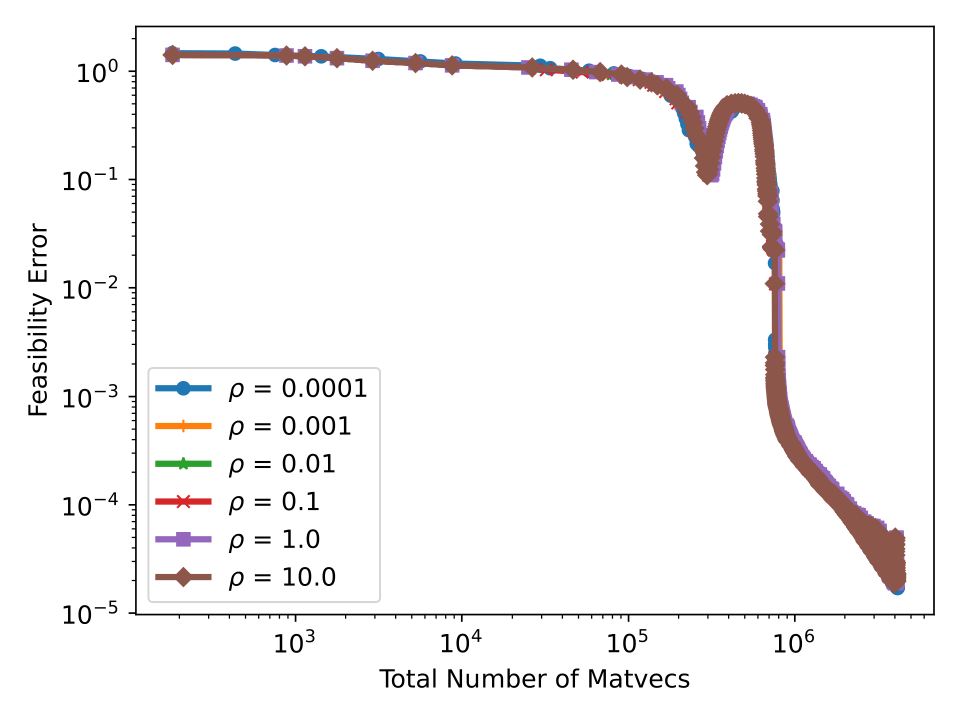}
    \caption{Feasibility error vs. matvecs.}
\end{subfigure}
\caption{Sensitivity to $\rho$ on G38. The convergence curves largely overlap
for the tested values of $\rho$.}
\label{fig:rho_tuning_g38}
\end{figure}

\begin{figure}[t]
\centering
\begin{subfigure}{0.48\textwidth}
    \centering
    \includegraphics[width=\linewidth]{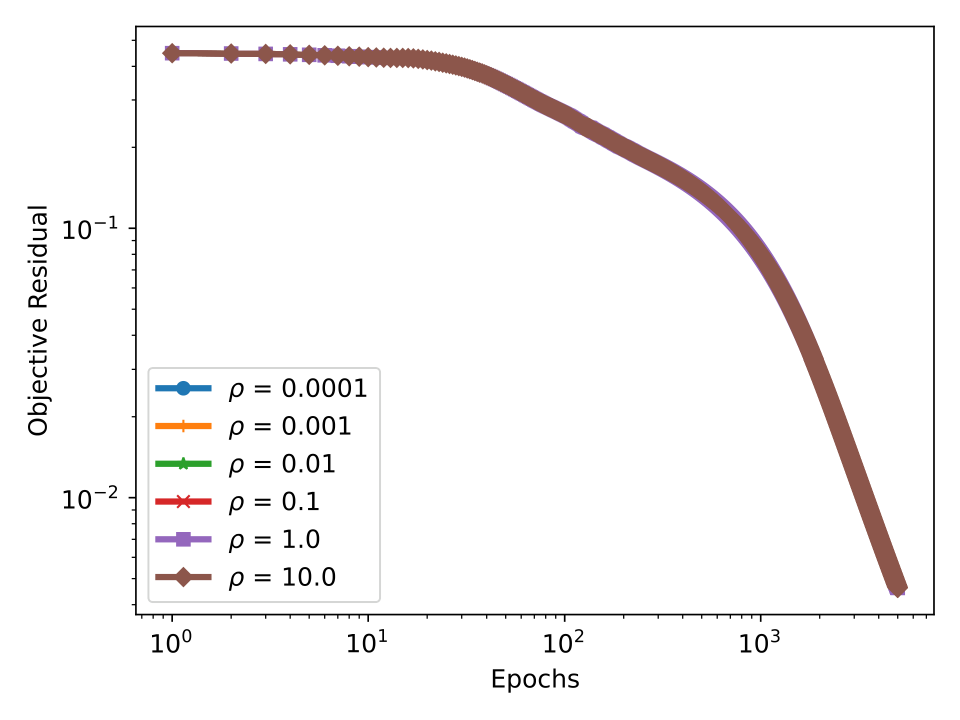}
    \caption{Objective residual vs. iterations.}
\end{subfigure}
\hfill
\begin{subfigure}{0.48\textwidth}
    \centering
    \includegraphics[width=\linewidth]{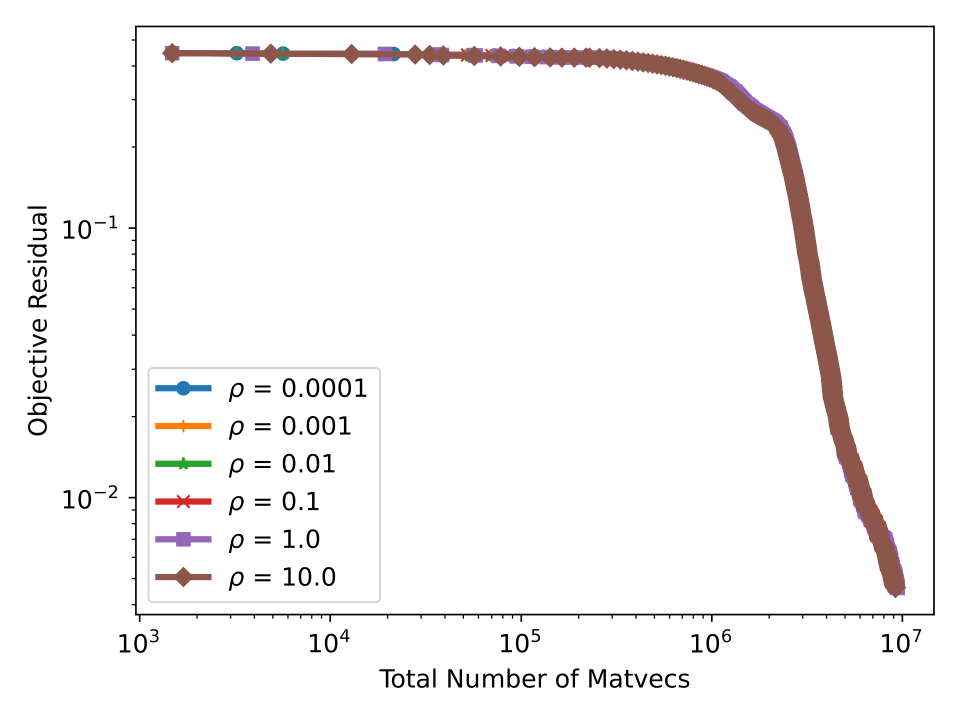}
    \caption{Objective residual vs. matvecs.}
\end{subfigure}

\vspace{0.2cm}
\begin{subfigure}{0.48\textwidth}
    \centering
    \includegraphics[width=\linewidth]{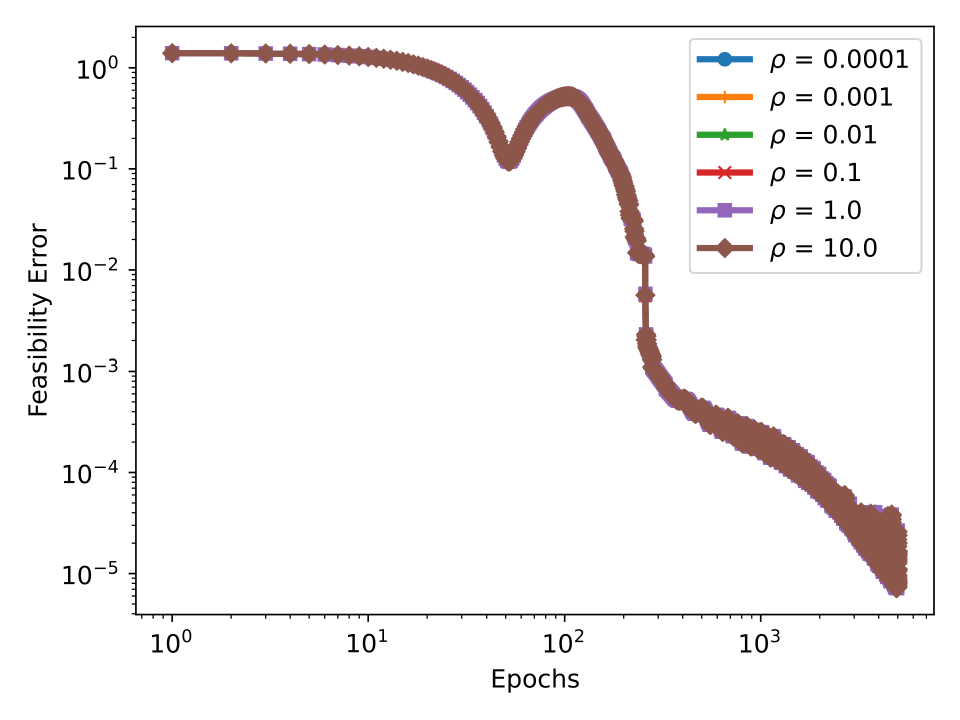}
    \caption{Feasibility error vs. iterations.}
\end{subfigure}
\hfill
\begin{subfigure}{0.48\textwidth}
    \centering
    \includegraphics[width=\linewidth]{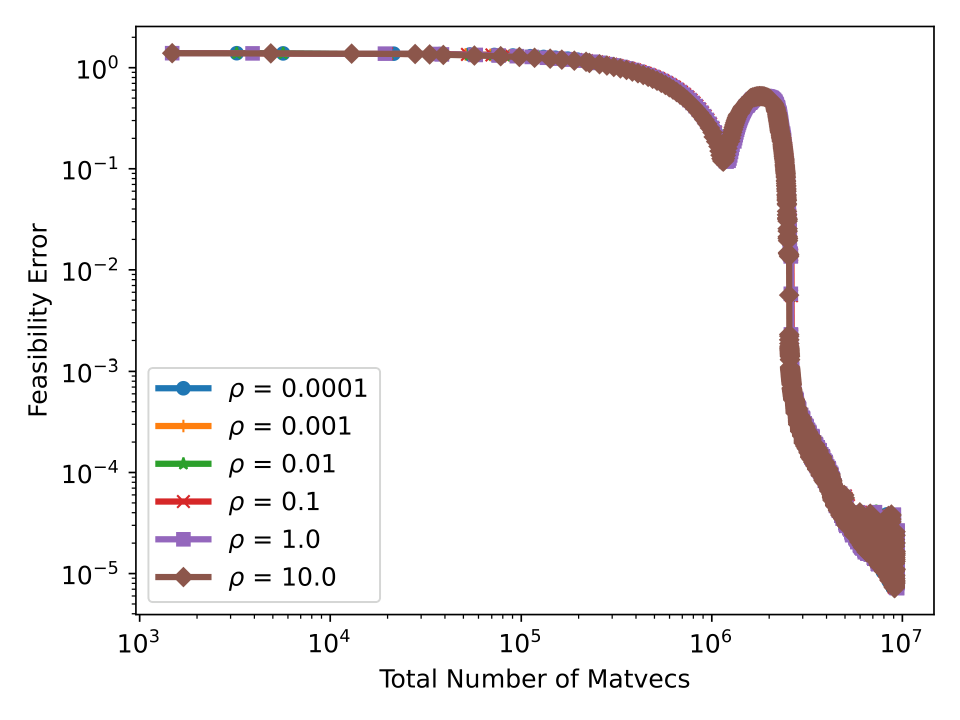}
    \caption{Feasibility error vs. matvecs.}
\end{subfigure}
\caption{Sensitivity to $\rho$ on G55. The objective and feasibility
trajectories remain very similar across the tested values.}
\label{fig:rho_tuning_g55}
\end{figure}

\begin{figure}[t]
\centering
\begin{subfigure}{0.48\textwidth}
    \centering
    \includegraphics[width=\linewidth]{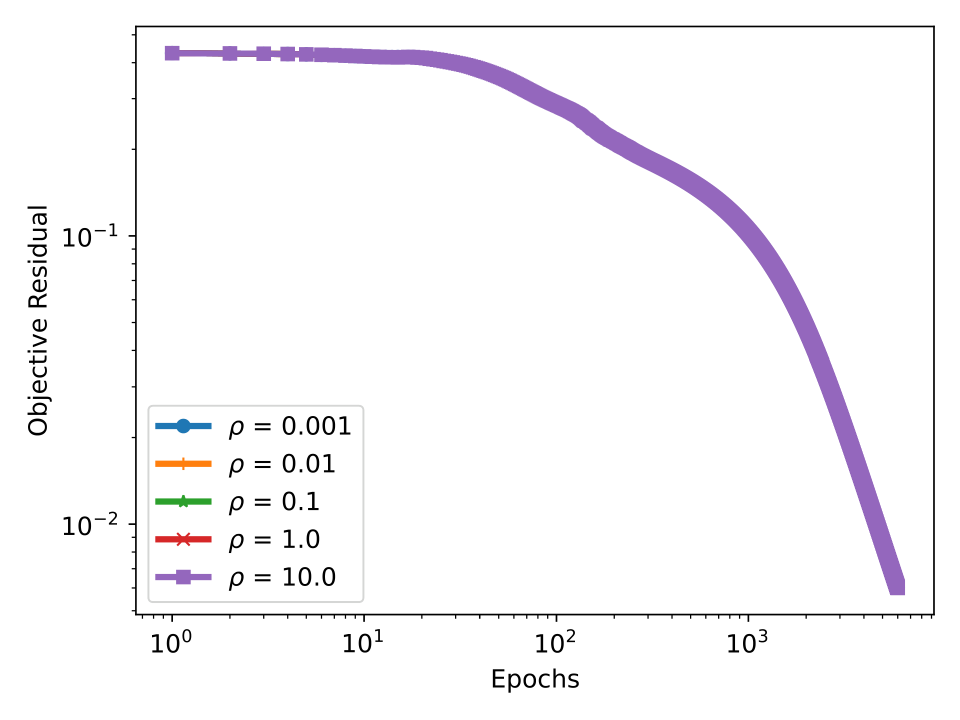}
    \caption{Objective residual vs. iterations.}
\end{subfigure}
\hfill
\begin{subfigure}{0.48\textwidth}
    \centering
    \includegraphics[width=\linewidth]{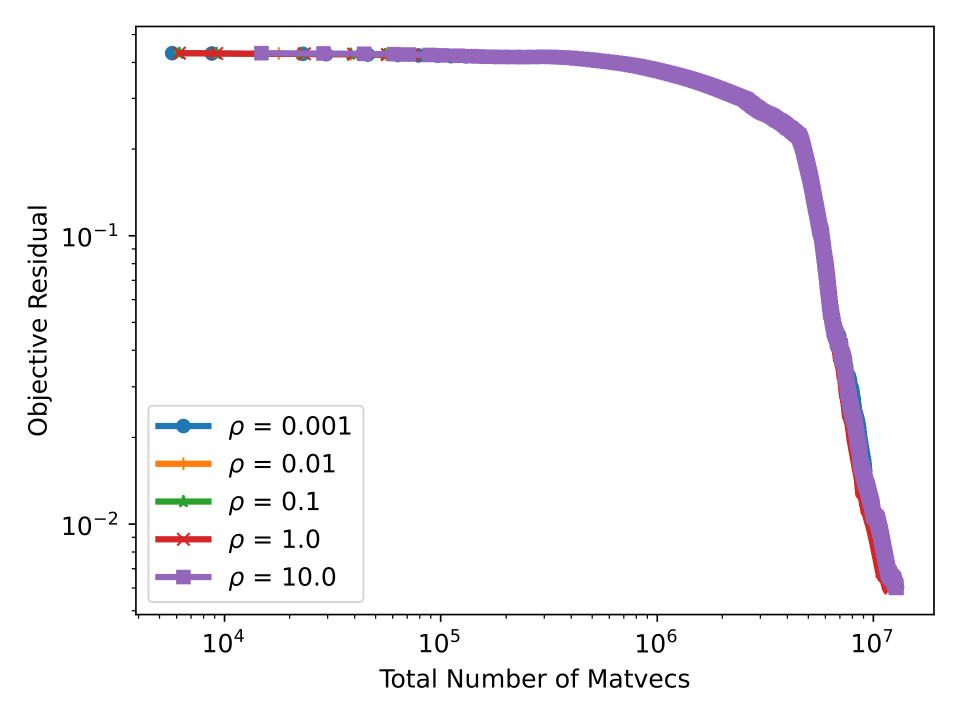}
    \caption{Objective residual vs. matvecs.}
\end{subfigure}

\vspace{0.2cm}
\begin{subfigure}{0.48\textwidth}
    \centering
    \includegraphics[width=\linewidth]{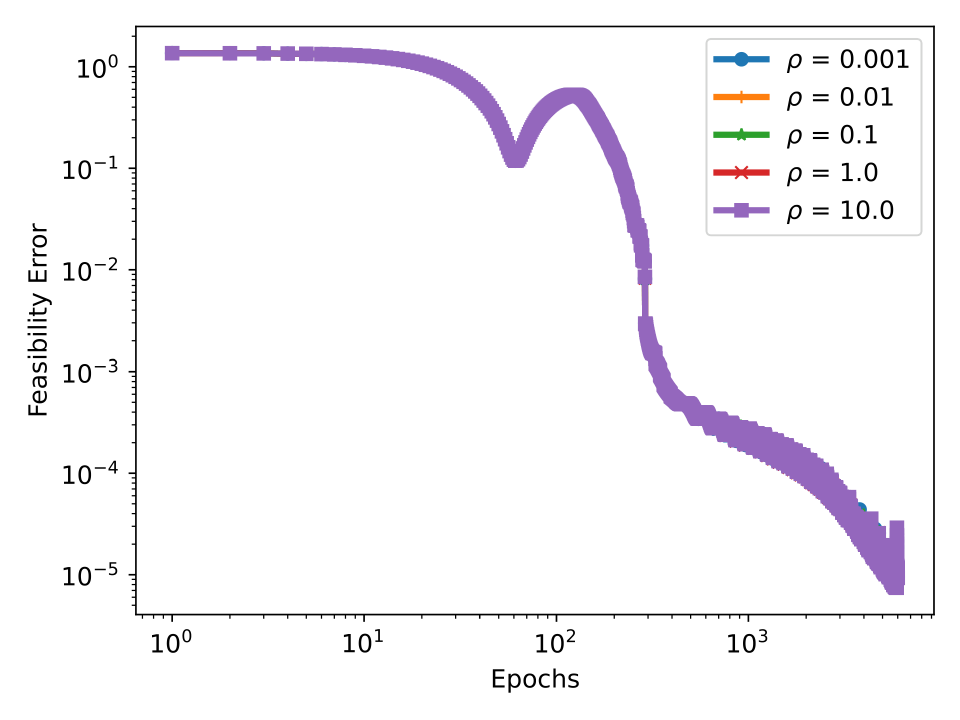}
    \caption{Feasibility error vs. iterations.}
\end{subfigure}
\hfill
\begin{subfigure}{0.48\textwidth}
    \centering
    \includegraphics[width=\linewidth]{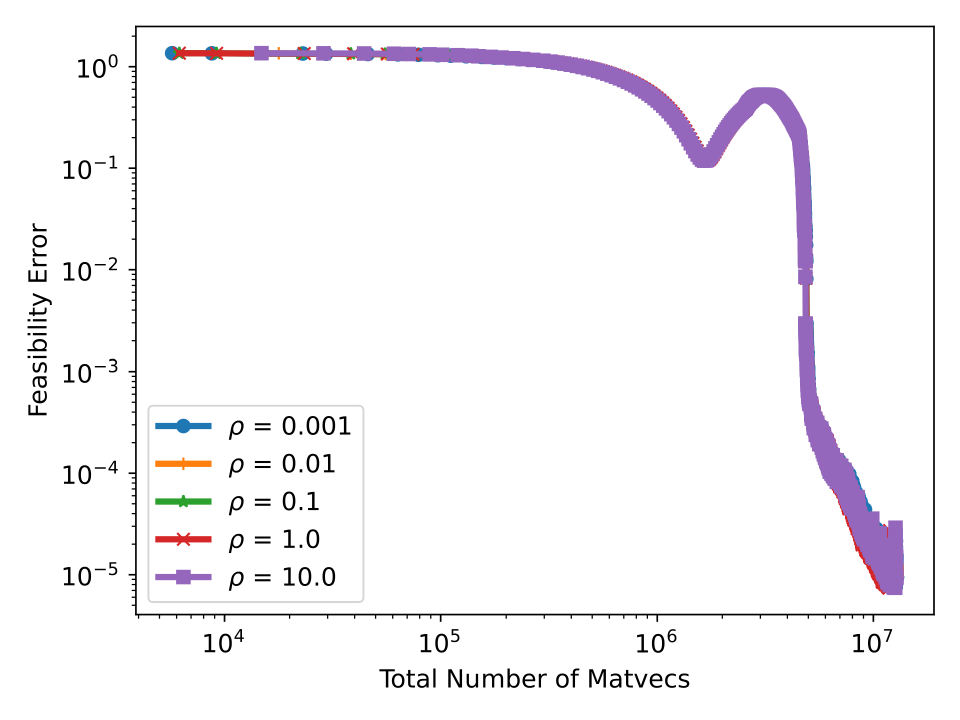}
    \caption{Feasibility error vs. matvecs.}
\end{subfigure}
\caption{Sensitivity to $\rho$ on G60. The displayed values of $\rho$ yield
nearly overlapping convergence curves. The $\rho=10^{-4}$ run was not
available for this instance.}
\label{fig:rho_tuning_g60}
\end{figure}

\begin{table}[t]
\centering
\caption{Total inner SFW iterations as $\rho$ varies. Each entry gives the
count and its percentage excess over the minimum for that instance. The
minimum in each row is boldfaced.}
\label{tab:rho_inner_work}
\scriptsize
\setlength{\tabcolsep}{3.5pt}
\begin{tabular}{lrrrrrr}
\toprule
Instance & $\rho=10^{-4}$ & $\rho=10^{-3}$ & $\rho=10^{-2}$
& $\rho=10^{-1}$ & $\rho=1$ & $\rho=10$ \\
\midrule
G38 &
\shortstack{3622\\(+1.1\%)} &
\shortstack{3721\\(+3.9\%)} &
\shortstack{3683\\(+2.8\%)} &
\shortstack{3638\\(+1.6\%)} &
\shortstack{3695\\(+3.2\%)} &
\shortstack{\textbf{3581}\\\textbf{(+0.0\%)}} \\
G55 &
\shortstack{\textbf{10059}\\\textbf{(+0.0\%)}} &
\shortstack{10197\\(+1.4\%)} &
\shortstack{10267\\(+2.1\%)} &
\shortstack{10425\\(+3.6\%)} &
\shortstack{10875\\(+8.1\%)} &
\shortstack{10356\\(+3.0\%)} \\
G60 &
-- &
\shortstack{13472\\(+9.3\%)} &
\shortstack{13860\\(+12.4\%)} &
\shortstack{13540\\(+9.8\%)} &
\shortstack{\textbf{12330}\\\textbf{(+0.0\%)}} &
\shortstack{13359\\(+8.3\%)} \\
\bottomrule
\end{tabular}
\end{table}

\begin{table}[t]
\centering
\caption{Total projection matrix--vector products as $\rho$ varies. Each
entry gives the count in millions and its percentage excess over the minimum
for that instance. The minimum in each row is boldfaced.}
\label{tab:rho_matvec_work}
\scriptsize
\setlength{\tabcolsep}{3.5pt}
\begin{tabular}{lrrrrrr}
\toprule
Instance & $\rho=10^{-4}$ & $\rho=10^{-3}$ & $\rho=10^{-2}$
& $\rho=10^{-1}$ & $\rho=1$ & $\rho=10$ \\
\midrule
G38 &
\shortstack{4.153\\(+1.9\%)} &
\shortstack{4.160\\(+2.1\%)} &
\shortstack{4.141\\(+1.6\%)} &
\shortstack{4.110\\(+0.9\%)} &
\shortstack{4.141\\(+1.6\%)} &
\shortstack{\textbf{4.074}\\\textbf{(+0.0\%)}} \\
G55 &
\shortstack{\textbf{9.047}\\\textbf{(+0.0\%)}} &
\shortstack{9.123\\(+0.8\%)} &
\shortstack{9.164\\(+1.3\%)} &
\shortstack{9.270\\(+2.5\%)} &
\shortstack{9.456\\(+4.5\%)} &
\shortstack{9.208\\(+1.8\%)} \\
G60 &
-- &
\shortstack{12.838\\(+6.5\%)} &
\shortstack{12.963\\(+7.5\%)} &
\shortstack{12.827\\(+6.4\%)} &
\shortstack{\textbf{12.055}\\\textbf{(+0.0\%)}} &
\shortstack{12.767\\(+5.9\%)} \\
\bottomrule
\end{tabular}
\end{table}

The total-work tables confirm that the tuning parameter does not materially
change the overall computational cost in most configurations. For G38, the
matvec count varies by at most $2.1\%$ from its minimum; for G55, by at most
$4.5\%$. G60 shows somewhat greater sensitivity, with a maximum deviation
of $7.5\%$ in matvecs and $12.4\%$ in inner iterations. Since the convergence
curves also nearly overlap, we use $\rho=0.01$ as a representative setting
rather than selecting a different value for each graph.
 
\end{document}